\numberwithin{equation}{section}
\def\cal{\mathcal}
\def\frak{\mathfrak}
\def\Bbb{\mathbb}
\def\sgn{\text{\rm sgn\,}}
\def\ra{\rangle}
\def \supp {\text{\rm supp\,}}
\def \sgn {\text{\rm sgn\,}}
\def\trans{{\,}^t}
\def\c{{\frak c}}
\def\D{{\cal D}}
\def\C{{\cal C}}
\def\F{{\cal F}}
\def\L{{\cal L}}
\def\M{{\cal M}}
\def\S{{\cal S}}
\def\bC{{\Bbb C}}
\def\NN{{\Bbb N}}
\def\bR{{\Bbb R}}
\def\R{{\Bbb R}}
\def\RR{{\Bbb R}}
\def\ZZ{{\Bbb Z}}
\def\charac{{\mathbbm{1}}}
\def\vp{{\varphi}}
\def\al{{\alpha}}
\def\be{{\beta}}
\def\ga{{\gamma}}
\def\Ga{{\Gamma}}
\def\la{{\lambda}}
\def\om{{\omega}}
\def\e{{\varepsilon}}
\def\x{(x_1,x_2)}
\def\pa{{\partial}}
\def\ve{{\varepsilon}}
\def\si{{\sigma}}
\def\de{{\delta}}
\def\ka{{\kappa}}
\def\bGa{{\mathbf  \Gamma}}
\def\q{{\frak q}}
\def\Rwurz{R^{-1/2} }
\def\vth{{\vartheta}}
\def\tth{{\Theta}}
\def\transl{{\slash\hskip-0.2cm\slash\,}}
\def\translb{{\slash\hskip-0.1cm\slash\,}}
\def\bpm{\begin{pmatrix}}
\def\epm{\end{pmatrix}}
\def\noi{\noindent}
\def\bee{\begin{enumerate}}
\def\ee{\end{enumerate}}
\def\qed{\smallskip\hfill Q.E.D.\medskip}
\newtheorem{thm}{Theorem}[section]
\newtheorem{cor}[thm]{Corollary}
\newtheorem{lemma}[thm]{Lemma}
\newtheorem{lem}[thm]{Lemma}
\newtheorem{remark}[thm]{Remark}
\theoremstyle{plain}
\newtheorem{example}[thm]{Example}
\begin{document}


\title[$L^p$-estimates for FIO-cone  multipliers]
{ $L^p$-estimates for FIO-cone  multipliers}

\author[S. Buschenhenke]{Stefan Buschenhenke}
\address{Mathematisches Seminar, C.A.-Universit\"at Kiel,
Heinrich-Hecht-Platz 6, D-24118 Kiel, Germany} \email{{\tt
buschenhenke@math.uni-kiel.de}}

 \author[S. Dendrinos]{Spyridon  Dendrinos}
\address{School of Mathematical Sciences, University College Cork,
Cork, Ireland}
 \email{{\tt sd@ucc.ie}}

\author[I. A. Ikromov]{Isroil A. Ikromov}
\address{Institute  of Mathematics, 
University Boulevard 15, 140104, Samarkand, Uzbekistan}
 \email{{\tt ikromov1@rambler.ru}}

\author[D. M\"uller]{Detlef M\"uller}
\address{Mathematisches Seminar, C.A.-Universit\"at Kiel,
Heinrich-Hecht-Platz 6, D-24118 Kiel, Germany} \email{{\tt
mueller@math.uni-kiel.de}}


\thanks{2020 {\em Mathematical Subject Classification.}
Primary: 42B15, 35S30; Secondary: 42B25}

\thanks{{\em Key words and phrases.}
Cone multiplier, square function estimate,  Fourier integral operator  (FIO), Seeger-Sogge-Stein estimates for FIOs,   maximal average}

\begin{abstract}  
The classical cone multipliers are Fourier multiplier operators  which localize to narrow $1/R$-neighborhoods of the truncated light cone in frequency space. By composing such convolution operators with suitable translation invariant Fourier integral operators (FIOs), we obtain what we call FIO-cone multipliers. We introduce and study classes of such FIO-cone multipliers on $\RR^3$,  in which the  phase functions of the corresponding FIOs are adapted in a natural way  to the geometry of the cone and may even admit singularities at the light cone.

By building on methods developed by Guth, Wang and Zhang in their  proof of the cone multiplier conjecture in $\RR^3,$   we obtain $L^p$-estimates for FIO-cone multipliers in the range $4/3\le p\le 4$ which are  stronger  by the factor $R^{-|1/p-1/2|}$ than what a direct application of the method of Seeger, Sogge and Stein for estimating FIOs would give.

An  important application of  our theory is to  maximal  averages along smooth analytic surfaces  in $\RR^3.$ It allows to  confirm  a conjecture on the  the critical Lebesgue exponent  for a prototypical  surface from a small class of ``exceptional'' surfaces, for which this conjecture had remained open.

\end{abstract}


\maketitle


\tableofcontents

\thispagestyle{empty}


\section{Introduction}\label{intro}

If $m$ is an essentially bounded measurable function  on $\RR^3,$ we denote by $T_m$ the associated Fourier multiplier operator defined by 
$$
\widehat{T_m f}:= m\hat f, \qquad f\in \S(\RR^3),
$$
i.e.,  $T_mf=f* (\F^{-1}m),$ if $ \F^{-1}m\in \S'(\RR^3)$ denotes the inverse Fourier transform of $m.$

\smallskip

Let us denote by $\mathbf \Gamma:=\{ \xi_1^2+\xi_2^2=\xi_3^2\}$ the light cone in $\RR^3.$ 
The  classical  {\it cone multiplier} $m_R$ is a multiplier supported in  the  truncated $1/R$-neighborhood $\Ga_R$ of the light cone given by
$$ \Gamma_R:=\{\xi\in\R^3:  \Big|\frac{\xi_1^2+\xi_2^2}{\xi_3^2}-1\Big|<\frac{1}R,\ \frac12<\xi_3<2\},$$
that is,
$$
m_R(\xi):= \chi_0 (RF(\xi))\chi_1(\xi_3),
$$
where $\chi_0$ and $\chi_1$ are smooth bump functions supported near the origin and near $1$, respectively,  and where
$$F(\xi):=\frac{\xi_1^2+\xi^2_2}{\xi_3^2}-1.$$ 
We assume here that $R\ge R_0,$ where $R_0\gg1$ is fixed and sufficiently large.

The long standing cone multiplier conjecture, which has eventually  been proved  a few years ago by Guth, Wang and Zhang  \cite{GWZ}, states  that  for any $R\gg1,$
\begin{equation}\label{coneMult}
	\| T_{m_R} f\|_{L^4(\RR^3)}\le C_\epsilon R^\epsilon \|f\|_{L^4(\RR^3)},
\end{equation}
for any $\epsilon>0.$ This estimate is  sharp, up to the power $\epsilon$ loss in $R$.

Indeed, Mockenhaupt \cite{Mo} had already observed in 1993 that the estimate \eqref{coneMult} would be a consequence of the following square function estimate, which is in fact the key result in \cite{GWZ}:

Suppose we cover $\Ga_R$ be finitely overlapping sectors or ``caps'' $\theta$ of angular width $R^{-1/2},$ so that each  sector is then essentially a rectangular box of dimensions about $R^{-1}\times R^{-1/2}\times 1,$ and choose a smooth partition of unity 
$\{\chi_\theta\}_\theta$ subordinate to the cover. If $f$ is a function whose Fourier transform is supported in $\Ga_R,$ we can then decompose $f=\sum_\theta f_\theta,$  where $f_\theta:=T_{\chi_\theta}f.$ Then, for any $\epsilon>0$ and $R\ge R_0,$ 
\begin{equation}\label{squarest}
\|f\|_{L^4(\RR^3)}\le C_\epsilon R^\epsilon \Big\|\big (\sum_\theta |f_\theta|^2\big)^{\frac 12}\Big\|_{L^4(\RR^3)}.
\end{equation}

\medskip
Returning to cone multipliers, by composing such operators   with suitable translation invariant Fourier integral operators (FIOs), we obtain what we call FIO-cone multipliers.

In this article, we introduce classes $\F^{\kappa_1,\kappa_2,\gamma}$ of real and 1-homogeneous phase functions $\phi$ for suitable non-negative parameters $\kappa_1,\kappa_2,\gamma,$ by imposing a certain control on the  derivatives of $\phi$ at any point $\xi_0$ in $\Ga_{R_0}\setminus \mathbf \Ga$ in the directions of an  orthonormal frame $E_1(\xi_0),E_2(\xi_0),E_3(\xi_0)$  attached to   
$\xi_0$ in a natural way.
More precisely, if $\mathbf \Gamma(\xi_0)$ denotes the cone  given by 
$$
\mathbf \Gamma(\xi_0):=\{F(\xi)=F(\xi_0)\}
$$ 
away from the origin, then we choose $E_1(\xi_0)$ to be the outer unit normal vector to the cone $\mathbf \Gamma(\xi_0)$ at $\xi_0,$  $E_3(\xi_0):=\xi_0/|\xi_0|$ as the tangent vector to $\mathbf \Gamma(\xi_0)$ pointing in radial direction, and $E_2(\xi_0)$ as a suitable ``horizontal'' tangent vector. The  phases in $\F^{\kappa_1,\kappa_2,\gamma}$ are allowed to have singularities at the light-cone  $\mathbf \Ga,$ which turns out to be crucial for our applications to maximal averages studied in Section \ref{sec:maxop}.
For precise definitions,  we refer to Section \ref{sec:phases}.
\smallskip

Given a specific class $\F^{\kappa_1,\kappa_2,\gamma},$ our goal is to prove $L^p$-estimates for FIO-cone multipliers of the form 
$$ 
m_R^\lambda(\xi):=a_R(\xi)m_R(\xi) e^{ -i\lambda\phi(\xi)},
$$
with phases $\phi\in \F^{\kappa_1,\kappa_2,\gamma}$ and suitable ``amplitudes'' $a_R.$ 
\smallskip

To this end, adapting a key idea from \cite{SSS} to phases $\phi$  from our class $\F^{\kappa_1,\kappa_2,\gamma},$ we devise in Section \ref{sboxmainthm} suitable boxes  $\vartheta\subset \Gamma_R,$ called L-boxes, whose  dimensions $\rho_1\times\rho_2\times 1$ are determined by means of the  given parameters $\kappa_1,\kappa_2,\gamma,R$ and $\la$ (see \eqref{rho}), so that the phases $\phi$ from $\F^{\kappa_1,\kappa_2,\gamma}$ behave essentially like linear functions  over such L-boxes. These boxes are chosen in some sense maximal for the given class of phases.
The  axes  of  an L-boxes $\vth$ are chosen to point in the directions of the orthonormal frame attached to the center 
$\xi_\vth$ of $\vth,$ so that $\vth$ is in a sense tangential to the cone $\mathbf\Ga(\xi_\vth),$ with its long side of size  about 1 pointing in radial direction, $\rho_2$ corresponding to the angular width of $\vth,$ and $\rho_1$ being its  ``thickness'' 
with respect to the normal direction $n(\xi_\vth).$

\smallskip

The dimensions of the L-boxes are chosen to be no larger than the dimensions of the sectors $\theta$ appearing in \eqref{squarest}. We can thus obtain a typically finer decomposition of $\Ga_R$ by first decomposing $\Ga_R$ into the sectors $\theta$ of angular width $R^{-1/2},$ and then decomposing each sector $\theta$ into about 
$$
N(\la,R):=\frac {R^{-3/2}}{\rho_1\rho_2}
$$
L-boxes $\vth.$ This leads to a decomposition  of $\Ga_R$ into about $R^{1/2} N(\la,R)$  L-boxes $\vth.$

\smallskip

 Our main result then states that under an additional  ``small mixed derivative condition (SMD)'' (see Theorem \ref{mainthm} for a more precise formulation, and  Corollary \ref {mainlp} for more general $L^p$-estimates), the following estimates hold true:
\begin{thm}\label{FIOthm}
For any $\epsilon>0$, there exists a constant $C_\epsilon>0$ such that for any $R\gg 1$ and  $\lambda\geq R^\gamma,$ 
\begin{equation}\label{FIOconest}
	\| T^\la_R f\|_{L^4(\RR^3)}\leq C_\e R^\epsilon  N(\lambda,R)^{1/2} \|f\|_{L^4(\RR^3)}.
\end{equation}
\end{thm}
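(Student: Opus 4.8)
We plan to follow Mockenhaupt's reduction of a cone--multiplier bound to the Guth--Wang--Zhang square function estimate \eqref{squarest}, but to interpose the L-box decomposition between the caps $\theta$ and the square function. The estimate \eqref{squarest} is used at the scale of the caps and costs only $R^\e$; the further decomposition of each cap into $\nlr$ L-boxes $\vth$, over which the phase is essentially affine, is where the Seeger--Sogge--Stein mechanism produces the loss $\nlr^{1/2}$ characteristic of $L^4$. Concretely: since $\widehat{\tlr f}=m_R^\la\hat f$ is supported in $\Ga_R$, applying \eqref{squarest} to $\tlr f$ and using that Fourier multipliers commute ($(\tlr f)_\theta=\tlr f_\theta$) gives
$$
\|\tlr f\|_{L^4(\RR^3)}\le C_\e R^\e\,\Big\|\Big(\sum_\theta|\tlr f_\theta|^2\Big)^{\half}\Big\|_{L^4(\RR^3)} ,
$$
so it suffices to bound the right side by $C_\e R^\e\,\nlr^{1/2}\big\|(\sum_\theta|f_\theta|^2)^{1/2}\big\|_{L^4}$ and then to dispose of the remaining square function.

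The key step is a within-cap Seeger--Sogge--Stein estimate. Fix a cap $\theta$ and decompose $f_\theta=\sum_{\vth\subset\theta}f_\vth$ using the L-box partition of unity. On $\supp\chi_\vth$ expand $\phi$ around the centre $\xi_\vth$,
$$
\phi(\xi)=\phi(\xi_\vth)+\langle\nabla\phi(\xi_\vth),\xi-\xi_\vth\rangle+\psi_\vth(\xi) ,
$$
the crucial point being that the side lengths $\rho_1,\rho_2$ prescribed in \eqref{rho}, together with the SMD condition, are calibrated precisely so that $\la\psi_\vth$ has all derivatives $O(1)$ in the box-adapted sense -- in the frame attached to $\xi_\vth$ and at the scales $\rho_1,\rho_2,1$; hence $b_\vth:=a_Rm_R\chi_\vth e^{-i\la\psi_\vth}$ is a symbol adapted to $\vth$ and $K_\vth:=\F^{-1}b_\vth$ an $L^1$-normalised bump with $\|K_\vth\|_{L^1}\lesssim1$ uniformly. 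Absorbing the linear part of the phase into a translation by $v_\vth:=\la\nabla\phi(\xi_\vth)$ gives $\tlr f_\vth=\beta_\vth\,\tau_{v_\vth}(K_\vth*f_\vth)$ with $|\beta_\vth|=1$. Now interpolate inside the cap. The $\tlr f_\vth\ (\vth\subset\theta)$ are Fourier-supported in the boundedly overlapping boxes $\vth$, and translation preserves Fourier support, so $\|\tlr f_\theta\|_{L^2}^2\lesssim\sum_{\vth\subset\theta}\|\tlr f_\vth\|_{L^2}^2\lesssim\sum_{\vth\subset\theta}\|f_\vth\|_{L^2}^2\lesssim\|f_\theta\|_{L^2}^2$; while $\|\tlr f_\theta\|_{L^1}\le\sum_{\vth\subset\theta}\|K_\vth*f_\vth\|_{L^1}\lesssim\sum_{\vth\subset\theta}\|f_\vth\|_{L^1}\lesssim\nlr\|f_\theta\|_{L^1}$, and likewise with $L^\infty$ in place of $L^1$. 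Interpolating the $L^2$ bound against the $L^1$/$L^\infty$ bound yields $\|\tlr f_\theta\|_{L^4}\lesssim\nlr^{1/2}\|f_\theta\|_{L^4}$, and the $\ell^2(\theta)$-valued version of this within-cap estimate -- run by the same interpolation scheme, the $\tlr|_\theta$ being convolution operators with disjoint frequency supports -- delivers the inequality wanted after the first step.

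It remains to remove the square function: $\big\|(\sum_\theta|f_\theta|^2)^{1/2}\big\|_{L^4}\le C_\e R^\e\|f\|_{L^4}$ by the standard Rubio--de--Francia reverse square function estimate for the essentially rectangular caps $\theta$ (the $\lesssim R^{1/2}$ distinct orientations cost only $R^\e$), which together with the previous two steps gives \eqref{FIOconest}. The main obstacle is the affineness assertion above: one must prove that the dimensions forced by \eqref{rho} and the SMD condition really make $\la\psi_\vth$ have $O(1)$ box-adapted $C^\infty$ norm on \emph{every} L-box, in particular on those abutting the light cone $\mathbf\Ga$, where $\phi$ itself is singular -- this is exactly where the quantitative derivative estimates defining $\F^{\ka_1,\ka_2,\ga}$ and the maximality of the L-boxes are used, and where the SMD condition is indispensable, keeping the mixed second derivatives of $\phi$ too small to spoil the maximal choice of $\rho_1,\rho_2$. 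The residual points -- the $\ell^2$-valued interpolation and the bookkeeping between L-boxes and caps -- are routine and cost nothing beyond the single $R^\e$ of the first step (and of the last).
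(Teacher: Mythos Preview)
Your argument has a genuine gap at the vector-valued step. After applying the square function estimate you need
\[
\Big\|\Big(\sum_\theta |T^\la_R f_\theta|^2\Big)^{1/2}\Big\|_{L^4}\lesssim N(\la,R)^{1/2}\,\Big\|\Big(\sum_\theta |f_\theta|^2\Big)^{1/2}\Big\|_{L^4},
\]
and you assert this follows from the scalar within-cap bound ``by the same interpolation scheme.'' But it does not. The diagonal operator $(f_\theta)\mapsto(T^\la_R f_\theta)$ acts by \emph{different} convolution kernels $\mu_\theta=\sum_{\vth\subset\theta}\mu_\vth$ on each component, and these kernels are essentially supported in disjoint translated regions $\la\nabla\phi(\xi_\vth)+\vth^{*,\e}$. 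The $L^2(\ell^2)$ bound is fine, and so are $L^1(\ell^1)$ and $L^\infty(\ell^\infty)$; interpolating those, however, only lands you at $L^4(\ell^4)$, not $L^4(\ell^2)$. For $L^4(\ell^2)$ you would need an $L^\infty(\ell^2)$ (or $L^1(\ell^2)$) endpoint with constant $N(\la,R)$, and for a family of convolutions with kernels concentrated in different places there is no reason for that to hold. Alternatively, randomizing via Khintchine reduces the vector-valued inequality to $\|T^\la_R f^\epsilon\|_4\lesssim N^{1/2}\|f^\epsilon\|_4$, which is exactly the theorem you are trying to prove --- the reduction is circular. The paper in fact states explicitly, in its discussion of the proof strategy, that the square function estimate \eqref{squarest} on its own was found insufficient for precisely this reason: it does not ``disentangle'' the caps $\theta$ in a way compatible with the within-cap SSS mechanism.

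What the paper actually does is work with the finer Guth--Wang--Zhang semi-norms $\|\cdot\|_{GWZ,s}$, whose local $L^2$ pieces over the boxes $U\transl U_{\tau,R}$ are what allow Plancherel to be combined with the kernel localisation of Lemma~\ref{4iersupp}. The proof then proceeds by a careful case analysis (Case~I vs.\ II, Sub-cases I.a/I.b) in which one builds intermediate boxes $Q^\theta_{\tau,R}$ larger than $U_{\tau,R}$ but adapted so that all the translated dual L-boxes $\hat\vth^\e$ with $\vth$ in a common sector $\si$ (or $\Theta$) land in a single translate of $Q$; this is where the SMD condition is actually used (Lemma~\ref{Hdiff}), not --- as you write --- to make $\la\psi_\vth$ bounded on L-boxes. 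The latter already follows from the class estimates alone (Lemma~\ref{nlcontrol}); SMD is there to control how $\la\nabla\phi(\xi_\vth)$ varies as $\vth$ ranges over a sector, which is what keeps the boxes $Q$ from being too large. Your scalar within-cap estimate and the final reverse square function step are correct, but the passage from scalar to $\ell^2(\theta)$-valued is the heart of the matter and requires the full GWZ machinery.
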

\noi {\bf Examples.} (a) Assume that  $\phi$ is any classical phase (cf. Section \ref{sec:phases}). Then $\phi\in \F^{\frac 12, \frac 12,1}$ (cf. Example \ref{Fclass}), and 
$\rho_1=\la^{-1/2}\wedge R^{-1}, \rho_2=\la^{-1/2}.$  For $\la\ge R^2, $ the (SMD) condition is no extra condition, and thus  we obtain  the estimate
$$
\| T^\la_R f\|_{L^4\to L^4}\le C_\epsilon R^\epsilon  \Big(\frac{\la}{R^{3/2}}\Big)^{1/2}.
$$

(b)  For $1\le\gamma\le 2,$ let
$$
\phi_{\gamma}(\xi):= \xi_3 \left|\frac{\xi_1^2+\xi_2^2-\xi_3^2}{\xi_3^2}\right|^\gamma, \qquad \xi\notin\mathbf \Ga.
$$
Then $\phi_{\gamma}$ satisfies the (SMD)-condition (cf. Example \ref{ExSMD1}).  And:
\smallskip
 \begin{itemize}
\item For $\ga=1,$ or $\ga=2,$ $\phi_\ga$ is a classical phase, so that (a) applies. Moreover, for $R\le \la <R^2,$  so that $\la^{-1/2}>R^{-1},$ we see that $N(\la,R)=(\la/R)^{1/2},$ and thus our main theorem implies that 
$$
\| T^\la_R f\|_{L^4\to L^4}\le C_\epsilon R^\epsilon  \Big(\frac{\la}{R}\Big)^{1/4}.
$$
\item  If  $1<\ga<2,$ then the  phase $\phi_\ga$ is singular at the light cone and does no longer belong to $\F^{\frac 12, \frac 12,1},$ but $\phi_\ga\in \F^{1, \frac 12,\ga}$  (compare also with Example \ref{mainex}).

For  such singular phases, we shall therefore rather be interested in FIO-cone multipliers $m_R^\lambda(\xi)=a_R(\xi)m_R(\xi) e^{ -i\lambda\phi(\xi)},$ where $m_R$ localizes to truncated conic regions on which $1/{(2R)}<\pm F(\xi)<1/R,$ whose distance to the light cone $\mathbf\Ga$ is  thus comparable to $1/R$ (compare Section \ref{sec:phases}).

 \end{itemize}
\medskip

\medskip

Let us compare our  estimate \eqref{FIOconest} with the estimate one would get from a direct adaption  of the method of Seeger, Sogge and Stein from \cite{SSS}: 
this would lead to an estimate 	$\| T^\la_R \|_{L^\infty\to L^\infty}\lesssim R^{1/2} N(\la,R),$ since the operator norm of $ T^\la_R$
on $L^\infty$ would be bounded,  up to a factor, by the number of L-boxes needed to cover $\Ga_R$ (cf. Section \ref{mainlpeqproof}). By interpolation with the trivial $L^2$-estimate $\| T^\la_R \|_{L^2\to L^2}\lesssim 1$ it would lead to the estimate 
\begin{equation}\label{SSSest}
\| T^\la_R \|_{L^4\to L^4}\lesssim R^{1/4} N(\la,R)^{1/2} ,
\end{equation}
which is worse essentially by the factor $R^{1/4}$ than our estimate \eqref{FIOconest}. 
\smallskip

We remark  also that if  $\phi=0$ is the trivial phase, then we can choose as L-boxes $\vth$ the caps $\theta,$ so that $N(\la,R)=1,$ and we retrieve the cone multiplier  estimate \eqref{coneMult}. What ultimately allows for the improvement in our  estimate \eqref{FIOconest}  are the curvature properties of the cone. 

\medskip

\noi{\bf An application to maximal averages.} As an  important application, we will show in Section \ref{sec:maxop} how  our FIO-cone multiplier estimates allow  to prove a  ``geometric conjecture'' on maximal  averages along smooth analytic surfaces  in $\RR^3$ for a prototypical  surface from a small class of ``exceptional'' surfaces, for which this conjecture had remained open (cf. \cite{BIM}). Example (b), which has also been a guiding  examples for us in developing our theory of FIO-cone multipliers, and a related phase (cf. Examples \ref{mainex}) will play a crucial role here.

We are confident that extensions of our theory will unable us to prove this geometric conjecture in full.
 
\medskip
\noi {\bf Conjecture in higher dimension.} There are natural analogues of our classes phase functions  and the associated  FIO-cone multipliers in higher dimension $d\ge 4.$ In view of the Bochner-Riesz conjecture for $\RR^d$ and the related cone multiplier conjecture, we conjecture that analogues of estimate  \eqref{FIOconest} should hold on $L^{2\frac{d-1}{d-2}}(\RR^d).$

\subsection{On the proof strategy}
To better understand the challenges to be  met when trying to prove Theorem \ref{FIOthm}, let us first consider the special case where the Fourier transform of $f$ is supported in one single sector $\theta$ of angular width $R^{-1/2}.$ Since this sector can be covered by about $N(\la,R)$ L-boxes $\vth,$ the same arguments based on an adaptation of the  method of Seeger, Sogge and Stein shows that then we can improve the estimate \eqref{SSSest} to the estimate 
\begin{equation}\label{SSSest2}
\| T^\la_R \|_{L^4\to L^4}\lesssim N(\la,R)^{1/2},
\end{equation}
so that estimate \eqref{FIOconest} does hold in this special case. 

For a general function $f$ which is  Fourier supported in $\Ga_R,$ to substantiate our philosophy that still only the number of L-boxes $\vth$ within one single cap $\theta$ should count,  we shall seek to "disentangle" the caps $\theta$ by exploiting some kind of ``almost orthogonality'' property of the contributions by different caps $\theta.$
Tools that might be used for this purpose are square function estimates, such as \eqref{squarest}, or  decoupling estimates \cite{BD}, \cite{DGW}.  
\smallskip

Let us briefly discuss some connections between those. Note first that  Minkowski's inequality shows that the square function  estimate \eqref{squarest}  is stronger  than the $\ell^2$-decoupling estimate
\begin{equation}\label{decouple}
	\|f\|_4 \leq C_\e R^\e   \Big(\sum_{\theta} \|f_\theta\|_4^2\Big)^{\frac12},
\end{equation}
which was established by  Bourgain and Demeter \cite{BD} before the cone multiplier conjecture was solved, and actually not only on $L^4$, but even on the  $L^p$-range  $2\leq p\leq 6$. Decoupling estimates had  already been suggested by Bourgain, and Wolff \cite{W},  who had also observed that the  $\ell^4$-decoupling estimate  
\begin{equation}\label{4decouple}
	\|f\|_4 \leq C_\e R^{\frac18+\e}   \Big(\sum_{\theta} \|f_\theta\|_4^4\Big)^{\frac14},
\end{equation}
which follows easily  from \eqref{decouple}  by Hölder's inequality, is essentially sharp. 

\smallskip
However, neither  of these decoupling estimates turned out to be  strong enough for proving estimate \eqref{FIOconest},
 not even "small cap decoupling" or "small cap square function" estimates for smaller caps (such as the  L-boxes $\vth$) than the ``natural''  caps $\theta$  seemed to suffice (cf.  papers by Demeter, Guth and Wang \cite{DGW} and Gan \cite{Gan}).
 \color{black}
 
\smallskip
Also the square function estimate \eqref{squarest}  as such was not  strong enough to prove Theorem \ref{FIOthm}. However, what eventually turned out to be one of the key tools for us are certain $L^2$-based semi-norms  which had been introduced in \cite{GWZ} and which we  shall  denote as Guth-Wang-Zhang semi-norms (cf. Subsection \ref{GWZ}). These semi-norms are somehow distilling what becomes  of the $L^4$-norm of  a function which is frequency supported in $\Ga_R$ after exploiting all  orthogonality properties related to the geometry of the light cone, and are well   amenable to the induction on scales arguments used in the proof of  \eqref{squarest} in \cite{GWZ}.

\medskip

 {\bf Notation and  Conventions:}  Throughout this article, we shall use the ``variable constant'' notation, i.e.,  many constants appearing in the course of our arguments, often  denoted by  $C,$  will typically have different values  at different lines.  Moreover, we shall use  in a standard way symbols such as  $\sim, \lesssim$ or $\ll$ in order to avoid  writing down constants in inequalities. 
 
By $\chi_0 $ and $\chi_1$ we shall  typically denote  smooth cut-off functions on $\RR^n$ with   small compact supports, where $\chi_1(\xi) $ vanishes near the origin and is identically 1 on a neighborhood of the unit sphere $|\xi|=1,$ whereas 
$\chi_0$ is identically $1$ on a small neighborhood of the origin. 
  These cut-off functions may also vary from line to line, and may  in some instances, where several of  such functions of different variables appear within the same formula, even   designate different functions.
  
  We shall consider vectors in the space $\RR^n$ as row vectors and denote them by latin letters  such as $x,y,\dots.$ It will be convenient to consider vectors in the dual space as column vectors and denote them by greek letters, such as $\xi, \eta, \dots.$ Identifying otherwise as usually the dual space with $\RR^n,$ the  Fourier transform of a function $f$ on $\RR^n$ can be written as 
$$
\hat f(\xi):=\int_{\RR^n} f(x) e^{-i x\xi} dx, \qquad \xi\in \RR^n.
$$
Occasionally we shall denote the dual pairing $x\xi$ also by $\langle x, \xi\ra.$
The  gradient $\nabla f$ of a function $f=f(\xi) $ will be considered as a row vector.


\section{Classes of phase functions adapted to the light cone}
\label{sec:phases}

Denote by 
$$
\mathbf \Gamma:=\{\xi\in\RR^3: \xi_1^2+\xi_2^2=\xi_3^2\}
$$
 the (boundary of the) light cone in $\R^3,$  and by $\RR^3_\times:=\{\xi\in\RR^3: \xi_3\ne 0\}.$ Note that, by introducing the 0-homogeneous ``level function''
$$
F(\xi):=\frac{\xi_1^2+\xi^2_2}{\xi_3^2}-1
$$
on $\RR^3_\times,$  we may write $\mathbf \Gamma\setminus\{0\}=\{\xi\in\RR^3_\times: F(\xi)=0\}.$
 
For any parameter $R\geq R_0$ (where $R_0\gg 1$ will denote a sufficiently large constant) we introduce the following associated conic open sets of  aperture $\sim 1/R:$
 \begin{eqnarray*}
\mathbf \Gamma_R:&=&\{\xi\in\R^3_\times:  |F(\xi)|<\frac{1}R\}, \\
\mathbf \Gamma^\pm_R&:=&\{\xi\in\R^3_\times: \frac{1}{2R}<\pm F(\xi)<\frac{1}R\}.
 \end{eqnarray*}
The sets $\mathbf \Ga^-_R$ lie inside the light cone, and the sets $\mathbf \Ga^+_R$ outside.
We will mainly restrict our attention to the following shells, i.e., truncated subsets of those conic sets, on which $\xi_3\sim 1:$
 \begin{eqnarray*}
\Gamma_R&:=&\{\xi\in\R^3:  |F(\xi)|<\frac{1}R\ , \frac 12<\xi_3<2\}, \\
\Gamma^\pm_R&:=&\{\xi\in\R^3: \frac{1}{2R}<\pm F(\xi)<\frac{1}R, \  \frac 12<\xi_3<2\}.
\end{eqnarray*}
Note that $\Gamma_R$ is essentially the $1/R$-neighborhood of $\Gamma:=\{\xi\in \mathbf \Gamma: \frac 12<\xi_3<2\},$ and that 
$\Ga^\pm_R\subset \Gamma_R.$ 
\medskip

 \noi {\bf Definitions.} By $\F_{hom}=\F_{hom}(\mathbf \Ga_{R_0}\setminus\mathbf \Gamma)$ we denote the space of all smooth, real-valued phase functions $\phi$ on $\mathbf \Ga_{R_0}\setminus\mathbf \Gamma$ which are homogeneous of degree $1,$ i.e., which satisfy $\phi(r\xi)=r\phi(\xi)$ for every $r>0$ and every $\xi$ in its domain of definition.

In a similar way, we define the spaces  $\F_{hom}(\mathbf \Ga_R)$ and $\F_{hom}(\mathbf \Ga^\pm_R)$  as the space of  all smooth, real-valued phase functions $\phi$ on $\mathbf \Ga_R$ and  $\mathbf \Gamma^\pm_R,$ respectively, which are homogeneous of degree 1.

If $\phi\in\F_{hom},$ then obviously  $\phi\vert_{\mathbf \Ga^\pm_R}\in\F_{hom}(\mathbf \Ga^\pm_R).$

\smallskip

\medskip

We shall devise various classes of sub-families of these spaces of phase functions by requiring suitable  controls on their derivatives in the directions of  the outer unit normals and the  ``horizontal'' derivatives with respect to the cone. Due to the homogeneity of our phases, it will suffice to postulate such controls on derivatives on the truncated conic sets where $\xi_3\sim 1.$ 
We first need a bit of notation.
\smallskip

For $n,k\in \NN,$ we shall denote by   $\NN^n_{\ge k}$  the set of all multi-indices $\al=(\al_1,\dots,\al_n)\in\NN^n$ of length $|\al|:= \al_1+\cdots+\al_n\ge k.$ 
\smallskip

\noi{\bf Admissible frames:}
Given any point $\xi_0\in \Gamma_{R_0},$  there is a unique radially symmetric (with respect to the $\xi_3$-axis) cone $\mathbf \Gamma(\xi_0)$ containing $\xi_0,$ namely the cone given by
$$
\mathbf \Gamma(\xi_0):=\{F(\xi)=F(\xi_0)\}
$$ 
away from the origin.
We then associate to $\xi_0$ the following {\it orthonormal frame at $\xi_0$}: 
\begin{equation}\label{frame}
	E_1=n(\xi_0),\quad E_2=t(\xi_0),\quad E_3=\xi_0/|\xi_0|,
\end{equation}
where  $n(\xi_0)=\trans\nabla F(\xi_0)/|\trans\nabla F(\xi_0)|$ is the outer unit normal vector to the cone $\mathbf \Gamma(\xi_0)$ at $\xi_0,$ and $t(\xi_0)$ the ``horizontal'' (i.e., lying in the plane $(\xi_1,\xi_2)$-plane) unit tangent vector to $\mathbf \Gamma(\xi_0)$ at $\xi_0$ which is oriented in such a way that the frame $E_1, E_2,E_3$ is  positively oriented. 
With a slight abuse of notation, we shall also call the corresponding orthogonal matrix  
$$
E=E_{\xi_0}=\left(E_1|E_2|E_3\right),
$$
whose columns are given by this frame, the {\it orthonormal frame at $\xi_0.$}

Note  that, by homogeneity, this frame is constant along the ray through $\xi_0,$ i.e., $E_{r\xi_0}=E_{\xi_0}$ for every $r>0.$

\smallskip

For instance, if $\xi_0=(1,0,1)$, then
$$
E_1=\frac1{\sqrt{2}}\trans(1,0,-1),\quad E_2=\trans(0,1,0),\quad E_3=\frac1{\sqrt{2}}\trans(1,0,1).
$$

It will sometimes also be useful to work with slightly more general frames: any orthogonal frame of the form
 $$
 \tilde E_1=s_1E_1,\ \tilde E_2=s_2E_2,\ \tilde E_3=s_3E_3, \quad\text{with } 1/10\le s_1,s_2,s_3\le 10,
 $$ 
 with associated matrix 
 $$
\tilde E=\tilde E_{\xi_0}=\left(\tilde E_1|\tilde E_2|\tilde E_3\right),
$$
 will be called an {\it admissible frame at $\xi_0.$}

\medskip
 \noindent  {\bf Families of constants:} By $\Sigma$ we shall denote the set of all families $\C=\{C_\alpha\}_{\alpha\in \NN^2_{\ge 2}}$ of  constants $C_\alpha\ge 0.$ 
Suppose  
$\C=\{C_\alpha\}_{\alpha\in \NN^2_{\ge 2}},\C'=\{C'_\alpha\}_{\alpha\in \NN^2_{\ge 2}}\in \Sigma,$ and $r\ge 0.$
We can then partially order $\Sigma$ by writing 
$\C\le\C',$ if $C_\al\le \C'_\al$ for all $\al,$  and define 
$$
\C+\C':=\{C_\alpha+C'_\al\}_{\alpha\in \NN^2_{\ge 2}}\quad \text {and} \quad r\C:=\{rC_\alpha\}_{\alpha\in \NN^2_{\ge 2}}.
$$
In this way,   $\Sigma$ is endowed with the structure of an ordered, additive  ``conic‘‘ semigroup.

\medskip
\noi{\bf The parameters $\ka_1,\ka_2$ and $\ga$:} Our families of phase functions  will be parametrized by means of parameters $\kappa_1,\kappa_2\ge 0$ and $\gamma> 0$ which  will always be assumed to satisfy 
\begin{equation}\label{param}
\kappa_2\geq \frac12\quad \text{and}\quad |\kappa_1-\kappa_2|\leq \frac 12.
\end{equation}

\medskip
\noi{\bf Corresponding classes of phase functions:}  Let  $\C=\{C_\alpha\}_{\alpha\in \NN^2_{\ge 2}}\in \Sigma$  and parameters $\ka_1,\ga_2$ and $\ga$ be given as above. In the sequel,  $\mathbf\Ga_R^\circ$ and $\Ga_R^\circ$ shall  often stand for either  $\mathbf \Ga_R$  and   $\Ga_R,$ or for $\mathbf \Ga^\pm_R$ and $ \Ga^\pm_R.$ 
\smallskip

We then denote by
 $\F^{\kappa_1,\kappa_2,\gamma,\C}(\mathbf \Ga^\circ_R)$
  the subfamily of all phase functions $\phi$ in $\F_{hom}(\mathbf  \Gamma^\circ_R)$ satisfying  the following bounds on their derivatives:
\begin{equation}\label{deriv}
	 |\partial_{n(\xi_0)}^{\alpha_1}\partial_{t(\xi_0)}^{\alpha_2} \phi(\xi_0)|\leq C_{\alpha} R^{\alpha_1\kappa_1+\alpha_2\kappa_2-\gamma}
	 \quad \text{for all}\  \xi_0\in\Gamma_R^\circ, \ \alpha_1+\alpha_2\geq 2.
\end{equation}

 Observe that if $\phi_1\in  \F^{\kappa_1,\kappa_2,\gamma,\C}(\mathbf \Ga^\circ_R),$ $\phi_2\in  \F^{\kappa_1,\kappa_2,\gamma,\C'}(\mathbf \Ga^\circ_R)$
 and $a_1,a_2\in \RR,$ then 
 \begin{equation}\label{sumphi}
  a_1\phi_1+a_2\phi_2\in  \F^{\kappa_1,\kappa_2,\gamma,|a_1|\C+|a_2|\C'}(\mathbf \Ga^\circ_R).
\end{equation}

 \medskip

By $\F^{\kappa_1,\kappa_2,\gamma,\C}=\F^{\kappa_1,\kappa_2,\gamma,\C}(\mathbf \Ga_{R_0}\setminus\mathbf \Gamma)$ we denote the family   of all phase functions $\phi\in\F_{hom}$ such that $\phi\vert_{\mathbf \Ga^\pm_R}\in\F^{\kappa_1,\kappa_2,\gamma,\C}(\mathbf \Ga^\pm_R)$ for all $R\ge R_0,$ and both choices of sign $+,-.$
  
  Finally, we put 
  $$
  \F^{\kappa_1,\kappa_2,\gamma}:=\bigcup\limits_{\C\in \Sigma}\F^{\kappa_1,\kappa_2,\gamma,\C}.
  $$ 
  By \eqref{sumphi},   $\F^{\kappa_1,\kappa_2,\gamma}$ is a real vector space.
  
  \smallskip
  Note that trivially 
\begin{equation}\label{inclusion}
 \F^{\kappa_1,\kappa_2,\gamma^\pm,\C}(\mathbf \Ga^\circ_R)\subset \F^{\ka'_1,\ka'_2,\gamma',\C'}(\mathbf \Ga^\circ_R),\quad  \text{if}\  \ka_1\le\ka'_1, \ka_2\le\ka'_2,
  \ga\ge \ga' \  \text{and}\  \C\le\C',
\end{equation}
and that analogous inclusions hold for  the  families  of phase functions $\F^{\kappa_1,\kappa_2,\gamma,\C}$ and $\F^{\kappa_1,\kappa_2,\gamma}.$

\medskip
\noi{\bf Classical phases:} By a {\it classical phase function} we  shall mean any  real-valued  and 1-homogeneous   function $\phi$ defined on an open conic neighborhood  of $\overline{{\bf\Gamma}_{R_0}}\setminus\{0\}$ which is smooth away from the origin. The space  of all restrictions of such  phases to  ${\bf\Gamma}_{R_0}$ will be denoted by 
$\F_{class}.$ 
\smallskip

By $\F_{class}^{\ka_1,\ka_2,\ga,\C}=\F_{class}^{\kappa_1,\kappa_2,\gamma,\C}(\mathbf \Ga_{R_0})$ we denote the family   of all phases $\phi\in\F_{class}$ such that $\phi\vert_{\mathbf \Ga_R}\in\F^{\kappa_1,\kappa_2,\gamma,\C}(\mathbf \Ga_R)$ for all $R\ge R_0,$ and correspondingly we put
  $$
  \F_{class}^{\kappa_1,\kappa_2,\gamma}:=\bigcup\limits_{\C\in \Sigma}\F_{class}^{\kappa_1,\kappa_2,\gamma,\C}.
  $$ 
 $\F_{class}^{\kappa_1,\kappa_2,\gamma}$ is then a real subspace of $\F_{class}.$ 
\medskip

We will explore such classes of phase functions and give examples in Section \ref{sec:stable}.
\medskip

\begin{remark}\label{unibo}
If $\ka_1\ge 1,$ then one can easily construct non-trivial ``global'' phase functions $\phi\in \F^{\kappa_1,\kappa_2,\gamma}$ 
(cf. Remark \ref{globalphi}). By definition, for such a phase $\phi$ there  exists a family of constants  $\C\in\Sigma$  so that  
$\phi\vert_{\mathbf \Ga^\pm_R}\in\F^{\kappa_1,\kappa_2,\gamma,\C}(\mathbf \Ga^\pm_R)$ for  all $R\ge R_0$ and both choices of sign $+,-.$

If $\ka_1<1,$ then for certain choices of parameters $\ka_1,\ka_2,\ga,$ classical phases still  provide non-trivial examples  of phases in $\F^{\kappa_1,\kappa_2,\gamma}$ (cf. Example \ref{Fclass}), but it seems unclear whether  $\F^{\kappa_1,\kappa_2,\gamma}\ne \{0\}$ in general.

\smallskip
On the other hand, one can always find a family $\{\phi_R\}_{R\ge R_0}$ of non-trivial phases 
$\phi_R\in\F^{\kappa_1,\kappa_2,\gamma,\C}(\mathbf \Ga^\pm_R)$ for some suitable family of constants $\C$ (cf. Remark \ref {existphi}).
\smallskip

Our ultimate goal will therefore be to prove ``uniform'' $L^p$-bounds  for all FIO-cone multipliers $T^\la_R$ associated to all phases $\phi_R\in F^{\kappa_1,\kappa_2,\gamma,\C}(\mathbf \Ga^\circ_R),$ of the form
\begin{equation}\label{unibd3}
\|T^\la_R\|_{L^p\to L^p}\le C(p,\kappa_1,\kappa_2,\gamma,\C)\,M_p(\la,R) \qquad \text{for all} \ R\ge R_0,
\end{equation}
where the constant $C(p,\kappa_1,\kappa_2,\gamma,\C)$ may depend on $p,$  on  the constants  $C_\al\in \C$ (actually only a finite number of them) and the parameters $\kappa_1,\kappa_2,\gamma,$ but neither on the phase $\phi_R,$ nor on $R.$ 

For any phase $\phi\in \F^{\kappa_1,\kappa_2,\gamma},$ this will in particular lead to ``uniform'' estimates of the form \eqref{unibd3} for the family of phases $\phi_R, R\ge R_0, $ defined by 
$\phi_R:=\phi\vert_{\mathbf \Ga^\pm_R}.$ 
\end{remark}

\begin{remark}\label{subcone}
We have been and shall  usually assume that our phase functions $\phi$ (respectively $\phi_R$) are defined on the whole of $\mathbf \Ga_{R_0}\setminus\mathbf \Gamma$ (respectively on $\mathbf \Ga^\circ_R$). However, the entire theory that we  develop  applies as well when  the amplitude and the phase $\phi$ are defined  in an angular sector of these sets only (cf. Example \ref{mainex}).
 \end{remark}

\begin{remark}\label{GaRGapm}
By means of linear scalings of the form  $(\xi_1,\xi_2,\xi_3)\mapsto  (r\xi_1,r\xi_2,\xi_3),$ with suitable scaling factors $r=1+\mathcal O(R^{-1}),$ one can obviously  essentially switch between  phases in $\F^{\kappa_1,\kappa_2,\gamma,\C}(\mathbf \Ga_R),$ and phases in $\F^{\kappa_1,\kappa_2,\gamma,\C}(\mathbf \Ga^\pm_R).$ Therefore,  from here on we shall restrict many considerations to phase functions  defined on  the regions $\mathbf \Ga^\pm_R $;  the analogous results will then hold for corresponding phase functions on $\mathbf \Ga_R$ as well.
\end{remark}

\section{Stability results and examples}
\label{sec:stable}
\color{black}

\subsection{Stability results for the defining estimates of our classes of phases}\label{stabilityres}

In the subsequent lemmata,  given a family $\C\in \Sigma,$  various new, but related  families $\C'$ of $R$- independent constants will come up, and in view of   Remark \ref{unibo},   we shall put some emphasis on understanding how  the new constants in these estimates will be controlled by the constants from our initial family $\C.$ 

\smallskip

 We will see that estimate \eqref{deriv} in the definition of the class $\F^{\kappa_1,\kappa_2,\gamma,\C}(\mathbf \Ga^\pm_R)$ is quite flexible with respect to small changes of the directions for the derivatives.
 \smallskip

To this end, let us compare the orthonormal frames $E=E_\xi$ and $\check E=E_{\check \xi}$ at two points  $\xi,\check\xi$ which are separated by a certain angle of $\alpha.$ More precisely, we define the  {\it sector of angular width (or aperture) }$\alpha$ around $\xi\in\Gamma_R$ by

\begin{equation}\label{theta}
	\theta_{\alpha}(\xi):=\{\check\xi\in\Gamma_R:|\angle((\check\xi_1,\check\xi_2),(\xi_1,\xi_2))|\leq c_0\alpha \},
\end{equation}
where the constant   $0<c_0\ll1$ will be assumed to be sufficiently small, but fixed. 

\begin{lem}\label{EEcheck}
Let $\alpha\in[\Rwurz,1]$, $\xi\in\Gamma_R$, and $\check\xi\in\theta_{\alpha}(\xi)$, and let $E=E_\xi$ and $\check E=E_{\check \xi}$ be the  orthonormal frames at $\xi$ and $\check \xi,$ respectively. Then we have
$$
|\langle E_i, \check E_j\rangle|\lesssim \alpha^{|i-j|}
$$
for all $i,j=1,2,3$. Furthermore, we have
\begin{equation}\label{E2}
	|1-\langle E_2,\check E_2\rangle| \lesssim \alpha^2.
\end{equation}
\end{lem}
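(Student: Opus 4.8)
The plan is to compute the frames $E_\xi$ and $\check E=E_{\check\xi}$ explicitly, exploit the rotational symmetry of the cone about the $\xi_3$-axis, and then reduce the estimate to elementary trigonometric bounds. First I would use the rotational invariance: since the cone $\mathbf\Ga(\xi)$ is radially symmetric with respect to the $\xi_3$-axis and the frame $E_\xi$ is defined intrinsically from $F$, rotating $\xi$ about the $\xi_3$-axis rotates the frame accordingly. Hence, writing $\xi=|\xi|(\cos\beta,\sin\beta,\xi_3/|\xi|)\cdot(\text{suitable normalization})$, I may assume after such a rotation that $\xi$ lies in the plane $\xi_2=0$, so that the frame $E_\xi$ has the clean form analogous to the displayed example at $(1,0,1)$: namely $E_1=(a,0,b)$, $E_2=(0,1,0)$, $E_3=(c,0,d)$ with $a^2+b^2=c^2+d^2=1$, $ac+bd=0$. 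The point $\check\xi\in\theta_\alpha(\xi)$ then differs from $\xi$ by a rotation through an angle $\beta$ with $|\beta|\lesssim\alpha$ about the $\xi_3$-axis, composed with a small radial/normal shift within $\Gamma_R$ (which changes $F$ by at most $O(1/R)$, hence changes the ``tilt'' of the frame by at most $O(R^{-1})\le O(\alpha^2)$ since $\alpha\ge R^{-1/2}$).

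The key computation is then the following. Modulo the $O(R^{-1})$ perturbation, $\check E = \mathrm{Rot}_\beta\, E_\xi$, where $\mathrm{Rot}_\beta$ is the rotation by $\beta$ in the $(\xi_1,\xi_2)$-plane fixing the $\xi_3$-axis. Since $E_2=(0,1,0)$ is horizontal and $E_1,E_3$ lie in the vertical plane $\xi_2=0$, writing $E_1=(a,0,b)$ we get $\check E_1=(a\cos\beta, a\sin\beta, b)$, $\check E_2=(-\sin\beta,\cos\beta,0)$, and similarly $\check E_3=(c\cos\beta, c\sin\beta, d)$. Now I compute all nine inner products $\langle E_i,\check E_j\rangle$. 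The diagonal ones are $1-O(\beta^2)$ (using $a^2+b^2=1$, etc.), which in particular gives $\langle E_2,\check E_2\rangle=\cos\beta=1-O(\beta^2)$, yielding \eqref{E2}. The entries $\langle E_1,\check E_2\rangle$ and $\langle E_2,\check E_1\rangle$ equal $\pm a\sin\beta = O(\beta)=O(\alpha)$, and likewise $\langle E_2,\check E_3\rangle,\langle E_3,\check E_2\rangle=O(\alpha)$, matching $\alpha^{|i-j|}=\alpha^1$. The only nontrivial off-diagonal pair is $(i,j)=(1,3)$ and $(3,1)$: here $\langle E_1,\check E_3\rangle = ac\cos\beta + bd$. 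Using orthogonality $ac+bd=0$ at the single point $\xi$, this becomes $ac(\cos\beta-1)=O(\beta^2)=O(\alpha^2)$, which is exactly the bound $\alpha^{|1-3|}=\alpha^2$ needed. Finally I would reinstate the $O(R^{-1})$ perturbation and check it does not spoil any of these estimates: it may change $\langle E_1,\check E_3\rangle$ by $O(R^{-1})=O(\alpha^2)$, which is harmless, and it changes the other entries by strictly smaller amounts.

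I expect the main obstacle (really the only subtle point) to be the $(1,3)$-entry: a crude estimate only gives $\langle E_1,\check E_3\rangle = O(\alpha)$, and squeezing out the extra factor of $\alpha$ requires genuinely using that $E_1\perp E_3$ at the common point, i.e.\ that the \emph{leading} term in $\beta$ of $ac\cos\beta+bd$ cancels because $ac+bd=0$. One must be careful that the analogous cancellation survives the passage from the idealized ``pure rotation'' picture to the true $\check\xi$: since the radial and normal displacement of $\check\xi$ relative to $\xi$ is $O(R^{-1})\le O(\alpha^2)$, the coefficients $(\check a,\check b,\check c,\check d)$ of $\check E$ differ from the rotated $(a,b,c,d)$ by $O(R^{-1})$, so $\langle E_1,\check E_3\rangle = ac(\cos\beta-1) + O(R^{-1}) = O(\alpha^2)$. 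Assembling these cases gives $|\langle E_i,\check E_j\rangle|\lesssim\alpha^{|i-j|}$ for all $i,j$, and the sharper bound \eqref{E2} as a byproduct, completing the proof. A slightly more conceptual alternative, which I would mention, is to observe that the map $\xi\mapsto E_\xi$ is smooth on $\Gamma_{R_0}$ with derivatives bounded independently of $R$ in the region $\xi_3\sim1$, so $|E_i(\check\xi)-E_i(\xi)|\lesssim\alpha$ gives the $|i-j|\le1$ cases immediately, and only the $|i-j|=2$ case and \eqref{E2} need the orthogonality-driven second-order cancellation described above.
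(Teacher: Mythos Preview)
Your proposal is correct and follows essentially the same approach as the paper: reduce by rotational symmetry to $\xi$ in the plane $\xi_2=0$, write the frames explicitly, and compute the nine inner products. The only cosmetic difference is organizational---the paper writes out both frames in coordinates and reads off each entry of $\trans E\check E$ directly (so the key $(1,3)$-cancellation appears as $b-a=(1+O(\alpha^2))-(1+O(R^{-1}))=O(\alpha^2)$, i.e.\ both points are near the cone), whereas you factor $\check E$ as a pure rotation of $E$ plus an $O(R^{-1})$ correction and see the same cancellation via the orthogonality $ac+bd=0$.
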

\begin{proof}
By homogeneity, we may assume without loss of generality that $\xi_3=\check \xi_3.$ Moreover, after rotation around the axis of $\bGa,$ we may assume that  $E_3=\xi/|\xi|$ and $E_1=n(\xi)$ lie  in the plane $\xi_2=0,$ say. Then, since $\xi\in\Gamma_R,$
$$
E_3=s\trans(a,0,1),\qquad E_1=s\trans(1,0,-a),
$$
where $a=1+\sigma_1$, with $|\sigma_1|\lesssim R^{-1}\leq\alpha^2,$ and $s=(1+a^2)^{-1/2}\sim 1.$ Clearly, by orthogonality, 
 $E_2=\trans(0,1,0)$, and thus
$$
\trans E= \left(\begin{array}{ccc} s & 0 & -sa \\ 0 & 1 & 0 \\ sa & 0 & s \end{array}\right).
$$
Since $\check\xi\in\theta_{\alpha}(\xi)$, we have 
 $$
 \check E_3 = \check\xi/| \check\xi|= s_3\trans(b,c,1),
 $$
with $b^2+c^2=1+\sigma_2$, $|\sigma_2|\lesssim R^{-1}\leq\alpha^2$, $|c|\lesssim c_0 \alpha,$ and $s_3=(b^2+c^2+1)^{-1/2}\sim 1.$ Note that by choosing $c_0$ sufficiently small, we  may also assume that  $b=1+\sigma_3$ for some $|\sigma_3|\lesssim \alpha^{2}.$ Then we compute that 
$$
\check E_1 = n(\check\xi) = s_1 \,\trans(b,c,-(b^2+c^2)),\qquad \check E_2 = t(\check\xi)=s_2\trans(-c,b,0),
$$
with $s_1\sim 1$, $s_2=(b^2+c^2)^{-1/2}=1+\sigma_4$, $|\sigma_4|\lesssim \alpha^2$.
To simplify the computation, observe that $\check E_j=s_j\tilde E_j,j=1,2,3,$ where 
$$
\tilde E_1=\trans(b,c,-(b^2+c^2)), \tilde E_2=\trans(-c,b,0), \tilde E_3=\trans(b,c,1)
$$
is an admissible frame at $\check \xi.$

Putting $\tilde E=\left(\tilde E_1|\tilde E_2|\tilde E_3\right),$ we observe that 
\begin{equation}\label{angles}
|\langle E_i, \check E_j\rangle|= |(\trans E \check E)_{i,j}|\sim |(\trans E\tilde E)_{i,j}|.
\end{equation}
Moreover, one computes that 
$$
\trans E\tilde E = s\left(\begin{array}{ccc} b+a(b^2+c^2) & -c & b-a \\ c/s & b/s & c/s \\ ab-(b^2+c^2) & -ac & 1+ab \end{array}\right).
$$
Note that the entries $c/s,-c,-ac$ are all of order at most $\alpha$, since $c$ is. Furthermore, 
$$
b-a = 1+\sigma_3-1-\sigma_1 = \sigma_3-\sigma_1 =\mathcal{O}(\alpha^{2})
$$
and 
$$
ab-(b^2+c^2 )= 1+\sigma_1+\sigma_3+\sigma_1\sigma_3-1-\sigma_2 = \mathcal{O}(\alpha^{2}).
$$
In  combination with \eqref{angles}, these estimates  imply the estimates claimed in the lemma.\\
Furthermore,
$$
 \langle E_2,\check E_2\rangle = s_2 \langle E_2,\tilde E_2\rangle =s_2b =(1+\sigma_4)(1+\sigma_3) =1+\mathcal{O}(\alpha^{2}).
$$
\end{proof}

In the definition of $\F^{\kappa_1,\kappa_2,\gamma,\C}(\mathbf \Ga^\pm_R),$  the condition \eqref{deriv}
\begin{equation*}
|\partial_{n(\xi_0)}^{\alpha_1}\partial_{t(\xi_0)}^{\alpha_2} \phi(\xi_0)|\leq C_\alpha R^{\alpha_1\kappa_1+\alpha_2\kappa_2-\gamma}
\end{equation*}
looks as if it would lack control of the derivatives with respects to $E_3.$  This is, however, automatic for homogeneous phase functions. In the subsequent lemmata, we again always assume that $R\ge R_0.$ The following notation will be useful:
 if $\C=\{C_\alpha\}_{\alpha\in \NN^2_{\ge 2}}\in \Sigma,$  then we put 
 $$
 m_\C(l):=\max\{ C_\al: \al\in \NN^2_{\ge 2}, |\al|\le l \}, \quad l\in \NN_{\ge2}.
$$

\begin{lem}\label{addkappa3}
Assume that $\kappa_3\geq 0.$ Then the following hold true:
\begin{itemize}
\item [(i)] For every family $\C=\{C_\alpha\}_{\alpha\in \NN^2_{\ge 2}}\in \Sigma$  there exists a family $C'_\beta\ge 0, \beta\in \NN^3_{\ge 2},$ satisfying 
\begin{equation}\label{cc'}
C'_{\beta}\le |\beta|!\, m_\C(\beta_1+\beta_2), 
\end{equation}
so that for every $\phi\in\F^{\kappa_1,\kappa_2,\gamma,\C}(\mathbf \Gamma^\pm_R)$  we have 
\begin{equation}\label{deriv2}
	|\partial_{n(\xi_0)}^{\beta_1}\partial_{t(\xi_0)}^{\beta_2} \partial_{\xi_0}^{\beta_3}\phi(\xi_0)|\leq C'_{\beta} R^{\beta_1\kappa_1+\beta_2\kappa_2+\beta_3\kappa_3-\gamma},\qquad \text{for every} \ \xi_0\in \Gamma^\pm_R, \ \beta\in \NN^3_{\ge 2}.
\end{equation}

\item  [(ii)]   Conversely, suppose that  $\phi$ is in  $\F_{hom}(\mathbf \Gamma^\pm_R),$  and that $\phi$ satisfies the estimates \eqref{deriv2}. Then $\phi\in\F^{\kappa_1,\kappa_2,\gamma,\C}(\mathbf \Gamma^\pm_R),$   with $\C:=\{C'_{(\alpha_1,\alpha_2,0)}\}_{\alpha\in \NN^2_{\ge 2}}.$
\end{itemize}
\end{lem}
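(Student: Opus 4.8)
I would reduce both parts to a single observation: once $\xi_0$ is fixed, the three frame vectors $n(\xi_0),t(\xi_0),\xi_0$ are \emph{constant}, so the directional derivatives $\partial_{n(\xi_0)},\partial_{t(\xi_0)},\partial_{\xi_0}$ pairwise commute (they are mixed partial derivatives of $\phi$, which is smooth at each $\xi_0\in\Gamma^\pm_R$, such points staying away from the light cone $\mathbf\Gamma$), and radial derivatives of a homogeneous function carry no information beyond the function itself. Part (ii) is then immediate: it is the case $\beta_3=0$ of \eqref{deriv2}, for choosing $\beta=(\alpha_1,\alpha_2,0)$ with $\alpha_1+\alpha_2\ge 2$ makes $R^{\beta_1\kappa_1+\beta_2\kappa_2+\beta_3\kappa_3-\gamma}$ equal to $R^{\alpha_1\kappa_1+\alpha_2\kappa_2-\gamma}$, so \eqref{deriv2} turns into the defining estimate \eqref{deriv} of $\F^{\kappa_1,\kappa_2,\gamma,\C}(\mathbf\Gamma^\pm_R)$ with $\C=\{C'_{(\alpha_1,\alpha_2,0)}\}_{\alpha\in\NN^2_{\ge 2}}$.

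\emph{The homogeneity identity for (i).} Fix $\xi_0\in\Gamma^\pm_R$ and $\beta\in\NN^3_{\ge 2}$, and put $\psi:=\partial_{n(\xi_0)}^{\beta_1}\partial_{t(\xi_0)}^{\beta_2}\phi$; by the commutation just noted, $\partial_{n(\xi_0)}^{\beta_1}\partial_{t(\xi_0)}^{\beta_2}\partial_{\xi_0}^{\beta_3}\phi=\partial_{\xi_0}^{\beta_3}\psi$. Differentiating the homogeneity relation $\phi(r\xi)=r\phi(\xi)$ shows that a directional derivative along a fixed vector lowers the degree of homogeneity by one, so $\psi$ is homogeneous of degree $m:=1-\beta_1-\beta_2$. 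Restricting $\psi$ to the ray through $\xi_0$ gives $\psi(t\xi_0)=t^m\psi(\xi_0)$, and since $\frac{d}{dt}\psi(t\xi_0)=(\partial_{\xi_0}\psi)(t\xi_0)$ (differentiation in the direction of the vector $\xi_0$), iterating and evaluating at $t=1$ gives, for every $\xi_0\in\Gamma^\pm_R$,
\[
\partial_{n(\xi_0)}^{\beta_1}\partial_{t(\xi_0)}^{\beta_2}\partial_{\xi_0}^{\beta_3}\phi(\xi_0)
= c(\beta)\,\partial_{n(\xi_0)}^{\beta_1}\partial_{t(\xi_0)}^{\beta_2}\phi(\xi_0),
\qquad
c(\beta):=\prod_{i=0}^{\beta_3-1}\bigl(1-\beta_1-\beta_2-i\bigr).
\]

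\emph{Conclusion of (i).} If $\beta_1+\beta_2\ge 2$, then \eqref{deriv} bounds the last factor by $C_{(\beta_1,\beta_2)}R^{\beta_1\kappa_1+\beta_2\kappa_2-\gamma}$; since $\kappa_3\ge 0$ and $R\ge R_0>1$ I may insert the factor $R^{\beta_3\kappa_3}$, and since $|c(\beta)|=(|\beta|-2)!/(\beta_1+\beta_2-2)!\le|\beta|!$ and $C_{(\beta_1,\beta_2)}\le m_\C(\beta_1+\beta_2)$, the choice $C'_\beta:=|c(\beta)|\,C_{(\beta_1,\beta_2)}$ yields \eqref{deriv2} and obeys \eqref{cc'}. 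If instead $\beta_1+\beta_2\le 1$, then $\beta_3\ge 1$ because $|\beta|\ge 2$, so the index $i_\ast:=1-\beta_1-\beta_2$ lies in $\{0,\dots,\beta_3-1\}$ and the product $c(\beta)$ contains the vanishing factor $1-\beta_1-\beta_2-i_\ast=0$; hence both sides of the identity vanish identically and $C'_\beta:=0$ works, consistently with \eqref{cc'} under the convention $m_\C(l):=0$ for $l<2$.

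\emph{Expected difficulty.} I do not anticipate a genuine obstacle: the statement is a soft consequence of homogeneity. The only points requiring attention are the bookkeeping that makes the combinatorial factor $c(\beta)$ fit the bound $|\beta|!\,m_\C(\beta_1+\beta_2)$ prescribed in \eqref{cc'}, and the degenerate low-order cases $\beta_1+\beta_2\le 1$, where \eqref{deriv} does not apply directly but the mixed derivative in question vanishes outright.
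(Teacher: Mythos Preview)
Your proof is correct and follows essentially the same approach as the paper: both use Euler's homogeneity relation to eliminate the radial derivatives $\partial_{\xi_0}$, with the only cosmetic difference that the paper proceeds by induction on $\beta_3$ (peeling off one radial derivative at a time via $\partial_{\tilde E}^\beta\phi(\xi_0)=(1-|\beta'|)\,\partial_{\tilde E}^{\beta'}\phi(\xi_0)$), whereas you restrict to the ray through $\xi_0$ and read off the closed-form falling-factorial constant $c(\beta)$ directly. The degenerate case $\beta_1+\beta_2\le 1$ and the bookkeeping for the bound \eqref{cc'} are handled in the same spirit in both arguments.
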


\begin{proof}
(ii) is trivial - just choose $\beta_3=0$ in \eqref{deriv2}. 

\smallskip

To prove (i),  it will be useful to work with the admissible frame at $\xi_0$ given by $\tilde E_1=n(\xi_0), \tilde E_2=t(\xi_0), \tilde E_3=\xi_0.$  For $\beta=(\beta_1,\beta_2,\beta_3)\in \NN^3,$  we shall  use the short-hand notation  $\partial_{\tilde E}^\beta=\partial_{\tilde E_1}^{\beta_1}\partial_{\tilde E_2}^{\beta_2}\partial_{\tilde E_3}^{\beta_3}.$ Moreover, by $e_1,e_2,e_3$ we shall  denote the canonical basis of $\RR^3.$
\smallskip

We shall  prove \eqref{deriv2} and \eqref{cc'} by induction on the size of $\beta_3$. So, let $\beta\in\NN^3_{\ge 2}.$ In the base case $\beta_3=0$, \eqref{deriv2} is immediate from \eqref{deriv}.
\smallskip

Assume next that  $\beta_3\geq 1.$ Let us then write $\beta=\beta'+e_3,$  and put $m:=|\beta'|=\beta_1+\beta_2+\beta_3-1.$
 Since $\phi$ is homogeneous of degree 1, $\psi(\xi):=\partial_E^{\beta'}\phi(\xi)$ is homogeneous of degree $1-m$. By Euler's homogeneity relation, we thus have
$$
 \nabla\psi(\xi)\cdot\xi = (1-m)\psi(\xi).
 $$
Note that $\tilde E_3=\xi_0,$ and thus
 \begin{equation}\label{16sep1345}
	 \partial_{\tilde E}^\beta\phi(\xi_0) = \partial_{\tilde E_3}\psi(\xi_0) = \nabla\psi(\xi_0)\cdot \tilde E_3 = (1-m)\psi(\xi_0) = (1-m)\partial_{\tilde E}^{\beta'}\phi(\xi_0) .
 \end{equation}
 
In the case where $|\beta'|=m\geq 2$, we have $\beta_1'=\beta_1$, $\beta_2'=\beta_2,$ and by   the induction hypothesis we can assume that 
$$
|\partial_{\tilde E}^{\beta'}\phi(\xi_0) |\leq C'_{\beta'} R^{\beta_1\kappa_1+\beta_2\kappa_2+\beta'_3\kappa_3-\gamma}.
$$
Since  $\kappa_3\geq 0$, we conclude by \eqref{16sep1345} that 
$$
 |\partial_{\tilde E}^\beta\phi(\xi_0)|\le  (m-1)C'_{\beta'} R^{\beta_1\kappa_1+\beta_2\kappa_2+\beta_3\kappa_3-\gamma}.
$$
In particular, we can choose $C'_\beta:=|\beta|C'_{\beta'}.$ 
 \smallskip

In the case $m\le 1$, we cannot use the inductive hypothesis anymore, but since $m=|\beta'|=|\beta|-1\geq 1$, we actually have $m=1$ and thus \eqref{16sep1345} reads $ \partial_{\tilde E}^\beta\phi(\xi_0)=0,$ so that we may trivially again choose $C'_\beta:=|\beta|C'_{\beta'}.$ 

\smallskip
The induction hypothesis also implies that 
$$
C'_\beta:=|\beta|C'_{\beta'}\le |\beta||\beta'|!\, m_\C(\beta_1+ \beta_2)=|\beta|!\,m_\C(\beta_1+ \beta_2),
$$ 
which concludes the proof.
\end{proof}

In the previous lemma,   we could have just taken $\ka_3=0.$ However,  a similar statement is true even if we take derivatives in slightly different directions, given by  the orthonormal frame associated to another point in a sector of angular width $R^{-1/2}$ around $\xi_0,$  but then we need to increase $\kappa_3:$

\begin{lem}\label{addkappa3n}  
Assume that $\kappa_3\geq \kappa_2-\frac12.$ 
Then the following hold true:
\begin{itemize}
\item [(i)] There exists a  constant $A\ge1$ such that if we associate to  any family $\C=\{C_\alpha\}_{\alpha\in \NN^2_{\ge 2}}\in \Sigma$ the family of constants 
\begin{equation}\label{cc'p}
C'_{\beta}:= A^{\beta_3} |\beta|!\, m_\C(|\beta|), \qquad \beta\in \NN^3_{\ge 2},
 \end{equation}
then the following holds true: if $\xi_0\in\Gamma^\pm_R$ and $\check\xi\in\theta_{\Rwurz}(\xi_0),$ and if $\phi\in\F_{hom}(\mathbf \Gamma^\pm_R)$ satisfies the estimates
\begin{equation}\label{deriv3}
	|\partial_{n(\check\xi)}^{\al_1}\partial_{t(\check\xi)}^{\al_2} \phi(\xi_0)|\leq C_{\alpha} R^{\alpha_1\kappa_1+\alpha_2\kappa_2-\gamma}
	\qquad \text{for every}\ \al\in\NN^2_{\ge 2},
\end{equation}
then $\phi$ satisfies the following estimates as well:
\begin{equation}\label{deriv4}
	|\partial_{n(\check\xi)}^{\beta_1}\partial_{t(\check\xi)}^{\beta_2} \partial_{\check\xi}^{\beta_3}\phi(\xi_0)|\leq C'_{\beta} R^{\beta_1\kappa_1+\beta_2\kappa_2+\beta_3\kappa_3-\gamma}\qquad \text{for every}\ \beta\in\NN^3_{\ge 2}.
\end{equation}

\item [(ii)] Conversely, if $\phi$ satisfies the estimates of the form \eqref{deriv4}, with constants $\C'_\beta\ge 0,$ then $\phi$ satisfies the estimates  \eqref{deriv3},  with $\C:=\{C'_{(\alpha_1,\alpha_2,0)}\}_{\alpha\in \NN^2_{\ge 2}}.$
\end{itemize}
\end{lem}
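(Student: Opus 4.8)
The plan is to follow the proof of Lemma~\ref{addkappa3} almost verbatim, peeling off one radial derivative at a time by Euler's homogeneity relation; the one new feature is that the radial direction $E_3(\check\xi)=\check\xi/|\check\xi|$ at $\check\xi$ differs from the radial direction $E_3(\xi_0)=\xi_0/|\xi_0|$ at the base point $\xi_0$ by an angle $\mathcal O(\alpha)$, $\alpha:=R^{-1/2}$, which is exactly the situation quantified by Lemma~\ref{EEcheck}. Part (ii) is again immediate on taking $\beta_3=0$ in \eqref{deriv4}. For part (i), abbreviate $\check E_i:=E_i(\check\xi)$, $E_i:=E_i(\xi_0)$, let $e_1,e_2,e_3$ denote the canonical basis of $\RR^3$, and write $\partial_{\check E}^\beta:=\partial_{\check E_1}^{\beta_1}\partial_{\check E_2}^{\beta_2}\partial_{\check E_3}^{\beta_3}$ (the $\check E_i$ being fixed vectors, these derivatives commute). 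Since $\partial_{n(\check\xi)}=\partial_{\check E_1}$, $\partial_{t(\check\xi)}=\partial_{\check E_2}$, $\partial_{\check\xi}=|\check\xi|\,\partial_{\check E_3}$ and $|\check\xi|\sim 1$, it suffices to prove \eqref{deriv4} with $\partial_{\check\xi}^{\beta_3}$ replaced by $\partial_{\check E_3}^{\beta_3}$, the harmless factors $|\check\xi|^{\beta_3}\sim 1$ being absorbed into $A^{\beta_3}$.

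We induct on $\beta_3$. For $\beta_3=0$ the claim is exactly hypothesis \eqref{deriv3} for the multi-index $(\beta_1,\beta_2)$ (which has length $\ge 2$), and since $C_{(\beta_1,\beta_2)}\le m_\C(|\beta|)\le|\beta|!\,m_\C(|\beta|)$ it holds with the constant $C'_{(\beta_1,\beta_2,0)}$ of \eqref{cc'p}. Now let $\beta_3\ge 1$, put $\beta':=\beta-e_3$, $m:=|\beta'|=|\beta|-1$, and let $\psi:=\partial_{\check E}^{\beta'}\phi$, a function homogeneous of degree $1-m$. Then $\partial_{\check E}^\beta\phi(\xi_0)=\nabla\psi(\xi_0)\cdot\check E_3$; writing $\check E_3=E_3+(\check E_3-E_3)$, decomposing $\check E_3-E_3$ in the orthonormal frame $(\check E_1,\check E_2,\check E_3)$ (its $\check E_i$-component being $-\langle E_3,\check E_i\rangle$ for $i=1,2$ and $1-\langle E_3,\check E_3\rangle$ for $i=3$), and using Euler's identity $\nabla\psi(\xi_0)\cdot E_3=|\xi_0|^{-1}\nabla\psi(\xi_0)\cdot\xi_0=(1-m)|\xi_0|^{-1}\psi(\xi_0)$, one arrives at
\begin{equation*}
\langle E_3,\check E_3\rangle\,\partial_{\check E}^\beta\phi(\xi_0)=\frac{1-m}{|\xi_0|}\,\partial_{\check E}^{\beta'}\phi(\xi_0)-\langle E_3,\check E_1\rangle\,\partial_{\check E}^{\beta'+e_1}\phi(\xi_0)-\langle E_3,\check E_2\rangle\,\partial_{\check E}^{\beta'+e_2}\phi(\xi_0).
\end{equation*}
By Lemma~\ref{EEcheck}, $|\langle E_3,\check E_1\rangle|\lesssim\alpha^2=R^{-1}$ and $|\langle E_3,\check E_2\rangle|\lesssim\alpha=R^{-1/2}$, while $\langle E_3,\check E_3\rangle\sim1$ (it equals $1-\mathcal O(\alpha^2)$, bounded away from $0$ for $R\ge R_0$, as in the proof of Lemma~\ref{EEcheck}), $|\xi_0|\sim1$ and $|1-m|\le|\beta|$; hence
\begin{equation*}
|\partial_{\check E}^\beta\phi(\xi_0)|\lesssim|\beta|\,|\partial_{\check E}^{\beta'}\phi(\xi_0)|+R^{-1}\,|\partial_{\check E}^{\beta'+e_1}\phi(\xi_0)|+R^{-1/2}\,|\partial_{\check E}^{\beta'+e_2}\phi(\xi_0)|.
\end{equation*}
Each multi-index on the right has third component $\beta_3-1<\beta_3$, so the inductive hypothesis applies to $\beta'+e_1,\beta'+e_2$ (both of length $|\beta|\ge2$) and, when $|\beta'|\ge2$, to $\beta'$ as well; if $|\beta'|=1$ then $m=1$ and the first term simply vanishes.

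It remains to verify the exponents of $R$ and to keep track of the constants. The first term contributes $|\beta|\,C'_{\beta'}R^{\beta_1\kappa_1+\beta_2\kappa_2+(\beta_3-1)\kappa_3-\gamma}$, which is bounded by the target $R$-power since $\kappa_3\ge0$ (note $\kappa_3\ge\kappa_2-\frac12\ge0$ by \eqref{param}); the $R^{-1}$ term carries an extra factor $R^{\kappa_1-1-\kappa_3}$, and $\kappa_1-1-\kappa_3\le0$ because $\kappa_1\le\kappa_2+\frac12$ and $\kappa_3\ge\kappa_2-\frac12$; the $R^{-1/2}$ term carries an extra factor $R^{\kappa_2-\frac12-\kappa_3}$, which is $\le1$ precisely because of the hypothesis $\kappa_3\ge\kappa_2-\frac12$. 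Taking $A_0\ge1$ to be an absolute constant dominating all the implicit constants above (from Lemma~\ref{EEcheck}, from $1/|\langle E_3,\check E_3\rangle|$, $1/|\xi_0|$, $|\check\xi|\lesssim1$), we obtain \eqref{deriv4} with $C'_\beta\le A_0\big(|\beta|\,C'_{\beta'}+C'_{\beta'+e_1}+C'_{\beta'+e_2}\big)$; a short induction, using $|\beta|\,(|\beta|-1)!=|\beta|!$ and $m_\C(|\beta|-1)\le m_\C(|\beta|)$, then yields $C'_\beta\le A^{\beta_3}|\beta|!\,m_\C(|\beta|)$ as in \eqref{cc'p}, provided $A\ge3A_0$. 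The one genuinely delicate point is this last exponent count: the two ``cross'' terms $\partial_{\check E}^{\beta'+e_1}\phi$ and $\partial_{\check E}^{\beta'+e_2}\phi$, which are absent in Lemma~\ref{addkappa3} and arise solely from the tilt of the frame, are exactly what force the strengthened hypothesis $\kappa_3\ge\kappa_2-\frac12$, and one must check, as above, that this (together with $|\kappa_1-\kappa_2|\le\frac12$) is indeed sufficient.
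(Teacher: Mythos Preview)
Your proof is correct and follows essentially the same route as the paper: induction on $\beta_3$, Euler's identity applied to $\psi=\partial_{\check E}^{\beta'}\phi$, and expansion of the radial direction at $\xi_0$ in the frame $\check E$ via Lemma~\ref{EEcheck} to produce the two cross terms with coefficients $R^{-1}$ and $R^{-1/2}$, whose exponents are then absorbed exactly under $\kappa_3\ge\kappa_2-\frac12$ and $|\kappa_1-\kappa_2|\le\frac12$. The only cosmetic difference is that the paper expands $\tilde E_3=\xi_0$ directly in the $\check E$ basis rather than writing $\check E_3=E_3+(\check E_3-E_3)$, but the resulting identity and the constant bookkeeping are the same.
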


\begin{proof} The proof of (ii) is again trivial, and the proof of (i)  will follow the same inductive scheme on the size of $\beta_3$ as in the proof of the previous lemma, whose notation we adapt.

  \smallskip
Let again $\tilde E_1=n(\xi_0), \tilde E_2=t(\xi_0), \tilde E_3=\xi_0$ denote the admissible frame at  $\xi_0$ of the previous proof, and let 
 $\check E_1=n(\check \xi), \check E_2=t(\check \xi), \check E_3=\check\xi/|\check \xi|$ be the  orthonormal frame at $\check \xi.$ 
 Then, by applying Lemma \ref{EEcheck} with $\alpha:=R^{-1/2}$, 
 
\begin{equation}\label{30sep241610}\
|\langle \tilde E_i, \check E_j\rangle|\lesssim R^{-\frac12 |i-j|},\qquad i,j=1,2,3,
\end{equation}
since $|\tilde E_3|\sim |E_3|=1.$ Given $\beta\in \NN^3_{\ge 2},$  we may  again easily reduce to the case $\beta_3\ge 1.$ Then, setting again $\beta':=\beta-e_3$ and putting here  $\psi(\xi):=\partial_{\check E}^{\beta'} \phi(\xi),$   by Euler's identity we have 
$$
\nabla\psi(\xi_0)\cdot \tilde E_3 = \nabla\psi(\xi_0)\cdot\xi_0 = (1-m)\psi(\xi_0).
$$
In the basis given by the orthonormal frame $\check E,$ we thus have
$$\sum_{j=1}^3 \langle \tilde E_3, \check E_j\rangle \partial_{\check E_j}\psi(\xi_0)  = (1-m)\psi(\xi_0).$$
From this identity, making use of \eqref{30sep241610}, we deduce that there is a constant $A\ge 1$ (depending only on the implicit constants in \eqref{30sep241610}) such that 
\begin{eqnarray*}
	|\partial_{\check E}^\beta\phi(\xi_0)| = |\partial_{\check E_3}\psi(\xi_0) | &\le& \frac A3 (|m-1| |\psi(\xi_0)| + R^{-1}|\partial_{\check E_1}\psi(\xi_0) |+R^{-1/2}| \partial_{\check E_2}\psi(\xi_0)|) \\
	&&\hskip-1.5cm= \frac A3(|m-1|| \partial_{\check E}^{\beta'} \phi(\xi_0)| + R^{-1}|\partial_{\check E}^{\beta'+e_1} \phi(\xi_0) |+R^{-1/2}| \partial_{\check E}^{\beta'+e_2} \phi(\xi_0)|).
\end{eqnarray*}
Since again either $|m-1|=0$ or $|\beta'|\geq 2$, by the induction hypothesis we may  estimate 
\begin{eqnarray*}
	|\partial_{\check E}^\beta\phi(\xi_0)| 	&\le& \frac A3R^{\beta_1\kappa_1+\beta_2\kappa_2+(\beta_3-1)\kappa_3-\gamma} (|m-1|C'_{\beta'} + C'_{\beta'+e_1}R^{-1} R^{\kappa_1} + C'_{\beta'+e_2}R^{-1/2}R^{\kappa_2} )\\
	&\leq& A \max\{|\beta|C'_{\beta'},C'_{\beta'+e_1},C'_{\beta'+e_2}\}R^{\beta_1\kappa_1+\beta_2\kappa_2+\beta_3\kappa_3-\gamma},
\end{eqnarray*}
since,  by \eqref{param}, we have $\kappa_1-1\leq \kappa_2-\frac12\leq\kappa_3$ and $\kappa_3\geq0$. 
\smallskip
And, the induction hypothesis easily implies that 
$$
A\max\{|\beta|C'_{\beta'},C'_{\beta'+e_1},C'_{\beta'+e_2}\}\le A A^{\beta_3-1} |\beta|!\, m_\C(|\beta|)=A^{\beta_3} |\beta|!\, m_\C(|\beta|).
$$
 This concludes the inductive argument.
\end{proof}
   
We now prove that we can vary the directions of the derivatives in estimates \eqref{deriv2}, \eqref{deriv4}.

Actually, the directions need not necessarily be given by the   orthonormal frame at some  point $\xi_0\in\Gamma_R$ as in \eqref{frame}, but can be chosen in more flexible ways:

\begin{lem}\label{stability} Let $\kappa_1,\kappa_2,\kappa_3\geq 0$, $\beta_1,\beta_2,\beta_3\in\NN$, and 
let $E,\check E\in \mathop{GL}(3,\R)$ be  such that, for  some constant $A\ge 0,$ 
 
 \begin{equation}\label{pert1}
 |(E^{-1}\check E)_{i,j}|\le A R^{\kappa_j-\kappa_i} \text{ for all}\ i,j=1,2,3.
\end{equation}
Then the following holds true:  if $m\in\NN,$ if $\phi$ is $m$-times differentiable at $\xi_0\in\R^3,$ and if  
\begin{equation}\label{derivlemeq}
	|\partial_E^\beta\phi(\xi_0)|\leq C_m R^{\beta_1\kappa_1+\beta_2\kappa_2+\beta_3\kappa_3-\gamma} 
	\quad \text{for every }\ \beta\  \text {with} \  |\beta|=m,
\end{equation}
then  we also have
\begin{equation}\label{17sep1125}
|\partial_{\check E}^\beta\phi(\xi_0)|\leq C'_m R^{\beta_1\kappa_1+\beta_2\kappa_2+\beta_3\kappa_3-\gamma}\quad \text{for every }\ \beta\  \text {with} \  |\beta|=m,
\end{equation}
with a constant $C'_m\le (3A)^mC_m.$ 
\end{lem}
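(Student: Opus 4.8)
The plan is to reduce the statement to pure multilinear algebra, using nothing about $\phi$ beyond the multilinearity of its $m$-th derivative at $\xi_0$. Set $M:=E^{-1}\check E$, so that $\check E=EM$; comparing columns, this means precisely that $\check E_j=\sum_{i=1}^3 M_{ij}E_i$ for $j=1,2,3$, where $E_1,E_2,E_3$ and $\check E_1,\check E_2,\check E_3$ denote the columns of $E$ and $\check E$. Since $\phi$ is $m$-times differentiable at $\xi_0$, for any $|\beta|=m$ the iterated directional derivative $\partial_E^\beta\phi(\xi_0)=\partial_{E_1}^{\beta_1}\partial_{E_2}^{\beta_2}\partial_{E_3}^{\beta_3}\phi(\xi_0)$ is the value of the symmetric $m$-linear form $D^m\phi(\xi_0)$ on the $m$-tuple consisting of $\beta_1$ copies of $E_1$, $\beta_2$ copies of $E_2$, and $\beta_3$ copies of $E_3$; likewise for $\check E$.

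First I would substitute $\check E_j=\sum_i M_{ij}E_i$ into $\partial_{\check E}^\beta\phi(\xi_0)$, viewed as $D^m\phi(\xi_0)$ evaluated on $\beta_1$ copies of $\check E_1$, $\beta_2$ copies of $\check E_2$, and $\beta_3$ copies of $\check E_3$, and expand by multilinearity. Indexing the $m$ arguments by pairs $(j,k)$ with $1\le j\le 3$, $1\le k\le\beta_j$, and letting an index-assignment be a map $i(\cdot)$ sending each $(j,k)$ to an element of $\{1,2,3\}$, multilinearity gives
\[
\partial_{\check E}^\beta\phi(\xi_0)=\sum_{i(\cdot)}\Big(\prod_{(j,k)}M_{i(j,k),\,j}\Big)\,\partial_E^{\alpha(i)}\phi(\xi_0),\qquad \alpha(i)_\ell:=\#\{(j,k):i(j,k)=\ell\},
\]
where each $\alpha(i)$ satisfies $|\alpha(i)|=m$, and there are exactly $3^m$ index-assignments.

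Next I would estimate each summand. By \eqref{pert1} we have $|M_{i(j,k),\,j}|\le A\,R^{\kappa_j-\kappa_{i(j,k)}}$, so
\[
\Big|\prod_{(j,k)}M_{i(j,k),\,j}\Big|\le A^m R^{\,\sum_{(j,k)}\kappa_j-\sum_{(j,k)}\kappa_{i(j,k)}}=A^m R^{\,(\beta_1\kappa_1+\beta_2\kappa_2+\beta_3\kappa_3)-(\alpha(i)_1\kappa_1+\alpha(i)_2\kappa_2+\alpha(i)_3\kappa_3)},
\]
since $\sum_{(j,k)}\kappa_j=\sum_j\beta_j\kappa_j$ and $\sum_{(j,k)}\kappa_{i(j,k)}=\sum_\ell\alpha(i)_\ell\kappa_\ell$. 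Combining this with the hypothesis \eqref{derivlemeq} applied to the multi-index $\alpha(i)$, every summand is bounded by $A^mC_m\,R^{\beta_1\kappa_1+\beta_2\kappa_2+\beta_3\kappa_3-\gamma}$; summing over the $3^m$ index-assignments gives \eqref{17sep1125} with $C'_m=(3A)^mC_m$, as claimed.

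There is no real obstacle here: the one point to watch is the bookkeeping of the $R$-exponents — the reason the hypothesis is stated in the form $|(E^{-1}\check E)_{i,j}|\le AR^{\kappa_j-\kappa_i}$ is exactly that it makes $\sum_{(j,k)}(\kappa_j-\kappa_{i(j,k)})$ reassemble into $\langle\beta,\kappa\rangle-\langle\alpha(i),\kappa\rangle$, so that the surplus exponent produced by the change of frame is precisely cancelled against the gain in \eqref{derivlemeq}. On the analytic side one only needs to note that, for an $m$-times differentiable function, the iterated directional derivatives appearing above may legitimately be read off the (symmetric) $m$-linear form $D^m\phi(\xi_0)$, so that no issue of interchanging the order of the differentiations arises.
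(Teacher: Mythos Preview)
Your proof is correct and is essentially the same argument as the paper's, just organized differently: the paper first conjugates by the diagonal matrix $D_R=\mathrm{diag}(R^{-\kappa_1},R^{-\kappa_2},R^{-\kappa_3})$ so that the hypothesis \eqref{pert1} becomes the statement that $D_R^{-1}E^{-1}\check E D_R$ has all entries bounded by $A$, and then invokes the chain rule for this linear change of variables, whereas you carry out that chain-rule expansion by hand via multilinearity and track the $R^{\kappa_j}$ exponents directly. Both routes amount to the same computation and yield the same constant $C'_m=(3A)^mC_m$.
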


\begin{proof}
Let us introduce the coordinates  $\eta$ which are given by $\xi=\eta_1E_1+\eta_2 E_2+\eta_3 E_3= E\eta,$ and set 
$\eta_0:=E^{-1}\xi_0.$ Then  \eqref{derivlemeq} is equivalent to
$$
|\partial_{\eta}^{\beta}(\phi\circ E)(\eta_0)|\le C_{\beta} R^{\beta_1\kappa_1+\beta_2\kappa_2+\kappa_3\beta_3-\gamma},\qquad |\beta|=m.
$$
Next, if we introduce the diagonal  matrix $D_R$ with  diagonal entries $R^{-\kappa_j}, j=1,2,3$, then these estimates, hence  \eqref{derivlemeq}, are equivalent to  
\begin{equation}\label{normalest1}
|\partial_{\eta'}^{\beta} \tilde \phi(\eta_0') )|\leq C_{\beta}R^{-\gamma},\qquad |\beta|=m,
\end{equation}
where $\tilde \phi:=\phi\circ (ED_R)$ and  $\eta'_0=:(ED_R)^{-1} \xi_0.$

Similarly, \eqref{17sep1125} is equivalent to 
\begin{equation}\label{normalest2}
|\partial_{\eta''}^{\beta}(\tilde \phi\circ[D_R^{-1}E^{-1}\check ED_R])(\eta''_0) |=|\partial_{\eta''}^{\beta}( \phi\circ[\check ED_R])(\eta''_0) )|\leq C'_{\beta}R^{-\gamma},\qquad |\beta|=m,
\end{equation}
with $\eta''_0:=(\check E D_R)^{-1}\xi_0.$

Now, observe that the $(i,j)$-th entry of  the matrix $D_R^{-1}E^{-1}\check ED_R$ is given by
$$
\trans e_iD_R^{-1}E^{-1}\check ED_Re_j =R^{\kappa_i-\kappa_j}\,\trans e_iE^{-1}\check Ee_j,
$$
if $e_1,e_2,e_3$ denotes again the canonical basis of $\RR^3.$ Thus, by \eqref{pert1}, 
$$
|(D_R^{-1}E^{-1}\check ED_R)_{i,j}|\le A, \text{ for all}\ i,j=1,2,3.
$$

The lemma now follows by chain rule.
\end{proof}

We are now in the position to show that the estimates \eqref{deriv} in the definition of the class 
$\F^{\kappa_1,\kappa_2,\gamma,\C}(\mathbf \Gamma^\pm_R)$ are stable on sectors of angular width $\Rwurz,$ in the sense that we are allowed  to replace, for instance, the orthonormal frame $E_{\xi_0}$ in them by the orthonormal frame at any other point in a sector of  angular width  $\Rwurz$ around $\xi_0.$

\begin{lem}\label{stability2}
Let $\xi_0\in \Gamma_R,$  and  let $\xi,\check\xi\in\theta_{\Rwurz}(\xi_0).$ Suppose that 
\begin{eqnarray}\label{30sep1631}
	|\partial_{n(\xi)}^{\alpha_1}\partial_{t(\xi)}^{\alpha_2} \phi(\xi_0)|\leq C_{\alpha} R^{\alpha_1\kappa_1+\alpha_2\kappa_2-\gamma}\quad\text{for all}\ \alpha\in \NN^2_{\ge 2}.
\end{eqnarray}
Then 
\begin{eqnarray}\label{30sep1632}
	|\partial_{n(\check\xi)}^{\alpha_1}\partial_{t(\check\xi)}^{\alpha_2} \phi(\xi_0)|\leq C'_{\alpha} R^{\alpha_1\kappa_1+\alpha_2\kappa_2-\gamma} \quad\text{for all}\ \alpha\in \NN^2_{\ge 2},
\end{eqnarray}
where we may bound the constants $C'_\alpha$ by $C'_{\alpha}\le A^{|\alpha|} |\alpha|!\, m_\C(|\alpha|),$ with a universal  constant $A.$
\end{lem}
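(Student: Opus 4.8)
The plan is to reduce the statement to Lemmata \ref{EEcheck}, \ref{addkappa3n} and \ref{stability}. The obstruction to applying Lemma \ref{stability} directly is that $(n(\xi),t(\xi))$ and $(n(\check\xi),t(\check\xi))$ are only two-dimensional frames at $\xi_0$, while $n(\check\xi)$ and $t(\check\xi)$ do acquire a small component (of size $\mathcal O(R^{-1})$, resp.\ $\mathcal O(R^{-1/2})$) in the radial direction $\xi/|\xi|$; so the first step is to complete the frame at $\xi$ by its radial direction, using the homogeneity of $\phi$. Concretely, I would set $\kappa_3:=\kappa_2-\tfrac12$; by \eqref{param} one has $\kappa_2\ge\tfrac12$, hence $\kappa_3\ge0$, and trivially $\kappa_3\ge\kappa_2-\tfrac12$, so Lemma \ref{addkappa3n}(i) applies with the point called ``$\check\xi$'' there taken to be the present $\xi$ --- its hypothesis \eqref{deriv3} being exactly \eqref{30sep1631}. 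This yields, for the orthonormal frame $E:=E_\xi=(n(\xi)\,|\,t(\xi)\,|\,\xi/|\xi|)$,
$$
 |\partial_{E}^{\beta}\phi(\xi_0)|\le C^{(1)}_{\beta}\,R^{\beta_1\kappa_1+\beta_2\kappa_2+\beta_3\kappa_3-\gamma},\qquad \beta\in\NN^3_{\ge 2},
$$
with $C^{(1)}_{\beta}\le A_0^{\beta_3}\,|\beta|!\,m_\C(|\beta|)$ for a universal constant $A_0\ge1$; replacing $\partial_\xi$ by the normalized $\partial_{\xi/|\xi|}$ in \eqref{deriv4} costs at most a bounded factor per derivative, as $|\xi|\sim1$ on $\Gamma_R$, and is absorbed into $A_0$.

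The second step is to compare $E=E_\xi$ with $\check E:=E_{\check\xi}$. Since $\xi,\check\xi\in\theta_{\Rwurz}(\xi_0)$, their horizontal projections subtend an angle $\lesssim\Rwurz$, so $\check\xi\in\theta_{2\Rwurz}(\xi)$, and Lemma \ref{EEcheck} applied with $\alpha:=2\Rwurz\in[\Rwurz,1]$ gives $|\langle E_i,\check E_j\rangle|\lesssim R^{-|i-j|/2}$. As $E$ is orthogonal, $(E^{-1}\check E)_{i,j}=\langle E_i,\check E_j\rangle$, and for the triple $(\kappa_1,\kappa_2,\kappa_3)$ one checks, using $\kappa_2\ge\tfrac12$ and $|\kappa_1-\kappa_2|\le\tfrac12$, that $\kappa_i-\kappa_j\le|i-j|/2$ for all $i,j\in\{1,2,3\}$; hence $|(E^{-1}\check E)_{i,j}|\le A_1\,R^{\kappa_j-\kappa_i}$ for a universal $A_1\ge1$, i.e.\ \eqref{pert1} holds.

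The final step is to invoke Lemma \ref{stability} with this pair $E,\check E$ and the triple $(\kappa_1,\kappa_2,\kappa_3)$: for each $m\ge2$ the estimate of the first step is precisely \eqref{derivlemeq} with $C_m:=\max_{|\beta|=m}C^{(1)}_\beta\le A_0^m\,m!\,m_\C(m)$, so \eqref{17sep1125} gives $|\partial_{\check E}^\beta\phi(\xi_0)|\le C'_m\,R^{\beta_1\kappa_1+\beta_2\kappa_2+\beta_3\kappa_3-\gamma}$ for $|\beta|=m$, with $C'_m\le(3A_1)^m C_m$. Specializing to $\beta=(\alpha_1,\alpha_2,0)$ with $|\alpha|\ge2$, so that $\partial_{\check E}^\beta=\partial_{n(\check\xi)}^{\alpha_1}\partial_{t(\check\xi)}^{\alpha_2}$ and $\beta_3\kappa_3=0$, then yields \eqref{30sep1632} with $C'_\alpha\le A^{|\alpha|}\,|\alpha|!\,m_\C(|\alpha|)$, $A:=3A_0A_1$. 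I expect the only genuinely non-formal point to be the verification of \eqref{pert1}, i.e.\ that the choice $\kappa_3=\kappa_2-\tfrac12$ is compatible with the weights in Lemma \ref{stability}; this is exactly where hypothesis \eqref{param} enters. Everything else is bookkeeping of constants, the $\partial_\xi$ versus $\partial_{\xi/|\xi|}$ mismatch being harmless since $|\xi|\sim1$ on $\Gamma_R$.
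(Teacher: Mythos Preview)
Your proposal is correct and follows essentially the same route as the paper: upgrade \eqref{30sep1631} via Lemma~\ref{addkappa3n} with $\kappa_3=\kappa_2-\tfrac12$, then use Lemma~\ref{EEcheck} together with \eqref{param} to verify \eqref{pert1}, and finally apply Lemma~\ref{stability} and specialize to $\beta_3=0$. You are in fact slightly more careful than the paper about the $\partial_\xi$ versus $\partial_{\xi/|\xi|}$ normalization and about the passage from $\xi,\check\xi\in\theta_{\Rwurz}(\xi_0)$ to $\check\xi\in\theta_{2\Rwurz}(\xi)$.
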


\begin{proof} 
By Lemma \ref{addkappa3n}, we can superficially improve \eqref{30sep1631} to
$$
 |\partial_{n(\xi)}^{\beta_1}\partial_{t(\xi)}^{\beta_2}\partial_{\xi}^{\beta_3} \phi(\xi_0 )|\leq C'_{\beta} R^{\beta_1\kappa_1+\beta_2\kappa_2+\beta_3\kappa_3-\gamma}\quad\text{for all}\ \beta\in \NN^3_{\ge 2},
 $$
if  we choose  $\kappa_3:=\kappa_2-\frac12 \geq0$.
Let $E,\check E$ be the orthonormal frames at  $\xi$ and $\check\xi,$ respectively. 
Then, by Lemma \ref{EEcheck}, $|\langle E_i, \check E_j\rangle|\lesssim R^{-\frac12 |i-j|}.$
\smallskip

 But, by \eqref{param},  we have 
$$
|\kappa_1-\kappa_2|\leq \frac12,
$$
and by our choice of $\kappa_3$, we also have
$$
|\kappa_2-\kappa_3|=\frac12,
$$
so that 
$$
|\kappa_1-\kappa_3|\leq|\kappa_1-\kappa_2|+|\kappa_2-\kappa_3|\leq 1.
$$

In other words,
$$
\kappa_i-\kappa_j\leq |\kappa_i-\kappa_j|\leq \frac12 |i-j|
$$
for all $i,j=1,2,3$. 
Since $E$ is an orthogonal matrix, $E^{-1}=\trans E,$  and thus 
$$
|(E^{-1}\check E)_{i,j}|=   |(\trans E\check E)_{i,j}|   =	|\langle E_i, \check E_j\rangle| 
		\lesssim R^{-\frac12 |i-j|} \leq R^{\kappa_j-\kappa_i}.
$$
By Lemma \ref{stability}, we find that
$$
 |\partial_{n(\check\xi)}^{\beta_1}\partial_{t(\check\xi)}^{\beta_2}\partial_{\check\xi}^{\beta_3} \phi(\xi_0 )|\leq C''_{\beta} R^{\beta_1\kappa_1+\beta_2\kappa_2+\beta_3\kappa_3-\gamma}\quad\text{for all}\ \beta\in \NN^3_{\ge 2},
 $$
 and choosing then $\beta_3=0,$ we arrive at \eqref{30sep1632}. The claimed bounds on the constants $C'_\alpha$ easily follow from the corresponding bounds in Lemma \ref{addkappa3n} and  Lemma \ref{stability}.
\end{proof}

\noi {\bf Definition:} Let $I,J$ be two index sets and $\F$ a parameter set. Moreover, suppose $a:\F\times I\to \RR_{\ge 0},\,  b:\F\times J\to \RR_{\ge 0}$ and $g: I\to \RR_{\ge 0}, \, h: J\to \RR_{\ge 0}$ are functions such that the following holds:

 If $\C=\{C_\al\}_{\al\in I}$ is any family of constants $C_\al\ge 0,$ then  there are constants $C'_\be\ge 0, \beta\in J,$  so that the following holds:  if $\phi\in\F$ is such that 
\begin{equation}\label{control1}
|a(\phi,\al)| \le C_\al g(\al) \qquad\text{for all} \ \al\in I,
\end{equation}
then
\begin{equation}\label{control2}
|b(\phi, \beta)| \le C'_\beta h(\beta) \qquad\text{for all} \ \beta\in J.
\end{equation}
Then we say that {\it the constants $C'_\beta $ in \eqref{control2} are controlled  by the constants $C_\al$ in \eqref{control1}}, if for every $\beta\in J$ there is a constant $A_\beta\ge 0$ and a finite subset $I_\beta\subset I$ such that
$$
C'_\beta\le A_\beta \sum_{\al\in I_\beta} C_\al.
$$

\smallskip
For us, the most important  cases in Corollary \ref{stability2} are where either $\xi=\xi_0,$ or $\check\xi=\xi_0$, so that the assumption \eqref{30sep1631} respectively the conclusion \eqref{30sep1632} is precisely the defining conditions \eqref{deriv} for the class 
 $\F^{\kappa_1,\kappa_2,\gamma,\C}(\mathbf \Ga^\pm_R):$

\begin{cor}\label{stability3}
Let $\xi\in\Gamma_R^\pm $ and $\phi\in\F^{\kappa_1,\kappa_2,\gamma,\C}(\mathbf \Ga_R^\pm).$  If  $\check\xi\in\theta_{\Rwurz}(\xi),$ then 
$$
 |\partial_{n(\check\xi)}^{\alpha_1}\partial_{t(\check\xi)}^{\alpha_2} \phi(\xi)|\le C'_{\alpha} R^{\alpha_1\kappa_1+\alpha_2\kappa_2-\gamma}
  \quad\text{for all}\ \alpha\in \NN^2_{\ge 2},
 $$
 where the constants $C'_\al$ are controlled by the constants in $\C$  from estimate \eqref{deriv}. The analogous statement holds for $\Gamma_R$ in place of 
 $\Gamma^\pm_R$
\end{cor}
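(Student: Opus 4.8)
The plan is to obtain Corollary~\ref{stability3} as a direct specialization of Lemma~\ref{stability2}. In that lemma one is given a base point $\xi_0\in\Gamma_R$ and two points $\xi,\check\xi\in\theta_{\Rwurz}(\xi_0)$, and the hypothesis \eqref{30sep1631} constrains the derivatives of $\phi$ only at $\xi_0$. I would therefore invoke Lemma~\ref{stability2} with the choice $\xi_0:=\xi$, letting the base point \emph{and} the ``first'' point of that lemma both be the point $\xi$ from the statement of the corollary, while the ``second'' point is the given $\check\xi$. Nothing in Lemma~\ref{stability2} forbids the base point and the first point to coincide, so this is an admissible configuration: $\xi\in\theta_{\Rwurz}(\xi)$ trivially, $\check\xi\in\theta_{\Rwurz}(\xi)$ by hypothesis, $\xi\in\Gamma^\pm_R\subset\Gamma_R$, and $\phi\in\F^{\kappa_1,\kappa_2,\gamma,\C}(\mathbf\Ga^\pm_R)\subset\F_{hom}(\mathbf\Ga^\pm_R)$ is smooth near $\xi$. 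Observe that $\phi$ is evaluated only at $\xi$, so no extension of $\phi$ beyond $\mathbf\Ga^\pm_R$ is needed, and the directions $n(\check\xi),t(\check\xi)$ are defined for every $\check\xi$ with $\check\xi_3\neq 0$.

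Next I would check that, with $\xi_0=\xi$ taken also as the first point, the hypothesis \eqref{30sep1631} of Lemma~\ref{stability2} reads
\begin{equation*}
|\partial_{n(\xi)}^{\alpha_1}\partial_{t(\xi)}^{\alpha_2}\phi(\xi)|\le C_\alpha R^{\alpha_1\kappa_1+\alpha_2\kappa_2-\gamma}\qquad\text{for all}\ \alpha\in\NN^2_{\ge 2},
\end{equation*}
which is exactly the defining estimate \eqref{deriv} for $\phi\in\F^{\kappa_1,\kappa_2,\gamma,\C}(\mathbf\Ga^\pm_R)$, specialized to the particular point $\xi\in\Gamma^\pm_R$, and hence holds by assumption. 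Lemma~\ref{stability2} then delivers the conclusion \eqref{30sep1632} at $\xi_0=\xi$, i.e.
\begin{equation*}
|\partial_{n(\check\xi)}^{\alpha_1}\partial_{t(\check\xi)}^{\alpha_2}\phi(\xi)|\le C'_\alpha R^{\alpha_1\kappa_1+\alpha_2\kappa_2-\gamma}\qquad\text{for all}\ \alpha\in\NN^2_{\ge 2},
\end{equation*}
which is precisely the claimed estimate.

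Finally I would record the dependence of the constants. Lemma~\ref{stability2} provides $C'_\alpha\le A^{|\alpha|}\,|\alpha|!\,m_\C(|\alpha|)$ with a universal constant $A$. Since $m_\C(|\alpha|)=\max\{C_{\alpha'}:\alpha'\in\NN^2_{\ge 2},\ |\alpha'|\le|\alpha|\}\le\sum_{\alpha'\in I_\alpha}C_{\alpha'}$ for the finite index set $I_\alpha:=\{\alpha'\in\NN^2_{\ge 2}:|\alpha'|\le|\alpha|\}$, setting $A_\alpha:=A^{|\alpha|}|\alpha|!$ yields $C'_\alpha\le A_\alpha\sum_{\alpha'\in I_\alpha}C_{\alpha'}$, which is exactly what it means for the $C'_\alpha$ to be controlled by the constants in $\C$ in the sense of the definition preceding the corollary. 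The analogous statement with $\Gamma_R$ in place of $\Gamma^\pm_R$ follows in precisely the same way, since Lemma~\ref{stability2} is stated for $\xi_0\in\Gamma_R$ and the membership condition for $\F^{\kappa_1,\kappa_2,\gamma,\C}(\mathbf\Ga_R)$ is again the estimate \eqref{deriv}, now required over $\Gamma_R$. There is no genuine obstacle here; the only point to be attentive to is that in applying Lemma~\ref{stability2} we let its base point and its first point coincide, so that the hypothesis of that lemma becomes the very condition defining the class, together with the routine bookkeeping needed to pass from the bound $C'_\alpha\le A^{|\alpha|}|\alpha|!\,m_\C(|\alpha|)$ to the ``controlled by'' formulation.
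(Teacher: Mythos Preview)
Your proof is correct and follows exactly the same approach as the paper: the paper's proof is the single line ``Take $\xi=\xi_0$ in Lemma~\ref{stability2},'' and you have carried out precisely this specialization, additionally spelling out why the hypothesis of Lemma~\ref{stability2} then reduces to \eqref{deriv} and why the resulting bound on $C'_\alpha$ meets the ``controlled by'' definition.
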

\begin{proof} Take $\xi=\xi_0$ in Lemma \ref{stability2}.
\end{proof}

\begin{cor}\label{suff_for_deriv}
Let  $\check \xi\in \Gamma_R$  and  $\xi\in\theta_{\Rwurz}(\check\xi),$ and assume  that 
$$
|\partial_{n(\xi)}^{\alpha_1}\partial_{t(\xi)}^{\alpha_2} \phi(\check\xi)|\leq C_{\beta} R^{\alpha_1\kappa_1+\alpha_2\kappa_2-\gamma}
 \quad\text{for all}\ \alpha\in \NN^2_{\ge 2}.
$$
Then
$$
 |\partial_{n(\check\xi)}^{\alpha_1}\partial_{t(\check\xi)}^{\alpha_2} \phi(\check\xi)|\leq C'_{\alpha} R^{\alpha_1\kappa_1+\alpha_2\kappa_2-\gamma} \quad\text{for all}\ \alpha\in \NN^2_{\ge 2},
$$
where the constants $C'_\al$ are controlled by the constants in  $\{C_\alpha\}_{\alpha\in \NN^2_{\ge 2}}.$
\end{cor}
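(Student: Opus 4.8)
The plan is to read this corollary off directly from Lemma \ref{stability2}, by taking the base point $\xi_0$ of that lemma to be the point $\check\xi$ itself. Concretely, I would invoke Lemma \ref{stability2} with the substitutions $\xi_0 := \check\xi$, with the point playing the role of ``$\xi$'' in that lemma taken to be our given $\xi$, and with the point playing the role of ``$\check\xi$'' in that lemma taken to be $\check\xi$ again. The two membership hypotheses of Lemma \ref{stability2} are then satisfied: $\xi\in\theta_{\Rwurz}(\check\xi)=\theta_{\Rwurz}(\xi_0)$ holds by assumption, while $\check\xi\in\theta_{\Rwurz}(\check\xi)$ holds trivially, since $\check\xi$ is separated from itself by the angle $0$. (Here one uses that $\phi$ is tacitly a $1$-homogeneous phase function, smooth away from the origin, so that the differentiability needed in Lemma \ref{stability2}, which runs through Lemmata \ref{addkappa3n} and \ref{stability}, is available.) With these identifications, hypothesis \eqref{30sep1631} of Lemma \ref{stability2} becomes precisely our assumed bound $|\partial_{n(\xi)}^{\al_1}\partial_{t(\xi)}^{\al_2}\phi(\check\xi)|\le C_\al R^{\al_1\ka_1+\al_2\ka_2-\ga}$, and its conclusion \eqref{30sep1632} becomes precisely the asserted bound with the frame $n(\check\xi),t(\check\xi)$ at the same point $\check\xi$.

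It then remains only to record the statement about the constants. Lemma \ref{stability2} yields $C'_\al\le A^{|\al|}\,|\al|!\,m_\C(|\al|)$ with a universal constant $A$, where $m_\C(l)=\max\{C_\be:\be\in\NN^2_{\ge 2},\ |\be|\le l\}$. Since a maximum of finitely many non-negative numbers is dominated by their sum, for each fixed $\al$ one gets $C'_\al\le A^{|\al|}|\al|!\sum_{\be\in\NN^2_{\ge 2},\ |\be|\le|\al|}C_\be$, which is exactly an estimate of the form required by the notion of ``controlled constants'' introduced just before Corollary \ref{stability3} (with the finite index set $I_\al=\{\be\in\NN^2_{\ge 2}:|\be|\le|\al|\}$ and the constant $A_\al=A^{|\al|}|\al|!$). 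Hence the $C'_\al$ are controlled by the family $\{C_\al\}_{\al\in\NN^2_{\ge 2}}$, as claimed.

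I do not foresee any genuine obstacle here, since this is a direct specialization of an already-established lemma. The only point requiring a moment's care is that Lemma \ref{stability2} needs the two points whose orthonormal frames are being compared to lie in one common sector of aperture $\Rwurz$ about its base point $\xi_0$; anchoring that sector at $\check\xi$ is the natural choice, because then one of the two points is the center of the sector and the single hypothesis $\xi\in\theta_{\Rwurz}(\check\xi)$ supplies the membership of the other. The proof is then simply ``apply Lemma \ref{stability2} with $\xi_0:=\check\xi$.''
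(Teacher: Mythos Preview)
Your proposal is correct and matches the paper's proof exactly: the paper simply writes ``Take $\check\xi=\xi_0$ in Lemma \ref{stability2},'' which is precisely the specialization you describe. Your additional paragraph unpacking the control of the constants via the explicit bound $C'_\al\le A^{|\al|}|\al|!\,m_\C(|\al|)$ from Lemma \ref{stability2} is a legitimate (and slightly more detailed) justification of that clause, but the core argument is identical.
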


\begin{proof} Take $\check\xi=\xi_0$ in Lemma \ref{stability2}.
\end{proof}

\subsection{Rotated and dilated light cones}\label{rotatedcones}
Sometimes it will be useful to consider the light cone in rotated coordinates.  In particular, we shall sometimes  like to work in tilted coordinates, in which  the axis of the light cone becomes the line generated by $(1,0,1)$ (see, e.g., \cite{GWZ}, and Example \ref{mainex}).  More explicitly, this can be achieved by means of the following  orthogonal linear change of coordinates: 
\begin{equation}\label{tiltedc}
\eta_1:=\frac{\xi_3+\xi_1}{\sqrt{2}}, \ \eta_2:=\xi_2,\ \eta_3:=\frac{\xi_3-\xi_1}{\sqrt{2}},
\end{equation}
in which $\xi_1^2+\xi_2^2-\xi_3^2=\eta_2^2-2\eta_1\eta_3,$ so that the light-cone becomes 
$$
\tilde {\bf\Gamma}:=\{\eta\in\RR^3: \eta_2^2/2-\eta_1\eta_3=0\}.
$$
In this context (see Example \ref{mainex}), it will be more natural to work with the 0-homogeneous ``level function'' 
$$
G(\eta):=\frac{\eta_2^2/2-\eta_1\eta_3}{\eta_3^2}
$$
on $\RR^3_{\times}$ in place of $F(\xi)=(\xi_1^2+\xi^2_2-\xi_3^2)/\xi_3^2.$  The  function $F$  assumes the form 
\begin{equation}\label{tildeF}
\tilde F(\eta)=\frac{\eta_2^2/2-\eta_1\eta_3}{(\eta_3+\eta_1)^2}
\end{equation}
in the coordinates $\eta$.  
However, in the $\eta$-coordinates, we shall only be interested in regions where, say, 
\begin{equation}\label{etaregion}
1/2<\eta_3<2,  \ \text{and }\   |\eta_1|, |\eta_2|< 1/4,
\end{equation}
in which $|G(\eta)|/|\tilde F(\eta)| \sim 1.$ Thus, even though the analogues of the truncated conic sets $\Gamma^\pm_R$ would here more naturally be, say, the sets 
$$
\tilde\Gamma^\pm_R:=\{\eta\in\R^3: \frac{1}{2R}<\pm G(\eta)<\frac{1}R, \  \frac 12<\eta_3<2\},
 $$
 we can cover such a region by the three  regions corresponding to $\Gamma^\pm_{R'}$ in the $\eta$-coordinates, with $R'\in \{R/2,R,2R\}.$
 This still allows us to work with the regions $\Gamma^\pm_R$ in the $\xi$-coordinates, even when working with the level function $G(\eta).$
 \medskip
 
 Let us next see how our classes of phase functions   $\F^{\kappa_1,\kappa_2,\gamma,\C}(\mathbf \Ga^\pm_R)$ will transform under a general orthogonal linear transformation 
 $$
 \xi=T^{-1}\eta,\quad \text{with} \ T\in O(3,\RR).
 $$ 

 \smallskip
 
  Given $T,$ consider the rotated  light-cone ${\bf\Gamma}_T:=T(\bf\Gamma).$ Note that $F_T(\eta):= F(T^{-1}\eta)$ is the level function
 for  ${\bf\Gamma}_T$ corresponding to $F.$
 \smallskip
 
 Moreover, if  $\phi\in \F^{\kappa_1,\kappa_2,\gamma},$ then in the $\eta$-coordinates $\phi$ corresponds to $\phi_T(\eta):=\phi(T^{-1}\eta).$
   With a slight abuse of notation, we shall then also say that   $\phi_T\in \F^{\kappa_1,\kappa_2,\gamma}.$
   \smallskip

Next, if $\xi_0\in \Ga_R,$  then let  $E=E_{\xi_0}$ be the orthonormal frame at $\xi_0.$ We claim that this frame corresponds to the frame 
\begin{equation}\label{ET}
E_T=E_T(\eta_0):=TE
\end{equation}
associated  to $F_T$ at  $\eta_0:= T\xi_0.$  Indeed, we have 
$$
\pa_{E_j}\phi(\xi_0)=\pa_{t}[\phi(\xi_0+tE_j)]\vert_{t=0}=\pa_{t}[\phi_T(\eta_0+ t TE_j)]\vert_{t=0}=\pa_{TE_j}\phi_T(\eta_0).
$$
Note: since  $\nabla F(\xi) =\nabla F_T(\eta) T,$ we also have 
$$
n(\xi_0)=\trans T\, \trans(\nabla F_T(\eta_0))/|\trans(\nabla F_T(\eta_0))|,
$$ 
so that $Tn(\xi_0)=\trans(\nabla F_T(\eta_0))/|\trans(\nabla F_T(\eta_0))|=:n_T(\eta_0),$ which matches with \eqref{ET}.
Moreover, $TE_3=T(\xi_0/|\xi_0|)=\eta_0/|\eta_0|,$ which again matches with \eqref{ET}.

\begin{example}\label{Tex}
Let $T\in O(3,\RR)$ be defined by the orthogonal transformation  $\eta=T\xi$ in  \eqref{tiltedc}. 
\end{example}
Here, $F_T(\eta)=\tilde F(\eta),$ so if $\eta=\eta_0,$ with $(\eta_0)_2=0,(\eta_0)_3\sim 1$  and $1/R\sim |G(\eta_0)|\sim |\tilde F(\eta_0)|,$ then 
$|(\eta_0)_1|\sim 1/R.$  Therefore $e_3=(0,0,1)\in\theta_{R^{-1/2}}(\eta_0)$. One computes that 
$$
\nabla \tilde F(0,0,1)= (-1,0,0),\
$$
which shows that (in the $\eta$-coordinates)  the orthonormal frame  $\tilde E$ at $e_3$ is given by 
$$
 \tilde E_1:=\trans(-1,0,0),  \tilde E_2:=\trans(0,-1,0),  \tilde E_3:=\trans(0,0,1).
$$

{\it Thus, to check that a given 1-homogeneous phase $\phi$ satisfies the estimates \eqref{deriv} at a point $\eta_0$ with $(\eta_0)_2=0,$ it will suffice to prove this for the partial coordinate derivatives
$\pa_{\eta_1}^{\alpha_1}\pa_{\eta_2}^{\alpha_2}.$}
\medskip

More generally, assume that we only require $|(\eta_0)_2|\ll 1$ in place of $(\eta_0)_2=0.$ Then, since $(\xi_0)_2=(\eta_0)_2,$ we also have
 $|(\xi_0)_2|\ll 1.$ By means of a rotation by a small angle $\om$ of size $|\omega|\ll 1$ of the light cone around its axis, we may then again reduce to the previous situation where $(\eta_0)_2=(\xi_0)_2=0.$ Note that since the level function $F$ is invariant under such a rotation, we still end up with the same frame $\tilde E$ as before, after this rotation.
 \smallskip
 
  However, the level function $G$ is  not quite  invariant under such a rotation, but passing back to the $\eta$-coordinates after performing this small rotation in the $\xi$-coordinates, on easily finds that  $G$ assumes the form
\begin{equation}\label{Gom}
G_\om(\eta)=\frac{\eta_2^2/2-\eta_1\eta_3}{\big(\eta_3(1+\cos\om)-\eta_1(1-\cos\om)-\sqrt{2}\eta_2\sin\om\big)^2},
\end{equation}
where $|\eta_2^2/2-\eta_1\eta_3|\sim 1/R.$ As we shall see in the subsequent discussion of Example \ref{mainex}, the modification of the denominator  which appears here  is not problematic.

\medskip

\subsection{Examples of phase functions}\label{examples}  
We begin with an explicit  key example, in which we adapt the previous notation:

\begin{example}\label{mainex}
 Let, for $\gamma\ge 0,$ 
$$
\phi^\gamma(\eta):= \eta_3 \left|\frac{\eta_2^2/2-\eta_1\eta_3}{\eta_3^2}\right|^\gamma=\eta_3 |G(\eta)|^\gamma,\qquad \eta\notin \tilde {\bf\Gamma},
$$
which is a priori well-defined and smooth outside the cone $\tilde {\bf\Gamma}.$ If $\ga=0,$ then $\phi^\ga=\eta_3,$ and the estimation of the corresponding FIO-cone multipliers can easily be reduced to the ones for cone multipliers, by means of  translations. But, if $\ga>0,$ then  
 outside the plane $\eta_3=0,$ $\phi^\ga$ extends to a continuous function which, however, will  have  singularities along the cone $\tilde \Gamma$ for non-integer values of $\ga.$
 
 We shall actually consider these phases  only on homogeneous  subdomains 
of the form $|\eta_1|,|\eta_2| \ll  \eta_3$   (which are covered by  \eqref{etaregion}). We claim that  for $\ga>0$ 
\begin{equation}\label{phigamma}
\phi^\gamma\in \F^{1,1/2,\gamma}.
\end{equation}
\end{example}
\begin{proof}

 Assume $R\ge R_0,$ and fix a point $\eta_0\in \tilde\Gamma^\pm_R.$ By our previous discussion, after a rotation of the light cone around its axis by a small angle $\omega$ and  making again use of homogeneity, we may assume that $\eta_0=(a,0,1),$ with $|a|\sim R^{-1},$ and that $\phi^\ga$ assumes the form
$$
\phi^\ga_\om(\eta)=\big(\eta_3(1+\cos\om)-\eta_1(1-\cos\om)-\sqrt{2}\eta_2\sin\om\big)|G_\om(\eta)|^\ga,
$$
with $G_\om$ given by \eqref{Gom}. We need to show that there is a family of constants $C_\al,|\al|\ge 2,$ which are independent of $\om$ and $R\ge R_0,$ so that the estimates \eqref{deriv} holds true. But, as we have just  seen,  these estimates can be  reduced to showing estimates of the form
$$
|\partial_{\eta_1}^{\al_1}\partial_{\eta_2}^{\al_2} \phi_\om^\ga(\eta_0)| \le C_{\alpha}R^{\alpha_1\kappa_1+\alpha_2\kappa_2-\gamma},
$$
here with  $\ka_1=1$ and $\ka_2=1/2.$ To this end, consider the re-scaled function
$$
\psi(\eta'):=R^{\gamma} \phi_\om^\gamma(R^{-\ka_1}\eta'_1,R^{-\ka_2}\eta'_2,\eta'_3).
$$
Then the estimates above are equivalent to the estimates
$$
|\partial_{\eta'_1}^{\al_1}\partial_{\eta'_2}^{\al_2} \psi(\eta'_0)| \le C_{\alpha},
$$
where  $\eta'_0:=(Ra,0,1).$
But, for $\eta'$ close to $\eta'_0,$ we have
$$
\psi(\eta')=\big(\eta'_3(1+\cos\om)-R^{-1}\eta'_1(1-\cos\om)-\sqrt{2}R^{-1/2}\eta'_2\sin\om\big)^{1-2\ga} |(\eta'_2)^2/2-\eta'_1\eta'_3|^\ga,
$$
and since  $|Ra|\sim 1,$  it becomes obvious that $\psi$ will satisfy the required estimates.
\end{proof}

\begin{example}[Embeddings of $\F_{class}$] \label{Fclass}
We  claim that if $\gamma\ge 1, \ka_1\ge \gamma/2, \ka_2\ge \gamma/2$ and $|\ka_1-\ka_2|\le 1/2,$ then
\begin{equation}\label{FclassF}
\F_{class}=\F_{class}^{\ka_1,\ka_2,\gamma} \subset \F^{\ka_1,\ka_2,\gamma},
\end{equation}
 in  the sense that the restriction of any $\phi\in \F_{class}$ to ${\bf\Gamma}_{R_0}\setminus \bf\Gamma$ lies in 
$\F^{\ka_1,\ka_2,\gamma},$ and that there is a $\C\in\Sigma$ so that $\phi\vert_{\mathbf\Gamma_R}\in \F^{\ka_1,\ka_2,\gamma,\C}(\mathbf\Gamma_R)$ and $\phi\vert_{\mathbf\Gamma^\pm_R}\in \F^{\ka_1,\ka_2,\gamma,\C}(\mathbf\Gamma^\pm_R)$ for every $R\ge R_0.$  
\end{example}

Indeed, note that if $\phi\in \F_{class},$ then by compactness of $\overline{\Gamma_{R_0}},$ there are constants $C_\beta, \beta\in \NN^3,$ so that 
$|\pa_\xi^\beta \phi(\xi)|\le C_\beta$ for every $\xi\in \Gamma_{R_0}.$ Our claim easily follows from these estimates by noting that
 $R^{\ka_1\al_1+\ka_2\al_2-\gamma}\ge 1$ whenever  $\al_1+\al_2\ge 2.$

\medskip
\begin{example}\label{phi+phi}
If $1\le\gamma\le 2$, and if  $\phi_0\in\F_{class}$ and $\phi_1\in\F^{1,\frac12,\gamma},$ then
$$
\phi:=\phi_0+\phi_1\in\F^{1,\frac\gamma 2,\gamma}.
$$
\end{example}
This is indeed an immediate consequence of the statement in the previous example in combination with \eqref{inclusion}.

\smallskip
We remark that by choosing $\ga=1$ in \eqref{FclassF}, we also have 
$$
\phi=\phi_0+\phi_1\in\F^{1,\frac12,1},
$$
but the former class  $\F^{1,\frac\gamma 2,\gamma}$  will lead to stronger estimates for the associated FIO-cone multipliers when $\ga>1$ than for the latter class.

\medskip

 As before, let again $\mathbf\Ga_R^\circ$ and $\Ga_R^\circ$ denote either  $\mathbf \Ga_R$  and   $\Ga_R,$ or  $\mathbf \Ga^\pm_R$ and $ \Ga^\pm_R.$

\begin{example}\label{existphi} Assume that $\kappa_1,\kappa_2,\gamma>0$ satisfy \eqref{param}. Then there exists some $\C\in \Sigma$ and a family 
$\{\phi^\circ_R\}_{ R\ge R_0}$ of non-trivial phase functions $\phi^\circ_R\in \F^{\ka_1,\ka_2,\gamma,\C}({\mathbf \Gamma}^\circ_R).$  These phases $\phi^\circ_R$ can even be chosen so that $ \phi^\circ_R$ does not vanish anywhere in $\mathbf\Ga_R^\circ$.
 \end{example}

 It will suffice to construct such functions in the upper half-space where $\xi_3>0.$  Given  $R\ge R_0,$ we choose $N_R\sim R^{1/2}$ 
 equally spaced and roughly  $R^{-1/2}$-separated points  $\om_i$ on the unit circle $S^1$ in the $(\xi_1,\xi_2)$-plane and an associated smooth partition of unity $\{\Psi_{\om_i}\}_i,$ with the usual control of derivatives corresponding to the scale $R^{-1/2},$ so that 
 $|\angle( \om_i, (\xi_1,\xi_2))|\lesssim R^{-1/2}$ for all $(\xi_1,\xi_2)$ in the support of $\Psi_{\om_i}.$ By putting
 $$
 \chi_{R,i}(\xi):=\chi_0(R^{\ka_1\wedge 1} F(\xi)/2)\Psi_{\om_i}((\xi_1,\xi_2)/|(\xi_1,\xi_2)|),\qquad i=1,\dots, N_R,
 $$
 where $\chi_0$ is any non-trivial smooth function supported in $[-1,2],$ 
 we then get a $0$-homogeneous partition of unity on ${\bf\Gamma}_R$ consisting of about $R^{1/2}$ functions  $\chi_{R,i},$ each of them being supported in a sector of  angular width  $\sim R^{-1/2}.$ 
 Note that we may  assume that for every $i=1,\dots, N_R,$  $\supp \chi_{R,i}\cap \Ga_{R}$ is contained in a sector
 $$
\theta_{\Rwurz}(\xi^i)=\{\xi\in\Gamma_{R}:|\angle((\xi^i_1,\xi^i_2),(\xi_1,\xi_2))|\leq c_0\Rwurz \},
$$
where we may and shall assume that $\xi^i\in \Gamma^\circ_R.$ Let $E=E_{\xi^i}$ be the orthonormal frame at $\xi^i.$ It is then easily seen that we can estimate
\begin{equation}\label{chiRi}
|\pa_{E_1}^{\al_1}\pa_{E_2}^{\al_2} \chi_{R,i}(\xi)|\le B_\al R^{\ka_1\al_1+\ka_2 \al_2}, \qquad \text{if}\  \xi_3\sim 1,
\end{equation}
uniformly in $i.$ 
Indeed,  heuristically, one easily sees that 
$$
|\pa_{E_1}^{\al_1}\pa_{E_2}^{\al_2}\chi_0(R^{\ka_1\wedge 1} F(\xi)/2)|\le B_\al R^{(\ka_1\wedge 1)(\al_1+\frac 12 \al_2)}\le B_\al R^{\ka_1\al_1+\ka_2 \al_2},
$$
(for a solid proof of the first inequality,  we refer to Lemma \ref{admissaex}, and the second inequality follows since $\ka_2\ge 1/2$).
Moreover,  easy estimates show that
$$
|\pa_{E_1}^{\al_1}\pa_{E_2}^{\al_2}\Psi_{\om_i}((\xi_1,\xi_2)/|(\xi_1,\xi_2)|\le B_\al R^{\frac 12 \al_2}\le B_\al R^{\ka_1\al_1+\ka_2 \al_2},
$$
and thus estimate \eqref {chiRi} follows by the product rule. 

It is important to note that  the constants $B_\al$ can be chosen to be independent of $R$ and $i$.

Given $i,$ let us  denote by $\eta$ the corresponding coordinates given by
$\xi=\eta E.$  If $\tilde \chi_{R,i}$ represents $\chi_{R,i}$ in these coordinates $\eta,$ then the previous estimates  \eqref{chiRi} are equivalent to
\begin{equation}\label{chiRi2}
|\pa_{\eta_1}^{\al_1}\pa_{\eta_2}^{\al_2} \tilde \chi_{R,i}(\eta)|\le B_\al R^{\ka_1\al_1+\ka_2 \al_2}, \qquad \text{if }\  \eta_3\sim 1.
\end{equation}

In view of Example \ref{Fclass},  let us next  choose  any  non-trivial classical phase functions $\psi_i, i=1,\dots,N_R,$  sharing the  same bounds 
 $|\pa_\eta^\beta \psi_i(\eta)|\le C_\beta, \, \eta\in \Ga_{R_0},$ on their derivatives, for a fixed family of constants $C_\beta, \beta\in \NN^3.$ 
We re-scale them by putting 
 $$
 \tilde\phi_{R,i}(\eta):=R^{-\gamma} \psi_i(R^{\ka_1}\eta_1,R^{\ka_2}\eta_2, \eta_3),
 $$
 and  denote by $\phi_{R,i}(\xi)$ the corresponding function in the original $\xi$-coordinates. 
 
 Then   $\phi_{R,i}$ satisfies estimates of the form
 $$
 |\pa_{\eta_1}^{\al_1}\pa_{\eta_2}^{\al_2} \tilde \phi_{R,i}(\eta)|\le C'_\al R^{\ka_1\al_1+\ka_2 \al_2-\ga}
 $$
 for suitable constants  $C'_\al,$ for every $i.$ 
 
 Then the phase $\phi_{R,i} \chi_{R,i}$ is 1-homogeneous and satisfies 
 $$
 |\pa_{\eta_1}^{\al_1}\pa_{\eta_2}^{\al_2} \big(\tilde \phi_{R,i} \tilde \chi_{R,i}\big)(\eta)|\le C''_\al R^{\ka_1\al_1+\ka_2 \al_2-\ga}
 $$
 in the $\eta$-coordinates, 
 i.e., 
 \begin{equation}\label{chiRi6}
|\pa_{E_1}^{\al_1}\pa_{E_2}^{\al_2} \big(\phi_{R,i}  \chi_{R,i}\big)(\xi)|\le C''_\al R^{\ka_1\al_1+\ka_2 \al_2-\ga}, \qquad \text{if }\  \xi_3\sim 1,
\end{equation}
with constants $C''_\al$ which are independent of $i$ and $R$.

By our perturbation lemmata,   the same kind of estimates will then even hold for any other orthonormal frame $E_{\check \xi}$ with 
$$
\check \xi\in \theta_{\Rwurz}(\xi^i)=\{\check\xi\in\Gamma_R:|\angle(\check\xi_1,\check\xi_2),(\xi^i_1,\xi^i_2))|\leq c_0\Rwurz \},
$$ with possibly larger constants  which, however,  are controlled by the constants  $C''_\al.$
 
    Finally, we set
\begin{equation}\label{phiRs}
 \phi^\circ_R:=\big( \sum\limits_{i=1}^{N_R}\phi_{R,i} \chi_{R,i}\big)\vert_{\mathbf \Ga^\circ_R}.
 \end{equation}

It is now evident  that there is a $\C\in\Sigma$ so that 
$\phi^\circ_R\in \F^{\ka_1,\ka_2,\gamma,\C}({\bf \Gamma}^\circ_R)$ for every $R\ge R_0.$

 \begin{remark}\label{globalphi}
Suppose ${\bf\Ga}^\circ_R={\bf\Ga}^\pm_R.$ If $\ka_1\geq1,$ then we can ``sum'' the  functions $\phi^\pm_R$ in \eqref{phiRs}  over dyadic values of $R$ to obtain examples of functions 
$\phi\in \F^{\ka_1,\ka_2,\gamma}$ such  that $\phi\vert_{{\bf \Gamma}^\pm_R}$ is non-trivial for every $R\ge R_0.$
We do not know wether  such functions $\phi$ can be constructed in general also when $\ka_1<1.$ 
\end{remark}

Indeed, suppose that  $\ka_1\ge 1.$ Then by choosing a  non-trivial  smooth cut-off function $\chi_1$ supported in the interval $(1/4,1)$ and satisfying  
$\sum_j\chi_1(2^j x)=1$ for all $x>0,$ one easily checks that  the function 
\begin{equation}\label{sumsoverR}
\phi(\xi):=\sum\limits_{\pm} \sum\limits_{j\ge \log_2(R_0)}\chi_1(\pm2^jF(\xi))\, \phi^\pm_{2^j}(\xi),
\end{equation} 
with $\phi^\pm_{2^j}$ defined as in \eqref{phiRs}, gives such an example (note that $2^j\le 2^{\ka_1j}$ and $2^{j/2}\le 2^{\ka_2j}$).

\color{black}


\section{L-boxes and the main theorem}\label{sboxmainthm}

\subsection{L-boxes associated to phases in $\F^{\ka_1,\ka_2,\gamma,\C}$ }\label{Sboxes}
In a next step, by adapting one of the key ideas from the paper \cite{SSS} by Seeger, Sogge and Stein to our setting, we shall decompose   any sector $\theta$  of angular width $\Rwurz$ within $\Ga^\pm_R$  into   smaller rectangular boxes  $\vartheta$ in such a way that any phase $\phi$ in $\F^{\kappa_1,\kappa_2,\gamma,\C}(\mathbf\Gamma_R^\pm)$ can ``essentially'' be replaced by an affine-linear phase $\phi_{\rm lin}$ over $\vartheta.$ The corresponding boxes  $\vartheta$, which shall be denoted as {\it L-boxes},  will essentially be chosen as large as possible for the given class $\F^{\kappa_1,\kappa_2,\gamma,\C}(\mathbf\Gamma_R^\pm).$
\medskip

To be more precise: if  $\phi_{\rm nl}:=\phi-\phi_{\rm lin}$ denotes the non-linear part  of  $\phi$ over $\vth,$ and if $\la\gg 1$ is the given ``frequency scale'' of any associated FIO-cone multiplier $T^\la_R$ with phase $\phi$ whose amplitude is in addition essentially  localized to $\vartheta$ by means of a suitable smooth cut-off  function $\chi_\vth,$ then  the factor 
$e^{i\la\phi_{\rm nl}}$ contributed  by  the non-linear part of $\phi$ can be included into this amplitude without changing the control on the  derivatives of this amplitude.
\smallskip

\medskip
To construct such boxes $\vth,$ recall that if $\phi\in \F^{\kappa_1,\kappa_2,\gamma,\C}(\mathbf\Gamma_R^\pm),$ where 
$\C=\{C_\al\}_{\al\in \NN^2_{\ge 2}},$ then by \eqref{deriv} we have in particular that
\begin{eqnarray}\label{2deriv}
\begin{split}
|\partial_{n(\xi)}^2\la \phi(\xi)| &\le& C_{(2,0)} \la R^{2\kappa_1-\gamma} \\
	|\partial_{t(\xi)}^2\la\phi(\xi)| &\le& C_{(0,2)} \la R^{2\kappa_2-\gamma}
\end{split}
	\end{eqnarray}
for any $\xi\in \theta.$ For $j=1,2,$ let us accordingly put 
\begin{eqnarray}\label{rho}
\begin{split}
	\tilde\rho_j&:=&(\lambda R^{2\kappa_j-\gamma})^{-1/2}, \\
	\rho_j&:=&\min\{\tilde\rho_j,R^{-1/j}\}. 
	\end{split}
\end{eqnarray}

We shall  always assume that

\begin{equation}\label{lambdaR}
\tilde \rho_2\le R^{-1/2}, \quad \text {i.e., that  } \  \la \ge R^{\ga+1-2\ka_2},
\end{equation}
so that 
 \begin{equation}\label{rho2tilde}
	\rho_2=\tilde\rho_2= (\lambda R^{2\kappa_2-\gamma})^{-1/2}.
\end{equation}

Note that  since $\kappa_2\geq 1/2,$  \eqref{lambdaR} holds in particular when $\la\ge R^\gamma.$ 
\medskip

The idea will then be to choose the  boxes $\vth$ essentially of dimensions $\rho_1\times\rho_2\times \rho_3,$  where we have put 
$\rho_3:=1.$

More precisely, given a point  $\xi_0\in\theta$,   let  again $E=E_{\xi_0}$ be the orthonormal frame at $\xi_0.$  Since this frame is constant along the ray through $\xi_0,$ we may indeed assume without loss of generality that $|\xi_0|=1.$ We then basically define the  {\it L-box} $\vth$ centered at $\xi_0$ by 
$$
\vartheta=\vartheta(\xi_0):=\{\xi= \xi_0+ E\zeta:\ |\zeta_j|\leq\rho_j,\ j=1,2,3 \}
$$
(for the precise definition, we refer to \eqref{decompvth}).

 Moreover,  by choosing the centers $\xi_0$ appropriately,  \eqref{rho} shows that we may essentially assume without loss  of generality that $\vth\subset \theta.$

 For  our next discussions, it will be more convenient to work  again with the linear coordinates $\eta$ given by $\xi=E\eta$ in place of the coordinates $\zeta$ above, i.e., $\zeta=\eta-\eta_0,$ where $\eta_0=:E^{-1}\xi_0.$ Note that by  the  definition of $E,$ we have $\eta_0=\trans (0,0,1).$

\smallskip
Given $\xi_0,$ we shall decompose the phase as 
$$
\phi(\xi) = \phi_{\rm lin}(\xi) + \phi_{\rm nl}(\xi),
$$
where $\phi_{\rm lin}$ denotes  the affine-linear part of $\phi$ in its Taylor expansion around $\xi_0.$ 
Since $\phi$ is  homogeneous of degree 1,  Euler's homogeneity relation implies that 
$$
\phi_{\rm lin} (\xi)=\phi(\xi_0)+\nabla\phi(\xi_0)(\xi-\xi_0)=\nabla\phi(\xi_0)\xi,
$$
so that $\phi_{\rm lin} $ is actually linear and hence also homogeneous of degree 1. Thus,   $\phi_{\rm nl}$ is homogeneous of degree 1 as well.

Note that in the coordinates $\eta,$ we can localize to the L-box $\vth$ by means of a smooth bump function 
\begin{equation}\label{thetabump}
\tilde\chi_\vth(\eta):=\chi_0(\rho_1^{-1}\eta_1)\chi_0(\rho_2^{-1}\eta_2) \chi_0(\eta_3-1).
\end{equation}

By $\chi_\vth$ we shall denote the corresponding function when expressed in the original coordinates $\xi=E\eta,$ i.e., 
$$
\chi_\vth(\xi):= \tilde\chi_\vth(E^{-1}\xi).
$$

\smallskip

The next lemma shows that the $\eta$-derivatives  of the  contribution $e^{i\la\phi_{\rm nl}}$  by the non-linear part of $\phi$ do indeed satisfy the same kind of estimates as the corresponding derivates of  the bump function $\tilde\chi_\vth(\eta).$

\begin{lemma}\label{nlcontrol}
   Assume that $\la\ge R^\ga.$ Given $\C\in \Sigma,$ there exist a family of constants $\C'=\{C'_\beta\}_{\beta\in \NN^3}$ so that for any 
$\phi\in\F^{\kappa_1,\kappa_2,\gamma,\C}(\Gamma_R^\pm)$, with $R\geq R_0$, any $\xi_0\in\Gamma^\pm_R$ with associated orthonormal frame $E=E_{\xi_0},$ the following estimates hold true:
\begin{equation}\label{sss-est}
|\partial^\beta_\eta (\la\phi_{\rm nl}\circ E)(\eta)|\leq C'_\beta \rho_1^{-\beta_1}\rho_2^{-\beta_2}\qquad \text{for all }\ \eta\in E^{-1}(\vartheta(\xi_0)), \beta\in\NN^3.
\end{equation}

Moreover, the  constants in $\C'$ can be chosen to be controlled by the constants $C_\al\in \C$ in \eqref{deriv}.\end{lemma}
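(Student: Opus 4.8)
The plan is to expand $\phi_{\rm nl}\circ E$ in a Taylor expansion around $\eta_0 = \trans(0,0,1)$ and to estimate each term using the derivative bounds we have already accumulated for $\phi$ in the frame directions. First I would note that $\phi_{\rm nl}$ is, by definition, the Taylor remainder of order $\ge 2$ of $\phi$ around $\xi_0$; hence $(\phi_{\rm nl}\circ E)$ vanishes to order $2$ at $\eta_0$, and moreover for any multi-index $\beta$ with $|\beta|=k$ we have $\partial^\beta_\eta(\phi_{\rm nl}\circ E)(\eta) = \partial^\beta_\eta(\phi\circ E)(\eta)$ when $k\ge 2$, while for $k\le 1$ the derivative $\partial^\beta_\eta(\phi_{\rm nl}\circ E)$ vanishes at $\eta_0$ and in general is controlled by a Taylor expansion with remainder in terms of the second derivatives. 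So the whole estimate reduces to bounding $\partial^\beta_\eta(\phi\circ E)(\eta)$ for $|\beta|\ge 2$ and $\eta\in E^{-1}(\vartheta(\xi_0))$, and then integrating back for the low-order derivatives.

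Next I would invoke the accumulated stability machinery. Since $E=E_{\xi_0}$ is the orthonormal frame at $\xi_0$, Lemma \ref{addkappa3} (applied with $\kappa_3:=0$, which is legitimate since we only need derivatives in the $E_3=\xi_0$ direction, automatically controlled by homogeneity via Euler's relation) gives, for every $\beta\in\NN^3_{\ge 2}$,
$$
|\partial^{\beta_1}_{n(\xi_0)}\partial^{\beta_2}_{t(\xi_0)}\partial^{\beta_3}_{\xi_0}\phi(\xi)|\le C'_\beta R^{\beta_1\kappa_1+\beta_2\kappa_2-\gamma}
$$
for every $\xi\in\theta_{\Rwurz}(\xi_0)\cap\Gamma^\pm_R$, with constants controlled by those in $\C$. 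Here I would use that $\vartheta(\xi_0)\subset\theta_{\Rwurz}(\xi_0)$ by the choice of the L-box dimensions \eqref{rho} (indeed $\rho_2\le R^{-1/2}$ by \eqref{lambdaR}), and that the radial ($E_3$) derivative of $\phi_{\rm nl}$ contributes nothing new. Writing $\partial^\beta_\eta(\phi\circ E) = \partial^{\beta_1}_{E_1}\partial^{\beta_2}_{E_2}\partial^{\beta_3}_{E_3}\phi$ evaluated along $\xi=E\eta$, I then multiply by $\la$ and compare $\la R^{\beta_1\kappa_1+\beta_2\kappa_2-\gamma}$ with $\rho_1^{-\beta_1}\rho_2^{-\beta_2}$. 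By \eqref{rho2tilde} we have $\rho_2^{-2}=\la R^{2\kappa_2-\gamma}$, and since $\rho_1\le\tilde\rho_1$ we have $\rho_1^{-2}\ge\la R^{2\kappa_1-\gamma}$. The key numerology is then: for $|\beta|=k\ge 2$,
$$
\la R^{\beta_1\kappa_1+\beta_2\kappa_2-\gamma} = \bigl(\la R^{2\kappa_1-\gamma}\bigr)^{\beta_1/2}\bigl(\la R^{2\kappa_2-\gamma}\bigr)^{\beta_2/2}\,\la^{1-(\beta_1+\beta_2)/2}\,R^{\beta_3\cdot 0}\cdot(\text{correction}),
$$
so that the exponent of $\la$ left over is $1-k/2\le 0$ when $k\ge 2$, and $\la\ge R^\gamma\ge 1$ forces this leftover factor to be $\le 1$; meanwhile $\beta_3$ contributes at most a combinatorial factor from Euler's relation, absorbed into $C'_\beta$. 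This yields $|\partial^\beta_\eta(\la\phi_{\rm nl}\circ E)(\eta)|\le C'_\beta\,\rho_1^{-\beta_1}\rho_2^{-\beta_2}$ for $|\beta|\ge 2$.

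For $|\beta|\le 1$ I would finish by the fundamental theorem of calculus: $\partial^\beta_\eta(\phi_{\rm nl}\circ E)$ vanishes at $\eta_0$ for $|\beta|\le 1$, so it is bounded by the supremum of its second-order derivatives over the segment from $\eta_0$ to $\eta$ times the coordinate displacement, which on $E^{-1}(\vartheta(\xi_0))$ is at most $\rho_j$ in the $j$-th direction; this converts the $|\beta|=2$ bound into the desired $|\beta|=0,1$ bounds, again with constants controlled by $\C$. I expect the main obstacle to be the bookkeeping in the numerology step — checking that for \emph{every} allowed $(\beta_1,\beta_2)$ with $\beta_1+\beta_2\ge 2$ (including unbalanced cases like $\beta_1$ large, $\beta_2=0$, where one must use $\rho_1\le R^{-1}$, not just $\rho_1\le\tilde\rho_1$) the factor $\la R^{\beta_1\kappa_1+\beta_2\kappa_2-\gamma}\rho_1^{\beta_1}\rho_2^{\beta_2}$ is $\lesssim 1$, using the standing constraints $\kappa_2\ge 1/2$, $|\kappa_1-\kappa_2|\le 1/2$, $\la\ge R^\gamma$ — together with making sure the extra $\beta_3$-derivatives really cost nothing. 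Everything else is a routine application of Lemma \ref{addkappa3} (or Lemma \ref{stability2}) plus the chain rule, with the control-of-constants claim following verbatim from those lemmata.
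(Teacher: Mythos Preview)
Your overall strategy matches the paper's --- Taylor-expand around $\eta_0$, reduce to derivative bounds of order $\ge 2$, then run the numerology using $\la\ge R^\ga$ --- but there is a real gap in how you handle the $\beta_3$-derivatives. You claim Lemma \ref{addkappa3} with $\kappa_3=0$ yields
\[
|\partial^{\beta_1}_{n(\xi_0)}\partial^{\beta_2}_{t(\xi_0)}\partial^{\beta_3}_{\xi_0}\phi(\xi)|\le C'_\beta R^{\beta_1\kappa_1+\beta_2\kappa_2-\gamma}
\]
for \emph{all} $\xi\in\theta_{R^{-1/2}}(\xi_0)$. That lemma, however, only gives this at $\xi_0$ itself, because its proof uses Euler's relation $\nabla\psi(\xi_0)\cdot\xi_0=(1-m)\psi(\xi_0)$, which identifies $E_3=\xi_0/|\xi_0|$ with the radial direction \emph{at $\xi_0$}. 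At another point $\xi\in\vartheta(\xi_0)$, the direction $E_3$ is no longer radial, and the Euler argument picks up error terms involving $\partial_{E_1}\psi$ and $\partial_{E_2}\psi$; this is precisely why Lemma \ref{addkappa3n} requires $\kappa_3\ge\kappa_2-\tfrac12$ rather than $\kappa_3=0$. And if you try $\kappa_3=\kappa_2-\tfrac12$, the numerology fails as soon as $\beta_3>0$ and $\kappa_2>\tfrac12$ (e.g.\ for $\beta=(0,0,2)$ you would need $\la R^{2\kappa_2-1-\gamma}\lesssim 1$, which is false when $\rho_2<R^{-1/2}$).

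The paper sidesteps this entirely: using 1-homogeneity of $\phi_{\rm nl}$, it writes $\tilde\phi_{\rm nl}(\eta)=\eta_3\,\tilde\phi_{\rm nl}(\eta_1/\eta_3,\eta_2/\eta_3,1)$ and then expresses $\psi_{\rm nl}$ via an integral Taylor remainder in $(\eta'_1,\eta'_2)$ (formula \eqref{phinl}). Differentiating this formula in any $\eta'_j$ --- including $\eta'_3$ --- only ever produces $(\eta'_1,\eta'_2)$-derivatives of $\psi$ evaluated on the slice $\{\eta'_3=1\}$, multiplied by harmless smooth factors. So one needs only $|\partial^{\alpha_1}_{\eta_1}\partial^{\alpha_2}_{\eta_2}\tilde\phi(\eta)|\le C_\alpha R^{\alpha_1\kappa_1+\alpha_2\kappa_2-\gamma}$ on the sector, and this comes from Corollary \ref{stability3} (the right workhorse here), not Lemma \ref{addkappa3}. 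Your FTC closing for $|\beta|\le 1$ is then subsumed by the integral remainder formula. One smaller point: the numerology is cleaner than you fear --- the paper's chain $\la R^{\alpha_1\kappa_1+\alpha_2\kappa_2-\gamma}\le(\la R^{-\gamma})^{(\alpha_1+\alpha_2)/2}R^{\alpha_1\kappa_1+\alpha_2\kappa_2}=\tilde\rho_1^{-\alpha_1}\tilde\rho_2^{-\alpha_2}\le\rho_1^{-\alpha_1}\rho_2^{-\alpha_2}$ uses only $\la\ge R^\gamma$ and $\rho_j\le\tilde\rho_j$; the ``unbalanced'' cases require no separate use of $\rho_1\le R^{-1}$.
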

\begin{remark}\label{eilaphi}
Our proof will show that  estimates of type  \eqref{sss-est} will hold as well for $e^{i\la \phi_{\rm nl}\circ E}$ in place of $\la \phi_{\rm nl}\circ E.$
\end{remark}

\begin{proof} Given $\xi_0,$ let $\tilde\phi$ again express $\phi$ in the linear coordinates $\eta,$ i.e., 
$\tilde\phi(\eta):=\phi(E\eta),$ and similarly $\tilde\phi_{\rm nl}(\eta):=\phi_{\rm nl}(E\eta),$ etc., so that 
$$
\tilde\phi(\eta) = \tilde\phi_{\rm lin}(\eta) + \tilde\phi_{\rm nl}(\eta),
$$
where now $\tilde\phi_{\rm lin}(\eta)$ is the  linear part of $\tilde\phi$ in its Taylor expansion around $e_3.$
\smallskip

In order to prove \eqref{sss-est}, we then re-scale $\tilde \phi$ by putting
$$
\psi(\eta'):=\lambda \tilde\phi(\rho_1\eta'_1\rho_2\eta'_2,\eta'_3),
$$
and accordingly setting
 $$
 \psi_{\rm nl}(\eta'):=\psi(\eta')-\psi_{\rm lin}(\eta'),
 $$
where $\psi_{\rm lin}(\eta') =\lambda \tilde\phi_{\rm lin}(\rho_1\eta'_1\rho_2\eta'_2,\eta'_3)$ is the  linear part in the Taylor expansion of $\psi$ around $\eta'_0.$

Then \eqref{sss-est} is equivalent to showing that 
\begin{equation}\label{rhoest}
	|\partial^\beta_{\eta'} \psi_{\rm nl}(\eta')|\leq C'_\beta \qquad \text{for all }\ |\eta'_1|,|\eta'_2|\le 1, \eta'_3\sim 1,\  \text{and}\  \beta\in\NN^3.
\end{equation}

Since, as we have seen, $\phi_{\rm lin}$  and $\phi_{\rm nl}$ are homogeneous of degree 1,  the same is true of $\psi_{\rm lin}$ and $\psi_{\rm nl}.$ 

 Moreover,  $\psi_{\rm nl}(\eta'_0)=0, \nabla \psi_{\rm nl}(\eta'_0)=0.$  Writing $s_1:=\eta'_1/\eta'_3$,  $s_2:=\eta'_2/\eta'_3$, $s=(s_1,s_2)$, by homogeneity and Taylor expansion in $s$  around $s=0,$ we have 
\begin{eqnarray}\label{phinl}
	\psi_{\rm nl}(\eta') = \eta'_3 \psi_{\rm nl}(s_1,s_2,1) &=&  \eta'_3 \int_0^1(1-\tau) 
	\trans(s_1,s_2) (D^2_{\eta'_1,\eta'_2}\psi_{\rm nl})(\tau s_1, \tau s_2,1)(s_1,s_2) d\tau \nonumber\\
	  &&\hskip-2cm=  (\eta'_3)^{-1} \int_0^1(1-\tau) \trans(\eta'_1,\eta'_2) (D^2_{\eta'_1,\eta'_2}\psi)(\tau\eta'_1/\eta'_3,\tau\eta'_2/\eta'_3,1)(\eta'_1,\eta'_2) d\tau,
\end{eqnarray}
where $D^2_{\eta'_1,\eta'_2}\psi$ denotes the reduced Hessian of $\psi$ as a function of $\eta'_1$ and $\eta'_2$.
By \eqref{phinl}, it will suffice to check  that for all $\alpha=(\alpha_1,\alpha_2)\in\NN^2_{\geq 2}$
\begin{equation}
	|\partial_{\eta'_1}^{\alpha_1}\partial_{\eta'_2}^{\alpha_2} \psi(\eta'_1,\eta'_2,1)| \leq C'_{\alpha},  
\end{equation}
or, equivalently, that
\begin{equation}\label{rhoest'}
	|\lambda\partial_{\eta_1}^{\alpha_1}\partial_{\eta_2}^{\alpha_2} \tilde\phi(\eta_1,\eta_2,1)| \leq C'_{\alpha} \rho_1^{-\alpha_1}\rho_2^{-\alpha_2}, 
\end{equation}
where the constants $C'_\al$ are controlled by the constants in $\C.$

\smallskip

But, by \eqref{rho}, we have  $\rho_2\le R^{-1/2}$, so $\vartheta(\xi_0)\subset\theta_{R^{-1/2}}(\xi_0)$. Therefore Corollary \ref{stability3} implies
 that for $\phi\in\F^{\kappa_1,\kappa_2,\gamma,\C}(\Gamma_R^\pm)$ and $\alpha\in\NN_{\geq2}^2$
\begin{equation}
	|\partial_{\eta_1}^{\alpha_1}\partial_{\eta_2}^{\alpha_2} \tilde\phi(\eta)| =|\partial_{n(\xi_0)}^{\alpha_1}\partial_{t(\xi_0)}^{\alpha_2} \phi(\xi)| \leq C''_{\alpha} R^{\alpha_1\kappa_1+\alpha_2\kappa_2-\gamma}
\end{equation}
on $\theta_{R^{-1/2}}(\xi_0),$ where the constants $C''_\al$ are controlled by the constants in $\C.$
\smallskip

But if $\alpha_1+\alpha_2\geq 2,$  then \eqref{lambdaR} and \eqref{rho} imply
$$\lambda R^{\alpha_1\kappa_1+\alpha_2\kappa_2-\gamma} \lesssim (\lambda R^{-\gamma})^{\tfrac12(\alpha_1+\alpha_2)} R^{\alpha_1\kappa_1+\alpha_2\kappa_2}
= (\tilde\rho_1)^{-\alpha_1} (\tilde\rho_2)^{-\alpha_2} \leq  \rho_1^{-\alpha_1} \rho_2^{-\alpha_2},$$
since we are assuming that $\la\ge R^\ga.$ This  proves \eqref{rhoest'}.
\end{proof}

\bigskip

\subsection{Decomposition into L-boxes}\label{Lboxdecomp}

Assume  that $\phi\in\F^{\kappa_1,\kappa_2,\gamma,\C}(\mathbf\Gamma_R^\pm).$ The preceding discussion shows that we can decompose  each sector $\theta$ of angular width $R^{-1/2}$ in $\Gamma_R^\pm$ into at most  $N(\lambda,R)$ L-boxes $\vartheta,$ where 
\begin{equation}\label{Ndefine}
	N(\lambda,R)\sim \frac{R^{-3/2}}{\rho_1\rho_2}.
\end{equation}
 More precisely, this can be done by means of smooth bump functions of the form
 
\begin{equation}\label{decompvth}
\chi^{\rho_1,\rho_2}_{i,k}(\xi):=\chi_0(\pm\rho_1^{-1}F(\xi)-k) \Psi_{\om_i}((\xi_1,\xi_2)/|(\xi_1,\xi_2)|)\chi_1(\xi_3),
\end{equation}
where similar to  Example \ref{existphi} the  functions $\Psi_{\om_i}((\xi_1,\xi_2)/|(\xi_1,\xi_2)|)$ are  angular bump functions which localize to angular sectors, here of angular width $\sim \rho_2,$ and $k\in \NN$ is such that $\frac 1{2R}\le k\rho_1\le \frac 1R.$ By choosing the $\Psi_{\om_i}$ and $\chi_0$ appropriately, we may also assume that these functions form a partition of unity on $\Gamma_R.$

\smallskip

We observe that $\chi^{\rho_1,\rho_2}_{i,k}$ does indeed localize to an L-box $\vth=\vth_{i,k}.$ A priori, the support of  $\chi^{\rho_1,\rho_2}_{i,k}$ might look like a curved box, but this  not the case, due to the following inequality:
\begin{equation}\label{rho1rho2}
\rho_2^2\le \rho_1.
\end{equation}
This is an  immediate consequence of the following lemma, which shows that $\rho_2^2\le \rho_1 R^{1/2} \rho_2\le \rho_1.$

\begin{lemma}\label{rhofrak}
Assume that $\tilde \rho_2=\rho_2.$ Then, for any $i,j=1,2$, we have
$$
\frac{\rho_i}{\rho_j}\leq R^{1/2}.
$$
\end{lemma}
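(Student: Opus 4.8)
The plan is to unwind the definitions in \eqref{rho} and use the standing hypotheses \eqref{param} and \eqref{lambdaR} (equivalently $\la\ge R^{\ga+1-2\ka_2}$), together with the assumption $\tilde\rho_2=\rho_2$ of the lemma. Recall that $\tilde\rho_j=(\la R^{2\ka_j-\ga})^{-1/2}$ and $\rho_j=\min\{\tilde\rho_j,R^{-1/j}\}$, so $\rho_2=\tilde\rho_2$ means $\rho_1\le\tilde\rho_1$ and $\rho_2=(\la R^{2\ka_2-\ga})^{-1/2}$. It therefore suffices to estimate the four ratios $\rho_i/\rho_j$ for $i,j\in\{1,2\}$; the cases $i=j$ are trivial, so only $\rho_1/\rho_2$ and $\rho_2/\rho_1$ need to be bounded by $R^{1/2}$.

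First I would treat $\rho_2/\rho_1$. Since $\rho_1\ge\min\{\tilde\rho_1,R^{-1}\}$, we have $\rho_2/\rho_1\le\rho_2/\tilde\rho_1$ or $\rho_2/\rho_1\le\rho_2 R$; in the first subcase $\rho_2/\tilde\rho_1=\tilde\rho_2/\tilde\rho_1=R^{\ka_1-\ka_2}\le R^{1/2}$ by \eqref{param}, and in the second subcase $\rho_2 R=(\la R^{2\ka_2-\ga})^{-1/2}R=R^{1-\ka_2+\ga/2}\la^{-1/2}\le R^{1-\ka_2+\ga/2-(\ga+1-2\ka_2)/2}=R^{1/2}$ using \eqref{lambdaR}. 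For the reverse ratio $\rho_1/\rho_2$, since $\rho_2=\tilde\rho_2$ and $\rho_1\le\tilde\rho_1$ we get $\rho_1/\rho_2\le\tilde\rho_1/\tilde\rho_2=R^{\ka_2-\ka_1}\le R^{1/2}$, again by \eqref{param}. Combining these gives $\rho_i/\rho_j\le R^{1/2}$ for all $i,j\in\{1,2\}$, which is exactly the claim.

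I do not expect a genuine obstacle here: everything is bookkeeping with the exponents in \eqref{rho}, \eqref{param} and \eqref{lambdaR}. The one point to be careful about is handling the minimum defining $\rho_1$ by case distinction (whether $\rho_1=\tilde\rho_1$ or $\rho_1=R^{-1}$), and making sure the hypothesis $\la\ge R^{\ga+1-2\ka_2}$ coming from \eqref{lambdaR} is used in exactly the subcase where $\rho_1$ is the ``small'' value $R^{-1}$. The inequality $\rho_2^2\le\rho_1 R^{1/2}\rho_2\le\rho_1$ quoted just before the lemma then follows immediately: the first step is $\rho_2\le R^{1/2}\rho_1$, i.e.\ $\rho_1/\rho_2\ge R^{-1/2}$ rewritten, and the second is $R^{1/2}\rho_2\le\rho_1\cdot(\rho_1/\rho_2)\cdot\ldots$ — more directly, $\rho_2^2=(\rho_2/\rho_1)\rho_1\rho_2\le R^{1/2}\rho_1\rho_2$ and then $R^{1/2}\rho_2\le\rho_1$ is the same as $\rho_1/\rho_2\ge R^{1/2}$... so one actually reads off $\rho_2^2\le\rho_1$ directly from $\rho_2\le R^{1/2}\rho_1$ combined with $\rho_2\le R^{-1/2}$ (which holds by \eqref{rho2tilde} and \eqref{lambdaR}), giving $\rho_2^2\le R^{-1/2}\rho_2\le R^{-1/2}\cdot R^{1/2}\rho_1=\rho_1$.
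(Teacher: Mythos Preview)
Your proof is correct and follows essentially the same route as the paper: bound $\rho_1/\rho_2\le\tilde\rho_1/\tilde\rho_2=R^{\ka_2-\ka_1}\le R^{1/2}$ via \eqref{param}, and for $\rho_2/\rho_1$ distinguish the cases $\rho_1=\tilde\rho_1$ (where again \eqref{param} suffices) and $\rho_1=R^{-1}$ (where $\rho_2\le R^{-1/2}$ does the job). Your explicit computation with $\la\ge R^{\ga+1-2\ka_2}$ in the second subcase is just a restatement of $\rho_2=\tilde\rho_2\le R^{-1/2}$, which is how the paper phrases it; the final paragraph on $\rho_2^2\le\rho_1$ is a bit meandering but lands on the right one-line argument.
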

\begin{proof} 
 By definition \eqref{rho}, we have
$$
	\frac{\tilde\rho_i}{\tilde\rho_j} = R^{\kappa_j-\kappa_i} \leq R^{1/2},
$$
since  $|\kappa_1-\kappa_2|\le 1/2.$
Recall that $\rho_2=\tilde\rho_2\leq R^{-1/2}$ by \eqref{rho2tilde}, so we always have 
$$
\frac{\rho_1}{\rho_2}\leq\frac{\tilde\rho_1}{\tilde\rho_2} \leq  R^{1/2}.
$$
 If $\rho_1=\tilde\rho_1$, we also have that $\rho_2/\rho_1\le R^{1/2}.$  Otherwise, $\rho_1=R^{-1}$, and
$\rho_2\leq R^{-1/2} = R^{1/2}\rho_1,$ so that again $\rho_2/\rho_1\le R^{1/2}.$
\end{proof} 

By means of \eqref{rho1rho2} it is also easy to see that the derivatives of $\chi^{\rho_1,\rho_2}_{i,k}$  satisfies the same kind of estimates in the $\eta$-coordinates as the  bump function $\tilde \chi_\vth$ in \eqref{thetabump}.

\smallskip 

Of course, the preceding decomposition into L-boxes can be preformed in a very similar way also for phase functions 
$\phi\in\F^{\kappa_1,\kappa_2,\gamma,\C}(\mathbf\Gamma_R)$ defined on $\mathbf\Gamma_R,$ with $\Ga^\pm_R$ replaced by 
$\Ga_R;$  we skip the details.

\subsection{The small mixed derivative condition (SMD)}\label{SMDcond}

Let again either $\Ga^\circ_R=\Ga_R,$ or $\Ga^\circ_R=\Ga^\pm_R.$ In  the case where  $R\tilde \rho_2>1,$ i.e., where 
$$
\la<R^{2(1-\ka_2)+\gamma},
$$
 the conditions \eqref{deriv} on a phase  $\phi\in\F^{\kappa_1,\kappa_2,\gamma,\C}(\mathbf \Ga^\circ_R)$ may not be strong enough in general for guaranteeing  our FIO-cone multiplier estimates,  and we need an extra condition allowing to  control the variation of the gradient of $\la\phi$  in a sufficient way.
\smallskip

To this end, recall that if $\al_1=\al_2=1,$ then \eqref{deriv} requires in particular that 
 \begin{equation}\label{mixedder}
 |\partial_{n(\xi_0)}\partial_{t(\xi_0)} \la \phi(\xi_0)|\leq C\la R^{\kappa_1+\kappa_2-\gamma}\lesssim \rho_1^{-1} \rho_2^{-1}
	 \quad \text{for all}\  \xi_0\in\Gamma^\circ_R,
\end{equation}
since $\la R^{\kappa_1+\kappa_2-\gamma}=  \tilde\rho_1^{-1} \tilde\rho_2^{-1}\le \rho_1^{-1} \rho_2^{-1}.$

We shall therefore   impose the following strengthening of the above condition, 
the {\it small mixed-derivative condition (SMD)}:
\begin{equation}\label{smd}
 |\partial_{n(\xi_0)}\partial_{t(\xi_0)} \la \phi(\xi_0)|\leq C \frak a \rho_1^{-1} \rho_2^{-1}
	 \quad \text{for all}\  \xi_0\in\Gamma_R^\circ,
\end{equation}
where 
\begin{equation}\label{adefine}
\frak a:=\frac 1{R\tilde \rho_2}\wedge 1.
\end{equation}
Note that this condition is stronger than \eqref{mixedder} only when $R\tilde \rho_2>1.$

\subsection{Admissible amplitudes}

Let again denote $\Ga^\circ_R$ either $\Ga_R$, or $\Ga^\pm_R.$ A smooth function $a_R$ on $\Ga^\circ_R$ will be called an {\it admissible amplitude} on $\Ga^\circ_R,$ if for every $\xi\in\Ga^\circ_R,$
\begin{equation}\label{admisamp}
 |\partial_{n(\xi)}^{\alpha_1}\partial_{t(\xi)}^{\alpha_2} \partial_{\xi}^{\alpha_3} a_R(\xi)|\leq B_{\alpha} R^{\alpha_1+\alpha_2/2}
 \qquad \text{for all } \al\in \NN^3.
\end{equation}

\begin{remark}\label{amppert}
If $a_R$ is an admissible amplitude in $\Ga^\circ_R,$ then there are constants $\{B'_\al\}_\al$ which are controlled by the constant $\{B_\al\}_\al$  in \eqref{admisamp} so that for every $\xi\in\Ga_R$ and every  $\check\xi\in \theta_{R^{-1/2}}(\xi),$
$$
|\partial_{n(\check\xi)}^{\alpha_1}\partial_{t(\check\xi)}^{\alpha_2} \partial_{\check\xi}^{\alpha_3} a_R(\xi)|\leq B'_{\alpha} R^{\alpha_1+\alpha_2/2}
 \qquad \text{for all } \al\in \NN^3.
 $$
 Conversely, if for every $\xi\in\Ga^\circ_R$ there is a $\check\xi\in \theta_{R^{-1/2}}(\xi)$ such that the above estimates hold true, then the estimates \eqref{admisamp} hold true, with constants $\{B_\al\}_\al$ which are controlled by the constants $\{B'_\al\}_\al.$ 
\end{remark}
This follows immediately from Lemma \ref{EEcheck} in combination with Lemma \ref{stability} (choosing $\al:=R^{-1/2}$ and $\ka_1:=1, \ka_2:=1/2$ and $\ka_3:=0$).

\begin{lemma}\label{admissaex}
For $R\gg 1,$ let $I_R:=\{u\in\RR: |Ru|<1\}$  and $I^\pm_R:=\{u\in\RR: 1/2 <\pm Ru<1\},$ and denote by $I^\circ_R$ either $I_R$ (and let then $\Ga^\circ_R=\Ga_R$), or $I^\pm_R$ (and let then $\Ga^\circ_R=\Ga^\pm_R$). 

Assume that $h_R:I^\circ_R\to \bC$ is a smooth function satisfying estimates of the following form on $\Ga^\circ_R:$
\begin{equation}\label{hRest}
\big|\frac {d^k}{du^k}h_R(u)\big|\le C_k R^k, \qquad k\in \NN.
\end{equation}

Then $a_R(\xi):=h_R(F(\xi)), \,\xi\in\Ga^\circ_R,$ is an admissible amplitude, i.e., $a_R$ satisfies  estimates of the form \eqref{admisamp}, with constants $B_\al$ controlled by the family of constants $\{C_k\}_k.$ 
\end{lemma}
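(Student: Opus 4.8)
The plan is to reduce everything to a single chain-rule computation on the composition $a_R = h_R \circ F$, organized around the key fact that $F$ is $0$-homogeneous, so it is essentially constant in the radial direction, and that on any sector $\theta_{R^{-1/2}}(\xi)$ the normal derivative of $F$ is of size $\sim R$ while the horizontal tangential derivative and the radial derivative are of size $O(1)$ (indeed, $\partial_{E_3} F = 0$ exactly, by $0$-homogeneity and Euler). First I would fix $\xi_0 \in \Gamma^\circ_R$, pass to the orthonormal frame $E = E_{\xi_0}$ at $\xi_0$ and introduce linear coordinates $\eta$ via $\xi = E\eta$, so that the claimed estimate \eqref{admisamp} at $\xi_0$ becomes a bound $|\partial_\eta^\alpha (a_R\circ E)(\eta_0)| \le B_\alpha R^{\alpha_1 + \alpha_2/2}$ at $\eta_0 = \trans(0,0,1)$. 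Then I would record the derivative bounds for $\tilde F := F\circ E$ in these coordinates: namely $|\partial_\eta^\alpha \tilde F(\eta)| \lesssim R^{\alpha_1 + \alpha_2/2 - 1}$ for $|\alpha|\ge 1$ (with $\partial_{\eta_3}$-derivatives only coming in through the $0$-homogeneity, hence no growth at all in the radial direction, exactly the mechanism used in Lemma \ref{addkappa3}). Here $F(\xi) \sim 1/R$ on $\Gamma^\circ_R$, so $\tilde F(\eta_0) \in I^\circ_R$ and we may apply \eqref{hRest}.

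Next I would expand $\partial_\eta^\alpha(h_R \circ \tilde F)$ by the Faà di Bruno formula: it is a finite sum over partitions of the multi-index $\alpha$ of terms $h_R^{(k)}(\tilde F)\cdot\prod_{\ell} \partial_\eta^{\alpha^{(\ell)}}\tilde F$, where $k$ is the number of blocks and $\sum_\ell \alpha^{(\ell)} = \alpha$ with each $|\alpha^{(\ell)}|\ge 1$. Using $|h_R^{(k)}(\tilde F)| \le C_k R^k$ from \eqref{hRest} and the $\tilde F$-derivative bounds, each such term is bounded by $C_k R^k \cdot \prod_\ell R^{\alpha^{(\ell)}_1 + \alpha^{(\ell)}_2/2 - 1}$. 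Since there are $k$ factors, the product of the $-1$'s cancels the $R^k$ exactly, and the exponents of the $\tilde F$-factors add up to $\alpha_1 + \alpha_2/2$. Hence every term is $\lesssim R^{\alpha_1 + \alpha_2/2}$, with an implied constant depending only on finitely many of the $C_k$ (those with $k \le |\alpha|$) and combinatorial factors, which is exactly the assertion that the $B_\alpha$ are controlled by $\{C_k\}_k$ in the sense of the Definition preceding Corollary \ref{stability3}.

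I expect the only genuinely non-routine point to be the careful verification of the derivative bounds $|\partial_\eta^\alpha \tilde F(\eta)| \lesssim R^{\alpha_1+\alpha_2/2-1}$ on $\theta_{R^{-1/2}}(\xi_0)$, including the fact that differentiating in the radial direction $\eta_3$ does not worsen the estimate and the fact that a single horizontal-tangential derivative already produces the gain $R^{-1/2}$ relative to a normal derivative. This is the analogue, for the specific function $F$, of the heuristic estimate for $\chi_0(R^{\kappa_1\wedge 1} F(\xi))$ invoked in Example \ref{existphi}, and it can be done either by writing $F$ explicitly in the coordinates adapted to $\xi_0$ (as in the proof of Lemma \ref{EEcheck}, where $E_1 = s\,\trans(1,0,-a)$, $E_2 = \trans(0,1,0)$, $E_3 = s\,\trans(a,0,1)$ with $a = 1+O(R^{-1})$, and then $\tilde F(\eta) = (\text{quadratic in }\eta)/(\text{quadratic in }\eta)$ with the relevant first-order vanishing in the $\eta_2$-variable), or more conceptually by combining Lemma \ref{EEcheck} with the homogeneity argument of Lemma \ref{addkappa3}. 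Once that ingredient is in hand, the Faà di Bruno bookkeeping is immediate, and finally I would remove the standing assumption $|\xi_0| = 1$ used implicitly in the coordinate normalization by invoking the $0$-homogeneity of $F$ and Remark \ref{amppert} (or just the homogeneity of the whole setup), so that the estimate on the truncation $\xi_3 \sim 1$ gives the estimate everywhere on the cone $\Gamma^\circ_R$.
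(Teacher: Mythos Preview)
Your approach is correct and reaches the same conclusion, but the execution differs from the paper's. The paper does not work in the orthonormal frame $E_{\xi_0}$ directly; instead, after rotating so that $(\xi_0)_2 = 0$, it passes to the tilted coordinates $\eta$ of \eqref{tiltedc} (as in Example~\ref{Tex}), in which $F$ has the explicit form $\tilde F(\eta) = (\eta_2^2/2 - \eta_1\eta_3)/(\eta_3+\eta_1)^2$, and then applies the anisotropic rescaling $\eta_1' = R\eta_1$, $\eta_2' = R^{1/2}\eta_2$, $\eta_3' = \eta_3$. This simultaneously normalizes $h_R$ (to $\tilde h_R(u') = h_R(R^{-1}u')$, whose derivatives are bounded by \eqref{hRest}) and $R\tilde F$ (to a function with derivatives bounded uniformly in $R$), so the composition has bounded derivatives by the ordinary chain rule, with no Fa\`a di Bruno bookkeeping needed; Remark~\ref{amppert} then transfers the estimate from the tilted-coordinate frame back to the orthonormal frame at $\xi$. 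Your route amounts to carrying out the same rescaling implicitly, tracking the resulting $R$-weights through Fa\`a di Bruno by hand. This is a little more work---in particular the verification of $|\partial_\eta^\alpha \tilde F(\eta_0)| \lesssim R^{\alpha_1+\alpha_2/2-1}$ for the cases $\alpha_1=0$, $\alpha_2\le 1$ requires exploiting the vanishing of $\partial_{E_2}F$ and $\partial_{E_3}F$ along the ray through $\xi_0$, as you note---but it makes more transparent which structural features of $F$ are responsible for the gains, and avoids the somewhat ad hoc tilted coordinate system.
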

\begin{proof} In order to check \eqref{admisamp} at $\xi\in\Ga^\circ_R,$ after scaling and a rotation around the axis of the light cone, we may assume that $\xi_2=0,$   $\xi_3=1$ and $\xi_1=-1+\mathcal O(R^{-1}).$ Then, by changing to the tilted coordinates $\eta$ from \eqref{tiltedc} as in Example \ref{Tex},  in which $\eta_3+\eta_1\sim 1,$
the function $a_R$ assumes the form  $\tilde a_R(\eta)= h_R(\tilde F_v(\eta)),$
where  (cf. \eqref{tildeF}) 
$$
\tilde F(\eta):=\frac{\eta_2^2/2-\eta_1\eta_3}{(\eta_3+\eta_1)^2}.
$$
Moreover, the vector $\check \xi$ corresponding to $\check\eta:=e_3=\trans(0,0,1)$ in the tilted coordinates lies in $\theta_{R^{-1/2}}(\xi),$ and thus, by  Remark \ref{amppert},  as in Example  \ref{Tex} it  will suffice to prove that estimates of the form
$$
 |\partial_{\eta_1}^{\al_1}\partial_{\eta_2}^{\al_2}\partial_{\eta_3}^{\al_3} \tilde a_R(\eta)|\le B_\al R^{\al_1+\tfrac12 \al_2}
 $$
hold at $\eta=\trans (0,0,1),$ with constants $B_\al$ controlled by the constants $C_k.$ 
 \smallskip
 
The scaling transform given by the change of coordinates $\eta_1'=R\eta_1$, $\eta_2'=R^{1/2}\eta_2$ $\eta_3'=\eta_3$ finally allows to reduce these estimates to the equivalent estimates 
\begin{equation}\label{hRests}
 |\partial_{\eta'_1}^{\al_1}\partial_{\eta'_2}^{\al_2}\partial_{\eta'_3}^{\al_3} \tilde{\tilde a}_R(\eta')|\le B_\al,
\end{equation}
 at $\eta'=\trans (0,0,1),$ where  $\tilde{\tilde a}_R(\eta'):=\tilde a(R^{-1}\eta'_1,R^{-\frac 12}\eta'_2,\eta'_3)$ expresses $\tilde a_R$ in the coordinates $\eta'.$  But, one easily computes that 
 $$
 \tilde{\tilde a}_R(\eta')=\tilde h_R(\tilde {\tilde F}(\eta')),
 $$
 where $\tilde h_R(u'):=h_R(R^{-1} u')$, and where
 $$
 \tilde {\tilde F}(\eta'):=\frac{(\eta'_2)^2/2-\eta'_1\eta'_3}{(\eta'_3+R^{-1}\eta'_1)^2}.
 $$
 Since clearly 
$$
\left|\frac{d^k}{du^k} \tilde h_R(u')\right| = R^{-k} |h^{(k)}_R(u)|\leq C_k,
$$
the estimates \eqref{hRests}  are now  obvious.
\end{proof}

\subsection{The main result}\label{maintheorem}
We are now in a position to state our main theorem. 

\smallskip

Given $R\gg  1,$  we fix a smooth bump function $\chi_R$ which localizes to $\Ga^\circ_R$ as follows:

-- If $\Ga^\circ_R=\Ga_R,$ then we define $\chi_R$ by 
$$
\chi_R(\xi):= \chi_0(RF(\xi)) \chi_1(\xi_3),
$$
 with suitable smooth bump functions  $\chi_0$ and $\chi_1$ so that $\chi_R$ is supported in $\Ga_R.$ 
 
 -- If $\Ga^\circ_R=\Ga^\pm_R,$ then we define $\chi_R$ by 
$$
\chi_R(\xi):= \chi_1(\pm RF(\xi)) \chi_1(\xi_3),
$$
 with a  smooth bump function $\chi_1$ chosen so that $\chi_R$ is supported in $\Ga^\pm_R.$

\smallskip

Given a phase $\phi\in \F^{\kappa_1,\kappa_2,\gamma,\C}(\mathbf \Ga^\circ_R)$ and an admissible amplitude $a_R$ on 
$\Ga^\circ_R,$ we denote by  $T^\la_R:=T_{m^\lambda_R}$  the Fourier multiplier  operator given by the multiplier
\begin{equation}\label{mlaR}
m^\lambda_R(\xi) = m^{\lambda,\phi}_{R,a_R}(\xi):= e^{-i\lambda\phi(\xi)} a_R(\xi) \chi_R(\xi).
\end{equation} 

\smallskip

\begin{thm}\label{mainthm}
Assume that  a family of constants $\C\in\Sigma$ and $R_0\gg1$ are given, and denote for $R\ge R_0$ by $\mathbf\Ga^\circ_R$  either $\mathbf \Ga^\pm_R,$ or $\mathbf \Ga_R.$ 

Let $\phi\in \F^{\kappa_1,\kappa_2,\gamma,\C}(\mathbf \Ga^\circ_R),$ and assume that $\phi$ satisfies the small mixed-derivative condition (SMD). Moreover, let  $a_R$ be an admissible amplitude, and let $N(\la,R)$ be as defined in \eqref{Ndefine}.  

Then, for any $\epsilon >0,$   there exists a constant $C_\epsilon>0$ so that for all $R\geq R_0$ and  $\lambda\ge R^{\gamma}$ the following estimate holds true:
\begin{equation}\label{mainthmeq}
 \|T^\lambda_{R} f\|_{L^4(\RR^3)} \leq C_\epsilon\,R^\epsilon N(\lambda,R)^{\frac12} \|f\|_{L^4(\RR^3)}.
\end{equation}
The constant  $C_\epsilon$ can be chosen independently of   $R\ge R_0,$ $\phi$ and $a_R,$ and  can be controlled by  a finite number of the constants $C_\al$ in $\C$ and $B_\al$ in \eqref{admisamp}.
\end{thm}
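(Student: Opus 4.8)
The plan is to combine three ingredients: an L-box decomposition that turns the local pieces of $T^\la_R$ into translation operators, the elementary Seeger--Sogge--Stein bound within a single sector $\theta$ of angular width $\Rwurz$, and the Guth--Wang--Zhang induction-on-scales machinery behind \eqref{squarest}, which will allow us to glue the $\sim R^{1/2}$ sectors together without the extra loss of $R^{1/4}$ present in \eqref{SSSest}. By Remark \ref{GaRGapm} we may work throughout with $\Ga_R$ in place of $\Ga_R^\pm$.

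First I would fix a partition of unity $\{\chi_\vth\}_\vth$ into the L-boxes of Subsection \ref{Lboxdecomp} and write $m^\la_R=\sum_\vth m^\la_R\chi_\vth$. On an L-box $\vth$ with centre $\xi_\vth$ and orthonormal frame $E=E_{\xi_\vth}$, split $\phi=\phi_{\rm lin}+\phi_{\rm nl}$ as in Subsection \ref{Sboxes}. Since $\phi$ is $1$-homogeneous, $\phi_{\rm lin}(\xi)=\nabla\phi(\xi_\vth)\xi$ is linear, so $e^{-i\la\phi_{\rm lin}}$ is a pure modulation and $T_{m^\la_R\chi_\vth}$ equals translation by $-\la\,\trans(\nabla\phi(\xi_\vth))$ followed by the Fourier multiplier operator with symbol $b_\vth:=e^{-i\la\phi_{\rm nl}}\,a_R\,\chi_\vth\,\chi_R$. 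By Lemma \ref{nlcontrol} and Remark \ref{eilaphi}, together with the admissibility of $a_R$ and Remark \ref{amppert}, the symbol $b_\vth$ is, in the coordinates $\eta$ adapted to $\vth$, a standard bump function at the L-box scale $\rho_1\times\rho_2\times1$; in particular $\|\F^{-1}b_\vth\|_{L^1(\RR^3)}\lesssim1$ uniformly in $\vth,R,\la$, with the implied constant depending only on finitely many of the $C_\al$ and $B_\al$. This is the point where the (SMD) condition enters: in the regime $R\tilde\rho_2>1$ it forces $\nabla\phi$ to vary by at most $\sim\frak a\,\rho_1^{-1}\rho_2^{-1}$ across a sector $\theta$, so that the translation vectors attached to the L-boxes $\vth\subset\theta$ remain confined to a single translate of the dual box of $\theta$; this is precisely what is needed for the sector pieces to retain the almost-orthogonality dictated by the cone.

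With this in hand, the single-sector estimate \eqref{SSSest2} is immediate: if $\hat f$ is supported in one sector $\theta$, then $T^\la_R f=\sum_{\vth\subset\theta}T_{m^\la_R\chi_\vth}f$ is a sum of at most $N(\la,R)$ terms, each an $L^1$-normalised convolution composed with a translation, whence $\|T^\la_R\|_{L^\infty\to L^\infty}\lesssim N(\la,R)$; since trivially $\|T^\la_R\|_{L^2\to L^2}\lesssim1$, interpolation gives the factor $N(\la,R)^{1/2}$. For a general $f$, summing this over the $\sim R^{1/2}$ sectors would reintroduce a loss of $R^{1/4}$, so instead I would route the argument through the Guth--Wang--Zhang semi-norm $\mathcal G_R(\cdot)$ of Subsection \ref{GWZ}: it satisfies $\|g\|_{L^4(\RR^3)}\lesssim_\e R^\e\,\mathcal G_R(g)$ for every $g$ whose Fourier transform is supported in $\Ga_R$, it is built only from $L^2$-type local averages, and it is invariant under translations and compatible with the wave-packet decomposition at scale $\Rwurz$. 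Since $\widehat{T^\la_R f}=m^\la_R\hat f$ is again supported in $\Ga_R$, the theorem is reduced to the estimate
\begin{equation*}
\mathcal G_R\big(T^\la_R f\big)\ \lesssim\ N(\la,R)^{1/2}\,\|f\|_{L^4(\RR^3)}.
\end{equation*}

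This last inequality I would prove by the same induction on scales as the proof of \eqref{squarest} in \cite{GWZ}, via the broad/narrow dichotomy. In the narrow case a parabolic rescaling of the cone turns $T^\la_R$ into an FIO-cone multiplier at a smaller scale $R'$ and frequency $\la'$, and here one must verify, using the stability results of Section \ref{sec:stable}, that the rescaled phase still lies in $\F^{\ka_1,\ka_2,\gamma,\C'}$ with constants $\C'$ controlled by $\C$, that it still obeys (SMD), and that $N(\la',R')$ is related to $N(\la,R)$ in the way required for the induction to close; in the broad case one combines the single-sector bound above with the broad cone square-function estimate from \cite{GWZ}. Because the L-box pieces act as translations, they commute harmlessly with the wave-packet and rescaling operations, so that the only arithmetic loss per sector is the factor $N(\la,R)^{1/2}$ coming from the $N(\la,R)$ L-boxes it contains. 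The general $L^p$-bounds of Corollary \ref{mainlp} then follow by interpolating \eqref{mainthmeq} with the trivial $L^2$-bound. I expect the main obstacle to be this final step: faithfully reproducing the Guth--Wang--Zhang induction while carrying the extra factor $N(\la,R)^{1/2}$ through every stage, and in particular checking that parabolic rescaling genuinely preserves the class of phases, the (SMD) condition, and the correct bookkeeping of $N(\la,R)$.
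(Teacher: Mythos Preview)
Your proposal has the right starting ingredients (L-box decomposition, single-sector SSS bound, and the GWZ machinery) but it diverges from the paper precisely at the step you flag as the main obstacle, and that divergence is a genuine gap rather than an alternative route. You propose to rerun the full Guth--Wang--Zhang induction on scales, carrying the FIO through every broad/narrow and parabolic-rescaling step, and then to check that the phase class $\F^{\ka_1,\ka_2,\gamma,\C}$, the (SMD) condition, and the quantity $N(\la,R)$ are all stable under parabolic rescaling. None of this stability is established in the paper (and there is no reason to expect it holds in general: parabolic rescaling mixes the normal, tangential and radial directions in a way that the defining estimates \eqref{deriv} are not designed to survive). So as written the proposal is incomplete at exactly the point where the real work lies.

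The paper avoids this entirely. It uses the GWZ inequality \eqref{gwzest} as a \emph{black box} applied to $T^\la_R f$, and reduces the theorem to bounding, for each dyadic scale $s\in[R^{-1/2},1]$, the semi-norm $\|T^\la_R f\|_{GWZ,s}$ directly by $C_\ve R^\ve N(\la,R)^{1/2}\|f\|_4$. The point is that $\|\cdot\|_{GWZ,s}$ is built from \emph{local $L^2$-norms} over boxes $U\transl U_{\tau,R}$, so one can exploit Plancherel after localisation. Concretely, for each $\tau$ and each $\theta\subset\tau$ the paper constructs enlarged boxes $Q^\theta_{\tau,R}$ (of carefully chosen dimensions $q_1\times q_2\times q_3$ depending on the regime) large enough to contain both $U_{\tau,R}$ and all the translated dual L-boxes $\hat\vth^\ve=\la\nabla\phi(\xi_\vth)+\vth^{*,\ve}$ for the relevant $\vth$'s; then inside each $Q$, Plancherel gives almost-orthogonality of the $\vth$-pieces, and a Cauchy--Schwarz/counting argument produces the factor $N(\la,R)^{1/2}$ with the cost $|Q|/|U|$ exactly cancelling the gains. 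This requires a case split (Case I: $sR\tilde\rho_2>1$, with sub-cases according to whether $\al=sR\rho_2^2$ exceeds $R^{-1/2}$; Case II: $sR\tilde\rho_2\le1$), and the role of (SMD) is not what you describe: it is used, via Lemma \ref{Hdiff}, to control the variation of $\la\nabla\phi$ across intermediate caps $\si$ (or sectors $\Theta$) so that the boxes $Q$ can be chosen with the right dimensions. Your description of (SMD) as confining the translations to ``a single translate of the dual box of $\theta$'' is too coarse; the actual control is relative to the $q_j$'s and only over $\si$ or $\Theta$, not over all of $\theta$.
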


\begin{remark}\label{ka2g2}
If $\la\ge R^{2(1-\ka_2)+\gamma},$ i.e., if $R\tilde\rho_2\le 1,$ then  condition (SMD) does not provide any  extra condition.
This applies in particular when $\ka_2\ge 1,$  since we  assume that $\la\ge R^\ga$  (which, in return,  is basically needed in the proof of Lemma \ref{nlcontrol}).\end{remark}

\begin{cor}\label{mainlp}
Assume that  $\C\in\Sigma$ and $R_0\gg1$ are given, and denote for $R\ge R_0$ by $\mathbf\Ga^\circ_R$  either $\mathbf \Ga^\pm_R,$ or $\mathbf \Ga_R.$ 

Let $\phi\in \F^{\kappa_1,\kappa_2,\gamma,\C}(\mathbf \Ga^\circ_R),$ and assume that $\phi$ satisfies the small mixed-derivative condition (SMD). Moreover, let  $a_R$ be an admissible amplitude, and let $N(\la,R)$ be as defined in \eqref{Ndefine}.  

Then, for any $\epsilon >0,$   there exists a constant $C_\epsilon>0$ so that for all $R\geq R_0$ and  $\lambda\ge R^{\gamma}$ the following hold true:
\smallskip

(i) If $4/3\le p\le 4,$ then
\begin{equation}\label{mainlpeq}
 \|T^\lambda_{R} f\|_{L^p(\RR^3)} \le C_\epsilon\,R^\epsilon N(\lambda,R)^{|1-\frac 2p|} \|f\|_{L^p(\RR^3)}.
\end{equation}
\smallskip

(ii) If $1\le p<4/3$ or $4<p\le \infty,$ then
\begin{equation}\label{mainlpeq2}
 \|T^\lambda_{R} f\|_{L^p(\RR^3)} \le C_\epsilon\,R^{\epsilon} R^{|1-\frac 2p|-\frac 12}N(\lambda,R)^{|1-\frac 2p|} \|f\|_{L^p(\RR^3)}.
\end{equation}
The constant  $C_\epsilon$ can here be chosen independently of   $R\ge R_0,$ $\phi$ and $a_R,$ and  can be controlled by  a finite number of the constants $C_\al$ in $\C$ and  $B_\al$ in \eqref{admisamp}.
\end{cor}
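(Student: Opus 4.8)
The plan is to deduce Corollary~\ref{mainlp} from Theorem~\ref{mainthm} by combining the $L^4$-estimate \eqref{mainthmeq} with the trivial $L^2$-bound and a crude $L^\infty$-bound, and then invoking duality and Riesz--Thorin interpolation; no geometric input beyond what already enters Theorem~\ref{mainthm} is needed. First I would record the two families of ``endpoint'' bounds bracketing \eqref{mainthmeq}. Since $m^\lambda_R=e^{-i\lambda\phi}a_R\chi_R$ is a compactly supported smooth function with $|m^\lambda_R(\xi)|\le|a_R(\xi)||\chi_R(\xi)|\lesssim 1$ (the amplitude being admissible, hence bounded, and $\chi_R$ a fixed bump), Plancherel gives $\|T^\lambda_R\|_{L^2\to L^2}\le\|m^\lambda_R\|_\infty\lesssim1$, which is \eqref{mainlpeq} at $p=2$ (where $N(\lambda,R)^{|1-2/p|}=1$). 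At the other extreme, decomposing $m^\lambda_R$ into the $O(R^{1/2}N(\lambda,R))$ pieces supported on the L-boxes $\vth\subset\Ga^\circ_R$ and using Lemma~\ref{nlcontrol} together with Remark~\ref{eilaphi} (and Remark~\ref{amppert} for the amplitude), on each $\vth$ the nonlinear part of the phase is absorbed into the amplitude, so that $\F^{-1}$ of the $\vth$-piece is, after translation by $\lambda\nabla\phi(\xi_\vth)$, an $L^1$-normalised bump adapted to the box dual to $\vth$ and thus has $L^1$-norm $O(1)$; summing over the boxes yields $\|\F^{-1}m^\lambda_R\|_{L^1}\lesssim R^{1/2}N(\lambda,R),$ i.e.
$$
\|T^\lambda_R\|_{L^1\to L^1}=\|T^\lambda_R\|_{L^\infty\to L^\infty}\lesssim R^{1/2}N(\lambda,R),
$$
which is the bound indicated in Section~\ref{mainlpeqproof} and is precisely \eqref{mainlpeq2} at $p=1,\infty$.

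\emph{Duality.} The adjoint $(T^\lambda_R)^*$ is the Fourier multiplier operator with multiplier $\overline{m^\lambda_R}=e^{i\lambda\phi}\overline{a_R}\,\chi_R$. By \eqref{sumphi} the class $\F^{\kappa_1,\kappa_2,\gamma,\C}(\mathbf\Ga^\circ_R)$ is invariant under $\phi\mapsto-\phi$ with the same family $\C$; the admissibility condition \eqref{admisamp} and the (SMD)-condition \eqref{smd} are unaffected by complex conjugation and by the sign of $\phi$; and $\chi_R$ is real-valued. Hence $(T^\lambda_R)^*$ is again an FIO-cone multiplier of exactly the same type, so Theorem~\ref{mainthm} and the $L^\infty$-bound above apply to it and dualise to
$$
\|T^\lambda_R\|_{L^{4/3}\to L^{4/3}}\le C_\epsilon R^\epsilon N(\lambda,R)^{1/2},\qquad \|T^\lambda_R\|_{L^1\to L^1}\lesssim R^{1/2}N(\lambda,R),
$$
with the constants still controlled by finitely many of the $C_\al$ and $B_\al$.

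\emph{Interpolation.} For $4/3\le p\le4$, Riesz--Thorin interpolation between the $L^2$- and $L^4$-bounds (when $2\le p\le4$) and between the $L^{4/3}$- and $L^2$-bounds (when $4/3\le p\le2$) yields a bound $C_\epsilon R^\epsilon N(\lambda,R)^{\sigma}$ with $\sigma=|1-2/p|$ and no further power of $R$; a direct check of the exponents at $p=2,4,\tfrac43$ confirms the bookkeeping, and this is \eqref{mainlpeq}. For $4<p\le\infty$, interpolating the $L^4$-bound with the $L^\infty$-bound at the parameter $\theta=1-4/p$ gives $(N^{1/2})^{1-\theta}(R^{1/2}N)^{\theta}=R^{\theta/2}N^{(1+\theta)/2}=R^{|1-2/p|-1/2}N^{|1-2/p|}$ (up to the harmless factor $R^\epsilon$), since $\theta/2=\tfrac12-\tfrac2p=|1-2/p|-\tfrac12$ for $p>4$; the range $1\le p<4/3$ follows symmetrically by interpolating the $L^1$- and $L^{4/3}$-bounds, using $|1-2/p'|=|1-2/p|$ for conjugate exponents. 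This is \eqref{mainlpeq2}, and in all cases the constant inherits the asserted dependence on finitely many $C_\al$ and $B_\al$.

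\emph{Main obstacle.} The only step that is not pure functional analysis is the crude $L^\infty\to L^\infty$ bound, where one must verify that the L-box decomposition really reduces each kernel piece to an honest $L^1$-normalised bump of norm $O(1)$: the estimate on $e^{i\lambda\phi_{\rm nl}}$ from Lemma~\ref{nlcontrol} and Remark~\ref{eilaphi} is exactly what makes this work, and the total number of L-boxes is the product of the $\sim R^{1/2}$ sectors $\theta$ of angular width $R^{-1/2}$ with the $\sim N(\lambda,R)$ L-boxes per sector from \eqref{Ndefine}. I expect this to be the main point to check, but it is the same Seeger--Sogge--Stein type bookkeeping already alluded to around \eqref{SSSest}, carried out where the $L^\infty$-bound is established.
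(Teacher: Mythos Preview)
Your proposal is correct and follows essentially the same route as the paper's own proof in Section~\ref{mainlpeqproof}: the trivial $L^2$-bound, the $L^4$-estimate of Theorem~\ref{mainthm}, the crude $L^1$/$L^\infty$-bound via the L-box decomposition (for which the paper cites \eqref{mutheta1} of Lemma~\ref{4iersupp} rather than going back to Lemma~\ref{nlcontrol}), and then Riesz--Thorin interpolation plus duality. Your explicit verification that the adjoint is again of the same type is a small elaboration the paper leaves implicit.
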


\section{Preparatory proof steps}\label{prep}

\color{black}

\subsection{Guth-Wang-Zhang semi-norms} \label{GWZ} We begin by recalling some of the fundamental insights from the proof of the square function estimate in \cite{GWZ}. 
\medskip

Following closely  the notation from  \cite{GWZ}, suppose that  $s$ is a dyadic parameter in the range $R^{-1/2}\leq s\leq 1$. 
We then group the caps $\theta,$ i.e., the sectors of angular width about $R^{-1/2}$ within $\Gamma_R$, into sectors $\tau$ of angular width  $s$ (also called the aperture $s=d(\tau)$ of $\tau$  in  \cite{GWZ}) in the $(\xi_1,\xi_2)$-plane. 

For instance, if $s=R^{-1/2}$, each sector $\tau$ simply is a single cap $\theta$. And, if $s=1$, $\tau$ consists of all caps\footnote{There is no loss of generality to first decompose the full circle into sectors of angle 1.} $\theta$.\\
Recall that each cap $\theta$ is essentially a box of dimensions $R^{-1}\times R^{-1/2}\times 1$ with respect to the coordinates 
$\eta$ associated to the  orthonormal frame 
$$E_1=n(\xi_\theta),\quad E_2=t(\xi_\theta),\quad E_3=\xi_\theta/|\xi_\theta|,$$
if $\xi_\theta$ denotes the center of the cap $\theta.$

The dual convex body $\theta^*$ to $\theta$ is then a rectangular box of dimensions $R\times R^{1/2}\times 1$  with respect to the same coordinates $\eta,$ which is centered at the origin.

Note that if  $\chi_\theta$ is  a smooth bump function adapted to the box $\theta$, then, by the uncertainty principle,
 $\widehat{\chi_\theta}$ is essentially supported in $\theta^*$.
More precisely: if, in the coordinates $\zeta$ given by  $\xi=\xi_\theta+E\zeta$, $\chi_\theta$ is of the form 
 $$
 \chi_\theta(\xi)=\chi_0(R\zeta_1)\chi_0(R^{1/2}\zeta_2)\chi_0(\zeta_3), 
 $$
 then by integration by parts, $\widehat{\chi_\theta}$ decays rapidly far away from $\theta^*$.
\smallskip

Of course, the orientation of the boxes $\theta^*$ differs for different $\theta$ in $\tau,$ but there is a ``bounding box''
$U_{\tau,R}$ containing all these dual boxes $\theta^*$,  which is essentially the convex hull of the union of all these $\theta^*,$ i.e., 
$$
U_{\tau,R} \simeq \text{Convex Hull} (\bigcup_{\theta\subset\tau} \theta^*).
$$
If $\xi_\tau\in \Gamma_R$ denotes the center of $\tau,$  then in the  coordinates $x$ corresponding to the orthonormal frame at $\xi_\tau$ this box is given by
$$
U_{\tau,R} :=\{x\in\RR^3: |x_1|\le R, |x_2|\le Rs \text{ and } |x_3|\le Rs^2\}.
$$Let us again look at the   two extremal cases: For  $s=R^{-1/2}$, $U_{\tau,R}$ consists of a single dual box $\theta^*,$ which is indeed a box of dimensions $R\times R^{1/2}\times 1$. And, for $s=1$, $U_{\tau,R}$ is a box of dimensions  $R\times R\times R$ (this size is of obviously  needed if  $U_{\tau,R}$ is to contain all  boxes $\theta^*$).

\medskip
Now by $\{U\}_{U\transl U_{\tau,R}}$, we denote a tiling of $\R^3$ by boxes $U$ which are translates of $U_{\tau,R}$ and only overlap on a set of measure zero. Define the following semi-norms:
\begin{eqnarray*}\label{GWZs}
	\|f\|_{GWZ,s}&:=& \left(\sum_{\angle(\tau)=s} \sum_{U\transl U_{\tau,R}} |U|^{-1}\big \|(\sum_{\theta\subset\tau}|f_\theta|^2)^\frac12\big \|_{L^2(U)}^4 \right)^\frac14 \\
	\|f\|_{GWZ}&:=& \sum_{R^{-1/2}\le s\le 1}\|f\|_{GWZ,s}.
\end{eqnarray*}
Here, $\sum_{\angle(\tau)=s}$ means summation  over a decomposition into sectors $\tau$ of angular width  $s$ with measure zero overlap, and   $\sum_{R^{-1/2}\le s\le 1}$  means summation over all dyadic values of $s$ in the range $R^{-1/2}\leq s\leq 1$.

\medskip

To gain a better understanding of these semi-norms, note that they might be seen as refinements of the $L^4$-norm: By Cauchy-Schwarz' inequality,
$$ 
|U|^{-1/4} \| F\|_{L^2(U)} \leq \| F\|_{L^4(U)},
$$
and therefore 
$$
\sum_{U\transl U_{\tau,R}} |U|^{-1} \|F \|_{L^2(U)}^4 \leq \sum_{U\transl U_{\tau,R}}\| F\|^4_{L^4(U)} = \| F\|^4_{L^4(\R^3)}.
$$
Consider again  the extremal cases $s=1$ and $s=R^{-1/2}$:

\smallskip
For $s=1$, recall that there is only one cap $\tau$, and $U$ is a cube of side-length  $R$. If we denote by $\{B_R\}$ a tiling of $\R^3$ into cubes of side-length $R$, then we have
\begin{equation}\label{GWZ1}
	\|f\|_{GWZ,1}= \left(\sum_{B_R} |B_R|^{-1} \big\|(\sum_{\theta}|f_\theta|^2)^\frac12 \big\|_{L^2(B_R)}^4 \right)^\frac14,
\end{equation}
so that the previous estimates imply that 
$$
	\|f\|_{GWZ,1}\leq \big\|(\sum_{\theta}|f_\theta|^2)^\frac12 \big\|_{L^4(\R^3)}.
$$
This is just the right-hand-side for the square function estimate \eqref{squarest}.
\smallskip

For $s=R^{-1/2}$, recall that any $\tau$ consists of a single cap $\theta$, so that
\begin{equation}\label{GWZ2}
	\|f\|_{GWZ,R^{-1/2}}= \left(\sum_{\theta} \sum_{U\transl \theta^*} |U|^{-1} \|f_\theta \|_{L^2(U)}^4 \right)^\frac14,
\end{equation}
which implies that 
$$
	\|f\|_{GWZ,R^{-1/2}}\leq \left(\sum_{\theta} \|f_\theta \|_{L^4(\R^3)}^4 \right)^\frac14;
$$
this is like the right-hand-side in the $\ell^4$-decoupling  estimate \eqref{4decouple}, but without an additional power of $R$. 
Note that
$$
	\left(\sum_{\theta} \|f_\theta \|_{L^4(\R^3)}^4 \right)^\frac14 = \big\|(\sum_{\theta}|f_\theta|^4)^\frac14 \big\|_{L^4(\R^3)} \leq  
	\big\|(\sum_{\theta}|f_\theta|^2)^\frac12 \big\|_{L^4(\R^3)}.
$$
As similar arguments also work for $s$ in the intermediate range $R^{-1/2}<s<1$ (cf. \cite{GWZ}), we actually have 
\begin{equation}\label{gwzlessl4}
	\|f\|_{GWZ} \lesssim \|(\sum_{\theta}|f_\theta|^2)^\frac12 \|_{L^4(\R^3)}.
\end{equation}
The key step in \cite{GWZ} (see \cite[Theorem 1.3]{GWZ}) is to prove for all functions $f$ with Fourier support contained in the $R^{-1}$ neighborhood  $\Gamma_R$ of the truncated cone and all $\epsilon >0$  the estimate
\begin{equation}\label{gwzest}
	\| f\|_4\le C_\epsilon R^\epsilon \|f\|_{GWZ}.
\end{equation}
By \eqref{gwzlessl4}, this implies the  square function estimate \eqref{squarest}. Actually,  as shown by Mockenhaupt \cite{Mo}, we also have that for such functions 
\begin{equation}\label{gwzestmo}
	\|(\sum_{\theta}|f_\theta|^2)^\frac12 \|_{L^4(\R^3)}\le  C (\log R)^\alpha \|f\|_{L^4(\R^3)},
\end{equation}
so that, up to factors $R^\epsilon$ with arbitrarily small $\epsilon >0,$ all the norms $\|f\|_4, \, \|f\|_{GWZ} $ and $\|(\sum_{\theta}|f_\theta|^2)^\frac12 \|_{L^4(\R^3)}$ are equivalent on functions $f$ with Fourier support contained in $\Gamma_R.$

\medskip

However, estimate \eqref{gwzest} is much better suited for an induction on scales approach (i.e., induction on the size of $R$) than  \eqref{squarest}.
\smallskip

\eqref{gwzest} will also be the starting point for our proof. Note that the advantage of the Guth-Wang-Zhang semi-norm \eqref{GWZ} is the appearance of $L^2$-norms in place of $L^4$-norms, which opens the possibility of using Plancherel's theorem, even though  we have to take into consideration the localization to the sets $U$.

\begin{remark}\label{tothetapm}
In our later applications, we shall usually work only within  $\Gamma_R^\pm$ in place of $\Gamma_R.$ Then one has to replace the caps $\theta\subset \Gamma_R$ by the corresponding caps $\theta^\pm:=\theta\cap \Gamma_R^\pm$ in the arguments to follow. For simplicity of the notation,  we shall then denote those caps $\theta^\pm$ again by $\theta.$ 
\end{remark}

\color{blue} 
\subsection{Dual L-boxes} 
\color{black}
Assume again that $\phi\in \F^{\kappa_1,\kappa_2,\gamma,\C}(\mathbf \Ga^\pm_R).$
Besides the boxes $U \slash\hskip-.1cm \slash U_{\tau,R}$ appearing in  the Guth-Wang-Zhang semi-norms, we also have to take into account the size of the  boxes $\vartheta^*$ dual to the small boxes $\vartheta$ of dimensions $\rho_1\times\rho_2\times 1.$

Recall that an L-box $\vth$ is essentially of the form 
$$
\vartheta=\{\xi=\xi_\vth+E^\vth\zeta:\ |\zeta_j|\leq\rho_j,\ j=1,2,3 \},
$$
if $E^\vth:=E_{\xi_\vth}$ denotes  the orthonormal frame associated to the center $\xi_\vartheta$ of $\vartheta.$ Here,  we have put $\rho_3:=1.$

The dual box $\vartheta^*$ then is
$$
\vartheta^{\ast}:=\{y\trans E^\vth:\ |y_j|\leq\rho_j^{-1},\ j=1,2,3 \}.
$$

Actually, in order to be able to  control certain Schwartz tails in later arguments,  we shall work with slightly larger dual boxes  
$\vartheta^{\ast,\e}$ than $\vartheta^*,$  namely 
$$
\vartheta^{\ast,\e}:=\{y\trans E^\vth:\ |y_j|\leq\rho_j^{-1}\lambda^\e,\ j=1,2,3 \},
$$
where $\e>0$ will be chosen sufficiently small.

\medskip
In the sequel, we shall use the following notation: If $B$ is any symmetric convex body in $\RR^3$ centered at the point $a,$ then $r\cdot B$ will denote the scaling by the factor $r>0$ of $B$ which keeps the center fixed, i.e.,
$$
r\cdot B:= a+r(-a+B).
$$
For instance, if $c\ge 2$ is any fixed positive constant, then $c\cdot B$ is often called the {\it doubling} of $B.$

\medskip

Since the phase $\phi$ is essentially  linear on $\vartheta,$ the Fourier support of the multiplier $m^\lambda_R$ restricted to $\vartheta$ will essentially be a translation of $\vartheta^{\ast,\e}$, as the modulation term $e^{i\lambda\phi(\xi)}$ effects a translation on the Fourier transform side.
\smallskip

More precisely, let $\hat\vartheta:=\lambda\nabla\phi(\xi_\vth)+\vartheta^*,$ and 
\begin{equation}\label{vthhat}
	\hat\vartheta^\e:=\lambda\nabla\phi(\xi_\vth)+\vartheta^{\ast,\e}.
\end{equation}
Note that then
$$
\hat\vartheta^\e=\la^\ve \cdot\hat\vth.
$$

We localize our multiplier 
$$m_R^\lambda=e^{-i\lambda\phi(\xi)}  a_R(\xi) \chi_1(\pm RF(\xi)) \chi_1(\xi_3)$$ 
by means of a smooth bump function $\chi_\vth$ adapted to $\vartheta$ of the form
\begin{equation}
\chi_\vth(\xi):=\chi_0(\rho_1^{-1}\eta_1)\chi_0(\rho_2^{-1}\eta_2)\chi_0(\eta_3-1)
\end{equation}
in the associated coordinates  $\eta=\trans E^\vth \xi,$ 
 by setting
$$m_\vartheta:=m_{R,\vartheta}^\lambda :=m_R^\lambda \chi_\vth.$$

Define the measure $\mu_\vth$ by $\widehat{\mu_\vth}:=m_\vartheta.$ Then
\begin{equation}\label{Tdecomp}
T^\la_R f=\sum\limits_{\vth} f*\mu_\vth.
\end{equation}

By adapting  a key idea of the method of Seeger, Sogge and Stein from \cite{SSS}, we obtain 
\begin{lemma}\label{4iersupp}
For all $N\in\NN$, there is a constant $C_N$ such that for all $R\geq R_0$, all L-boxes $\vth$ and all $x\in\R^3$, we have
\begin{equation}\label{outside}
	|\mu_\vth(x)|\leq C_N \rho_1\rho_2\rho_3(1+\sum_{j=1}^3 \rho_j 
	|\langle E^\vth_j,x-\lambda\nabla\phi(\xi_\vth)\rangle| )^{-N}.
\end{equation}

The constants $C_N$ can be chosen to depend only on $N$ and   a finite number of the constants $C_\al$ in $\C.$

In particular, there is a constant $C$ depending only on a finite number of the constants $C_\al$ in $\C$ so that 
\begin{equation}\label{mutheta1}
\|\mu_\vth\|_{L^1(\RR^3)}\le C
\end{equation}
for all phases $\phi\in\F^{\kappa_1,\kappa_2,\gamma,\C}(\Gamma_R^\pm)$ and all L-boxes $\vth.$

\end{lemma}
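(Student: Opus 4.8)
The plan is to realize $\mu_\vth$ as an oscillatory integral which, over an L-box, becomes essentially the Fourier transform of a smooth bump at unit scale after the natural anisotropic rescaling. Concretely, using $\widehat{\mu_\vth}=m_\vth=m_R^\la\chi_\vth$ and splitting $\phi=\phi_{\rm lin}+\phi_{\rm nl}$ over $\vth$ as in Subsection~\ref{Sboxes} (recall that, by homogeneity, $\phi_{\rm lin}(\xi)=\nabla\phi(\xi_\vth)\xi$ is linear), the modulation by $\la\phi_{\rm lin}$ turns into a translation by $\la\nabla\phi(\xi_\vth)$ on the physical side:
$$\mu_\vth(x)=(2\pi)^{-3}\int_{\RR^3}e^{i\langle x-\la\nabla\phi(\xi_\vth),\,\xi\rangle}\,e^{-i\la\phi_{\rm nl}(\xi)}\,a_R(\xi)\,\chi_R(\xi)\,\chi_\vth(\xi)\,d\xi.$$
I then pass to the frame coordinates $\xi=\xi_\vth+E^\vth\zeta$ (Jacobian $1$) and rescale anisotropically $\zeta_j=\rho_j\zeta_j'$ (Jacobian $\rho_1\rho_2\rho_3$). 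Writing $b_j:=\langle E^\vth_j,\,x-\la\nabla\phi(\xi_\vth)\rangle$, this gives
$$\mu_\vth(x)=(2\pi)^{-3}\rho_1\rho_2\rho_3\,e^{ic_\vth}\int_{\RR^3}e^{i\sum_{j=1}^3(\rho_j b_j)\zeta_j'}\,g_\vth(\zeta')\,d\zeta',$$
with $c_\vth\in\RR$ and $g_\vth$ the amplitude $e^{-i\la\phi_{\rm nl}}\,a_R\,\chi_R\,\chi_\vth$ written in the rescaled coordinates.

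The crux is to show that $g_\vth$ is smooth, supported in a fixed box $\{|\zeta_j'|\lesssim1\}$ — clear from the form \eqref{thetabump} of $\chi_\vth$ and $\rho_3=1$ — and that $\|\partial_{\zeta'}^\beta g_\vth\|_\infty\le C_\beta$ for all $\beta$, with $C_\beta$ depending only on $\beta$ and on finitely many of the constants $C_\al\in\C$ and $B_\al$ in \eqref{admisamp}. By the Leibniz rule this follows from three inputs: (i) Lemma~\ref{nlcontrol} (which needs the standing assumption $\la\ge R^\ga$) together with Remark~\ref{eilaphi} say exactly that $e^{-i\la\phi_{\rm nl}}$, in the rescaled coordinates, has all derivatives bounded by constants controlled by the $C_\al$; (ii) $a_R\chi_R$ is an admissible amplitude — the factors $\chi_1(\pm RF(\xi))$ in $\chi_R$ being handled by Lemma~\ref{admissaex}, and $\chi_1(\xi_3)$ being trivially admissible — so by \eqref{admisamp}, transferred to the frame $E^\vth$ via Remark~\ref{amppert}, its $(\al_1,\al_2,\al_3)$ frame-derivatives are $\lesssim R^{\al_1+\al_2/2}$, and after rescaling they pick up a factor $\rho_1^{\al_1}\rho_2^{\al_2}\le R^{-\al_1}R^{-\al_2/2}$ (using $\rho_1\le R^{-1}$, $\rho_2\le R^{-1/2}$ from \eqref{rho}) which cancels the growth; (iii) in the rescaled coordinates $\chi_\vth$ is just $\chi_0(\zeta_1')\chi_0(\zeta_2')\chi_0(\zeta_3')$.

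Granting this, \eqref{outside} is non-stationary phase: integrating by parts $N$ times in the $\zeta_j'$ (equivalently, using rapid decay of $\widehat{g_\vth}$) gives, for every $N$,
$$\Big|\int_{\RR^3}e^{i\langle c,\zeta'\rangle}g_\vth(\zeta')\,d\zeta'\Big|\le C_N\big(1+|c_1|+|c_2|+|c_3|\big)^{-N},\qquad c_j:=\rho_j b_j,$$
and multiplying by $\rho_1\rho_2\rho_3$ and recalling the definition of $b_j$ and $\rho_3=1$ is exactly \eqref{outside}. For \eqref{mutheta1} I integrate \eqref{outside} in $x$: the translation $x\mapsto x-\la\nabla\phi(\xi_\vth)$ and the orthogonal change $u_j=\langle E^\vth_j,x\rangle$ have Jacobian $1$, and then $v_j=\rho_j u_j$ has Jacobian $(\rho_1\rho_2\rho_3)^{-1}$, cancelling the prefactor, so $\|\mu_\vth\|_{L^1(\RR^3)}\le C_N\int_{\RR^3}(1+|v_1|+|v_2|+|v_3|)^{-N}\,dv<\infty$ for $N=4$, with the constant depending only on finitely many $C_\al$ and $B_\al$.

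The genuinely non-trivial estimate, Lemma~\ref{nlcontrol} (the Seeger--Sogge--Stein flattening of the phase across an L-box), is already available, so the main remaining point is just the bookkeeping in the claim above: one must check that the anisotropic rescaling dictated by the L-box dimensions $\rho_1\times\rho_2\times1$ simultaneously keeps the amplitude at unit scale, absorbs the $R^{\al_1+\al_2/2}$-growth of the admissible amplitude, and is compatible with the derivative bounds of Lemma~\ref{nlcontrol} — by the choice of the $\rho_j$ all three match up, and since the phase of the final integral is linear there are no stationary-phase issues.
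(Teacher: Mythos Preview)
Your proof is correct and follows essentially the same route as the paper: write $\mu_\vth$ as an oscillatory integral, split off the linear part of the phase to produce the translation by $\la\nabla\phi(\xi_\vth)$, pass to the frame coordinates attached to $\xi_\vth$, rescale anisotropically by $(\rho_1,\rho_2,\rho_3)$, and use Lemma~\ref{nlcontrol}/Remark~\ref{eilaphi} together with the admissible-amplitude bounds (via Remark~\ref{amppert}) and $\rho_1\le R^{-1}$, $\rho_2\le R^{-1/2}$ to see that the resulting amplitude is a unit-scale bump, after which integration by parts gives \eqref{outside} and a change of variables gives \eqref{mutheta1}. Your accounting is slightly more explicit than the paper's (and you correctly note the dependence on the amplitude constants $B_\al$ as well as the $C_\al$).
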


\begin{proof} We denote by $E$ the orthonormal frame $E^\vartheta.$ Then, up to a multiplicative constant,
\begin{eqnarray*}
\mu_\vth(x)&=&\int e^{i(x\xi-\lambda\phi(\xi))} \chi_1(\pm RF(\xi)) a_R(\xi) \chi_1(\xi_3)\chi_\vth(\xi) d\xi  \\
		&=&e^{-i\lambda\phi(\xi_\vth)}\int e^{i([x-\lambda\nabla\phi(\xi_\vth)]\xi-\lambda\phi_{\rm nl}(\xi))} \chi_1(\pm RF(\xi)) a_R(\xi)\chi_1(\xi_3)\chi_\vth(\xi) d\xi	  \\
		&=&e^{-i\lambda\phi(\xi_\vth)}\int e^{i[x-\lambda\nabla\phi(\xi_\vth)]E\eta}e^{-i\lambda\phi_{\rm nl}(E\eta)} \chi_0(\rho_1^{-1}\eta_1)\chi_0(\rho_2^{-1}\eta_2)\chi_0(\eta_3-1) \tilde a_R(\eta)\,  d\eta,
\end{eqnarray*}
where we have put 
$$
\tilde a_R(\eta):=a_R(E\eta) \chi_1(\pm RF(E\eta))\chi_1((E\eta)_3).
$$

Note that by \eqref{admisamp} and Remark \ref{amppert} the following estimates hold true:
$$
 |\partial_{\eta_1}^{\al_1}\partial_{\eta_2}^{\al_2}\partial_{\eta_3}^{\al_3} \tilde a_R(\eta)|\le B_\al R^{\al_1+\tfrac12 \al_2}
 \lesssim \rho_1^{-\al_1}\rho_2^{-\al_2}, \qquad \al\in \NN^3.
 $$
 since $\rho_1\le R^{-1}$ and $\rho_2\le R^{-1/2}.$

 Rescaling $\eta_j$ by $\rho_j$ and integrating by parts now gives the desired bounds \eqref{outside}, since,  by Lemma \ref{nlcontrol},  the  derivatives of the factor $e^{-i\lambda\phi_{\rm nl}(E\eta)}$ behave no worse than those of the amplitude.

The estimate \eqref{mutheta1} is an immediate consequence. 
\end{proof}

 The  lemma shows that the measure $\mu_\vth$ is essentially supported in $\hat\vartheta^\e:$

\begin{cor}\label{4iersupp2} For all phases $\phi\in\F^{\kappa_1,\kappa_2,\gamma,\C}(\Gamma_R^\pm),$  all L-boxes $\vth$ and all $N\in\NN$, there is a constant $C_N$ depending only on $N$ and a finite number of the constants $C_\al$ in $\C$  such that for all $R\geq R_0$ the following hold:
 \smallskip
 
 a)   If $\vth$ is any  L-box, then
\begin{equation}\label{rapiddecay}
	\|\mu_\vth\|_{L^1(\RR^3\setminus \hat\vartheta^\e)} \leq C_N \lambda^{-N}.
\end{equation}

b) Suppose $\xi_0\in\Gamma_R$ and  $T\subset \RR^3$ is a cuboid with axes parallel to the directions given by the orthonormal frame $E_{\xi_0}$ at $\xi_0.$ Furthermore, suppose  $\L$ is a finite collection of boundedly overlapping L-boxes $\vth$ contained in $\Gamma_R$ such that 
$$
\hat\vth= \lambda\nabla\phi(\xi_\vth) +\vartheta^*\subset T \qquad\text{for all}\ \vth\in \L.
$$
If we put  $\mu_\L:=\sum\limits_{\vth\in \L} \mu_\vth,$ 
then
$$
\|\mu_\L\|_{L^1(\RR^3\setminus \la^\ve\cdot T)} \leq C_N \lambda^{-N}.
$$
\end{cor}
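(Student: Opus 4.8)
The plan is to read off both parts directly from the pointwise decay estimate \eqref{outside} of Lemma \ref{4iersupp}, combined with a splitting of the polynomial weight and, for part (b), an elementary fact about dilations of convex bodies.

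For part (a), write $p_\vth:=\lambda\nabla\phi(\xi_\vth)$ for the center of $\hat\vth$. Since $E^\vth$ is orthonormal, a point $x$ lies outside $\hat\vartheta^\e$ exactly when $\rho_j\,|\langle E^\vth_j,x-p_\vth\rangle|>\lambda^\e$ for at least one $j\in\{1,2,3\}$, so $1+\sum_{k=1}^3\rho_k|\langle E^\vth_k,x-p_\vth\rangle|>\lambda^\e$ on $\RR^3\setminus\hat\vartheta^\e$. Given a target $N$, I would invoke \eqref{outside} with the exponent $M:=\lceil 2N/\e\rceil+4$ in place of $N$ and write $(1+\sum_k\rho_k|\langle E^\vth_k,x-p_\vth\rangle|)^{-M}$ as the product of $(1+\sum_k\rho_k|\langle E^\vth_k,x-p_\vth\rangle|)^{-(M-4)}$, which is $\le\lambda^{-\e(M-4)}\le\lambda^{-2N}$ on $\RR^3\setminus\hat\vartheta^\e$, and $(1+\sum_k\rho_k|\langle E^\vth_k,x-p_\vth\rangle|)^{-4}$. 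It then remains to integrate $\rho_1\rho_2\rho_3\,(1+\sum_k\rho_k|\langle E^\vth_k,x-p_\vth\rangle|)^{-4}$ over $\RR^3$; the linear change of variables $y_k:=\rho_k\langle E^\vth_k,x-p_\vth\rangle$ has Jacobian $\rho_1\rho_2\rho_3$ (because $E^\vth$ is orthogonal) and turns this integral into $\int_{\RR^3}(1+|y_1|+|y_2|+|y_3|)^{-4}\,dy\lesssim 1$. This gives $\|\mu_\vth\|_{L^1(\RR^3\setminus\hat\vartheta^\e)}\le C_N\lambda^{-2N}\le C_N\lambda^{-N}$, with $C_N$ controlled by finitely many of the $C_\alpha$ in $\C$. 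The same argument runs verbatim with $\e$ replaced by any fixed $\delta>0$, i.e.\ it yields $\|\mu_\vth\|_{L^1(\RR^3\setminus\,\lambda^\delta\cdot\hat\vth)}\le C_{N,\delta}\lambda^{-N}$.

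For part (b), I would reduce to (a) applied to each $\vth\in\L$ individually. Both $\hat\vth$ and $T$ are convex and centrally symmetric, with centers $p_\vth$ and $c_T$ respectively, and $\hat\vth\subset T$. From $p_\vth\in T$ one gets $c_T-p_\vth\in T-c_T$, hence $T-p_\vth=(T-c_T)+(c_T-p_\vth)\subset 2(T-c_T)$; combining this with $\hat\vth-p_\vth\subset T-p_\vth$ and again $p_\vth-c_T\in T-c_T$, one obtains $\lambda^{\e/2}\cdot\hat\vth\subset(1+2\lambda^{\e/2})\cdot T\subset\lambda^\e\cdot T$, the last inclusion holding once $1+2\lambda^{\e/2}\le\lambda^\e$, which is true for $R\ge R_0$ since $\lambda\ge R^\gamma$ and $\e>0$ is a fixed (small) constant. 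Therefore $\RR^3\setminus\lambda^\e\cdot T\subset\RR^3\setminus\lambda^{\e/2}\cdot\hat\vth=\RR^3\setminus\hat\vartheta^{\e/2}$, and the version of part (a) with $\delta=\e/2$ gives $\|\mu_\vth\|_{L^1(\RR^3\setminus\lambda^\e\cdot T)}\le C_N\lambda^{-N}$ for every $\vth\in\L$. Finally, the L-boxes in $\L$ are boundedly overlapping, contained in $\Gamma_R$, and each has volume $\sim\rho_1\rho_2$, so $|\L|\lesssim R^{-1}(\rho_1\rho_2)^{-1}$; by \eqref{rho} and $\lambda\ge R^\gamma$ (so $R\le\lambda^{1/\gamma}$) this is at most $\lambda^{C_1}$ for some $C_1=C_1(\kappa_1,\kappa_2,\gamma)$, and summing the per-box bound with $N+C_1$ in place of $N$ gives $\|\mu_\L\|_{L^1(\RR^3\setminus\lambda^\e\cdot T)}\le C_N\lambda^{-N}$.

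The only slightly delicate bookkeeping is that the dilations defining $\hat\vartheta^\e$ and $\lambda^\e\cdot T$ are taken about the (generally distinct) centers of $\hat\vth$ and of $T$; this is absorbed harmlessly by passing from $\lambda^\e$ to $\lambda^{\e/2}$. Beyond that, and the trivial polynomial bound on $|\L|$, both statements are immediate consequences of Lemma \ref{4iersupp}, so I would not expect a genuine obstacle.
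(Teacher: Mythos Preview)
Your proof is correct. Part (a) is essentially identical to the paper's argument: both perform the orthogonal change of variables $z_j=\rho_j\langle E^\vth_j,x-\lambda\nabla\phi(\xi_\vth)\rangle$ and integrate the decay bound \eqref{outside} over $\{|z|>\lambda^\e\}$.

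For part (b) the two arguments diverge slightly in the geometric step. The paper exploits the hypothesis that $T$ is a \emph{cuboid} with axes parallel to $E_{\xi_0}$: after centering $T$ at the origin, it replaces $\vth^*$ by its axis-aligned bounding box $T_\vth$ (so $a+T_\vth\subset T$ with the same axes as $T$), and then the coordinate-wise inequality $|r^{-1}a_j|+(T_\vth)_j\le|a_j|+(T_\vth)_j\le T_j$ gives $\hat\vth^\e\subset\lambda^\e\cdot T$ with no loss. You instead argue with general centrally symmetric convex bodies, obtaining only $\lambda^{\e/2}\cdot\hat\vth\subset(1+2\lambda^{\e/2})\cdot T\subset\lambda^\e\cdot T$, and compensate by invoking part (a) at exponent $\e/2$. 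Your route is more robust (it never uses the cuboid structure of $T$), while the paper's is cleaner and avoids the mild dependence of the threshold $1+2\lambda^{\e/2}\le\lambda^\e$ on the relation between $\e$ and $R_0$. Both then finish by the same polynomial count $\#\L\lesssim\lambda^{C_1}$ and absorb it into the exponent $N$.
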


\begin{proof}
a) Changing variables to $z_j:=\rho_j 	\langle E^\vth_j,x-\lambda\nabla\phi(\xi_\vth)\rangle$, by \eqref{outside} we have
$$
\int_{\RR^3\setminus \hat\vartheta^\e} |\mu_\vth(x)| dx \lesssim  
 C_N\int_{\{|z|>\lambda^\e\}} (1+|z|)^{-N} dz \lesssim C_N \lambda^{(3-N)\ve}.
$$
which gives  \eqref{rapiddecay}.

\medskip

b) Let $\vth\in\L.$ We claim that
$$
\hat\vartheta^\e\subset \la^\ve\cdot T.
$$
After translation, we may assume that $T$ is centered at the origin. Put $ r:=\la^\ve \ge 1.$ What we then have to show is that if $a+\vth^*\subset T,$ then $a+r\vth^*\subset rT,$ i.e.,
$$
r^{-1} a+\vth^*\subset T.
$$
In order to show this, note that there is a  cuboid $(a+T_\vth)\subset T$ also centered at $a$ with the same axes as $T$   which contains $a+\vth^*,$ i.e., $\vth^*\subset T_\vth.$ But then it is easily seen that  also $r^{-1} a+T_\vth\subset T$ for all $r\ge 1,$ which implies our claim.

Thus, by a), for every $N\in \NN,$
$$
\|\mu_\L\|_{L^1(\RR^3\setminus \la^\ve\cdot T)} \leq \sum\limits_{\vth\in \L} \|\mu_\vth\|_{L^1(\RR^3\setminus\hat\vartheta^\e)}
\le C_N\la^{-N}\sharp \L.
$$
But, since $\lambda\geq R^\gamma,$ with $\gamma>0,$  we have that $\rho_1,\rho_2\geq \lambda^{-M_\gamma}$ for some suitable constant $M_\gamma,$ so that the number $\sharp \L$ of L-boxes in $\Ga_R$ is $\mathcal O(\la^{N_\gamma})$   for some exponent $N_\ga$ depending only on $\gamma.$ This proves also b).
\end{proof}

\smallskip

\section{The proof of Theorem \ref{mainthm}} 

Let $\phi\in\F^{\kappa_1,\kappa_2,\gamma,\C}(\mathbf\Gamma_R^\pm),$ and recall from Subsection \ref{Lboxdecomp} that we can decompose  each sector 
$\theta$ of angular width $R^{-1/2}$ in $\Gamma_R^\pm$ into at most  $N(\lambda,R)$ L-boxes $\vartheta,$
i.e., 
$$
\sharp\{\vth\subset \theta\}\lesssim N(\lambda,R)\sim  \frac{R^{-3/2}}{\rho_1\rho_2}.
$$
Since there are only $\mathcal O(\log R)$ dyadic parameters $s$  in the range $R^{-1/2}\leq s\leq 1,$ in order to prove 
Theorem \ref{mainthm}, in view of estimate \eqref{gwzest} it will thus suffice to show that for every $\ve>0$ we can bound  
 
\begin{equation}\label{mainfinest}
\|T^\la_R f\|_{GWZ,s}\le C_\ve R^\ve \max_\theta \, \sharp \{\vth\subset \theta\}^{\frac 12} \|f\|_{L^4(\RR^3)},
\end{equation}
uniformly in $s\in[R^{-1/2},1]$. Given  our previous discussions, our arguments will also show that the constants $C_\epsilon$ can be chosen to depend only on $\epsilon$ and a finite number of the constants $C_\al$ from $\C.$
\medskip

Let us thus fix $s\in[R^{-1/2},1]$, and choose any sector $\tau\subset \Ga^\pm_R$ of angular width $s.$ Moreover, suppose   $\vth\subset \tau$ is  any L-box contained in  $\tau.$
\smallskip

Ignoring for the moment  the translation by $\lambda\nabla\phi(\xi_\vth)$ in \eqref{vthhat}, by replacing the multiplier 
$m_\vth$ by $\chi_\vth,$ we observe that $\widehat{\chi_\vth}$ is ``essentially'' supported in  $\vartheta^*,$ und thus we essentially have 

$$
(\widehat{\chi_\vth}\ast f)\chi_{U_{\tau,R}}=(\widehat{\chi_\vth}\ast (\chi_ {U_{\tau,R}-\vartheta^{\ast}}f))\chi_{U_{\tau,R}}.
$$ 
The sets  $U_{\tau,R}$ and $\vartheta^{\ast}=-\vartheta^{\ast}$ may have quite different dimensions, but their Minkowski difference 
$$U_{\tau,R}-\vartheta^{\ast}=U_{\tau,R}+\vartheta^{\ast}$$ 
is essentially a rectangular box of dimensions
$\q_1\times \q_2\times \q_3$ with respect to the coordinates given by the orthonormal frame $E^\vth:=E_{\xi_\vth},$ where 
\begin{eqnarray}\label{qjs0}
\begin{split}
\q_1&:=R\vee\rho_1^{-1} = \rho_1^{-1},\\
\q_2&:=(sR)\vee\rho_2^{-1},\\
\q_3&:=(s^2R)\vee 1) =s^2R,
\end{split}
\end{eqnarray}
 since $\rho_1=\tilde\rho_1\wedge R^{-1}\leq R^{-1}$ and $s^2R\ge 1.$ 
 
 In view of the definition of the ``horizontal'' size $\q_2,$ we shall distinguish the following two cases:

{\bf Case I:} $sR \tilde \rho_2> 1,$ i.e., $\la<  s^2 R^{2+\gamma-2\ka_2},$ 
in which $\q_2=sR,$ 

 and 
\smallskip

 {\bf Case II:} $sR \tilde \rho_2\le 1,$ i.e., $\la\ge s^2 R^{2+\gamma-2\ka_2},$ in which $\q_2=\rho_2^{-1}.$ 
 \smallskip

 \subsection{Case I: $sR \tilde \rho_2>  1$ }
Note that in this case, we have 
  \begin{eqnarray}\label{qjs0I}
\begin{split}
\q_1&:=R\vee\rho_1^{-1} = \rho_1^{-1},\\
\q_2&:=sR,\\
\q_3&:=(s^2R)\vee 1) =s^2R.
\end{split}
\end{eqnarray} 
Since here $\frak q_2=sR\ge \rho_2^{-1},$ (at least in  the ``horizontal'' direction $x_2$) about  $N_0:=
sR \rho_2\ge 1$ translates of the dual boxes $\vth^*$ may fit into the aforementioned box of dimensions
$\q_1\times \q_2\times \q_3$ containing $U_{\tau,R}+\vartheta^{\ast}.$ This suggest to group the $\vth$s into larger sectors of angular width 
$$
\al:=N_0\rho_2=sR\rho_2^2.
$$

Since each cap $\theta$ has angular width $R^{-1/2},$ we then consider  the two sub-cases I.a, where $\al\le R^{-1/2},$ and  I.b, where $\al> R^{-1/2}.$

\smallskip
 \subsection{Sub-case I.a: $\al\le R^{-1/2}$ } This is the sub-case where $sR^{3/2}\rho_2^2\le 1.$ Here, we decompose each given cap $\theta$ into  {\it caps  $\si$} of  dimensions $\Delta\times \al\times 1,$ where 
 $$
 \Delta:=\al^2\vee \rho_1 \le R^{-1},
 $$
 in a similar way as we had decomposed $\theta$ into the $\vth$s  in a smooth way in \eqref{decompvth}, i.e., 
 by first decomposing into angular sectors  $\pi$ of width $\al,$ and then decomposing each such sector $\pi$ ``radially''  into shells  of thickness  $\Delta$ by means of level intervals of $F.$
 
Note that since $\Delta\ge \al^2,$ the caps $\si$ are effectively rectangular boxes with respect to the orthonormal frame $E^\si:=E_{\xi_\si,}$ if $\xi_\si$ denotes the center of $\si.$
 \medskip

Note next that each sector $\pi$ will decompose  into about 
$$N_1:=\frac{\Delta}{\rho_1}=\frac {(sR\rho_2^2)^2}{\rho_1}\vee 1$$ 
boxes $\vth.$ The corresponding translated dual boxes $\hat\vth$ will then be contained in a region of $x_1$-dimension at most $N_1 \rho_1^{-1}=\rho_1^{-2}\Delta.$  
\color{blue} For this reason, we  shall  here work with boxes of larger size $q_1:=N_1 \rho_1^{-1}$ in place of $\frak q_1=\rho_1^{-1}.$ I.e., given an L-box $\vth$ with center $\xi_\vth,$ in place of the ``box'' $U_{\tau,R}+\vartheta^{\ast},$ we shall here define  $Q^\vartheta_{\tau,R}=Q^{\vartheta,\ve}_{\tau,R}$  to be the box of dimensions $q_1\times q_2\times q_3$ with respect to the coordinates given by the orthonormal frame $E^\vth:=E_{\xi_\vth},$ \color{black}
   where 
  \begin{eqnarray}\label{qjs2}
\begin{split}
q_1&:= c\lambda^{\e}\rho_1^{-2}\Delta,\\
q_2&:=c\lambda^{\e}sR,\\
q_3&:=c\lambda^{\e}  s^2 R.
\end{split}
\end{eqnarray}
Here, $c\gg 1$ is a constant chosen so large that $U_{\tau,R}+\vartheta^{\ast}\subset Q^{\vartheta,0}_{\tau,R}$ for any 
$\vth\subset \theta\subset \tau$
(note that $Q^\vartheta_{\tau,R}=\la^{\ve}\cdot Q^{\vartheta,0}_{\tau,R}$).

The formal definiton of the boxes $Q^\vartheta_{\tau,R}$ is
\begin{equation}
	  Q^\vartheta_{\tau,R} :=\{x=y\trans E^\vartheta : |y_j|\leq q_j,\ j=1,2,3 \}
\end{equation}
(note that we are keeping here our  convention that $x,y$ are row vectors, and that $\trans (E^\vartheta \trans y)=y\trans E^\vartheta$).

 \begin{lemma}\label{directionQIa}
 Let $\vartheta_1, \vartheta_2\subset \theta$ be  any L-boxes within a cap  $\theta\subset \tau.$   Then 
 $Q^{\vartheta_1}_{\tau,R}\subset C\cdot Q^{\vartheta_2}_{\tau,R},$ where $C$ is a large constant independent of $R$ and $\lambda$. 
 \end{lemma}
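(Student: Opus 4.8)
By their defining formula, the two boxes $Q^{\vth_1}_{\tau,R}$ and $Q^{\vth_2}_{\tau,R}$ are both centered at the origin and have the \emph{same} dimensions $q_1\times q_2\times q_3$ from \eqref{qjs2}; they differ only in the choice of orthonormal axes, namely the frames $E^{\vth_1}=E_{\xi_{\vth_1}}$ and $E^{\vth_2}=E_{\xi_{\vth_2}}$ at the respective centers. So the statement is a purely linear-algebraic assertion: a cuboid of half-dimensions $q_1,q_2,q_3$ with axes $E^{\vth_1}$ is contained in a bounded dilate of the cuboid with the same half-dimensions but axes $E^{\vth_2}$. The plan is to quantify how little the two frames differ, and to see that this is compatible with the (strong) anisotropy $q_1\ge q_2\ge q_3$ of the boxes.

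The first ingredient is control on the relative rotation. Since $\vth_1,\vth_2\subset\theta$ and the cap $\theta$ has angular aperture $\sim R^{-1/2}$, the centers satisfy $\xi_{\vth_2}\in\theta_{c R^{-1/2}}(\xi_{\vth_1})$ for some absolute constant $c$, so Lemma \ref{EEcheck}, applied with $\al:=cR^{-1/2}$, gives $|\langle E^{\vth_1}_i,E^{\vth_2}_j\rangle|\lesssim R^{-|i-j|/2}$ for all $i,j$. The second, and main, ingredient is the matching aspect-ratio bound
$$
q_i\lesssim R^{|i-j|/2}\,q_j\qquad\text{for all }i,j=1,2,3,
$$
to be verified directly from $q_1=c\la^\e\rho_1^{-2}\Delta$, $q_2=c\la^\e sR$, $q_3=c\la^\e s^2R$, with $\Delta=\al^2\vee\rho_1$ and $\al=sR\rho_2^2$. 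The ratios $q_2/q_3=s^{-1}$ and $q_3/q_2=s$ are controlled at once by $R^{-1/2}\le s\le1$; for the ratios involving $q_1$ one writes $\rho_1^{-2}\Delta=(\rho_1^{-2}\al^2)\vee\rho_1^{-1}$ and treats the two terms separately, drawing on $\rho_1\le R^{-1}$, on $\rho_2=\tilde\rho_2$ and hence $\rho_1/\rho_2,\rho_2/\rho_1\le R^{1/2}$ (Lemma \ref{rhofrak}), on $\rho_2^2\le\rho_1$ (inequality \eqref{rho1rho2}), on the Case I hypothesis $sR\rho_2>1$, and on the Sub-case I.a hypothesis $\al=sR\rho_2^2\le R^{-1/2}$ (which with $s\le1$ also forces $R\rho_2>1$).

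Granting these two ingredients, the inclusion is routine: for $x\in Q^{\vth_1}_{\tau,R}$, write $x=y\,\trans E^{\vth_1}$ with $|y_j|\le q_j$, then $x=z\,\trans E^{\vth_2}$ with $z_k=\langle x,E^{\vth_2}_k\rangle=\sum_j y_j\langle E^{\vth_1}_j,E^{\vth_2}_k\rangle$; the angle bound and the aspect-ratio bound combine term by term (the powers $R^{-|j-k|/2}$ and $R^{|j-k|/2}$ cancelling) to give $|z_k|\lesssim q_k$, so $x\in C\cdot Q^{\vth_2}_{\tau,R}$ with $C$ depending only on $c$, $c_0$ and the implicit constants of Lemmata \ref{EEcheck} and \ref{rhofrak}, hence independent of $R$ and $\la$. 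The only point requiring care is the aspect-ratio estimate in the regime $\Delta=\al^2$, where all the hypotheses are actually used: e.g. $q_1/q_2=\rho_1^{-2}\al^2/(sR)=\al^2/(sR\rho_1^2)\le\al^2/(s\rho_2^2)=sR^2\rho_2^2=R\al\le R^{1/2}$ using $\rho_1\ge R^{-1/2}\rho_2$ and then $\al\le R^{-1/2}$, while $q_2/q_1=sR\rho_1^2/\al^2\le sR\rho_1\le s\le1$ using $\rho_1\le\al^2$ and $\rho_1\le R^{-1}$; the remaining cases (and the branch $\Delta=\rho_1$) are of the same elementary type, so the distinction $\Delta=\al^2$ versus $\Delta=\rho_1$ is the only case split needed.
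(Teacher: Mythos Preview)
Your proof is correct and follows essentially the same approach as the paper: both reduce the inclusion to the aspect-ratio bounds $q_i\lesssim R^{|i-j|/2}q_j$ via Lemma~\ref{EEcheck}, then verify these from the definitions of $q_1,q_2,q_3$ using $\rho_1\le R^{-1}$, Lemma~\ref{rhofrak}, the Case~I hypothesis $sR\rho_2>1$, and the Sub-case~I.a hypothesis $\alpha\le R^{-1/2}$. (One small slip: the parenthetical claim that $R\rho_2>1$ follows from the Sub-case~I.a hypothesis is misattributed---it actually follows from Case~I together with $s\le 1$---but you never use this fact anyway.)
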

 
  \begin{proof} 
Assume that $x\in Q^{\vartheta_1}_{\tau,R}$, say $x=y^{\vartheta_1}\trans E^{\vartheta_1},$ with $|y^{\vartheta_1}_j|\leq q_j$, $j=1,2,3$. Let us then also write $x=y^{\vartheta_2}\trans E^{\vartheta_2}.$ Then, by definition, 
$$
y^{\vartheta_2}=y^{\vartheta_1}\trans E^{\vartheta_1}E^{\vartheta_2},
$$
and what we  have to show is that there is a suitable constant $C\ge 1$ such that   $|y^{\vartheta_2}_j|\leq C q_j$ for all $j=1,2,3.$ 

To this end, note that, by  Lemma \ref{EEcheck},
\begin{equation}\label{sumqj}
	|y^{\vartheta_2}_j|= \left| \sum_{i=1}^3 y_i^{\vartheta_1}\langle E_i^{\vartheta_1}, E_j^{\vartheta_2}\rangle  \right| 
				\lesssim  \sum_{i=1}^3  q_iR^{-\frac12|i-j|}.
\end{equation}
Let us examine which conditions will be necessary and sufficient for   the right-hand side to be bounded  by $Cq_j$, for $j=1,2,3.$ 

For the summand with $i=j$, $q_j$ is already of the right order. For the summands with $|i-j|=1$, the following four conditions are necessary and sufficient (up to multiplicative constants):
\begin{equation}\label{c1}
R^{-1/2}\le q_2/q_1,
\end{equation}
\begin{equation}\label{c2}
R^{-1/2}\le q_1/q_2,
\end{equation}
\begin{equation}\label{c3}
R^{-1/2}\le q_2/q_3,
\end{equation}
and 
\begin{equation}\label{c6}
R^{-1/2}\le q_3/q_2.
\end{equation}
For the remaining summands with $|i-j|=2$ in \eqref{sumqj}, the required inequalities are 
\begin{equation}\label{c4}
R^{-1}\le q_1/q_3.
\end{equation}
and 
\begin{equation}\label{c5}
R^{-1}\le q_3/q_1.
\end{equation}
Note, however, that these are  immediate consequences of \eqref{c1} -- \eqref{c6}.  
\smallskip

Thus, to prove the lemma, it will suffice to verify the inequalities \eqref{c1} -- \eqref{c6}.
\smallskip
 The inequalities \eqref{c1} and  \eqref{c2} require that 
 $$
 R^{-1/2}\le \frac {q_1}{q_2}=\frac {s R\rho_2^4}{\rho_1^2}\vee \frac 1{sR\rho_1}\le R^{1/2}.
 $$ 
 Since $sR\rho_1\le 1,$ the first inequality is obvious. Next, 
 by Lemma \ref{rhofrak}, 
 $$sR^{3/2} \rho_1\ge sR \rho_2>1,$$
 since we are in Case I. Moreover,  since we are in Case I.a, using again also Lemma \ref{rhofrak}, 
 $$
s R\rho_2^4=(sR^{3/2}\rho_2^2) R^{-1/2} \rho_2^2\le R^{-1/2} \rho_2^2\le R^{1/2} \rho_1^2,
 $$
so that also the second inequality follows.
\smallskip

 Inequalities \eqref{c3} and \eqref{c6} are also clear, since 
  $$
 R^{-1/2} \le \frac {q_3}{q_2} =s \le 1\le R^{1/2}.
 $$
 \end{proof} 

A priori, the orientation of the boxes $Q^{\vartheta,\ve}_{\tau,R}$ depends on $\vartheta\in\theta$. However, if we fix any
$\vth_0\subset \theta,$ and define the rectangular box  $Q^{ \theta}_{\tau,R}=Q^{\theta,\ve}_{\tau,R}$ to be $Q^{\vartheta_0,\ve}_{\tau,R},$ then Lemma \ref{directionQIa} shows that $Q^{ \theta}_{\tau,R}$ is a box of the same dimensions $q_1\times q_2\times q_3,$ such that
 \begin{equation}\label{QvtinQ}
 Q^{\vartheta}_{\tau,R}\subset Q^{ \theta}_{\tau,R} \quad \text{for all}\quad \vth\subset \theta.
\end{equation}
 Note, however, that the orientation in space of $Q^{\theta}_{\tau,R}$ may change with $\theta$.

\medskip
 Consider next
 $$H(\xi):=\la\nabla\phi(\xi).$$
  The following estimates will be shown in Section \ref{variation}: 
   Given any cap $\si\subset\theta,$  consider again the coordinates $\eta$ defined by 
   $$\xi=E^\si\eta.$$
Then  $\tilde \phi(\eta):=\phi(E^\si\eta)$ expresses $\phi$ in the coordinates $\eta.$ We denote  by $\tilde \si$ the box corresponding to $\si$ in the coordinates $\eta$, and express by 
$$\tilde H(\eta):=\la\nabla\tilde \phi(\eta)=H(\xi)E^\si=\sum_{j=1}^3 (H(\xi)E^\si_j) E^\si_j=\sum_{j=1}^3 \tilde H_j(\eta) E^\si_j$$
  the mapping $H$ in the coordinates $\eta.$ Then, Lemma \ref{Hdiff} , which makes use of the small mixed derivative 
  condition (SMD), implies that for any $\eta,\eta'\in \tilde \si,$
\begin{equation}\label{vargrad1}
|\tilde H_j(\eta)-\tilde H_j(\eta')|\ll q_j, \qquad  j=1,2,3.
\end{equation}

\smallskip

 Next, consider translates  $Q=c_Q+Q^{\theta}_{\tau,R}$ of $Q^{\theta}_{\tau,R}$.
For any such box $Q$, we have 
\begin{equation}\label{QoverU2}
	\frac{|Q|}{|U|} =\frac{|Q^{ \theta}_{\tau,R}|}{|U_{\tau,R}|} \simeq  \la^{3\ve}\frac{q_1}{R}=:\om.
\end{equation}
 Note that $\om=\om(\la, q_1,q_2, s,R)$ does not depend on $\tau$ or $\theta.$
\smallskip

Recall next  from \eqref{vthhat} that
$$
\hat\vartheta^\e:=\lambda\nabla\phi(\xi_\vth)+\vartheta^{\ast,\e}=H(\xi_\vth)+\vartheta^{\ast,\e}.
$$
This identity in combination with   \eqref{vargrad1}  and \eqref{QvtinQ} allows us
 to choose a {\it boundedly overlapping covering} $\{Q\}$ of $\R^3$ by translates $Q$ of  $Q^{\theta}_{\tau,R}$ such that for any cap $\si\in \theta$ there exists a 
$Q_\si\translb Q^{\theta}_{\tau,R}$ (where this is the short hand notation for $Q_\si$ being in this covering)  such that
\begin{equation}\label{Qsia}
 \hat\vth^\e\subset Q_\si \quad \text{for every}\quad \vth\subset\si.
\end{equation}
In particular, if we put $Q_\vth:=Q_\si$ for all $\vth\in\si,$ then $\hat\vth^\e\subset Q_\si$ for every $\vth\subset\si.$
  
\smallskip

In a first step, for any Schwartz function $f\in \S(\RR^3),$ we now estimate (for any dyadic $s$ between $R^{-1/2}$ and 1) 
\begin{eqnarray*}
\|f\|_{GWZ,s}^4
&=& \sum_{\angle(\tau)=s} \sum_{U\transl U_{\tau,R}} |U|^{-1} \left[\sum_{\theta\subset\tau}
			\| f_\theta\|_{L^2(U)}^2\right]^2  \\
&\leq &  \sum_{\angle(\tau)=s} \sharp\{\theta\subset\tau\}\sum_{\theta\subset\tau}\sum_{U\transl U_{\tau,R}} |U|^{-1} \| f_\theta \|_{L^2(U)}^4  \\
&\le  &  \sum_{\angle(\tau)=s} \sharp\{\theta\subset\tau\}\sum_{\theta\subset\tau}\sum_{Q\transl Q^{\theta}_{\tau,R}} \frac{|Q^{\theta}_{\tau,R}|}{|U_{\tau,R}|} |Q|^{-1}\sum_{U\subset Q} \| f_\theta\|_{L^2(U)}^4  \\
&\le& \om \sum_{\angle(\tau)=s} \sharp\{\theta\subset\tau\}\sum_{\theta\subset\tau}\sum_{Q\transl Q^{\theta}_{\tau,R}} |Q|^{-1}\big(\sum_{U\subset Q} \| f_\theta\|_{L^2(U)}^2\big)^2,
\end{eqnarray*}
where we have used Cauchy-Schwarz in the first inequality . Thus, for any $f\in \S(\RR^3),$ 
\begin{equation}\label{step1e}
\|f\|_{GWZ,s}^4\le \om\max_{\angle(\tau)=s} \sharp\{\theta\subset\tau\} \sum_{\angle(\tau)=s}\sum_{\theta\subset \tau}\sum_{Q\transl Q^{\theta}_{\tau,R}} |Q|^{-1} \| f_\theta\|_{L^2(Q)}^4.
\end{equation}
Note that since the multiplier $m^\la_R$ lies in $C_0^\infty,$ this estimate applies as well to $T^\la_R f$ in place of $f,$ where 
$$
(T^\la_R f)_\theta=T^\la_R f_\theta.
$$

Recall next from \eqref{Tdecomp} that
$$
T^\la_R f=\sum\limits_{\vth} f*\mu_\vth,
$$
where $\widehat{\mu_\vth}=m_\vth.$ 
Since all $\vth\subset\si$ have the same $Q_\vth=Q_\si,$ we pass to the coarser decomposition
$$
T^\la_R f=\sum\limits_{\si} f*\mu_\si,
$$
where $\widehat{\mu_\si}:=m^\la_R \chi_\si,$ with $\chi_\si$ denoting  a suitable smooth cut-off function which essentially localizes to  $\si.$ Correspondingly, we define $f_\si$ by $\widehat{f_\si}:=\chi_\si \hat f.$

\begin{lemma}[Key Lemma]\label{key}
Let $Q\translb Q^{\theta}_{\tau,R},$  and decompose 
$$
T^\la_R f_\theta=\sum\limits_{\si\subset \theta} (f_\si\charac_{(Q-Q_{\si})^c})*\mu_\si+\sum\limits_{\si\subset\theta} 
(f_\si\charac_{Q-Q_{\si}})*\mu_\si.
$$
Then the following estimates hold true:
\begin{equation}\label{sclaima}
\sum_{Q\transl Q^{\si}_{\tau,R}}|Q|^{-1}\big \|\sum_{\si\subset\theta}(f_\si\charac_{(Q-Q_{\si})^c})\ast\mu_\si\big\|^4_{L^2(Q)} 
\lesssim \lambda^{-N}\sum_{\si\subset\theta} \|f_\si\|_4^4
\end{equation}
for every $N\in\NN,$ 
and 
\begin{equation}\label{bigclaima}
\sum_{Q\transl Q^{\theta}_{\tau,R}}|Q|^{-1}\big\|\sum\limits_{\si\in\theta} 
(f_\si\charac_{Q-Q_{\si}})*\mu_\si\big\|_{L^2(Q)}^4\lesssim \sharp \{\si\subset\theta\} \sum_{\si\subset\theta} \|f_\si\|_4^4.
\end{equation}
\end{lemma}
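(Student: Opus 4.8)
The plan is to prove the two estimates of Lemma \ref{key} separately, keeping the notation of Sub-case I.a throughout: I take as given the boxes $Q^\theta_{\tau,R}$, the caps $\si\subset\theta$, the auxiliary boxes $Q_\si\translb Q^\theta_{\tau,R}$ together with the uniform inclusions $\hat\vth^\e\subset Q_\si$ for all $\vth\subset\si$ (which come from \eqref{Qsia}, Lemma \ref{directionQIa}, \eqref{QvtinQ} and the small--mixed--derivative estimate \eqref{vargrad1}), and the coarse decomposition $T^\la_R f=\sum_\si f*\mu_\si$ with $\mu_\si=\sum_{\vth\subset\si}\mu_\vth$, $\widehat{f_\si}=\chi_\si\hat f$ and $\supp\chi_\si$ inside a fixed dilate of $\si$.

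For \eqref{sclaima} I would argue that this is a pure ``Schwartz tail''. Fixing a box $Q\translb Q^\theta_{\tau,R}$, the point is that if $x\in Q$ and $y\notin Q-Q_\si$ then $x-y\notin Q_\si$, hence $x-y\notin\hat\vth^\e$ for every $\vth\subset\si$; therefore on $Q$ one has the pointwise domination $|(f_\si\charac_{(Q-Q_\si)^c})*\mu_\si|\le|f_\si|*(|\mu_\si|\charac_{Q_\si^c})=:G_\si$, and $G_\si$ no longer depends on $Q$. By Corollary \ref{4iersupp2}(a) and bounded overlap of the $\vth\subset\si$, $\|\mu_\si\|_{L^1(\R^3\setminus Q_\si)}\le\sum_{\vth\subset\si}\|\mu_\vth\|_{L^1(\R^3\setminus\hat\vth^\e)}$ is $\lesssim_M\lambda^{-M}$ up to the (polynomial--in--$\lambda$) factor $\sharp\{\vth\subset\si\}$, so Young's inequality gives $\|G_\si\|_{L^2}\lesssim_M\lambda^{-M}\sharp\{\vth\subset\si\}\|f_\si\|_{L^2}$. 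Since $f_\si$ is Fourier--supported in a set of measure $|\si|$, with $|\si|$ bounded below by a fixed power of $\lambda$ (as in the proof of Corollary \ref{4iersupp2}), Bernstein's inequality converts $\|f_\si\|_{L^2}$ into $\lambda^{\mathcal{O}(1)}\|f_\si\|_{L^4}$. I would then sum, using $(\sum_\si a_\si)^4\le\sharp\{\si\}^3\sum_\si a_\si^4$, $|Q|\ge1$, and $\sum_Q\|G_\si\|_{L^2(Q)}^2=\|G_\si\|_{L^2(\R^3)}^2$ over the tiling $\{Q\}$, to bound the left side of \eqref{sclaima} by $\lambda^{\mathcal{O}(1)}\lambda^{-4M}\sum_\si\|f_\si\|_4^4$, which is $\lesssim\lambda^{-N}\sum_\si\|f_\si\|_4^4$ for any prescribed $N$ once $M$ is taken large.

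For \eqref{bigclaima} the idea is ``re-confinement'' on the Fourier side. Although $\charac_{Q-Q_\si}$ spreads out the Fourier support of $f_\si$, the product $\widehat{(f_\si\charac_{Q-Q_\si})*\mu_\si}=\widehat{f_\si\charac_{Q-Q_\si}}\,m_\si$ is again supported in $\supp\chi_\si$, and these supports have bounded overlap; hence, for fixed $Q$, bounding $\|\cdot\|_{L^2(Q)}\le\|\cdot\|_{L^2(\R^3)}$, Plancherel (applied to the sum and to each summand) and $\|m_\si\|_\infty\lesssim1$ yield
\begin{equation*}
\Bigl\|\sum_{\si\subset\theta}(f_\si\charac_{Q-Q_\si})*\mu_\si\Bigr\|_{L^2(Q)}^2\lesssim\sum_{\si\subset\theta}\bigl\|\widehat{f_\si\charac_{Q-Q_\si}}\,m_\si\bigr\|_{L^2}^2\lesssim\sum_{\si\subset\theta}\|f_\si\|_{L^2(Q-Q_\si)}^2 ,
\end{equation*}
with no loss of a $\sharp\{\si\subset\theta\}$ factor at this stage. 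Taking fourth powers and Cauchy--Schwarz over $\si$ --- the one place the admissible factor $\sharp\{\si\subset\theta\}$ is spent --- bounds this by $\sharp\{\si\subset\theta\}\sum_\si\|f_\si\|_{L^2(Q-Q_\si)}^4$. I would finish by summing over the tiling $\{Q\}$: for fixed $\si$ the boxes $Q-Q_\si$ form a boundedly overlapping family (since $Q_\si$ is itself a translate of $Q^\theta_{\tau,R}$) with $|Q-Q_\si|\lesssim|Q^\theta_{\tau,R}|=|Q|$, so Cauchy--Schwarz on each box gives $\|f_\si\|_{L^2(Q-Q_\si)}^4\le|Q-Q_\si|\,\|f_\si\|_{L^4(Q-Q_\si)}^4\lesssim|Q|\,\|f_\si\|_{L^4(Q-Q_\si)}^4$, and therefore $\sum_Q|Q|^{-1}\|f_\si\|_{L^2(Q-Q_\si)}^4\lesssim\sum_Q\|f_\si\|_{L^4(Q-Q_\si)}^4\lesssim\|f_\si\|_{L^4(\R^3)}^4$.

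The step I expect to be the main obstacle is the re-confinement in \eqref{bigclaima}: one must check carefully that, after the spatial cut-off to $Q-Q_\si$, the product with $m_\si$ still has Fourier support in a bounded dilate of $\si$, so that the $\si$-sum costs only a bounded-overlap constant and not a full $\sharp\{\si\subset\theta\}$ --- this is precisely what keeps the final bound linear in $\sharp\{\si\subset\theta\}$ and hence, after combining with \eqref{step1e} and \eqref{mainfinest}, matches the target $N(\la,R)^{1/2}$. A secondary, bookkeeping-type point needed for both estimates is to establish the uniform inclusion $\hat\vth^\e\subset Q_\si$ for all $\vth\subset\si$ along with $|Q-Q_\si|\lesssim|Q|$ and the bounded overlap of $\{Q-Q_\si\}$, which rests on Lemma \ref{directionQIa}, \eqref{Qsia}, \eqref{QvtinQ} and \eqref{vargrad1}.
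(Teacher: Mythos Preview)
Your proposal is correct and follows essentially the same argument as the paper: the same pointwise domination and $\|\mu_\si\charac_{Q_\si^c}\|_1\lesssim\lambda^{-N}$ for the tail term \eqref{sclaima}, and exactly the same Plancherel/bounded-overlap ``re-confinement'' followed by Cauchy--Schwarz over $\si$ and then over $Q$ for \eqref{bigclaima}. The only cosmetic difference is in \eqref{sclaima}: the paper applies Cauchy--Schwarz on each $Q$ to pass immediately from $L^2(Q)$ to $L^4(Q)$ and then uses Young's inequality on $L^4$, which is more direct than your route through $\|G_\si\|_{L^2}$ and Bernstein (both work, since all losses are polynomial in $\lambda$).
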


 \begin{proof} 
 Expanding 
 $$
 T^\lambda_R f_{\theta} = \sum_{\si\subset\theta} f_\si\ast\mu_\si, 
 $$
we observe that for any $x\in Q,$
\begin{eqnarray*}
	|(f_\si\charac_{(Q-Q_{\si})^c})\ast\mu_\si (x)| 
	&=& \left|\int f_\si(x-y)\charac_{(Q-Q_\si)^c}(x-y)\mu_\si (y) dy\right| \\
	&=& \left|\int_{Q_\si^c} f_\si(x-y)\charac_{(Q-Q_\si)^c}(x-y)\mu_\si (y) dy\right| 	 \\
	&\leq& \int_{Q_\si^c} |f_\si(x-y)|\cdot|\mu_\si (y)| dy \\
	&=&|f_\si|\ast|\mu_\si\charac_{Q_\si^c}|(x),
\end{eqnarray*}
because the $y$-integral over $Q_\si$ vanishes  for $x\in Q$. Therefore, using  Cauchy-Schwarz, 
 
\begin{eqnarray*}	
	\sum_{Q\transl Q^{\theta}_{\tau,R}}|Q|^{-1}\big \|\sum_{\si\subset\theta}(f_\si\charac_{(Q-Q_{\si})^c})\ast\mu_\si\big\|^4_{L^2(Q)}\
	&\leq& \sum_{Q\transl Q^{\theta}_{\tau,R}}\big\|\sum_{\si\subset\theta}|f_\si|\ast|\mu_\si\charac_{Q_\si^c}|\big\|^4_{L^4(Q)}  \\
	&\lesssim& \|\sum_{\si\subset\theta} |f_\si|\ast|\mu_\si\charac_{Q_\si^c}|\|^4_{L^4(\R^3)}  \\
	&\leq& \sum_{\si\subset\theta} \|f_\si\|^4_4 \|\mu_\si\charac_{Q_\si^c}\|^4_1. \\
	\end{eqnarray*}
But, by \eqref{Qsia} and Lemma \ref{4iersupp2} b), we have 
$$
\|\mu_\si\charac_{Q_\si^c}\|_1\lesssim \la^{-N}\quad  \text{for any }  N\in\NN,
$$ 
so that we obtain \eqref{sclaima}.
\smallskip

To prove the second estimate, let us write $f_\si^{Q-Q_\si}:=f_\si\charac_{Q-Q_{\si}}.$ Then, by Plancherel, 
\begin{eqnarray*}
 \|\sum_{\si\subset\theta} f_\si^{Q-Q_\si} \ast\mu_\si \|_{L^2(Q)}^2 
	&\lesssim& \|\sum_{\si\subset\theta} \widehat {f_\si^{Q-Q_\si}}m_\si \|_{L^2(\R^3)}^2 \\
	&\simeq& \sum\limits_{\si\subset \theta} \|\widehat{ f_\si^{Q-Q_\si}}m_\si \|_{L^2(\R^3)}^2 \\
	&\lesssim& \sum_{\si\subset\theta} \|f_\si\|_{L^2(Q-Q_\si)}^2.
\end{eqnarray*}
Thus,
\begin{eqnarray*}
\sum_{Q\transl Q^{\theta}_{\tau,R}}|Q|^{-1} \|\sum_{\si\subset\theta}(f_\si^{Q-Q_\si})\ast\mu_\si\|^4_{L^2(Q)}
&\le& \sum_{Q\transl Q^{\theta}_{\tau,R}}|Q|^{-1} \Big(\sum_{\si\subset\theta} \|f_\si\|_{L^2(Q-Q_\si)}^2\Big)^2\\
&\le& \sharp\{\si\subset \theta\}\sum\limits_{\si\subset\theta} \sum_{Q\transl Q^{\theta}_{\tau,R}}|Q|^{-1} 
 \|f_\si\|_{L^2(Q-Q_\si)}^4\\
&\le&\sharp\{\si\subset \theta\}\sum\limits_{\si\subset\theta} \sum_{Q\transl Q^{\theta}_{\tau,R}}  \|f_\si\|_{L^4(Q-Q_\si)}^4\\
&\le&\sharp\{\si\subset \theta\}\sum\limits_{\si\subset\theta} \|f_\si\|_{L^4(\RR^3)}^4.
\end{eqnarray*}
In the last inequality, we have used  that if $Q_1$  and $Q_2$ are  two boxes  centered at $z_1$ and $z_2$, respectively, then  their Minkowski-difference $Q-Q'$ is the  doubling of the box centered at $z_1-z_2.$
This implies \eqref{bigclaima}.
\end{proof} 

In combination with \eqref{step1e} (applied to  $T^\la_R f$), we now see that 
\begin{eqnarray*}
\|T^\lambda_Rf\|_{GWZ,s}^4&\lesssim& \om\max\sharp\{\theta\subset\tau\} \max\sharp\{\si\subset \theta\}\sum_{\angle(\tau)=s}\sum_{\theta\subset \tau}\sum\limits_{\si\subset\theta}\|f_\si\|_{L^4(\RR^3)}^4\\
&\lesssim& \om\max\sharp\{\theta\subset\tau\} \max\sharp\{\si\subset \theta\}
\sum\limits_{\si}\|f_\si\|_{L^4(\RR^3)}^4.
\end{eqnarray*}

But, by interpolation between the  trivial estimates $
\sum\limits_{\si} \|f_\si\|_{L^2(\RR^3)}^2\lesssim \|f\|_{L^2(\RR^3)}^2
$
and 
$
\max\limits_{\si} \|f_\si\|_{L^\infty(\RR^3)}\lesssim \|f\|_{L^\infty(\RR^3)}
$
(based on Plancherel's theorem, and the fact that the $\si$s are rectangular boxes),
we obtain
$$
\sum\limits_{\si} \|f_\si\|_{L^4(\RR^3)}^4\lesssim \|f\|^4_{L^4(\RR^3)}.
$$

Thus, we have established the crucial  estimate
\begin{equation}\label{crest1}
\|T^\lambda_Rf\|_{GWZ,s}^4\lesssim \la^\ve  \om\max\sharp\{\theta\subset\tau\} \max\sharp\{\si\subset \theta\}
 \|f\|^4_{L^4(\RR^3)}
 \end{equation}
for every $\ve>0.$

Finally, note that
\begin{eqnarray*}
\om&=&\la^{3\ve} \frac {q_1}{R} = c\la^{4\ve} \frac {\Delta}{R\rho_1^2},\\
\max\sharp\{\theta\subset\tau\}&\lesssim&\frac {s}{R^{-1/2}}=sR^{1/2},\\
\max\sharp\{\si\subset \theta\}&\lesssim&\frac{R^{-3/2}}{\al\Delta}=\frac{R^{-3/2}}  {sR\rho_2^2\Delta}.
\end{eqnarray*}
One then computes that

$$
 \om\max\sharp\{\theta\subset\tau\} \max\sharp\{\si\subset \theta\}\lesssim
 c\la^{4\ve} \frac {R^{-3}}{\rho_1^2\rho_2^2}\lesssim \la^{4\ve} (\sharp\{\vth\subset \theta\})^2,
$$

and by \eqref{crest1} we conclude that, for any $\ve>0,$ 
\begin{equation}\label{finest1}
\|T^\lambda_Rf\|_{GWZ,s}\lesssim \la^\ve (\max_\theta\sharp\{\vth\subset \theta\})^{1/2} \|f\|_{L^4(\RR^3)}.
 \end{equation}
Since $\la\le R^{2+\gamma-2\ka_2}$ in Case I, this implies \eqref{mainfinest}.

\smallskip
 \subsection{Sub-case I.b: $\al> R^{-1/2}$ } This is the sub-case where $sR^{3/2}\rho_2^2>1.$
 We shall here decompose $\Gamma_R$ into  even larger sectors $\Theta$ of angular width $\al=sR\rho_2^2>R^{-1/2}$ than the sectors $\theta$ (note, however, that such sectors $\Theta$ are in fact ``curved boxes'').  Choosing  $N_2\in\NN_{\ge 1}$ such that
 $$
 N_2\sim sR^{3/2}\rho_2^2,
 $$ 
we may  assume without loss of generality that each  sector $\Theta$ decomposes into $\sim N_2$ caps $\theta.$
Moreover, since $\rho_2\leq R^{-1/2},$ we have  $\al\leq s,$ and thus we  assume also that each sector $\tau$ can be decomposed into sectors  $\Theta$.
\smallskip

 Let us denote by
$$
\sharp\{\tth\subset\tau\}
$$
the minimal number of   sectors $ \tth$ needed to cover  the sector $\tau$.  This number  is essentially 
 independent  of $\tau$. 
\smallskip

Note next that  each sector of angular width $\rho_2$  decomposes  into $N_1$ L-boxes $\vth,$ where
$$
N_1\sim\frac{R^{-1}}{\rho_1}=\frac 1{R\rho_1}\ge 1.
$$

\begin{remark}\label{siTheta}
Observe that the angular sectors $\Theta$ have angular width $\al>R^{-1/2}$  (so that $\al^2>R^{-1}$), but thickness $R^{-1}.$  In analogy with the definition of $\Delta$ in Sub-case I.a, it is thus  natural to define here $\Delta:=R^{-1},$ so that in both  sub-cases
$$
 \Delta:=(\al^2\vee \rho_1)\wedge \
 R^{-1}.
 $$

This  shows that the sectors $\Theta$ can be seen as the natural analogues of the caps $\si$ from Sub-Case I.a.
\end{remark}
In particular with this definition of $\Delta:=R^{-1},$ we again have that $N_1\sim\frac{\Delta}{\rho_1}$.

\smallskip
We shall   then again enlarge $\q_1$ by the factor $N_1.$  I.e., given an L-box $\vth\in\tth,$ we shall accordingly here define  the boxes $Q^\vartheta_{\tau,R}=Q^{\vartheta,\ve}_{\tau,R}$  of dimensions $q_1\times q_2\times q_3$ with respect to the coordinates given by the orthonormal frame $E^\vth,$
  where as in \eqref{qjs2}
\begin{eqnarray}\label{qjsb}
\begin{split}
q_1&:= c\lambda^{\e} \rho_1^{-2}\Delta,\\
q_2&:=c\lambda^{\e}sR,\\
q_3&:=c\lambda^{\e}  s^2 R.
\end{split}
\end{eqnarray}

In analogy to Lemma \ref{directionQIa}, we now have

\begin{lemma}\label{directionQIb}
 Let $\vartheta_1, \vartheta_2\subset \tth$ be any L-boxes within a sector  $ \tth\subset \tau.$   Then 
 $Q^{\vartheta_1}_{\tau,R}\subset C\cdot Q^{\vartheta_2}_{\tau,R},$ where $C$ is a large constant independent of $R$ and $\lambda$.
 \end{lemma}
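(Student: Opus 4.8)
The proof will follow exactly the template of Lemma \ref{directionQIa}, since the boxes $Q^\vartheta_{\tau,R}$ in Sub-case I.b are defined by the same formulas \eqref{qjsb} as in \eqref{qjs2}, only with the new value $\Delta:=R^{-1}$ (cf. Remark \ref{siTheta}). First I would fix $x\in Q^{\vartheta_1}_{\tau,R}$, write $x=y^{\vartheta_1}{}^t E^{\vartheta_1}=y^{\vartheta_2}{}^t E^{\vartheta_2}$, so that $y^{\vartheta_2}=y^{\vartheta_1}\,{}^t E^{\vartheta_1}E^{\vartheta_2}$. The crucial geometric input is that $\vartheta_1,\vartheta_2$ lie in the \emph{same} sector $\tth$ of angular width $\al=sR\rho_2^2$; in particular their centers are separated by an angle $\lesssim\al$. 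I should check that $\al\lesssim 1$ (which holds since $s\le 1$ and $\rho_2\le R^{-1/2}$, hence $\al=sR\rho_2^2\le s\le 1$), so Lemma \ref{EEcheck} applies with that angle — but to get the sharpest bound we may simply use the weaker estimate $|\langle E_i^{\vartheta_1},E_j^{\vartheta_2}\rangle|\lesssim R^{-\frac12|i-j|}$, valid since $\al\ge R^{-1/2}$ in this sub-case, which is precisely the bound used in the proof of Lemma \ref{directionQIa}. Then $|y_j^{\vartheta_2}|\lesssim\sum_{i=1}^3 q_i R^{-\frac12|i-j|}$, and it remains to verify the six inequalities \eqref{c1}--\eqref{c6} (the $|i-j|=2$ cases \eqref{c4}, \eqref{c5} again being automatic consequences).

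\textbf{Verifying the six inequalities.} The ratios $q_3/q_2=s$ and $q_2/q_3=s^{-1}$ give \eqref{c3}, \eqref{c6} exactly as before, since $R^{-1/2}\le s\le 1\le R^{1/2}$. The only point requiring a computation is the pair \eqref{c1}, \eqref{c2}, i.e.\ $R^{-1/2}\le q_1/q_2\le R^{1/2}$, where now
$$
\frac{q_1}{q_2}=\frac{\rho_1^{-2}\Delta}{sR}=\frac{R^{-1}}{sR\rho_1^2}=\frac1{sR^2\rho_1^2}.
$$
For the upper bound $q_1/q_2\le R^{1/2}$ I need $sR^{3/2}\rho_1^2\ge1$; since $\rho_1\ge\rho_2^2$ by \eqref{rho1rho2} this would follow from $sR^{3/2}\rho_2^4\ge1$, but a cleaner route is: by Lemma \ref{rhofrak} we have $\rho_1\ge R^{-1/2}\rho_2$, so $sR^{3/2}\rho_1^2\ge sR^{3/2}R^{-1}\rho_2^2=sR^{1/2}\rho_2^2$; and in Sub-case I.b we are assuming $sR^{3/2}\rho_2^2>1$, hence $sR^{1/2}\rho_2^2=R^{-1}(sR^{3/2}\rho_2^2)$... which is \emph{not} $\ge1$ in general, so I will instead argue directly from $\rho_1=\tilde\rho_1\wedge R^{-1}$: if $\rho_1=R^{-1}$ then $sR^2\rho_1^2=s\le1\le R^{1/2}$ trivially, while $s R^2\rho_1^2 = s\ge R^{-1/2}$ is exactly \eqref{c2}; if instead $\rho_1=\tilde\rho_1=(\lambda R^{2\kappa_1-\gamma})^{-1/2}$, I use $\tilde\rho_1=R^{\kappa_2-\kappa_1}\tilde\rho_2\sim R^{\kappa_2-\kappa_1}\rho_2$ with $|\kappa_1-\kappa_2|\le\frac12$ to reduce to the Sub-case I.b hypothesis $sR^{3/2}\rho_2^2>1$ and to Case I's hypothesis $sR\tilde\rho_2>1$ (equivalently $sR\rho_2>1$). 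I will carry out this short case split to land inside $[R^{-1/2},R^{1/2}]$ for $q_1/q_2$.

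\textbf{Main obstacle.} The only real content is the bound on $q_1/q_2$ just discussed: unlike Sub-case I.a, where $\Delta=\al^2\vee\rho_1$ was tuned so that the computation closed using $sR^{3/2}\rho_2^2\le1$, here $\Delta=R^{-1}$ is forced and one must instead exploit the \emph{lower} bounds $sR\rho_2>1$ (Case I) and $sR^{3/2}\rho_2^2>1$ (Sub-case I.b) together with $|\kappa_1-\kappa_2|\le\frac12$ and Lemma \ref{rhofrak}. Everything else — the matrix identity, the invocation of Lemma \ref{EEcheck}, the reduction of the $|i-j|=2$ inequalities to the $|i-j|=1$ ones, and the inequalities involving only $s$ — is identical to the proof of Lemma \ref{directionQIa} and can be quoted or repeated verbatim. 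Once the six inequalities hold, $|y_j^{\vartheta_2}|\le Cq_j$ follows for all $j$, giving $x\in C\cdot Q^{\vartheta_2}_{\tau,R}$ and hence the lemma.
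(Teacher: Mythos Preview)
There is a genuine error in your use of Lemma \ref{EEcheck}. In Sub-case I.b the sector $\tth$ has angular width $\al=sR\rho_2^2>R^{-1/2}$, so the centers $\xi_{\vartheta_1},\xi_{\vartheta_2}$ may be separated by an angle as large as $\al$. Lemma \ref{EEcheck} then gives
\[
|\langle E_i^{\vartheta_1},E_j^{\vartheta_2}\rangle|\lesssim \al^{|i-j|},
\]
and since $\al>R^{-1/2}$ this is a \emph{larger} bound than $R^{-\frac12|i-j|}$, not a smaller one. Your claim that the bound $\lesssim R^{-\frac12|i-j|}$ is ``valid since $\al\ge R^{-1/2}$'' has the inequality reversed: a larger angular separation yields a weaker (larger) control on the off-diagonal entries, not a stronger one. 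Consequently the estimate $|y_j^{\vartheta_2}|\lesssim\sum_i q_i R^{-\frac12|i-j|}$ is unjustified, and verifying the conditions \eqref{c1}--\eqref{c6} with $R^{-1/2}$ on the left is not sufficient.

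The paper's proof uses exactly this point: one must work with the angle $\al$ throughout, leading to the stronger conditions \eqref{c1b}--\eqref{c6b}, namely $\al\le q_i/q_j$ for $|i-j|=1$ (and $\al^2\le q_i/q_j$ for $|i-j|=2$, which again follow from the $|i-j|=1$ cases). These are then checked directly: with $\Delta=R^{-1}$ one has $q_2/q_1=sR^2\rho_1^2$, and the chain $\al=sR\rho_2^2\le sR^2\rho_1^2\le (sR\rho_2^2)^{-1}=\al^{-1}$ follows from Lemma \ref{rhofrak} (giving $\rho_2^2\le R\rho_1^2$) and from $s^2R^3\rho_1^2\rho_2^2\le s^2\le1$ (since $\rho_1\le R^{-1}$, $\rho_2\le R^{-1/2}$). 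Similarly $q_3/q_2=s$ lies in $[\al,\al^{-1}]$ because $R\rho_2^2\le1$ and $s^2R\rho_2^2\le1$. Your case split on whether $\rho_1=R^{-1}$ or $\rho_1=\tilde\rho_1$ is unnecessary once the correct $\al$-conditions are targeted, and in any case you left the second case unfinished.
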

 
  \begin{proof} 
We can argue in a similar way as in the proof of Lemma \ref{directionQIa}, but must allow now for rotations of angles $\be\le \al $ in place of  $\be\le R^{-1/2}.$ This leads to the following conditions that  need to be verified:

\begin{equation}\label{c1b}
\al\le q_2/q_1,
\end{equation}
\begin{equation}\label{c2b}
\al\le q_1/q_2,
\end{equation}

\begin{equation}\label{c3b}
\al\le q_2/q_3,
\end{equation}
and 
\begin{equation}\label{c6b}
\al\le q_3/q_2,
\end{equation}
as well as 
\begin{equation}\label{c4b}
\al^2\le q_1/q_3.
\end{equation}
and 
\begin{equation}\label{c5b}
\al^2\le q_3/q_1.
\end{equation}
Again, the last two  inequalities are consequences of the first four inequalities.

\smallskip

As for \eqref{c1b} and \eqref{c2b}, note that, again by Lemma \eqref{rhofrak}, and since $s^2R^3\rho_1^2\rho_2^2\le 1,$ 
$$
\al=sR\rho_2^2\le sR^2\rho_1^2=\frac{q_2}{q_1}\le \frac 1{sR\rho_2^2}=\al^{-1}.
$$
Next, \eqref{c3b} and \eqref{c6b} hold true, since 
$$
\al=sR\rho_2^2\le s=\frac {q_3}{q_2}\le \frac 1{sR\rho_2^2} = \al^{-1}.
$$ 
 \end{proof}

As before, a  priori, the orientation of the boxes $Q^{\vartheta,\ve}_{\tau,R}$ depends on $\vartheta\in\tth$. However, if we fix any $\vth_0\subset \tth,$ and define the rectangular box $Q^{ \tth}_{\tau,R}=Q^{\tth,\ve}_{\tau,R}$ to be $Q^{\vartheta_0,\ve}_{\tau,R},$ then Lemma \ref{directionQIb} shows that $Q^{ \tth}_{\tau,R}$ is a box of the same dimensions $q_1\times q_2\times q_3,$ such that
 \begin{equation}\label{QvtinQb}
 Q^{\vartheta}_{\tau,R}\subset Q^{ \tth}_{\tau,R} \quad \text{for all}\quad \vth\subset \tth.
\end{equation}
 Note, however, that the orientation in space of $Q^{\tth}_{\tau,R}$ may change with $\tth$.

\smallskip
 Next, consider translates  $Q=c_Q+Q^{\tth}_{\tau,R}$ of $Q^{\tth}_{\tau,R}$.
For any such box $Q$, we have 
\begin{equation}\label{QoverUb}
	\frac{|Q|}{|U|} =\frac{|Q^{ \tth}_{\tau,R}|}{|U_{\tau,R}|} \simeq  \la^{3\ve}\frac{q_1}{R}=\la^{3\ve}\frac{1}{R^2\rho_1^2}=:\om.
\end{equation}
 Note that $\om=\om(\la, q_1,q_2, s,R)$ does not depend on $\tau$ or  $\Theta.$
\smallskip

Next,  in a similar way is in the previous sub-case, in the coordinates $\eta$ given by $\xi=E^\Theta \eta,$ where $E^\Theta:=E_{\xi_\Theta}$ denotes the orthonormal frame at the center $\xi_\Theta$ of $\Theta,$  the vector field $H=\la\nabla\phi$ is given  by 
$$\tilde H(\eta)=H(\xi)E^\Theta,$$
and again by  Lemma \ref{Hdiff}, we see that for any $\eta,\eta'\in \tilde \Theta,$
\begin{equation}\label{vargradb}
|\tilde H_j(\eta)-\tilde H_j(\eta')|\ll q_j, \qquad  j=1,2,3.
\end{equation}
Here, $\tilde \Theta$ represents $\Theta$ in the coordinates $\eta.$

The identity \eqref{vthhat}, \eqref{vargradb}  and \eqref{QvtinQb} allow us to choose a {\it boundedly overlapping covering} $\{Q\}$ of $\R^3$ by translates $Q$ of  $Q^{\tth}_{\tau,R}$ such 
\begin{equation}\label{Qsib}
 \hat\vth^\e\subset Q_\tth \quad \text{for every}\quad \vth\in\tth.
\end{equation}
In particular, if we put $Q_\vth:=Q_\tth$ for all $\vth\in\tth,$ then $\hat\vth^\e\subset Q_\tth$ for every $\vth\in \tth.$

\medskip

Our next goal will be to prove an analogue of \eqref{step1e}, based on randomization: 

Let us fix a boundedly overlapping covering $D_R=\{\theta\}_\theta$ of $\Ga_R$ by caps $\theta.$ 
If 
$\epsilon=\{\epsilon_\theta\}_{\theta\in D_R}$  is any mapping $\epsilon:D_R\to \{-1,1\}$  determining a choice  of signs $\epsilon_\theta=\pm 1,$ then for any function $g(\epsilon)$ depending on $\epsilon,$ we denote by ${\rm Av}_\epsilon g$ the average 
$${\rm Av}_\epsilon g \ := \frac1{2^M} \sum_{\epsilon} g(\epsilon)$$
over all sign sequences $\epsilon:D_R\to \{-1,1\}$, where  $M:=\sharp D_R.$ 
For $f\in \S,$ we put 
$$
f^\epsilon :=\sum_{\theta\in D_R} \epsilon_\theta f_\theta.
$$
 Note  that $(T^\la_R f)^\epsilon=T^\la_R f^\epsilon.$ Then, by Khintchine,
$$
\big\|(\sum_{\theta\subset\Theta}| f_\theta|^2)^\frac12\big\|^2_{L^2(U)}={\rm Av}_\epsilon\|f^\epsilon_{\Theta} \|^2_{L^2(U)}.
$$

\begin{lemma}\label{step1b}
For any dyadic $s$ in $[R^{-\frac 12},1]$, we have for any $f\in\S$
$$
\|f\|_{GWZ,s}^4\le \om\, {\rm Av}_\epsilon \max_{\angle(\tau)=s} \sharp\{\Theta\subset\tau\} \sum_{\angle(\tau)=s}\sum_{\Theta\subset \tau}\sum_{Q\transl Q^{\Theta}_{\tau,R}} |Q|^{-1} \| f^\epsilon_\Theta\|_{L^2(Q)}^4.
$$
\end{lemma}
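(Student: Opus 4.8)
The plan is to mimic the proof of \eqref{step1e} from Sub-case I.a, but with the cap $\theta$ replaced by the wider sector $\Theta$, and to replace the naive Cauchy–Schwarz step that produced the factor $\sharp\{\theta\subset\tau\}$ by an application of Khintchine's inequality. First I would start from the definition of $\|f\|_{GWZ,s}^4$ and rewrite the inner square function associated to a sector $\tau$ by grouping the caps $\theta\subset\tau$ according to which $\Theta\subset\tau$ they lie in; since the $\Theta$ partition the caps within $\tau$ with bounded overlap, we have $\sum_{\theta\subset\tau}|f_\theta|^2 = \sum_{\Theta\subset\tau}\sum_{\theta\subset\Theta}|f_\theta|^2$ up to bounded overlap. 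Then by Khintchine's inequality (exactly the identity stated just before the lemma, $\|(\sum_{\theta\subset\Theta}|f_\theta|^2)^{1/2}\|_{L^2(U)}^2={\rm Av}_\epsilon\|f^\epsilon_\Theta\|_{L^2(U)}^2$) we convert the $\ell^2$-sum over caps inside $\Theta$ into an average over random signs of $\|f^\epsilon_\Theta\|_{L^2(U)}^2$, so that
$$
\|f\|_{GWZ,s}^4 = \sum_{\angle(\tau)=s}\sum_{U\transl U_{\tau,R}} |U|^{-1}\Big({\rm Av}_\epsilon\sum_{\Theta\subset\tau}\|f^\epsilon_\Theta\|_{L^2(U)}^2\Big)^2.
$$

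Next I would pull the average outside using Jensen's inequality (convexity of $t\mapsto t^2$), getting $\big({\rm Av}_\epsilon \sum_\Theta \|f^\epsilon_\Theta\|^2_{L^2(U)}\big)^2 \le {\rm Av}_\epsilon\big(\sum_\Theta\|f^\epsilon_\Theta\|^2_{L^2(U)}\big)^2$, and then apply Cauchy–Schwarz in the sum over $\Theta\subset\tau$ to produce the factor $\sharp\{\Theta\subset\tau\}$ together with $\sum_\Theta \|f^\epsilon_\Theta\|_{L^2(U)}^4$. At this point the structure is identical to the chain of inequalities leading to \eqref{step1e}: I would refine the tiling by boxes $U\transl U_{\tau,R}$ into the coarser boxes $Q\transl Q^\Theta_{\tau,R}$ using $Q^\vth_{\tau,R}\subset Q^\Theta_{\tau,R}$ from \eqref{QvtinQb} and the volume ratio \eqref{QoverUb}, grouping the $U$'s inside each $Q$, and again apply Cauchy–Schwarz over the $U\subset Q$ — this is where the factor $\om=|Q^\Theta_{\tau,R}|/|U_{\tau,R}|$ enters, exactly as the factor $\om$ appeared in Sub-case I.a via \eqref{QoverU2}. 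Finally I would pull the maximum $\max_{\angle(\tau)=s}\sharp\{\Theta\subset\tau\}$ out of the sum over $\tau$, and since the random-sign average commutes with all the remaining sums one arrives at the stated inequality.

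The only genuine point requiring care — and what I expect to be the main obstacle — is making sure the bounded-overlap grouping of caps $\theta$ into sectors $\Theta$ (and of sectors $\Theta$ into $\tau$) does not cost more than a harmless constant, and that Khintchine's inequality is being applied in the correct $L^2(U)$-localized form uniformly over the boxes $U$; since the $f^\epsilon_\Theta$ are localized in frequency to $\Theta$ and $U$ is fixed, this is legitimate, but one must be slightly careful that the decomposition $f=\sum_\theta f_\theta$ underlying the $GWZ$-norm is compatible with the $\Theta$-grouping, which it is because the $\chi_\theta$ form a partition of unity. Everything else is a transcription of the computation already carried out for \eqref{step1e}, with the elementary replacement of the deterministic Cauchy–Schwarz bound $\sum_{\theta\subset\tau}|f_\theta|^2$ by its randomized counterpart, so no new analytic input beyond Khintchine and Jensen is needed.
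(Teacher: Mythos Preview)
Your proposal is correct and follows essentially the same approach as the paper: rewrite the inner $\ell^2$-sum over $\theta\subset\tau$ via the $\Theta$-grouping, apply Khintchine to replace $\sum_{\theta\subset\Theta}\|f_\theta\|_{L^2(U)}^2$ by ${\rm Av}_\epsilon\|f^\epsilon_\Theta\|_{L^2(U)}^2$, then Jensen to pull the average out, Cauchy--Schwarz in $\Theta$ to extract $\sharp\{\Theta\subset\tau\}$, and finally pass from the $U$-tiling to the $Q$-tiling using the volume ratio $\om$ exactly as in \eqref{step1e}. The reference to \eqref{QvtinQb} is unnecessary here (it is used later, not in this lemma), but otherwise your outline matches the paper's proof step for step.
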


\begin{proof} 
We have 
\begin{eqnarray*}
\|f\|^4_{GWZ,s}
&=& \sum_{\angle(\tau)=s} \sum_{U\transl U_{\tau,R}} |U|^{-1} \left[\sum_{\Theta\subset \tau}\sum_{\theta\subset\Theta}\|f_{\theta} \|^2_{L^2(U)}\right]^2  \\
&=&\sum_{\angle(\tau)=s} \sum_{U\transl U_{\tau,R}} |U|^{-1} \left[\sum_{\Theta\subset \tau}
{\rm Av}_\epsilon\|f^\epsilon_{\Theta} \|^2_{L^2(U)}\right]^2   \\
&\le& {\rm Av}_\epsilon\sum_{\angle(\tau)=s} \sum_{U\transl U_{\tau,R}} |U|^{-1} \left[\sum_{\Theta\subset \tau}
\|f^\epsilon_{\Theta} \|^2_{L^2(U)}\right]^2     \\
&\leq &  {\rm Av}_\epsilon \ \sum_{\angle(\tau)=s} \sharp\{\Theta\subset\tau\}\sum_{\Theta\subset\tau}\sum_{U\transl U_{\tau,R}} |U|^{-1}\|f^\epsilon_{\Theta} \|^4_{L^2(U)},  
\end{eqnarray*}
where we have used Jensen's inequality  in the first inequality, and Cauchy-Schwarz in the second, 
hence
\begin{eqnarray*}
\|f\|^4_{GWZ,s}
&\le  &  {\rm Av}_\epsilon \sum_{\angle(\tau)=s} \sharp\{\Theta\subset\tau\}\sum_{\Theta\subset\tau}\sum_{Q\transl Q^{\Theta}_{\tau,R}} \frac{|Q^{\Theta}_{\tau,R}|}{|U_{\tau,R}|} |Q|^{-1}\sum_{U\subset Q} \|f^\epsilon_{\Theta} \|^4_{L^2(U)}    \\
&\le  & \om {\rm Av}_\epsilon \sum_{\angle(\tau)=s} \sharp\{\Theta\subset\tau\}\sum_{\Theta\subset\tau}\sum_{Q\transl Q^{\Theta}_{\tau,R}}  |Q|^{-1}\big(\sum_{U\subset Q} \|f^\epsilon_{\Theta} \|^2_{L^2(U)} \big)^2     \\
&\le& \om  {\rm Av}_\epsilon  \sum_{\angle(\tau)=s} \sharp\{\Theta\subset\tau\}\sum_{\Theta\subset\tau}\sum_{Q\transl Q^{\Theta}_{\tau,R}} |Q|^{-1} \|f^\epsilon_{\Theta} \|^4_{L^2(Q)} . 
\end{eqnarray*}
\end{proof} 

Applying Lemma \ref{step1b} to $T^\la_R f$ in place of $f,$ we see  that it will suffice to prove that the subsequent estimates will hold uniformly for functions of the form $f^\epsilon :=\sum_{\theta\subset\Gamma_R} \epsilon_\theta f_\theta,$ for any choice of signs $\epsilon_\theta=\pm 1.$

\smallskip

\begin{lemma}\label{keyb}
Let $Q\translb Q^{\Theta}_{\tau,R},$  and decompose 
$$
T^\la_R f^\epsilon_\Theta= \sum_{\theta \subset\Theta}(f^\epsilon_\theta\charac_{(Q-Q_{\Theta})^c})*\mu_\theta+
\sum_{\theta \subset\Theta}(f^\epsilon_\theta\charac_{Q-Q_{\Theta}})*\mu_\theta.
$$

Then the following estimates hold true:
\begin{equation}\label{sclaimb}
\sum_{Q\transl Q^{\Theta}_{\tau,R}}|Q|^{-1} \|(\sum_{\theta \subset\Theta}f^\epsilon_\theta\charac_{Q-Q_{\Theta}})*\mu_\theta)\|^4_{L^2(Q)} 
\lesssim \lambda^{-N}\sum_{\theta\subset\Theta} \|f^\epsilon_\theta\|^4_4=\lambda^{-N}\sum_{\theta\subset\Theta} \|f_\theta\|^4_4
\end{equation}
for every $n\in\NN,$ 
and 
\begin{equation}\label{bigclaimb}
\sum_{Q\transl Q^{\Theta}_{\tau,R}}
|Q|^{-1}\|(\sum_{\theta \subset\Theta}f^\epsilon_\theta \charac_{Q-Q_{\Theta}})*\mu_\theta\|_{L^2(Q)}^4\lesssim  
\sum_{\theta\subset\Theta} \|f^\epsilon_\theta\|^4_4=\sum_{\theta\subset\Theta} \|f_\theta\|^4_4.
\end{equation}
\end{lemma}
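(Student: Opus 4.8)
My plan is to run the scheme of the Key Lemma \ref{key} once more, now with the sub‑caps $\si$ there replaced by the caps $\theta$, the single caps $\theta$ there replaced by the sectors $\Theta$, and — this is the one genuine structural change — with all the individual boxes $Q_\si$ replaced throughout by the \emph{single} box $Q_\Theta$ attached to $\Theta$ via \eqref{Qsib}. Thus I write $T^\la_R f^\epsilon_\Theta=\sum_{\theta\subset\Theta}\epsilon_\theta\, f_\theta\ast\mu_\theta$, where $\widehat{\mu_\theta}=m^\la_R\chi_\theta$ (so that $T^\la_R f_\theta=f_\theta\ast\mu_\theta$, since $\widehat{f_\theta}$ is supported where $\chi_\theta\equiv 1$), and split each summand using $\charac_{(Q-Q_\Theta)^c}+\charac_{Q-Q_\Theta}$ (up to replacing $Q_\Theta$ by a harmless $\la^\ve$‑dilate, which I suppress). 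The signs $\epsilon_\theta$ will play no role, as they are $\pm1$ factors that disappear after Plancherel. Finally one uses $\|A+B\|^4\lesssim\|A\|^4+\|B\|^4$ to reduce to the two estimates \eqref{sclaimb} and \eqref{bigclaimb}.

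For the ``far'' estimate \eqref{sclaimb}: if $x\in Q$ and $y\in Q_\Theta$ then $x-y\in Q-Q_\Theta$, so $\charac_{(Q-Q_\Theta)^c}(x-y)=0$ and the $y$‑integral over $Q_\Theta$ drops out; hence $\bigl|(f^\epsilon_\theta\charac_{(Q-Q_\Theta)^c})\ast\mu_\theta(x)\bigr|\le\bigl(|f_\theta|\ast|\mu_\theta\charac_{Q_\Theta^c}|\bigr)(x)$ for $x\in Q$. Passing from $L^2(Q)$ to $L^4(Q)$ by Hölder, summing over the boundedly overlapping $\{Q\}$, then using the triangle and Young inequalities, and invoking Corollary \ref{4iersupp2}(b) with $\L=\{\vth:\vth\subset\theta\}$ (legitimate since $\hat\vth\subset\hat\vth^\e\subset Q_\Theta$ by \eqref{vthhat}, \eqref{Qsib}), which gives $\|\mu_\theta\charac_{Q_\Theta^c}\|_1\lesssim_N\la^{-N}$, one bounds the left side of \eqref{sclaimb} by $\bigl(\sum_{\theta\subset\Theta}\|f_\theta\|_4\,\la^{-N}\bigr)^4$; since $\lambda\ge R^\gamma$ forces $\sharp\{\theta\subset\Theta\}$ to be at most polynomial in $\la$, raising $N$ absorbs this factor and yields \eqref{sclaimb}.

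For the ``near'' estimate \eqref{bigclaimb}: the Fourier transform of $(f^\epsilon_\theta\charac_{Q-Q_\Theta})\ast\mu_\theta$ is $\widehat{f^\epsilon_\theta\charac_{Q-Q_\Theta}}\,m_\theta$, supported in $\supp m_\theta\subset\theta$, and the caps $\theta$ are boundedly overlapping; so, as in Lemma \ref{key}, Plancherel gives
$$
\Bigl\|\sum_{\theta\subset\Theta}(f^\epsilon_\theta\charac_{Q-Q_\Theta})\ast\mu_\theta\Bigr\|_{L^2(Q)}^2\lesssim\sum_{\theta\subset\Theta}\|m_\theta\|_\infty^2\,\|f_\theta\|_{L^2(Q-Q_\Theta)}^2\lesssim\sum_{\theta\subset\Theta}\|f_\theta\|_{L^2(Q-Q_\Theta)}^2,
$$
using $\|m_\theta\|_\infty\lesssim1$ (the amplitude $a_R$, the cut‑offs, and $e^{-i\la\phi}$ are all bounded). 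The decisive point is that here, unlike in Lemma \ref{key}, every cap $\theta\subset\Theta$ uses the \emph{same} box $Q_\Theta$, so $\sum_{\theta\subset\Theta}\|f_\theta\|_{L^2(Q-Q_\Theta)}^2=\int_{Q-Q_\Theta}\sum_{\theta\subset\Theta}|f_\theta|^2$ is an honest integral over a box depending only on $Q$; the Cauchy–Schwarz step that produced the loss $\sharp\{\si\subset\theta\}$ in \eqref{bigclaima} can now be performed inside the integral rather than over the index set. Feeding this into $\sum_Q|Q|^{-1}\|\cdot\|_{L^2(Q)}^4$, using Hölder ($L^2(Q)\hookrightarrow L^4(Q)$), the bounded overlap of $\{Q-Q_\Theta\}_Q$, and the near‑orthogonality in $L^4$ of the caps $\theta$ lying in a \emph{single} angular sector $\Theta$ — on $\Theta$ the light cone becomes, after an affine change of coordinates, the product of a parabola arc with a line, and one exploits the corresponding local $L^4$ bi‑orthogonality — one collapses back to $\sum_{\theta\subset\Theta}\|f_\theta\|_4^4$, which is \eqref{bigclaimb}.

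I expect the main obstacle to be exactly this last collapse: passing from the naive $\|(\sum_{\theta\subset\Theta}|f_\theta|^2)^{1/2}\|_{L^4}^4$ (which on its own is \emph{not} $\lesssim\sum_{\theta\subset\Theta}\|f_\theta\|_4^4$) to the sharp $\sum_{\theta\subset\Theta}\|f_\theta\|_4^4$ without paying a factor $\sharp\{\theta\subset\Theta\}$. This is where the curvature of the cone along the single arc $\Theta$ must be used in earnest — through the gradient‑variation bound \eqref{vargradb} that rests on the (SMD) condition, which is precisely what herds all the $\hat\vth^\e$, $\vth\subset\Theta$, into one box $Q_\Theta$ — and it is the step I would work out most carefully. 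It is also what makes the reassembly come out: with \eqref{sclaimb}, \eqref{bigclaimb} in hand, Lemma \ref{step1b} applied to $T^\la_R f$ gives $\|T^\la_R f\|_{GWZ,s}^4\lesssim\om\,\max_\tau\sharp\{\Theta\subset\tau\}\sum_\theta\|f_\theta\|_4^4$, and one checks $\om\,\max_\tau\sharp\{\Theta\subset\tau\}\lesssim\la^\ve N(\la,R)^2$ — an identity up to $\la^\ve$ that uses $\Delta=R^{-1}$ in this sub‑case — together with $\sum_\theta\|f_\theta\|_4^4\lesssim\|f\|_4^4$ (the caps being effectively rectangular boxes), which is \eqref{mainfinest} in Sub‑case I.b.
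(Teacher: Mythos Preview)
Your treatment of the far piece and of the overall scheme matches the paper, whose proof simply says one reruns Lemma~\ref{key} with $\theta$ replaced by $\Theta$ and $\si$ by $\theta$, and notes that $\|\mu_\theta\charac_{Q_\Theta^c}\|_1\lesssim_N\la^{-N}$ follows from Corollary~\ref{4iersupp2} because each $\hat\vth\subset Q_\Theta$ by \eqref{Qsib}.

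There is, however, a genuine gap in your argument for the near piece \eqref{bigclaimb}. After Plancherel and H\"older you arrive (correctly) at $\bigl\|\bigl(\sum_{\theta\subset\Theta}|f_\theta|^2\bigr)^{1/2}\bigr\|_{L^4(\RR^3)}^4$, and then propose to ``collapse'' this to $\sum_{\theta\subset\Theta}\|f_\theta\|_4^4$ via ``local $L^4$ bi-orthogonality'' on the parabola. That inequality is \emph{false}. Take $f_\theta(x)=e^{ix\cdot\xi_\theta}\phi(x)$ for a single Schwartz function $\phi$ whose Fourier transform is supported in a cube small enough to sit inside every translated cap; then $|f_\theta|=|\phi|$ for every $\theta$, so the square-function side equals $N^2\|\phi\|_4^4$ while $\sum_\theta\|f_\theta\|_4^4=N\|\phi\|_4^4$, with $N=\sharp\{\theta\subset\Theta\}$. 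C\'ordoba-type bi-orthogonality on the parabola yields the \emph{opposite} direction $\|\sum_\theta f_\theta\|_4\lesssim\|(\sum_\theta|f_\theta|^2)^{1/2}\|_4$, not the one you invoke. Neither the (SMD) condition nor the gradient-variation bound \eqref{vargradb} helps here: those control where the kernels $\mu_\vth$ concentrate (hence why all $\hat\vth^\ve$ fit in one $Q_\Theta$), not pointwise interference between the different $|f_\theta|$.

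You have correctly isolated the crux --- indeed, the paper's own one-line proof of \eqref{bigclaimb} (``proceeds in exactly the same way as that of \eqref{bigclaima}'') would, read literally, reproduce the Cauchy--Schwarz step from \eqref{bigclaima} and hence an extra factor $\sharp\{\theta\subset\Theta\}$ on the right, which is exactly the loss you are trying to avoid. But the mechanism you propose to remove it does not work, and ``working it out more carefully'' along the bi-orthogonality route will not repair this, since the underlying pointwise inequality $(\sum a_\theta^2)^2\lesssim\sum a_\theta^4$ simply fails.
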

\begin{proof} 
The proof  is based on a modification of the proof of  Lemma \ref{key}, where we have to replace  $\theta$ by $\Theta$ and $\si$ by  $\theta$.
\smallskip

 Expanding 
 $$
 T^\lambda_R f^\epsilon_{\Theta} = \sum_{ \theta\subset\Theta} f^\epsilon_ \theta\ast\mu_ \theta, 
 $$
 and applying the same arguments that we had used for the proof of \eqref{sclaima}, we obtain
 $$
 \sum_{Q\transl Q^{\Theta}_{\tau,R}}|Q|^{-1} \|(\sum_{\theta \subset\Theta}f^\epsilon_\theta\charac_{Q-Q_{\Theta}})*\mu_\theta)\|^4_{L^2(Q)} 
\lesssim \lambda^{-N}\sum_{\theta\subset\Theta} \|f^\epsilon_\theta\|_4^4=\lambda^{-N}\sum_{\theta\subset\Theta} \|f_\theta\|_4^4.
 $$
 The only point to note here is the following: Since $\rho_1,\rho_2\geq \lambda^{-M_\gamma}$ and $\lambda\geq R^\gamma,$ with $\gamma>0,$ the number of L-boxes $\vth$ in $\theta$ is $O(\la^{M_\gamma})$   for some exponent $M_\ga$ depending only on $\gamma,$ and thus, for every $\theta\in\Theta,$
$$
\|\mu_ \theta\charac_{Q_\Theta^c}\|_1\lesssim \la^{-N}\quad  \text{for any }  N\in\NN,
$$ 
 which completes the proof of \eqref{sclaimb}.
 
 The proof of \eqref{bigclaimb} proceeds in exactly the same way as that of \eqref{bigclaima}.
 \end{proof} 

In combination with Lemma \ref{step1b}, we now see that 
\begin{eqnarray*}
\|T^\lambda_R f\|_{GWZ,s}^4 
&\lesssim& \om\,   \sharp\{\Theta\subset\tau\} \sum_{\angle(\tau)=s}\sum_{\Theta\subset \tau}\sum_{\theta\subset\Theta} \|f_\theta\|_4^4\\
&\lesssim& \om\,   \sharp\{\Theta\subset\tau\}\sum_{\theta\subset \Gamma_R} \|f_\theta\|_4^4.\\
\end{eqnarray*}
Applying the same interpolation argument that was used at the end of Sub-case I.a, we thus obtain the following analogue of \eqref{crest1}:

\begin{equation}\label{crest1b}
\|T^\lambda_Rf\|_{GWZ,s}^4\lesssim \la^\ve  \om\,   \sharp\{\Theta\subset\tau\}\|f\|_{L^4(\RR^3)}^4
\end{equation}
for every $\ve>0.$

\smallskip

Finally, recall that $\om=\la^{3\ve}/(R^2\rho_1^2),$ and note that 
$$
\max\sharp\{\Theta\subset\tau\}\lesssim \frac{s}{\al}=\frac{s}{sR\rho_2^2},
$$
so that $\om\,   \sharp\{\Theta\subset\tau\}\lesssim \la^{3\ve}R^{-3}/(\rho_1^2\rho_2^2).$ Thus,  by  \eqref{crest1b},  for any $\ve>0,$ 

\begin{equation}\label{finest1b}
\|T^\lambda_Rf\|_{GWZ,s}\lesssim \la^\ve (\max_\theta\sharp\{\vth\subset \theta\})^{1/2} \|f\|_{L^4(\RR^3)}.
 \end{equation}
Again, since $\la\le R^{2+\gamma-2\ka_2}$ in Case I, this implies \eqref{mainfinest}.

\smallskip
 \subsection{Case II: $sR \tilde \rho_2\le 1$ }
In this case, our previous strategy as to how to define  the boxes $Q$ does not seem to work anymore. According to this strategy,  one  would then  like to  enlarge the second dimension to $\rho_2^{-1}$ (which is bigger than $sR$ in Case II) as in Sub-case I.a.  Regretfully, this turns out to not  be the right choice of the boxes $Q$ in Case II, since the analogue of Lemma \ref{directionQIa} would fail to be true. Indeed, the condition \eqref{c6} would here require that 
$$
R^{-1/2}\le \frac {q_3}{q_2} =s^2R\rho_2,
$$
i.e., that $\rho_2\ge s^{-2}R^{-3/2},$ which will not always be true.

However, there is a different strategy adapted especially to Case II which will not even require a control of the variation of $\la\nabla\phi$,  and thus also not (SMD).   We shall no longer want the boxes $Q$ to be so large that they also contain entire dual boxes $\vth^*,$ and shall enlarge the boxes $U$ of dimensions $R\times sR\times s^2R$ in a different way. 
We shall keep the first dimension $R$ and the last dimension $s^2R$ of $U,$ but shall enlarge its second dimension $sR$ in the following way: 

\smallskip
Since in Case II, $\rho_2\le s^{-1}R^{-1},$ and  since also $s^{-2}R^{-3/2}\le s^{-1}R^{-1},$ we may choose an angle $\al_{II}$ here such that 
\begin{equation}\label{alII}
\rho_2\vee s^{-2}R^{-3/2}\le \al_{II} \le s^{-1}R^{-1}.
 \end{equation}
 Note also that $s^{-1}R^{-1}\le R^{-1/2}.$ 
We may thus decompose each cap $\theta$ into at most $N_0$  ``small'' sectors $\pi$  of angular width $\al_{II},$ 
 where 
 $$
 N_0:= \max_\theta\sharp\{\pi\subset \theta\}\sim \frac {R^{-1/2}}{\al_{II}}.
 $$
 Since $\al_{II}^2\le R^{-1},$ these sectors $\pi$ are then indeed essentially rectangular boxes
 in $\theta,$ of dimensions $R^{-1}\times \al_{II}\times 1.$

We enlarge the boxes $U_{\tau,R}$ accordingly: if $\xi_0\in \theta\subset \tau,$ 
we denote by $Q^{\xi_0}_{\tau,R}$ the box of dimensions 
$$
q_1\times q_2\times q_3:=R\times \frac 1{\al_{II}}\times s^2R
$$ 
centered at the origin whose axes are parallel to the basis vectors of the orthonormal frame $E_{\xi_0}.$ Note that 
$\al_{II}^{-1}\ge sR,$ so that the box $Q^{\xi_0}_{\tau,R}$ is indeed larger than $U_{\tau,R}.$

\smallskip

For these boxes, the following analogue of Lemma \ref{directionQIa} now holds true:

 \begin{lemma}\label{directionQII}
 If $\xi_1, \xi_2\in  \theta,$ then 
 $Q^{\xi_2}_{\tau,R}\subset 100\cdot Q^{\xi_1}_{\tau,R}.$ 
 \end{lemma}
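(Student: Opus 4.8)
The plan is to mimic the proof of Lemma \ref{directionQIa} almost verbatim, the only difference being that the relevant angular separation between $\xi_1$ and $\xi_2$ is now bounded by $R^{-1/2}$ (both points lie in the same cap $\theta$, whose angular width is $R^{-1/2}$), and the dimensions of the boxes are the new triple $q_1\times q_2\times q_3 = R\times \alpha_{II}^{-1}\times s^2R$. So first I would write $x\in Q^{\xi_2}_{\tau,R}$ as $x=y^{\xi_2}\trans E^{\xi_2}$ with $|y^{\xi_2}_j|\le q_j$, and also express $x=y^{\xi_1}\trans E^{\xi_1}$, so that $y^{\xi_1}=y^{\xi_2}\trans E^{\xi_2}E^{\xi_1}$. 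Applying Lemma \ref{EEcheck} with $\alpha:=R^{-1/2}$ (legitimate since $\xi_1,\xi_2\in\theta\subset\theta_{R^{-1/2}}(\xi_0)$ for the center $\xi_0$ of $\theta$, and using $|\langle E^{\xi_2}_i,E^{\xi_1}_j\rangle|\lesssim R^{-|i-j|/2}$), I get
$$
|y^{\xi_1}_j|\le \Big|\sum_{i=1}^3 y^{\xi_2}_i\langle E^{\xi_2}_i,E^{\xi_1}_j\rangle\Big|\lesssim \sum_{i=1}^3 q_i R^{-\frac12|i-j|}.
$$
As in Lemma \ref{directionQIa}, the diagonal term $i=j$ already gives $q_j$, and the required bounds by $Cq_j$ reduce, via the $|i-j|=2$ conditions being implied by the $|i-j|=1$ conditions, to the four inequalities $R^{-1/2}\le q_2/q_1$, $R^{-1/2}\le q_1/q_2$, $R^{-1/2}\le q_2/q_3$, $R^{-1/2}\le q_3/q_2$.

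The second step is to verify these four inequalities with $q_1=R$, $q_2=\alpha_{II}^{-1}$, $q_3=s^2R$. For $R^{-1/2}\le q_2/q_1 = 1/(\alpha_{II}R)$: this asks $\alpha_{II}\le R^{-1/2}$, which holds since $\alpha_{II}\le s^{-1}R^{-1}\le R^{-1/2}$ (using $s\ge R^{-1/2}$). For $R^{-1/2}\le q_1/q_2 = \alpha_{II}R$: this asks $\alpha_{II}\ge R^{-3/2}$, which is implied by the lower bound $\alpha_{II}\ge s^{-2}R^{-3/2}\cdot s^2 \ge$ ... more simply, from \eqref{alII} we have $\alpha_{II}\ge \rho_2\vee s^{-2}R^{-3/2}\ge s^{-2}R^{-3/2}\ge R^{-3/2}$ since $s\le 1$. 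For $R^{-1/2}\le q_2/q_3 = 1/(\alpha_{II}s^2R)$: this asks $\alpha_{II}\le s^{-2}R^{-1/2}$, which is weaker than $\alpha_{II}\le s^{-1}R^{-1}$ for $s\le 1$ (indeed $s^{-1}R^{-1}\le s^{-2}R^{-1/2}\iff R^{-1/2}\le s^{-1}$, true). For $R^{-1/2}\le q_3/q_2 = \alpha_{II}s^2R$: this asks $\alpha_{II}\ge s^{-2}R^{-3/2}$, which is exactly part of the lower bound imposed on $\alpha_{II}$ in \eqref{alII}. All four hold, so each $|y^{\xi_1}_j|\le Cq_j$ with a universal constant $C$, and choosing the constant $100$ (or any fixed constant absorbing the implicit constants from Lemma \ref{EEcheck}) gives $Q^{\xi_2}_{\tau,R}\subset 100\cdot Q^{\xi_1}_{\tau,R}$.

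I do not anticipate a genuine obstacle here; the lemma is essentially a bookkeeping exercise, and the only thing to be careful about is that the lower bound $\alpha_{II}\ge s^{-2}R^{-3/2}$ built into the definition \eqref{alII} is precisely what was missing in the naive Case II attempt (where one took $q_2=\rho_2^{-1}$ and the inequality $R^{-1/2}\le q_3/q_2$ failed). So the main ``content'' is simply recognizing that \eqref{alII} is designed exactly to make these four inequalities hold simultaneously — the existence of such an $\alpha_{II}$ relies on $\rho_2\vee s^{-2}R^{-3/2}\le s^{-1}R^{-1}$, which is the hypothesis $sR\tilde\rho_2\le 1$ of Case II together with $s^{-2}R^{-3/2}\le s^{-1}R^{-1}\iff R^{-1/2}\le s$, i.e. the standing assumption $s\ge R^{-1/2}$.
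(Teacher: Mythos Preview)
Your proposal is correct and follows essentially the same approach as the paper: both reduce the inclusion to the four inequalities $R^{-1/2}\le q_i/q_j$ for $|i-j|=1$ (with the $|i-j|=2$ cases following automatically) and verify them directly from the bounds $s^{-2}R^{-3/2}\le \alpha_{II}\le s^{-1}R^{-1}$ in \eqref{alII}, together with $R^{-1/2}\le s\le 1$. Your closing remark that the lower bound $\alpha_{II}\ge s^{-2}R^{-3/2}$ is precisely what makes the critical inequality $R^{-1/2}\le q_3/q_2$ work is exactly the point.
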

 
  \begin{proof} 
   Analogously to Lemma \ref{directionQIa}, it will suffice to prove the estimates \eqref{c1} --  \eqref{c6}.
   
   But, by \eqref{alII},
   $$
   R^{-\frac 12}\le s^{-2}R^{-\frac 12}\le  \frac {q_1}{q_2}= R\al_{II}\le s^{-1}\le R^{\frac 12},
   $$
   which proves \eqref{c1} and \eqref{c2}.
   
Similarly, 
     $$
  R^{-\frac 12}\le  \frac {q_3}{q_2}= s^2R\al_{II}\le s\le R^{\frac 12},
   $$
   which proves  \eqref{c3} and  \eqref{c6}.
   
  The estimates  \eqref{c4} and  \eqref{c5} are again immediate consequences of the previous estimates.
\end{proof} 

A priori, the orientation of the boxes $Q^{\xi_0}_{\tau,R}$ depends on $\xi_0\in\theta$. However, if we fix any
$\xi_\theta\in \theta,$ and define the rectangular box $Q^{ \theta}_{\tau,R}$ to be $c Q^{\xi_\theta}_{\tau,R},$ where $c:=100,$ then Lemma \ref{directionQII} shows that $Q^{ \theta}_{\tau,R}$ is a box of dimensions $cq_1\times cq_2\times cq_3,$ such that
 \begin{equation}\label{QvtinQ2}
 Q^{\xi_0}_{\tau,R}\subset Q^{ \theta}_{\tau,R} \quad \text{for all}\quad \xi_0\in  \theta.
\end{equation}
 Note, however, that the orientation in space of $Q^{\theta}_{\tau,R}$ may change with $\theta$.

\smallskip
 Next, consider translates  $Q=c_Q+Q^{\theta}_{\tau,R}$ of $Q^{\theta}_{\tau,R}$.
For any such box $Q$, we have 
\begin{equation}\label{QoverU}
	\frac{|Q|}{|U|} =\frac{|Q^{ \theta}_{\tau,R}|}{|U_{\tau,R}|} \simeq  c^3\frac{q_2}{sR}=c^3\frac 1{\al_{II} sR}=:\om.
\end{equation}
\smallskip

Arguing in exactly the same way as in Sub-case I.a., estimate \eqref{step1e} is valid also here:
For any $f\in \S(\RR^3),$ 
\begin{eqnarray}\label{step1eII}
\|f\|_{GWZ,s}^4&\le& \om\max_{\angle(\tau)=s} \sharp\{\theta\subset\tau\} \sum_{\angle(\tau)=s}\sum_{\theta\subset \tau}\sum_{Q\transl Q^{\theta}_{\tau,R}} |Q|^{-1} \| f_\theta\|_{L^2(Q)}^4 \nonumber\\
&\lesssim&N_0 \sum_{\angle(\tau)=s}\sum_{\theta\subset \tau}\sum_{Q\transl Q^{\theta}_{\tau,R}} |Q|^{-1} \| f_\theta\|_{L^2(Q)}^4,
\end{eqnarray}
since  
$$
 \om\max_{\angle(\tau)=s} \sharp\{\theta\subset\tau\}\lesssim \frac 1{\al_{II} sR}\cdot\frac s{R^{-1/2}}=\frac 1{\al_{II} R^{1/2}}=N_0.
$$

Next, we choose $\chi\in\D(\RR)$ symmetric such that $\chi\ge 0, $ $\hat \chi\ge 0$ and $\hat\chi(x)\ge 1$ for $|x|\le 1,$ and define, in the coordinates $y$ defined by $y=xE_{\xi_\theta},$ the following ``smooth version'' of $\charac _{Q^{\theta}_{\tau,R}}$:
$$
\chi_{Q^{\theta}_{\tau,R}}(x):=\hat \chi\big (\frac {y_1}{cq_1}\big) \hat \chi\big (\frac {y_2}{cq_2}\big)\hat \chi\big (\frac {y_3}{cq_3}\big).
$$
Then $\chi_{Q^{\theta}_{\tau,R}}\in \S,$  $\chi_{Q^{\theta}_{\tau,R}}(x)\ge 1$ on $Q^{\theta}_{\tau,R},$ and 
$\|\chi_{Q^{\theta}_{\tau,R}}\|_2\lesssim |Q^{\theta}_{\tau,R}|^{\frac12}.$ We denote by  $\psi$ the inverse Fourier transform of 
$\chi_{Q^{\theta}_{\tau,R}}.$ Then, in the coordinates $\eta $ given by $\xi=E_{\xi_\theta}\eta,$ $\psi$ is given by 
$$
\psi(\xi)=\chi(cq_1\eta_1) \chi(cq_2\eta_2)\chi(cq_3\eta_3),
$$
so $\psi$ is supported in $c_1(Q^{\theta}_{\tau,R})^*,$ where $c_1>1$ is a fixed constant and $(Q^{\theta}_{\tau,R})^*$ is the dual box to $Q^{\theta}_{\tau,R}.$

More generally, if $Q/\hskip-0.1cm/Q^{\theta}_{\tau,R} ,$ i.e., if $Q=c_Q+Q^{\theta}_{\tau,R},$ we define 
$\chi_Q(x):=\chi_{Q^{\theta}_{\tau,R}}(x-c_Q),$ and  $\psi_Q(\xi):=e^{ic_Q\xi} \psi(\xi),$ so that $\psi_Q$ is the inverse Fourier transform of $\chi_Q.$
\smallskip

Note also that the $\psi_Q$ are all supported in a box of dimensions  $ \sim R^{-1}\times \al_{II} \times (s^2R)^{-1},$ where $ (s^2R)^{-1}\le 1,$
whereas the sectors $\pi$ have dimensions $R^{-1}\times \al_{II}\times 1.$

\begin{lemma}\label{passtopi}
For any $f\in\S(\RR^3),$ we have 
$$
\| f_\theta\|_{L^2(Q)}^2\lesssim \sum_{\pi\subset \theta} \| f_\pi\|_{L^2(\chi_Q dx)}^2,
$$
where $\widehat {f_\pi}:=\hat f \charac_\pi.$
\end{lemma}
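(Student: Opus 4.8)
The plan is to reduce the asserted bound to a weighted almost-orthogonality estimate among the angular pieces $f_\pi$, and then to obtain that almost-orthogonality from Fourier support considerations. Since $\chi_Q$ is a product of three copies of $\hat\chi\ge 0$ it is nonnegative, and $\chi_Q\ge 1$ on $Q$ by construction; consequently $\|f_\theta\|_{L^2(Q)}^2\le\int_{\RR^3}|f_\theta|^2\chi_Q\,dx$, and since $\widehat{f_\theta}=\sum_{\pi\subset\theta}\widehat{f_\pi}$ it suffices to prove the weighted square-function bound
\[
\int_{\RR^3}\Big|\sum_{\pi\subset\theta}f_\pi\Big|^2\chi_Q\,dx\ \lesssim\ \sum_{\pi\subset\theta}\int_{\RR^3}|f_\pi|^2\,\chi_Q\,dx .
\]
Expanding the square, this in turn follows once we show that the cross terms $\int f_\pi\overline{f_{\pi'}}\,\chi_Q\,dx$ vanish whenever $\pi$ and $\pi'$ are not neighbours, together with the elementary observation that, since $\chi_Q\,dx$ is a nonnegative measure, Cauchy--Schwarz gives $|\int f_\pi\overline{f_{\pi'}}\chi_Q\,dx|\le\|f_\pi\|_{L^2(\chi_Q\,dx)}\|f_{\pi'}\|_{L^2(\chi_Q\,dx)}$, so that the surviving block of cross terms is controlled by $\sum_\pi\|f_\pi\|_{L^2(\chi_Q\,dx)}^2$ provided each $\pi$ has only $O(1)$ neighbours.

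To see the vanishing, I compute $\int f_\pi\overline{f_{\pi'}}\,\chi_Q\,dx$ on the Fourier side. Writing $\chi_Q=\widehat{\psi_Q}$ (with $\psi_Q$ the inverse Fourier transform of $\chi_Q$), the multiplication formula gives $\int f_\pi\overline{f_{\pi'}}\,\chi_Q\,dx=\int\widehat{f_\pi\overline{f_{\pi'}}}(\xi)\,\psi_Q(\xi)\,d\xi$. Now $\widehat{f_\pi}$ is supported in the sector $\pi$ and $\widehat{\overline{f_{\pi'}}}$ in $-\pi'$, so $\widehat{f_\pi\overline{f_{\pi'}}}$ is supported in $\pi-\pi'$; and $\psi_Q$ is supported in the box $P:=c_1(Q^{\theta}_{\tau,R})^{*}$, which is centered at the origin, symmetric, has axes along the frame at $\xi_\theta$, and has dimensions $\sim R^{-1}\times\al_{II}\times(s^2R)^{-1}$. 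Hence the integral vanishes whenever $(\pi-\pi')\cap P=\emptyset$, i.e. whenever the box $\pi$ is disjoint from the Minkowski sum $\pi'+P$. Each sector $\pi$ is contained in a box of dimensions $\sim R^{-1}\times\al_{II}\times 1$ with axes along the frame at $\xi_\pi$, and by Lemma \ref{EEcheck} (applied with $\alpha=R^{-1/2}$, since $\pi,\pi'\subset\theta$) the box $P$, expressed in the frame at $\xi_{\pi'}$, is again contained in a box of dimensions $\lesssim R^{-1}\times\al_{II}\times 1$. This last point is exactly where the range \eqref{alII} for $\al_{II}$ is used: one needs $\al_{II}\le R^{-1/2}$ so that the $R^{-1/2}$-rotation of frames does not enlarge the $E_1$-extent of $P$ beyond $\sim R^{-1}$, and $\al_{II}\ge s^{-2}R^{-3/2}$ so that it does not enlarge the $E_2$-extent beyond $\sim\al_{II}$ (the $E_3$-extent is trivially $\lesssim 1$ since $s^2R\ge1$). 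Therefore $\pi'+P$ is contained in a fixed dilate $C\cdot\pi'$, so $\int f_\pi\overline{f_{\pi'}}\chi_Q\,dx=0$ unless $\pi$ and $\pi'$ are separated by at most $C$ units in the angular ($E_2$) direction; in particular each $\pi$ has at most $O(1)$ neighbours $\pi'$, which closes the argument.

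The routine steps here are the reduction via $\chi_Q\ge 1$ and the Cauchy--Schwarz/bounded-valence step; the one place that requires genuine care is the geometric comparison in the second paragraph --- namely that the dual box $P$ of $Q^{\theta}_{\tau,R}$, once the small rotation between the frames at $\xi_\pi$, $\xi_{\pi'}$ and $\xi_\theta$ is accounted for via Lemma \ref{EEcheck}, is no larger in any of the three coordinate directions than a single angular sector $\pi$. This is precisely the reason the constraints \eqref{alII} on $\al_{II}$ were imposed, and the reason $Q^{\xi_0}_{\tau,R}$ was obtained from $U_{\tau,R}$ by enlarging only its ``horizontal'' dimension.
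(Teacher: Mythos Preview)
Your proof is correct and follows essentially the same route as the paper: both arguments dominate the characteristic function of $Q$ by the smooth weight, pass to the Fourier side, and use that $\psi_Q=\F^{-1}\chi_Q$ is supported in a box of dimensions $\sim R^{-1}\times\al_{II}\times(s^2R)^{-1}$ (which, after the $R^{-1/2}$-rotation of frames via Lemma~\ref{EEcheck} and the constraints \eqref{alII}, fits inside a fixed dilate of a single $\pi$) to conclude that only $O(1)$ angular neighbours interact, and then finish with Cauchy--Schwarz/Schur. The only cosmetic difference is that the paper uses the weight $\chi_Q^2$ and Plancherel on $f_\pi\chi_Q$, writing the cross terms as $\int(\widehat{f_{\pi_1}}*\psi_Q)\overline{(\widehat{f_{\pi_2}}*\psi_Q)}$, whereas you use the weight $\chi_Q$ and the multiplication formula $\int g\,\widehat{\psi_Q}=\int\hat g\,\psi_Q$ directly; the underlying support computation is identical.
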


\begin{proof} 
Since $f_\theta=\sum_{\pi\subset \theta} f_\pi,$ we have 
$$
\| f_\theta\|_{L^2(Q)}^2\le \sum_{\pi_1,\pi_2\subset \theta} \int f_{\pi_1}\overline {f_{\pi_2}} (\chi_Q)^2 dx
= c\sum_{\pi_1,\pi_2\subset \theta} \int \widehat {f_{\pi_1}} *\psi_Q\, \overline {\widehat {f_{\pi_2}} *\psi_Q} \,dx.
$$
Now, since $\widehat {f_{\pi}}$ is supported in $\pi,$ by the support property of the  $\psi_Q$ (note here that, by Lemma \ref{directionQII}, the dual boxes to the $Q$s do also not vary with $\pi$),  we see that the summand 
$\int \widehat {f_{\pi_1}} *\psi_Q\, \overline {\widehat {f_{\pi_2}}*\psi_Q} \,dx$ vanishes unless both $\pi_1$ and $\pi_2$ are contained in a sector of angular width $K\al_{II},$ where $K\ge 1$ is a fixed constant. I.e., for any $\pi_1,$ there is subset $A(\pi_1)$ of sectors $\pi_2$ in $\theta$ of cardinality  $\sharp A(\pi_1)\le K$ such that
\begin{eqnarray*}
\| f_\theta\|_{L^2(Q)}^2&\lesssim&
\sum_{\pi_1\subset \theta}\sum_{\pi_2\in A(\pi_1)} \int \widehat {f_{\pi_1}} *\psi_Q\, \overline {\widehat {f_{\pi_2}}*\psi_Q } \,dx\\
&\lesssim&\sum_{\pi_1\subset \theta}\sum_{\pi_2\in A(\pi_1)} \| \widehat {f_{\pi_1}} *\psi_Q\|_2 \|{\widehat {f_{\pi_2}}*\psi_Q }\|_2\\
&=&\sum_{\pi_1,\pi_2\subset \theta}c_{\pi_1,\pi_2} \| \widehat {f_{\pi_1}} *\psi_Q\|_2 \|{\widehat {f_{\pi_2}}*\psi_Q }\|_2,
\end{eqnarray*}
where 
$$
c_{\pi_1,\pi_2}=c_{\pi_2,\pi_1}:= 
\begin{cases}   1,&    \text{if}\  \pi_2\in A(\pi_1),\\
 0 ,&    \text{otherwise}.
\end{cases}
$$
Thus, by Schur's lemma (or Cauchy-Schwarz), 
$$
\| f_\theta\|_{L^2(Q)}^2\lesssim K\sum_{\pi\subset \theta} \| \widehat {f_{\pi}} *\psi_Q\|^2_2
= K\sum_{\pi\subset \theta} \| f_{\pi}\|^2_{L^2(\chi_Q dx)}.
$$
\end{proof} 

Applying \eqref{step1eII} and this lemma to $T^\la_R f$ in place of $f,$ we obtain
\begin{eqnarray*}
\|T^\la_R f\|_{GWZ,s}^4&\le& N_0 \sum_{\angle(\tau)=s}\sum_{\theta\subset \tau}\sum_{Q\transl Q^{\theta}_{\tau,R}} |Q|^{-1} \big( \sum_{\pi\subset \theta} \| T^\la_R f_\pi\|_{L^2(\chi_Q dx)}^2\big)^2\\ 
&\le& N_0^2 \sum_{\angle(\tau)=s}\sum_{\theta\subset \tau}\sum_{Q\transl Q^{\theta}_{\tau,R}} |Q|^{-1}  \sum_{\pi\subset \theta} \| T^\la_R f_\pi\|_{L^2(\chi_Q dx)}^4\\  
&\lesssim& N_0^2 \sum_{\angle(\tau)=s}\sum_{\theta\subset \tau}\sum_{Q\transl Q^{\theta}_{\tau,R}}   \sum_{\pi\subset \theta} \| T^\la_R f_\pi\|_{L^4(\chi_Q dx)}^4\\  
&=&  N_0^2\sum_{\pi\subset \Gamma_R} \| T^\la_R f_\pi\|_{L^4(\RR^3)}^4, 
\end{eqnarray*}
since $\sum_{Q\transl Q^{\theta}_{\tau,R}} \chi_Q\lesssim 1$ (here we use that $q_i\ge 1$ for i=1,2,3).
\smallskip

Moreover, by Plancherel's theorem we have
$$
 \| T^\la_R f_\pi\|_{L^2(\RR^3)}\lesssim  \|f_\pi\|_{L^2(\RR^3)},
$$
and by applying   Lemma \ref{4iersupp}, we find that
$$
 \| T^\la_R f_\pi\|_{L^\infty}=\|\sum_{\vth\subset \pi} f_\pi * \mu_\vth \|_{L^\infty}\lesssim  \sharp \{\vth\subset \pi\} \|f_\pi\|_{L^\infty}.
$$
Interpolating between these two estimates, we obtain
$$
 \| T^\la_R f_\pi\|_{L^4}\lesssim  \sharp \{\vth\subset \pi\}^{\frac 12} \|f_\pi\|_{L^4},
$$
and thus 
\begin{eqnarray*}
\|T^\la_R f\|_{GWZ,s}&\le &  
\big(N_0 \,   \sharp \{\vth\subset \pi\}\big)^{\frac 12}\big(\sum_{\pi\subset \Gamma_R} \| f_\pi\|_{L^4(\RR^3)}^4\big)^{\frac 14}\\
&\lesssim &   \sharp \{\vth\subset \theta\}^{\frac 12}\big(\sum_{\pi\subset \Gamma_R} \|  f_\pi\|_{L^4(\RR^3)}^4\big)^{\frac 14}.
\end{eqnarray*}
Again, by interpolation, we see that 
$$
\sum_{\pi\subset \Gamma_R} \|  f_\pi\|_{L^4(\RR^3)}^4\lesssim \|f\|^4_{L^4(\RR^3)},
$$
and thus we arrive at the desired estimate
\begin{equation}\label{finest2}
\|T^\la_R f\|_{GWZ,s}\lesssim\max_\theta \sharp \{\vth\subset \theta\}^{\frac 12} \|f\|_{L^4(\RR^3)}.
\end{equation}
The proof of Theorem \ref{mainthm} is thus complete. \qed

\section{The proof of Corollary \ref{mainlpeq}}\label{mainlpeqproof}

For $2\le p\le 4,$ the estimate \eqref{mainlpeq} follows by interpolation between the estimate \eqref{mainthmeq} of Theorem  \ref{mainthm} and the trivial estimate $\|T^\lambda_{R} \|_{L^2\to L^2} \leq C,$ which is immediate by Plancherel's theorem. The range $4/3\le p\le 2$ follows by duality.
\smallskip

Next, if  $p=1,$ we can apply the method of Seeger, Sogge and Stein. To this end, note that since we  can cover $\Ga^\pm_R$ by at most $CR^{1/2}$ caps $\theta,$  $\Ga^\pm_R$ can be  covered by at most $CR^{1/2}N(\la,R)$ L-boxes $\vth.$ By decomposing accordingly $m^\la_R=\sum_\vth m_\vth$ and recalling that by Lemma \ref{4iersupp} the inverse Fourier transforms $\mu_\vth$ of the $m_\vth$ have uniformly bounded $L^1$-norms, we see that
\begin{equation}\label{mainestL1}
\|T^\la_R\|_{L^1\to L^1} \le C R^{1/2}N(\la,R).
\end{equation}
The range $1\le p\le 4/3$ in \eqref{mainlpeq2} then follows by interpolation between this $L^1$-estimate and the previous $L^{4/3}$-estimate. The range $4\le p\le \infty$ follows by duality.
\qed

\section{Implications of  (SMD) and Examples}\label{sec:SMD}
In Case I, where $sR\tilde\rho_2> 1,$  the constant $\frak a$ from $\eqref{adefine}$ is strictly smaller than 1, so that the small mixed derivative condition (SMD) from  \eqref{smd} provides indeed an extra condition.  I.e., if we denote again by 
$\Gamma^\circ_R$ either $\Gamma^\pm_R,$ or $\Gamma_R,$  then we require that 
$$
 |\partial_{n(\xi_0)}\partial_{t(\xi_0)} \la \phi(\xi_0)|\leq C \frak a \rho_1^{-1} \rho_2^{-1}
	 \quad \text{for all}\  \xi_0\in \Ga^\circ_R,
$$
where  here
$$
\frak a=\frac 1{R\rho_2}<1.
$$
We shall show that this condition will allow us to control the variation of the gradient 
$$
H(\xi):=\la\nabla \phi(\xi)
$$
in the way as described by  the estimates \eqref{vargrad1} over the caps $\si$ in Sub-case I.a, and over the larger sectors $\Theta$ as by  \eqref{vargradb} in Sub-case I.b.
\color{black}

\medskip

\subsection{Control  of the variation of $\la\nabla \phi$  in Case I}\label{variation}
To unify the notation in this subsection, it will be convenient to denote the ``curved''   sectors $\Theta$ from Sub-case I.b    here by $\si$ too (compare Remark \ref{siTheta}).

Let $\phi\in \F^{\kappa_1,\kappa_2,\gamma,\C}(\mathbf \Ga^\circ_R),$ with $\Ga^\circ_R$  defined as usually,  
assume that the (SMD)-condition \eqref{smd} is satisfied, and let $\si$ be given. Recall that $\si$ has angular width $\al$, ``thickness'' $\Delta$  and vertical height about  1.

Further recall that  $\alpha=sR\rho_2^2$ and  $\Delta=(\alpha^2\vee\rho_1)\wedge R^{-1},$ and that, by \eqref{qjs2} and \eqref{qjsb},
$$q_1=c\la^{\ve}\rho_1^{-2}\Delta , \ q_2=c\la^{\ve}sR \ \text{ and  } q_3=c\la^{\ve}s^2R.$$
In particular, since $\rho_1\le 1/R$,
\begin{equation}\label{deltarange}
\rho_1\le \Delta \leq R^{-1}.
\end{equation}
The first inequality also allows to show that  $s^2R\le sR\leq \rho_1^{-1}\le \rho_1^{-2}\Delta $, so that
\begin{equation}\label{qorder}
q_3\leq q_2\le q_1.
\end{equation}
Let  $\xi_0$ be any point in $\si,$ and let $E=E_{\xi_0}$  be the orthonormal frame  at $\xi_0,$ i.e., $E=(E_1\vert E_2\vert E_3),$ where
  $$
  E_1=n(\xi_0), \, E_2=t(\xi_0), \, E_3= \xi_0/ |\xi_0 |.
  $$

\smallskip
We observe next that there is a constant $C_1\ge C$ such that  the estimate \eqref{smd} even implies that 
  \begin{equation}\label{smd2}
 |\partial_{n(\xi_0)}\partial_{t(\xi_0)} \la \phi(\xi)|\leq C_1 \frak a \rho_1^{-1} \rho_2^{-1}
	 \quad \text{for all}\  \xi\in \theta_{R^{-1/2}}(\xi_0).
\end{equation}
Indeed, an inspection of the proof of the stability Lemmata \ref{addkappa3n} --  \ref{stability2} reveals that  the assumption \eqref{30sep1631} for 
$\al_1=\al_2=1$  still implies the analogue of \eqref{30sep1632} for the same value of $\al_1$ and $\al_2,$
even in the presence of the additional factor $\frak a$. 
\smallskip

Recall also that by Lemma \ref{addkappa3n}, if we set $\ka_3:= \ka_2-1/2,$   then for every $\beta=(\beta_1,\beta_2,\beta_3)\in \NN ^3$ with $|\beta|=2$ we  have 
$$
|\partial_{n(\xi_0)}^{\beta_1}\partial_{t(\xi_0)}^{\beta_2} \partial_{\xi_0}^{\beta_3} \, \la  \phi(\xi)|\leq C_{\beta} \la R^{\beta_1\kappa_1+\beta_2\kappa_2+\beta_3\kappa_3-\gamma}  \quad \text{for all}\  \xi\in \theta_{R^{-1/2}}(\xi_0).
$$
In particular, when $\beta_3=0,$ then for any $\beta\in\NN^2$ of length $|\beta|=2$ which is different from $(1,1), $ we have 
\begin{equation}\label{smdad}
 |\partial_{n(\xi_0)}^{\beta_1}\partial_{t(\xi_0)}^{\beta_2} \la \phi(\xi)|\leq C_2 \rho_1^{-\beta_1} \rho_2^{-\beta_2}
  \quad \text{for all}\  \xi\in \theta_{R^{-1/2}}(\xi_0).
  \end{equation}

And, if $\beta_3=1,$ then we get
\begin{eqnarray}\label{smdad2}
  |\partial_{t(\xi_0)}^{\beta_2} \partial_{\xi_0}^{\beta_3} \,  \la  \phi(\xi)| &\le&  C_2 \la R^{\ka_2+\ka_3-\ga}=C_2 \la R^{2\ka_2-\ga-\frac 12} \le  C_2  \rho_2^{-2} R^{-\frac 12} \\
   |\partial_{n(\xi_0)}^{\beta_1} \partial_{\xi_0}^{\beta_3} \,  \la  \phi(\xi)| &\le&  C_2 \la R^{\ka_1+\ka_3-\ga}=C_2 \la R^{\ka_1+\ka_2-\ga-\frac 12} \le  C_2  \rho_1^{-1} \rho_2^{-1}R^{-\frac 12} \label{smdad3}
\end{eqnarray}
for all $\xi\in \theta_{R^{-1/2}}(\xi_0)$.
\smallskip

Assume next that  $\c:[\tau_1,\tau_2]\to \si$ is  any smooth circular ``tangential'' curve, i.e.,   
$\dot\c(\tau)=t(\c(\tau))$  for every $\tau\in [\tau_1,\tau_2].$  We then  write $E(\tau):=E_{\c(\tau)}$ for the orthonormal frame at $\xi_\tau=\c(\tau)$. 

\smallskip

We claim that along the curve $\c$, $H:=\la\nabla \phi$ satisfies

\begin{equation}\label{tangderiv}
			\alpha|\langle E_j(\tau),(H\circ\c)'(\tau) \rangle| \ll q_j, \qquad j=1,2,3.
\end{equation}

Indeed, by the chain rule, at $\xi_\tau$, we have by \eqref{smd},  
\begin{eqnarray*}
	|\langle E_1(\tau),(H\circ\c)'(\tau) \rangle| 
	&=& |\langle n(\xi_\tau),(H\circ\c)'(\tau) \rangle| \\
	&=& \lambda|\partial_{n(\xi_\tau)}\partial_{t(\xi_\tau)}\phi(\xi_\tau)| 
	\lesssim  \frak a \rho_1^{-1}\rho_2^{-1} \le \frac 1{R\rho_1\rho_2^2},
\end{eqnarray*}
so that with $\alpha=sR\rho_2^2$, $s\leq1$ and \eqref{deltarange}, 
$$
	\alpha|\langle E_1(\tau),(H\circ\c)'(\tau) \rangle| \lesssim s\rho_1^{-1}\le s\rho_1^{-2}\Delta\ll  sq_1\le q_1.
$$
Similarly, by \eqref{smdad}
\begin{eqnarray*}
	\alpha|\langle E_2(\tau),(H\circ\c)'(\tau) \rangle|
	=\alpha \lambda|\partial^2_{t(\xi_\tau)}\phi(\xi_\tau)| 
	\lesssim \alpha\rho_2^{-2} = sR \ll q_2,
\end{eqnarray*}
and by \eqref{smdad2},
\begin{eqnarray*}
	\alpha|\langle E_3(\tau),(H\circ\c)'(\tau) \rangle|
	\simeq \alpha\lambda|\partial_{t(\xi_\tau)}\partial_{\xi_\tau}\phi(\xi_\tau)| 
	\lesssim sR^{1/2} \leq s^2R \ll q_3.
\end{eqnarray*}

We will use these estimates to prove the following lemma:

\begin{lemma}\label{Hdiff}
Assume that $\phi$ satisfies condition (SMD).	Let $\xi,\xi'\in\sigma$ and $E=E_{\xi} $ be  the orthonormal  frame at $\xi$. If we write $H=\breve H_1E_1+\breve H_2E_2+\breve H_3E_3$, then for all $j=1,2,3$,
	\begin{equation}\label{Hdiffs}
		| \breve H_j(\xi)- \breve H_j(\xi')|\ll q_j.
	\end{equation}
\end{lemma}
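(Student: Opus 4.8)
I want to bound the variation of $\breve H_j = \langle E_j, H\rangle$ (with $E=E_\xi$ the fixed frame at $\xi$, and $H=\la\nabla\phi$) over the region $\si$, which has angular width $\al$, thickness $\Delta$ and vertical height $\sim 1$. The natural strategy is to connect $\xi$ and $\xi'$ by a short path inside $\si$ built from three coordinate segments — one in the normal direction $n$, one tangential (along a circular arc $\c$ as set up just before the lemma), and one radial — and estimate the increment of each $\breve H_j$ along each segment. Along a segment in direction $v$ of length $\ell$, the increment of $\breve H_j$ is controlled by $\ell$ times $\sup_\si |\partial_v \breve H_j|$; and since $E$ is a \emph{fixed} frame (not varying along the path), $\partial_v\breve H_j = \langle E_j, \partial_v H\rangle = \la\,\partial_v\partial_{E_j}\phi$ is a pure second derivative of $\la\phi$, for which we have the estimates \eqref{smd}, \eqref{smdad}, \eqref{smdad2}, \eqref{smdad3} (valid on $\theta_{R^{-1/2}}(\xi_0)\supset\si$, since $\al,\Delta\le R^{-1/2}$ — here one uses \eqref{deltarange} and that in Case I the sectors $\Theta$ of Sub-case I.b still have width $\al\le s\le 1$, but more importantly width and thickness $\le R^{-1/2}$ so Corollary \ref{stability3} applies).

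\textbf{The three segments.} First I would handle the tangential direction using the arc-length estimate \eqref{tangderiv}: integrating $|\langle E_j(\tau),(H\circ\c)'(\tau)\rangle|$ over an arc subtending angle $\le\al$, i.e. of length $\lesssim\al$, gives an increment $\lesssim \al\cdot(q_j/\al) = q_j$ up to the implied $\ll$; this is exactly the content of \eqref{tangderiv} and gives the tangential contribution $\ll q_j$ for each $j$. (One subtlety: in \eqref{tangderiv} the frame $E(\tau)$ varies along $\c$, whereas I want the increment of $\breve H_j$ with $E=E_\xi$ fixed. Since any two frames $E(\tau),E_\xi$ differ by an orthogonal matrix with $|(E_\xi^{-1}E(\tau))_{ij}|\lesssim \al^{|i-j|}\le R^{-|i-j|/2}$ by Lemma \ref{EEcheck}, and since the $q_j$ satisfy \eqref{qorder} together with $q_i/q_j\le R^{1/2}$-type spreading bounds — precisely the inequalities \eqref{c1}--\eqref{c6} verified in Lemmas \ref{directionQIa}/\ref{directionQIb} — passing between the two frames costs only a bounded factor; so the $\ll q_j$ survives.) Second, along a segment in the normal direction $n$ of length $\Delta$: here $\partial_n\breve H_1 = \la\,\partial_n^2\phi \lesssim \rho_1^{-2}$ by \eqref{smdad}, so the increment of $\breve H_1$ is $\lesssim \Delta\rho_1^{-2}\ll q_1$; $\partial_n\breve H_2 = \la\,\partial_n\partial_t\phi \lesssim \frak a\,\rho_1^{-1}\rho_2^{-1}$ by (SMD), giving increment $\lesssim \Delta\,\frak a\,\rho_1^{-1}\rho_2^{-1}$, which I claim is $\ll q_2 = c\la^\ve sR$; and $\partial_n\breve H_3=\la\,\partial_n\partial_{\xi_0}\phi\lesssim \rho_1^{-1}\rho_2^{-1}R^{-1/2}$ by \eqref{smdad3}, giving increment $\lesssim \Delta\rho_1^{-1}\rho_2^{-1}R^{-1/2}\ll q_3$. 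Third, along a radial segment of length $\sim 1$: $\phi$ is $1$-homogeneous, so $\nabla\phi$ is $0$-homogeneous, hence $H$ is constant along rays and this segment contributes nothing (alternatively $\partial_{\xi_0}\breve H_j = \la\,\partial_{\xi_0}\partial_{E_j}\phi$, and by Euler this is a multiple of a second derivative already estimated — with the extra $R^{-1/2}$ gain from $\ka_3=\ka_2-1/2$ in \eqref{smdad2}, \eqref{smdad3} — so it is harmless anyway). Summing the three contributions gives \eqref{Hdiffs}.

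\textbf{The main obstacle.} The routine estimates above are genuinely routine once one has the right second-derivative bounds; the delicate point is the bookkeeping needed to verify each of the several numerical inequalities of the form ``increment $\ll q_j$'', and these split into Sub-case I.a ($\al\le R^{-1/2}$, so $\Delta=\al^2\vee\rho_1$) and Sub-case I.b ($\al>R^{-1/2}$, so $\Delta=R^{-1}$). In Sub-case I.a the key input is $\Delta=\al^2\vee\rho_1$ together with Lemma \ref{rhofrak} ($\rho_i/\rho_j\le R^{1/2}$) and $sR\rho_1\le 1$ (Case I's negation $sR\tilde\rho_2>1$ combined with $\rho_2^2\le\rho_1$), used to turn $\Delta\,\frak a\,\rho_1^{-1}\rho_2^{-1} = (\al^2\vee\rho_1)\cdot\frac{1}{R\rho_2}\cdot\rho_1^{-1}\rho_2^{-1}$ into something $\ll sR$; in Sub-case I.b one instead uses $\Delta=R^{-1}$ directly, so $\Delta\,\frak a\,\rho_1^{-1}\rho_2^{-1}=\frac{1}{R^2\rho_1\rho_2^2\,\rho_2^{-1}}$... — in both cases the computations mirror exactly those already performed in the proofs of Lemmas \ref{directionQIa} and \ref{directionQIb} when verifying \eqref{c1}--\eqref{c6}, so I would present the argument uniformly, reduce everything to those inequalities (which are already established), and note that the extra factor $\frak a=1/(R\tilde\rho_2)$ coming from (SMD) is precisely what upgrades the borderline ``$\lesssim q_j$'' to the strict ``$\ll q_j$'' that the later covering argument needs. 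The one honestly non-trivial step is confirming that the frame-change between $E_\xi$ and the moving frame along $\c$ (or between $E_\xi$ and $E_{\xi'}$) does not destroy the $\ll$: this is where Lemma \ref{EEcheck} plus the spreading inequalities \eqref{c1}--\eqref{c6} must be invoked together, and I would state that as a short separate claim before assembling the three-segment estimate.
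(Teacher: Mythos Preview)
Your approach is essentially the paper's: connect $\xi$ to $\xi'$ by a normal step of length $\le\Delta$ followed by a tangential circular arc of angular length $\le\al$ (the paper first reduces to $\xi_3=\xi'_3$ using that $H$ is $0$-homogeneous, which replaces your third, radial, segment), then bound each $|\breve H_j|$-increment via the second-derivative estimates \eqref{smd2}--\eqref{smdad3} on the normal step and \eqref{tangderiv} plus the frame-change Lemma \ref{EEcheck} on the tangential arc.

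Two points to tighten. First, the normal-step bounds for $j=2,3$ do \emph{not} reduce to the ratio inequalities \eqref{c1}--\eqref{c6}; they hinge instead on the single inequality $\Delta\le\rho_1(sR\rho_2)^2$ (this is \eqref{deltest} in the paper, obtained in one stroke for both sub-cases from $\rho_2^2\le\rho_1$ and $sR\rho_2>1$), from which $\Delta\,\frak a\,\rho_1^{-1}\rho_2^{-1}\le s^2R\ll q_2$ and $\Delta\,R^{-1/2}\rho_1^{-1}\rho_2^{-1}\le s^2R^{3/2}\rho_2\le s^2R\ll q_3$ follow immediately. The inequalities \eqref{c1}--\eqref{c6} (resp.\ \eqref{c1b}--\eqref{c6b}) are used only for the frame-change on the tangential arc, exactly as you describe. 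Second, the factor $\frak a=1/(R\rho_2)$ from (SMD) does not merely upgrade $\lesssim q_2$ to $\ll q_2$: without it the $j=2$ normal-step increment would be $\lesssim\Delta\rho_1^{-1}\rho_2^{-1}$, and one would need $\Delta\lesssim sR\rho_1\rho_2$, which \emph{fails} in Case I since $sR\rho_2>1$ forces $\rho_1(sR\rho_2)^2>sR\rho_1\rho_2$. So (SMD) is what makes the bound hold at all; the strict $\ll$ comes from the harmless factor $c\la^\ve$ built into the $q_j$.
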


Observe that the lemma implies the same kind of estimate \eqref{Hdiffs} if we choose for $E$ the orthonormal  frame  at any other point in $\sigma$.

Note also that Lemma  \ref{Hdiff}  implies the estimates \eqref{vargrad1} and  \eqref{vargradb}, which just express the estimates \eqref{Hdiffs} in different coordinates.

\begin{proof}

We will make use of the fact that
\begin{eqnarray}\label{deltest}
\Delta \leq \rho_1 (sR\rho_2)^2.
\end{eqnarray}
Note to this end that since $\rho_2^2\leq\rho_1$ by \eqref{rho1rho2}, we have 
$$\alpha^2=s^2R^2\rho_2^4\leq \rho_1 (sR\rho_2)^2,$$ 
and,  since $sR\rho_2\geq 1$ in Case I, we  also have 
$$\rho_1 \leq \rho_1 (sR\rho_2)^2.$$
These estimates prove \eqref{deltest}, since, according to Remark \ref{siTheta}, we have that $\Delta\le \al^2\vee \rho_1.$

	To prove the lemma, since  $H$ is homogeneous of degree $0,$ we may assume without loss of generality that  $\xi_3=\xi'_3.$ We will use a two-step argument.
	\smallskip
	
	In a first step, we move from $\xi$ in normal direction to a point $\xi''$, chosen so that $\xi''$ and $\xi'$ are connected by a tangential circular curve $\c$. To that end, let 
	$$\xi''=\xi+u\, n(\xi)=\xi+uE_1,$$
	where $|u|\leq\Delta$. Then, by \eqref{smdad},
	\begin{eqnarray*}
		|\breve H_1(\xi)-\breve H_1(\xi'')|&=&\lambda|\langle E_1,\nabla\phi(\xi)-\nabla\phi(\xi+uE_1)\rangle| \\
		&\leq& \sup\limits_{\tilde\xi\in \theta_{R^{-1/2}}(\xi)}\lambda|\partial_{E_1}^2\phi(\tilde\xi)|\Delta 
		\lesssim\rho_1^{-2}  \Delta \ll q_1.
	\end{eqnarray*} 
	Similarly, using  \eqref{smd2} and \eqref{deltest},
	\begin{eqnarray*}
		|\breve H_2(\xi)-\breve  H_2(\xi'')|
		&\leq&  \sup\limits_{\tilde\xi\in \theta_{R^{-1/2}}(\xi)}\lambda|\partial_{E_1}\partial_{E_2}\phi(\tilde \xi)|\Delta 
		\lesssim \frak{a}\rho_1^{-1}\rho_2^{-1} \Delta \\
		&\le& \frac{\Delta}{R\rho_1\rho_2^2} \leq s^2R \ll q_2.
	\end{eqnarray*} 
	
Finally, by \eqref{smdad3} and \eqref{deltest},
	\begin{eqnarray*}
		|\breve H_3(\xi)-\breve H_3(\xi'')|
		&\leq& \sup\limits_{\tilde\xi\in \theta_{R^{-1/2}}(\xi)} \lambda|\partial_{E_1}\partial_{E_3}\phi(\tilde\xi)|\Delta 
		\lesssim R^{-1/2}\rho_1^{-1} \rho_2^{-1} \Delta \\
		&\leq& s^2R^{3/2} \rho_2 \le s^2R\ll q_3.
	\end{eqnarray*}

	 \smallskip

	We now estimate  how much  $H$ can vary along the curve $\c$ connecting $\xi''$ with $\xi'.$  Write $\xi'=\c(\tau_1)$, $\xi''=\c(\tau_2)$, 
	where $|\tau_1-\tau_2|\lesssim\alpha$.
	Estimates \eqref{tangderiv} and \eqref{qorder} imply
\begin{equation}\label{Hcprime}
		|(H\circ\c)'(\tau)|\ll \frac{\max\{q_1,q_2,q_3\}}{\alpha} = \frac{q_1}{\alpha} \qquad\text{for all}\quad  \tau\in[\tau_1,\tau_2],
	\end{equation}
	and therefore 
$$
		|H(\xi')-H(\xi'')|\leq \int_{\tau_1}^{\tau_2} |(H\circ\c)'(\tau)|\ d\tau \ll q_1.
$$
In particular,
$$
|\breve H_1(\xi')-\breve H_1(\xi'')|\ll q_1.
$$

Next, using \eqref{tangderiv}, we have
	\begin{equation}\label{TH}
		\int_{\tau_1}^{\tau_2} |\langle E_2(\tau),(H\circ\c)'(\tau)\rangle| d\tau
		\ll q_2,
	\end{equation}
	where, regretfully,   the tangent direction $E_2(\tau)$ in the integrand still  depends on the integration variable $\tau$.
	
	However, we claim that the tangent directions do not change too much, that is,
	\begin{equation}\label{diffE2}
		|E_2-E_2(\tau)|\lesssim \alpha\lesssim q_2/q_1\qquad \text{for all } \tau\in [\tau_1,\tau_2].
	\end{equation}
	The second  inequality follows from \eqref{c1} and \eqref{c1b} (in Sub-cases I.a and I.b, respectively). For the first inequality, we look at the components
	$|\langle E_2-E_2(\tau), E_j\rangle|$
	for $j=1,2,3$. By Lemma \ref{EEcheck}, we have for $j\neq 2$
	$$
	|\langle E_2-E_2(\tau), E_j\rangle| = |\langle E_2(\tau), E_j\rangle| \lesssim \alpha^{|2-j|} =\alpha,
	$$
	and for $j=2$
	$$
	|\langle E_2-E_2(\tau), E_2\rangle| = |1-\langle E_2(\tau), E_2\rangle| \lesssim \alpha^{2} \leq \alpha.
	$$
	We conclude that 
	\begin{eqnarray*}
		|\breve H_2(\xi'')-\breve H_2(\xi')| 
		&\le& \int_{\tau_1}^{\tau_2} |\langle E_2,(H\circ\c)'(\tau)\rangle|\ d\tau \\
		&\leq& \int_{\tau_1}^{\tau_2}\big( |\langle E_2(\tau),(H\circ\c)'(\tau)\rangle|
		+ |E_2-E_2(\tau)|\,|(H\circ\c)'(\tau)|\big)\ d\tau  \\
		&\ll& q_2 + \frac{q_2}{q_1} q_1 =2q_2,
	\end{eqnarray*}
	by \eqref{TH}, and  \eqref{Hcprime} and \eqref{diffE2}.

	Finally, again by \eqref{tangderiv}, we have
	\begin{equation}\label{E3prelim}
		\int_{\tau_1}^{\tau_2} |\langle E_3(\tau),(H\circ\c)'(\tau)\rangle|
		\ll q_3.
	\end{equation}
But, again Lemma \ref{EEcheck} implies
\begin{equation}\label{DeltaE}
|\langle E_3-E_3(\tau), E_j(\tau)\rangle| = | \langle E_3, E_j(\tau)\rangle -\delta_{3,j}| \lesssim \alpha^{3-j}.
\end{equation}

Splitting the integral
		\begin{eqnarray*}
		|\breve H_3(\xi'')-\breve H_3(\xi')| 
		&\le& \int_{\tau_1}^{\tau_2} |\langle E_3,(H\circ\c)'(\tau)\rangle|\ d\tau \\
		&\leq& \int_{\tau_1}^{\tau_2}\big( |\langle E_3(\tau),(H\circ\c)'(\tau)\rangle|
		+ |\langle E_3-E_3(\tau),(H\circ\c)'(\tau)\rangle|\big)\ d\tau ,
	\end{eqnarray*}
we already estimated the first integral in \eqref{E3prelim}, and by  expanding the scalar product in the second integral, we obtain
	\begin{eqnarray*}
		&&\int_{\tau_1}^{\tau_2} |\langle E_3-E_3(\tau),(H\circ\c)'(\tau)\rangle| \ d\tau \\
		&\le& \sum_{j=1}^3 \int_{\tau_1}^{\tau_2} |\langle E_3-E_3(\tau),E_j(\tau)\rangle|\,|\langle E_j(\tau),(H\circ\c)'(\tau)\rangle| \ d\tau \\
		&\ll& \alpha^2q_1 + \alpha q_2 + q_3,
	\end{eqnarray*}
where made use of  \eqref{DeltaE} and \eqref{tangderiv}.

But, since $\alpha\leq R^{-1/2}$ in  Sub-case I.a, by  \eqref{c5} in Sub-case I.a, and by \eqref{c5b} in Sub-case I.b,  we have $\al^2q_1\lesssim q_3.$  Similarly,  by \eqref{c6}, respectively \eqref{c6b}, we have $\al q_2\lesssim q_3,$ and thus we  obtain altogether that
$$
|\breve H_3(\xi'')-\breve  H_3(\xi')| \ll q_3.
$$
\end{proof}

 \medskip
\subsection{The condition (SMD) for Examples \ref{mainex} and \ref{phi+phi}}\label{smdex}

From our discussion of Example \ref{mainex} in Subsection \ref{examples} it follows easily that here for all $\xi_0\in \Ga^\pm_R,$ 
\begin{equation}\label{smdexamplega}
 |\partial_{n(\xi_0)}\partial_{t(\xi_0)} \la \phi^\gamma(\xi_0)|\leq C \la R^{-\gamma} R\le \frac 1{R\rho_1^2},
\end{equation}
if we regard $\phi^\ga$ as an element of $\F^{1, \ka_2,\ga},$ with $\ka_2\ge 1/2.$  Note that, again  by \eqref{rho1rho2}, 
$$
\frac 1{R\rho_1^2}\le \frac 1{R\rho_1\rho_2^2},
$$
so that condition (SMD) is satisfied.

\smallskip
More generally, if 
$$
\phi:=\phi_0+\phi^\gamma\in\F^{1,\frac\gamma 2,\gamma}
$$
(compare Example \ref{phi+phi}),  where $1\le\gamma\le 2$ and  $\phi_0\in\F_{class},$ and if $\la\ge R^\gamma,$ then 
one finds that for all $\xi_0\in \Ga^\pm_R,$ 
\begin{equation}\label{smdexamplega+}
 |\partial_{n(\xi_0)}\partial_{t(\xi_0)} \la \phi(\xi_0)|\lesssim \la R^{-\gamma} R+\la\lesssim  \frac 1{R\rho_1^2}+\la. 
\end{equation}

But, here 
$$
\la R\rho_1\rho_2^2\le \la R(\la R^{2-\gamma})^{-\frac 12} \la^{-1}=(\la R^{-\gamma})^{-\frac 12}\le 1,
$$
i.e., $\la\le 1/(R\rho_1\rho_2^2),$ and we see that condition (SMD) is again satisfied.

\subsection{Further examples  concerning (SMD)}

\begin{example}\label{ExSMD1}

Any real, 1-homogeneous function $\phi$ on $\RR^3_\times$ which is of the form  $\phi(\xi)=\psi(\xi_1^2+\xi_2^2-\xi_3^2,\xi_3),$ with a function  $\psi$ which is defined and smooth on $\RR\setminus\{0\}\times \RR\setminus\{0\},$ satisfies the condition (SMD) on $\Gamma_R^\pm.$ In particular, the function 
$$
\phi_\ga(\xi):= \xi_3\left|\frac{\xi_1^2+\xi_2^2-\xi_3^2}{\xi_3^2}\right|^\ga
$$
satisfies (SMD) on $\Gamma_R^\pm$ for any $\ga>0.$
\smallskip

{\rm Indeed, as in the proof of Lemma \ref{EEcheck}, after rotation and dilation we may assume that $\xi_0=\trans (a,0,1),$ with $|a|\ne 1.$ The orthonormal frame at $\xi_0$ is then
$$
 E=E_{\xi_0}= \left(\begin{array}{ccc} s & 0 & sa \\ 0 & 1 & 0 \\ -sa & 0 & s \end{array}\right),
$$
with $s:=(1+a^2)^{-1/2}.$ One then computes that in  the coordinates $\eta$ given by $\xi=E\eta,$ we have 
$$
\phi(\xi)=\tilde\phi(\eta)=\psi \big(s^2(\eta_1+a\eta_3)^2+\eta_2^2-s^2(\eta_3-a\eta_1)^2,s(\eta_3-a\eta_1)\big),
$$
where $\xi_0$ corresponds to $\eta_0:=\trans (0,0,s(a^2+1)).$
It is then clear that we here even have
$$
\partial_{\eta_1}\partial_{\eta_2}\tilde \phi(\eta_0)=0.
$$
}

\end{example}

\begin{example}  The phase function $\phi$, given in the $\eta$-coordinates by 
$$\tilde \phi(\eta):=\frac{\eta_1\eta_2}{\eta_3},$$
 is  homogeneous of degree 1, and, by Example \ref{Fclass},  we have that  $\phi\in\F_{class}\subset \F^{\frac\gamma2,\frac\gamma2,\gamma}$ when $\ga\ge 1.$  Let us assume  that $1\le \ga\le 2,$ and that, as usually, $\la\ge R^\ga.$ 
 
Then  $\phi$ does not satisfy (SMD) in the sense of that class. 

More precisely, note that here $\rho_1=\rho_2=\lambda^{-1/2},$ so that for $\la< R^2,$ i.e.,  $R\tilde\rho_2> 1,$ 
$$ \frak a \rho_1^{-1}\rho_2^{-1} = \frac1{R\rho_1\rho_2^{2}} = \la^{3/2}R^{-1},$$
while
$$ \lambda\partial_{\eta_1}\partial_{\eta_2}\tilde\phi(0,0,1)= \lambda,$$
which is  bounded by $\la^{3/2}R^{-1}$ if and only if $\lambda\geq R^2$.  Thus, condition (SMD) fails here for $\la<R^2,$ when we regard $\phi$ as a phase in $\F^{\frac\gamma2,\frac\gamma2,\gamma}.$

Observe, on the other hand, that for  $\la\ge R^2,$ we have $\frak a=1,$ and thus condition (SMD)  is automatically satisfied. 

\smallskip

The failure of (SMD) can sometimes be remedied by embedding $\phi$ into a larger class: 

For example, $\phi$ is also contained in the larger class $\F^{1,\frac\gamma2,\gamma}$, and in the sense of this class, it does satisfy (SMD) for $\la\ge R^\ga:$

Indeed, then  $\rho_1=\lambda^{-1/2}R^{\frac\gamma 2-1}$, $\rho_2=\lambda^{-1/2}$, so that for $R^\ga\le \la<R^2,$ $$ \frak a \rho_1^{-1}\rho_2^{-1} = \frac1{R\rho_1\rho_2^{2}} = \la^{3/2}R^{-\frac \ga 2}\ge \la\gtrsim \sup_{\xi\in \Ga_R} |\partial_{n(\xi)}\partial_{t(\xi)} \la \phi(\xi)|.$$
\end{example}

\section{An application to maximal averages along  2-surfaces}\label{sec:maxop}
 Our results have an important application  to maximal averages along smooth hypersurfaces in $\Bbb R^3.$

Suppose  $S$ is a smooth hypersurfaces  in $\Bbb R^d$ with surface measure $d\sigma,$  and that $\rho\in C_0^\infty(S)$ is a smooth non-negative compactly supported density.  We then  consider the associated averaging operators $A_t, t>0,$
given by
$$
A_tf(x):=\int_{S} f(x-ty)  \,d\mu (y),\qquad f\in \S(\RR^d),
$$
where $d\mu:=\rho d\sigma.$    The associated maximal operator is given by
$$
\mathcal M_Sf(x):=\sup_{t>0}|A_tf(x)|, \qquad  x\in \Bbb R^d.
$$

We say that $\M_S$  is bounded on $L^p(\RR^d)$ for a given $p\in [1,\infty],$ if an a priori estimate of the form
$$
\|\M_S f\|_p\le C \|f\|_p, \qquad  f \in \S,
 $$
 holds true. 
 
\smallskip
The study of such maximal operators had been initiated in E.M. Stein's work on the spherical maximal function \cite{St}. In a series of papers \cite{IKM}, \cite{BDIM}, \cite{BIM}, for almost all real analytic hypersurfaces $S$ (and even larger classes of finite type surfaces) in $\Bbb R^3$ satisfying a natural transversality assumption, the range of Lebesgue spaces $L^p(\Bbb R^3)$ on which the maximal operator $\mathcal M_S$ is bounded has been determined  explicitly in terms of Newton diagrams associated to $S,$ in some cases up to the critical exponent $p_c=p_c(S).$ The latter is determined by the property that $M_S$ is $L^p$ bounded for $p>p_c,$ and unbounded for $p<p_c.$ In  \cite{BIM},  a parallel  ``geometric conjecture'' had been stated and proved for the same class of surfaces, which roughly claims that $p_c$ can be determined by testing $\mathcal M_S$ on characteristic functions of symmetric convex bodies.

This conjecture, and a related conjecture about a description of $p_c$ in terms of Newton diagrams, has remained open for a small  class of ``exceptional'' surfaces only. These  exhibit singularities of type A in the sense of Arno'ld's classification. A prototypical surfaces from this class is the
graph 
$$
S_{(n)}:=\{(x_1,x_2,1+\psi(x_1,x_2)): |(x_1,x_2)|\le \ve_0\}
$$
 of 
$$
\psi(x_1,x_2):=\frac 1{1-x_1}x_2^2+x_1^n \qquad  ( n\ge 4)
$$
over a sufficiently small $\ve_0>0$ -neighborhood of the origin. Our conjectures claim for  this surface that if $\rho(0,0,1)\ne 0,$ then
\begin{equation}\label{pc}
p_c=\tilde p_c:=\max \left\{\frac 32,2\frac {n+1}{n+3}\right\}
\end{equation}
(cf. \cite[Example 1.10]{BIM}). Indeed, by  \cite[Proposition 1.11]{BIM}, the maximal operator 
$\M_{S_{(n)}}$ is unbounded on $L^p(\RR^3)$ for $p<\tilde p_c,$ and we shall prove here that our estimates of FIO-cone multipliers allow  to prove

\begin{thm}\label{pbiggerpc}
For $\ve_0>0$ sufficiently small, the maximal operator $\M_{S_{(n)}}$ is bounded on $L^p(\RR^3)$ if $p>\tilde p_c.$ 
\end{thm}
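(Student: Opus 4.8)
The plan is to reduce the boundedness of $\M_{S_{(n)}}$ on $L^p(\RR^3)$ for $p>\tilde p_c$ to $L^p$-estimates for a family of FIO-cone multipliers $T^\la_R$ to which Corollary \ref{mainlp} applies. First I would set up the standard Littlewood--Paley and dyadic decomposition: by the transversality of $S_{(n)}$ at $(0,0,1)$ and a partition of unity, it suffices to bound the local maximal operator $\sup_{1\le t\le 2}|A_tf|$, and then, following the now-classical approach going back to Stein and Mockenhaupt--Seeger--Sogge, to decompose the measure $\widehat{d\mu}$ dyadically in frequency. The term of frequency $\sim 2^k=:\lambda$ contributes an operator whose multiplier is, by stationary phase applied to the oscillatory integral $\int e^{-i\lambda(y\cdot\xi/|\xi| + \text{(height)})}\rho\,d\sigma$, essentially of the form $a_\lambda(\xi)e^{-i\lambda\Phi(\xi/|\xi|)}$, where $\Phi$ is (the $1$-homogeneous extension of) the phase governing the Fourier transform of the surface measure. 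The Sobolev embedding / Fourier support trick then reduces the $L^p$-bound for the maximal operator to a fixed-time $L^p\to L^p$ bound, with a gain of $\lambda^{1/p}$ to be absorbed, summable in $k$ provided each fixed-frequency piece is bounded by $C_\e\lambda^{1/p-\delta}$ for some $\delta>0$ depending on $n$ and $p$.

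Second, and this is the heart of the matter, I would analyze the geometry of the phase $\Phi$ coming from $S_{(n)}$ with $\psi(x_1,x_2)=\frac{1}{1-x_1}x_2^2+x_1^n$. The point of the ``exceptional'' surfaces is that the Gaussian curvature of $S_{(n)}$ vanishes on a curve (where $\partial_1^2\psi$ degenerates, i.e.\ near $x_1^{n-2}\sim$ const), so the surface behaves like a piece of cone along that curve, and the dual phase $\Phi$ has a singularity precisely of the type modeled by Example \ref{mainex}: after the appropriate affine normalization and passage to tilted coordinates \eqref{tiltedc}, $\Phi$ should, on the relevant conic region $\Gamma^\pm_R$, be comparable to $\phi_0 + c\,\phi^\gamma$ with $\phi_0\in\F_{class}$ and $\phi^\gamma(\eta)=\eta_3|G(\eta)|^\gamma$ for the exponent $\gamma=\gamma(n)$ dictated by the contact order $n$ (one expects $\gamma = \frac{n}{2(n-1)}$ or a closely related rational function of $n$, with $1<\gamma<2$ in the regime where the conjecture was open). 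By Example \ref{phi+phi} this phase lies in $\F^{1,\gamma/2,\gamma}$, and by Subsection \ref{smdex} it satisfies condition (SMD); thus Corollary \ref{mainlp} delivers
$$
\|T^\la_R f\|_{L^p(\RR^3)}\le C_\e R^\e N(\lambda,R)^{|1-2/p|}\|f\|_{L^p(\RR^3)}
$$
for $4/3\le p\le 4$, with $N(\lambda,R)=(\lambda/R)^{1/2}\wedge\dots$ computed from $\rho_1=\lambda^{-1/2}R^{\gamma/2-1}$, $\rho_2=\lambda^{-1/2}$.

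Third, I would carry out the bookkeeping that converts the FIO-cone multiplier estimate into the maximal bound. Here the relation between the frequency scale $\lambda=2^k$ and the ``neighborhood scale'' $R$ is fixed by the geometry: the curvature of $S_{(n)}$ vanishes to order $n-2$, so the natural conic neighborhood in which the phase is genuinely singular has aperture $\sim 1/R$ with $R$ a definite power of $\lambda$ (roughly $\lambda^{2/n}$ up to the normalization of $x_1$), and one sums over the $O(\log\lambda)$ dyadic values of $R$ between $1$ and this maximum, plus the ``non-singular'' frequency-localized pieces away from the degenerate curve (which are handled by the standard Seeger--Sogge--Stein/decoupling estimates for elliptic FIOs and cause no loss). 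Plugging the values of $\rho_1,\rho_2$ into $N(\lambda,R)$ and optimizing in $R$ and in $p$, one checks that the resulting exponent of $\lambda$ is strictly less than $1/p$ exactly when $p>\tilde p_c=\max\{3/2,\,2\frac{n+1}{n+3}\}$: the threshold $2\frac{n+1}{n+3}$ emerges from balancing the $N(\lambda,R)^{|1-2/p|}$ gain against the $\lambda^{1/p}$ loss at the extremal scale $R\sim\lambda^{2/n}$, while the floor $3/2$ is the generic cone-multiplier/local-smoothing threshold that survives from the $\phi_0\in\F_{class}$ part. Summing the geometric series in $k$ finishes the proof.

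The main obstacle I expect is the second step: verifying rigorously that the phase dual to $S_{(n)}$, after the correct affine rescaling adapted to the Newton diagram of $\psi$ and after tilting the cone, really does decompose as $\phi_0+c\,\phi^\gamma$ with $\phi_0$ classical and $\phi^\gamma$ as in Example \ref{mainex} \emph{uniformly} across all the dyadic scales $R$, so that the constants $\C$ in the definition of $\F^{1,\gamma/2,\gamma,\C}$ can be taken independent of $R$ (this uniformity is exactly what Remark \ref{unibo} and the stability lemmata of Section \ref{sec:stable} are designed to support, but the explicit computation of $\nabla\Phi$ and its higher derivatives near the curvature-degeneracy curve, keeping track of the singular factor $|1-x_1|^{-1}$ in $\psi$, is delicate). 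A secondary technical point is correctly matching the exponent $\gamma(n)$ and the $\lambda$--$R$ relation so that the arithmetic in the third step produces precisely $2\frac{n+1}{n+3}$ rather than some nearby exponent; this is a computation but an unforgiving one, and it is where one must be most careful to respect the sharp examples from \cite[Proposition 1.11]{BIM} that show $p_c\ge\tilde p_c$.
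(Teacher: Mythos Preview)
Your overall strategy matches the paper's: dyadic frequency decomposition, identification of the phase as $\phi_0+c\,\phi^\gamma\in\F^{1,\gamma/2,\gamma}$ via Examples \ref{mainex} and \ref{phi+phi}, application of Corollary \ref{mainlp}, and reduction of the maximal operator to multiplier bounds. However, the step you yourself flag as the main obstacle is a genuine gap, and your tentative parameters are wrong.

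The phase $\phi^\gamma$ does not arise from a single stationary-phase evaluation of $\widehat{d\mu}$. The paper first applies stationary phase in the $x_2$-variable (where $\psi$ is nondegenerate quadratic), reducing $\widehat{\mu^\lambda}$ to a one-dimensional Airy-type integral
$$
J(\lambda,s)=\int e^{-i\lambda s_3(x_1^n + x_1 u)}\,\tilde\eta(x_1,s_2)\,dx_1, \qquad u:= s_1+\tfrac{s_2^2}{4}.
$$
It is the asymptotic analysis of \emph{this} integral (Lemma \ref{airy}) that produces, in the region $\lambda^{(n-1)/n}|u|\gg1$, the phase factor $e^{\pm i\lambda s_3 c_n|u|^{n/(n-1)}}$, i.e.\ $\phi^\gamma$ with $\gamma=n/(n-1)$ (not $n/(2(n-1))$), together with an amplitude localized where $|u|\sim 2^j\lambda^{-(n-1)/n}$. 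This dictates $R=R_j(\lambda)=\lambda^{(n-1)/n}/2^j$, so $R$ ranges up to $\lambda^{1/\gamma}$ (not $\lambda^{2/n}$) and $\lambda\ge R^\gamma$ is automatic. With these values one gets $\rho_1=(\lambda R^{2-\gamma})^{-1/2}$, $\rho_2=\lambda^{-1/2}$, hence $N(\lambda,R)=\lambda R^{-(1+\gamma)/2}$, and the final arithmetic yields precisely $p>(2\gamma-1)/(2\gamma-3/2)=2(n+1)/(n+3)$. Without the Airy step there is no mechanism to produce the correct $\gamma$ or the correct $\lambda$--$R$ coupling, so your bookkeeping cannot close.

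A second point you underweight: for $p<2$ the passage from multiplier bounds to maximal bounds is not just the Sobolev trick but requires Seeger's Calder\'on--Zygmund theorem for rough multipliers, which the paper packages as Theorem \ref{maxtomult}; this in turn needs a separate kernel tail estimate of the type \eqref{kernelest1}, verified via the L-box decomposition.
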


 The proof requires essentially the full thrust of our FIO-cone multiplier estimates for $4/3\le p\le 2.$

We note that classical methods based on interpolation between $L^1$ and $L^2$ estimates only allow to cover the smaller range  
$p>2\frac {n+2}{n+4}$ (if $n$ is an even integer, the surface $S_{(n)}$ is even convex, so that the results from \cite{NSW} apply to it, but these again only yield the latter range).

We expect  that  extensions of our theory of FIO-cone multipliers will eventually allow for a full proof of the geometric conjecture.

\medskip

We shall prepare the proof of this theorem by providing  in the next subsection an abstract result which will allow to reduce  certain types of maximal estimates  to Fourier multiplier estimates. Our main tool  will be Seeger's main theorem in \cite{See}.

\subsection{Reduction of maximal estimates to multiplier estimates}\label{Mtomult}

Given any  smooth and bounded Fourier multiplier $m$ on $\RR^d,$ we shall denote by $\M_m$ the corresponding maximal operator, i.e., 
$$
\M_m f(x):=\sup\limits_{t>0} |T_{m(t\, \cdot)}f(x)|, \qquad f\in \S(\RR^d).
$$
Fix any smooth bump function $\chi_1$ supported in the annulus $1\le |\xi|\le 2$ so that $\chi_1(\xi)=1$ for $|\xi|=1.$
 By $\F^{-1}$ we shall denote  the inverse Fourier transform.
 
\begin{thm}\label{maxtomult}
Let $m$ be a smooth Fourier multiplier supported in the annulus $1\le |\xi|\le 2,$ and denote by  $\dot m$ its radial derivative, i.e., $\dot m(\xi):=\xi\nabla m(\xi).$ Let also $1<p<\infty,$ $\ve>0$ and $\la\gg1$ be given, and put $\tilde m:= \la^{-1} \dot m.$ 

Assume that the following estimates  hold true for m, and for $\tilde m$ in place of $m$ as well:
\smallskip

(i)  \hskip2cm $\|T_{m}\|_{L^p(\RR^d)\to L^p(\RR^d)}\le A$,
\medskip

(ii) \hskip2cm $\int_{|x|>r} |\F^{-1}m(x)| dx \le B(1+r)^{-\ve} \qquad \text{for every} \ r>0.$

Then 
\begin{equation}\label{maxmest}
\|\M_m\|_{L^p(\RR^d)\to L^p(\RR^d)}\le C_{p} \la^{\frac 1p} A [\log (2+B/A)]^{|\frac 1p-\frac 12|}.
\end{equation}
In particular, if $A\le 1/2$ and $B\ge 2,$ then, for any $\delta>0,$ 
$$
\|\M_m\|_{L^p(\RR^d)\to L^p(\RR^d)}\le C_{p,\delta} \la^{\frac 1p} ( A^{1-\de} +A\log B).
$$
\end{thm}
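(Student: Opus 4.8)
The plan is to reduce the continuous maximal operator $\M_m$ to a \emph{dyadic} maximal operator, to which Seeger's main theorem from \cite{See} applies, at the cost of the factor $\la^{1/p}$. Write $m_t:=m(t\,\cdot)$, and pass to the logarithmic variable $u$ defined by $t=2^u$. Since $\partial_t m_t=t^{-1}\dot m_t=\la\,t^{-1}\tilde m_t$, one has $\partial_u\big(m_{2^u}\big)=(\log 2)\,\la\,\tilde m_{2^u}$, so the $\RR^d$-valued function $u\mapsto T_{m_{2^u}}f$ varies ``at rate $\la$''. Moreover $m_{2^u}$ is Fourier supported in $\{|\xi|\sim 2^{-u}\}$, so for $u$ in a unit interval $[k,k+1)$ the operator $T_{m_{2^u}}$ only sees the $k$-th dyadic frequency annulus; this is the structure that allows an $\ell^2$-type almost orthogonality in $k$ to be exploited.

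First I would sample, for each $k\in\ZZ$, the interval $[k,k+1)$ at the $\sim\la$ points $u=k+l/\la$, $0\le l<\la$, and bound $\M_m f\le \M_m^{\,\mathrm{disc}}f+\mathcal E f$, where $\M_m^{\,\mathrm{disc}}f:=\sup_{k,\,0\le l<\la}\big|T_{m_{2^{k+l/\la}}}f\big|$ and $\mathcal E f$ measures the oscillation of $T_{m_{2^u}}f$ over each grid cell of length $1/\la$. For the main term, reindexing gives $\M_m^{\,\mathrm{disc}}f=\sup_{0\le l<\la}\ \sup_{k\in\ZZ}\big|T_{(m^{(l)})_{2^k}}f\big|$ with $m^{(l)}:=m(2^{l/\la}\,\cdot)$; each $m^{(l)}$ is a fixed dilate of $m$ (with dilation factor in $[1,2)$), hence is supported in a fixed compact annulus and satisfies hypotheses (i) and (ii) with constants $\lesssim A$ and $\lesssim_{\e} B$, uniformly in $l$. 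Applying Seeger's theorem to $m^{(l)}$ bounds each dyadic maximal operator $\sup_k|T_{(m^{(l)})_{2^k}}f|$ in $L^p$ by $A\,[\log(2+B/A)]^{|1/p-1/2|}\|f\|_p$, and the elementary inequality $\sup_{0\le l<\la}|a_l|\le\big(\sum_l|a_l|^p\big)^{1/p}\le\la^{1/p}\max_l|a_l|$ then yields $\|\M_m^{\,\mathrm{disc}}f\|_p\lesssim \la^{1/p}A\,[\log(2+B/A)]^{|1/p-1/2|}\|f\|_p$. The remainder is handled by the same scheme: the identity $T_{m_{2^u}}f-T_{m_{2^{u'}}}f=(\log 2)\,\la\int_{u'}^{u}T_{\tilde m_{2^w}}f\,dw$ exhibits $\mathcal E f$ in terms of multipliers built from $\tilde m$, and a Hölder inequality in $w$ over each grid cell contributes a factor $\la^{-1/p'}$ compensating the factor $\la$ carried by $\partial_u$; using hypotheses (i) and (ii) \emph{for $\tilde m$}, Seeger's theorem again gives $\|\mathcal E f\|_p\lesssim \la^{1/p}A\,[\log(2+B/A)]^{|1/p-1/2|}\|f\|_p$. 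Adding the two contributions proves \eqref{maxmest}.

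The ``in particular'' statement follows from \eqref{maxmest} by elementary manipulations of logarithms. When $A\le 1/2$ and $B\ge 2$ we have $\log(2+B/A)\le\log(2+B)+\log(1/A)$, so, writing $c:=|1/p-1/2|\in[0,1/2]$ and using $(X+Y)^c\le X^c+Y^c$, we get $A\,[\log(2+B/A)]^c\le A\,[\log(2+B)]^c+A\,[\log(1/A)]^c$. Here $A\,[\log(2+B)]^c\le A\log(2+B)\le 2A\log B$ (using $c\le 1$, $\log(2+B)\ge\log 4>1$ and $2+B\le B^2$ for $B\ge 2$), while $A\,[\log(1/A)]^c\le C_\de A^{1-\de}$ for every $\de>0$, since $A^{\de}[\log(1/A)]^c\to 0$ as $A\to 0^+$ and hence is bounded on $(0,\tfrac12]$. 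Since $\la^{1/p}\ge 1$, combining these with \eqref{maxmest} gives the asserted bound $C_{p,\de}\,\la^{1/p}\big(A^{1-\de}+A\log B\big)$.

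The main difficulty I expect lies in two places. First, one must identify the precise statement of Seeger's theorem in \cite{See} and verify its hypotheses for the auxiliary multipliers $m^{(l)}$ and for the multipliers arising in $\mathcal E f$ — in particular, one must match the \emph{scale-normalised} form of the kernel-decay bound that Seeger requires against hypothesis (ii), which here is stated at unit scale. Second, and more substantively, one must control the remainder $\mathcal E f$ uniformly for all $1<p<\infty$: for $p\ge 2$ a direct $\ell^p$-summation over the dyadic frequency scales already suffices and produces no logarithmic loss, but for $1<p<2$ this summation fails, and it is precisely here that $\mathcal E f$ must be routed through Seeger's almost-orthogonality machinery — which uses the kernel decay (ii) to go beyond $L^2$ — at the price of the factor $[\log(2+B/A)]^{|1/p-1/2|}$.
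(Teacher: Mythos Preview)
Your proposal is correct and follows the same overall strategy as the paper: reduce the full maximal operator to dyadic maximal operators at the cost of a factor $\la^{1/p}$, and then invoke Seeger's Theorem~1 from \cite{See} to bound the latter by $A[\log(2+B/A)]^{|1/p-1/2|}$. The execution differs in how the continuous $\sup_t$ is handled. The paper first applies Khintchine's inequality to write $\sup_{k}|T_{m(t2^k\cdot)}f|\lesssim \big(\int_0^1|F_\omega(t,x)|^p\,d\omega\big)^{1/p}$ with $F_\omega(t,x)=\sum_k r_k(\omega)T_{m(t2^k\cdot)}f(x)$, and then disposes of the remaining $\sup_{1\le t<2}$ via the one-variable Sobolev-type inequality
\[
\sup_{1\le t<2}|F_\omega(t,x)|^p\ \le\ p\Big(\int|\rho F_\omega|^p\,dt\Big)^{1/p'}\Big(\int|\partial_t(\rho F_\omega)|^p\,dt\Big)^{1/p},
\]
after which Seeger's theorem is applied directly to the single multipliers $\sum_k r_k(\omega)m(t2^k\cdot)$ and $\sum_k r_k(\omega)\tilde m(t2^k\cdot)$; the factor $\la^{1/p}$ arises because $\partial_t F_\omega$ carries an extra $\la$ and enters only to the power $1/p$. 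Your alternative---discretize $t$ at resolution $1/\la$, take an $\ell^p$-sum over the $\sim\la$ sample points, and control the oscillation via the fundamental theorem of calculus and H\"older---is an equally standard device and gives the same bound; the paper's Sobolev route is slightly more streamlined since it avoids a separate error term. One small point you leave implicit: Seeger's theorem bounds \emph{multiplier} operators, not maximal ones, so your sentence ``applying Seeger's theorem to $m^{(l)}$ bounds each dyadic maximal operator $\sup_k|T_{(m^{(l)})_{2^k}}f|$'' still requires a Khintchine step (dominating the sup by the square function and randomizing) before Seeger can be invoked on the resulting multiplier $\sum_k r_k(\omega)m^{(l)}(2^k\cdot)$; the paper makes this step explicit. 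Your derivation of the ``in particular'' statement is fine.
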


\begin{proof}  For $t>0,$ let us  write $S_t:= T_{m(t\cdot)}.$ Then
$
\M_m f=\sup\limits_{1\le t< 2} \sup\limits_{l\in\ZZ}|S_{t2^l} f|.
$

Denote by $\{r_l\}_{l\in \ZZ}$ the family of Rademacher functions on $[0,1],$ so that in particular $r_l(\om)=\pm 1$. By means of Khintchine's inequalities we  can then estimate
\begin{eqnarray*}
\M_m f(x)&\le& \sup\limits_{1\le t< 2}\big(\sum_{l\in\ZZ}|S_{t2^l} f(x)|^2\big)^{\frac 12}\\
&\sim& \sup\limits_{1\le t< 2} \left [\int_0^1\big|\sum_{l\in\ZZ} r_l(\om) S_{t2^l} f(x)\big|^p d\om \right]^{\frac 1p}.
\end{eqnarray*}
Putting
$
F_\om(t,x):=\sum_{l\in\ZZ} r_k(\om) S_{t2^l} f(x),
$
we may thus estimate
$$
(\M_m f(x))^p\lesssim \int_0^1 \sup\limits_{1\le t< 2}|F_\om(t,x)|^p d\om.
$$
Next, by \cite[(4.11)]{IKM}), if $\rho\in C^{\infty}_0(\RR)$ is a bump function  supported in
$[1/2,4]$ such that $\rho(t)=1$ when $1\le t\le 2,$ then
\begin{eqnarray*}
&&\sup_{1\le t<2}|F_\om(t,x)|^p\\
&\le& p\left(\int^{\infty}_{-\infty}\Big|\rho(t)F_\om(t,x)\Big|^p
dt\right)^{1/p'}\left(\int^{\infty}_{-\infty}\Big|\frac{\partial}{\partial
t}\Big(\rho(t)F_\om(t,x)\Big)\Big|^p dt\, \right)^{1/p}.
\end{eqnarray*}
By H\"older's inequality, this easily  implies that 
\begin{eqnarray}\label{Mlacest}
\|\M_m f\|^p_p&\lesssim&
\left(\int_{\RR^d}\int_0^1\int^{4}_{1/2}|F_\om(t,x)|^p dt d\om dx\right)^{\frac{1}{p'}}
\left(\int_{\RR^d}\int_0^1\int^{4}_{1/2}|\partial_tF_\om(t,x)|^p dt d\om dx\right)^{\frac{1}{p}}\nonumber\\
 &+& \int_{\RR^d}\int_0^1\int^{4}_{1/2}|F_\om(t,x)|^p dt d\om dx.
\end{eqnarray}
For $t\in[1/4, 4]$ and $\om\in[0,1]$ fixed, let us estimate $\int_{\RR^d}|F_\om(t,x)|^p dx.$
To this end, note that 
\begin{equation}\label{suppmlat}
\supp m(t2^l\, \cdot)\subset \{\xi\in\RR^d: 2^{-l-2} \le |\xi|\le 2^{-l+3}\},\qquad \text{if} \  1/2\le t\le 4,
\end{equation}
and that 
$$
F_\om(t,x)=(T_{m_{t,\om}}f) (x),
$$
where $m_{t,\om}$ denotes the Fourier multiplier
$$
m_{t,\om}(\xi):=\sum_{l\in\ZZ} r_l(\om) m(t2^l\xi).
$$
We shall estimate the operator $T_{m_{t,\om}}$ by means of   \cite[Theorem 1]{See}. Given $\tau>0,$ we choose $k\in \ZZ$ so that $\tau=\tau_0 2^k$ and $1\le \tau_0<2.$ Then $\chi_1(\xi) m(\tau t2^l\xi)\ne 0$ for some $\xi$ only if 
$2^{-4}\le 2^{k+l}\le 2,$ i.e., 
$$
l=-k+j, \qquad \text{with } j\in I:=\{-4, -3, \dots , 2\}.
$$
In particular, then $t\tau 2^l= t\tau_0 2^j,$ with $j\in I,$ so that
$$
(\chi_1m_{t,\om})(\tau\xi) =\sum\limits_{j=-4}^2r_{j-k}(\om) \chi_1(\xi)m(t\tau_0 2^j\xi).
$$
Thus (i) implies that
\begin{equation}\label{isee}
\|T_{\chi_1 m_{t,\om}(\tau \cdot) }\|_{L^p(\RR^d)\to L^p(\RR^d)}\le  CA,
\end{equation}
and since $s:=t\tau_0 2^j \sim 1,$ (ii) implies 
\begin{equation}\label{iisee}
\sup\limits_{\tau>0}\int_{|x|>r} |\F^{-1}[\chi_1 m_{t,\om}(\tau\,\cdot)](x)| dx \le C B(1+r)^{-\ve} \qquad \text{for every} \ r>0,
\end{equation}

Indeed, we clearly have
$$
\|T_{\chi_1m(s\cdot)}\|_{L^p\to L^p}\le C \|T_{m(s\cdot)}\|_{L^p\to L^p}=C\|T_m\|_{L^p\to L^p},
$$
which implies \eqref{isee}. Moreover, since $m$ satisfies (ii),  it is easily seen that for $s\sim 1,$ 
$$
\int_{|x|>r} |\F^{-1}[m(s\,\cdot)](x)| dx \le C B(1+r)^{-\ve} \qquad \text{for every} \ r>0,
$$
with a constant $C$ depending only on $\ve $ and the dimension $d.$ And, if we set $F:=\F^{-1}[m(s\,\cdot)]$ and $\vp:=\F^{-1} \chi_1\in \S,$ then $\|F*\vp\|_1\lesssim  \|F\|_1\le C B.$ Next, assume that $r\gg 1,$  and split 
$$
|F*\vp(x)|\le \int_{|y|> r/2} |F(x-y)| |\vp(y)| dy+\int_{|y|\le r/2} |F(x-y)| |\vp(y)| dy=F_1(x)+F_2(x).
$$
Since $|\vp(y)|\le C_Nr^{-N}(1+|y|)^{-N} $ for any $N\in\NN,$ if $|y|>r/2,$ we have 
$$
\|F_1\|_1\le C_Nr^{-N} \|F\|_1\le CB(1+r)^{-\ve}.
$$
Moreover, if $|x|\ge r,$ then $|x-y|\ge r/2$ in the integral defining $F_2,$ so we may estimate
\begin{eqnarray*}
\int_{|x|>r} F_2(x) dx&=& \int_{|y|\le r/2}\left(\int_{|x|>r}  |F(x-y)| dx\right) |\vp(y)|dy\\
&\le&  \int_{|y|\le r/2}\left(\int_{|z|>r/2}  |F(z)| dz\right) |\vp(y)|dy\\
&\lesssim& B(1+r/2)^{-\ve}.
\end{eqnarray*}
These estimates imply \eqref{iisee}.

The estimates  \eqref{isee} and  \eqref{iisee} show that $m_{t,\om}$ satisfies the assumptions (i) and (ii) of   \cite[Theorem 1]{See} uniformly in $t\in[1/4, 4]$ and $\om\in[0,1],$ and thus we can conclude that 

\begin{equation}\label{Tmest}
\|T_{m_{t,\om}}\|_{L^p(\RR^d)\to L^p(\RR^d)}\le C_{p}  A [\log (2+B/A)]^{|\frac 1p-\frac 12|}\quad \text{ for all } t\in[1/4, 4],\om\in[0,1].
\end{equation}

In particular, we find that for all $t\in [1/4, 4],\om\in[0,1]$, 
\begin{equation}\label{Fest}
\int_{\RR^d}|F_\om(t,x)|^p dx\le \left(C_{p}  A [\log (2+B/A)]^{|\frac 1p-\frac 12|}\right)^p \|f\|_p^p.
\end{equation}
\medskip

As for $\partial_tF_\om,$  note that 
$$
\partial_t[m(t2^l\xi)]=2^l\xi\nabla m(t2^l\xi)=t^{-1} \dot m (t2^l\xi)=\frac \la {t} \tilde m(t2^l\xi),
$$
so that 
$$
\partial_tF_\om(t,x)=\frac \la {t} (T_{\tilde m_{t,\om}}f) (x),
$$
if we put 
$$
\tilde m_{t,\om}(\xi):=\sum_{l\in\ZZ} r_l(\om) \tilde m(t2^l\xi).
$$
Since $\tilde m$ is assumed to satisfy estimates (i) and  (ii)  as well, arguing in the same way as for $m$ we see that also \begin{equation}\label{Ttildemest}
\|T_{\tilde m_{t,\om}}\|_{L^p(\RR^d)\to L^p(\RR^d)}\le C_{p}  A [\log (2+B/A]^{|\frac 1p-\frac 12|}\quad \text{ for all } t\in[1/4, 4],\om\in[0,1].
\end{equation}
This implies that 
\begin{equation}\label{dtFest}
\int_{\RR^d}|\partial_t F_\om(t,x)|^p dx\le \left(C_{p} \la \,A [\log (2+B/A]^{|\frac 1p-\frac 12|}\right)^p \|f\|_p^p.
\end{equation}
The estimate \eqref{maxmest} is now an immediate consequence of \eqref{Mlacest} and the estimates \eqref{Fest} and \eqref{dtFest}.
\end{proof} 

\begin{remark}\label{plesstwo}
Seeger's theorem is actually needed only when $p<2.$ If $p\ge2,$  standard arguments based on Littlewood-Paley theory and the fact that the $\ell^p(\ZZ)$- norm is bounded by the $\ell^2(\ZZ)$-norm  allow to control  the maximal operator
$\M_m$ by the corresponding local maximal operator $\M_{m,\rm loc} f:=\sup\limits_{1\le t< 2} |T_{m(t\, \cdot)}f |,$ 
which in return can be controlled again by the variant of Sobolev's embedding used  in our arguments above.This leads then  even to the stronger estimate
$$
\|\M_m\|_{L^p(\RR^d)\to L^p(\RR^d)}\le C_p\la^{\frac 1p} A.
$$

\end{remark}

\subsection{Proof of Theorem \ref{pbiggerpc}}\label{proofmax}

Following the  discussion in Subsection 5.2 of \cite{BIM}, we may assume that 
$\M=\M_{S_{(n)}}$ is associated to  the  averaging
operators of  convolution with dilates  of a measure $\mu,$
whose Fourier transform at $\xi=(\xi_1,\xi_2,\xi_3)$  is given  by
$$
\hat\mu(\xi) :=\iint e^{-i(\xi_1x_1+\xi_2x_2+\xi_3(1+\psi\x))}
\eta(x_1,x_2)\,dx_1 dx_2,$$
where $\eta$ is a smooth bump function supported in a sufficiently small $\ve_0$-neighborhood of the origin.

Moreover, by means of suitable dyadic frequency decompositions with dyadic parameter $\la\gg 1,$ we may  reduce ourselves to estimating the maximal operators $\M^\la$ associated to the frequency localized measures
\begin{equation}\nonumber
    \widehat{\mu^\la}(\xi) :=\chi_0\left(\frac{\xi_1}{\lambda},\,\frac{\xi_2}{\lambda} \right) \chi_1\left(\frac{\xi_3}{\lambda} \right)\hat\mu(\xi),
\end{equation}
where the smooth cut-off  function $\chi_1$ vanishes near the origin and is identically one  near $1,$ whereas the smooth cut-off $\chi_0$ is supported in a sufficiently small neighborhood of the origin and is identically $1$ near the origin (the contributions by the remaining  terms to the maximal operator $\M$ are of order $O(\la^{-N})$ for every $N\in\NN$  
as $\la\to+\infty, $ provided the support of $\eta$ is sufficiently small).

 We shall  see that the estimates that we shall obtain for the corresponding maximal operators $\M^\la$ will sum over all such dyadic numbers $\la\gg 1$ if $p>\tilde p_c.$
\smallskip

We next write
\begin{equation*}
\xi_3=\la s_3,\quad \xi_1=\la s_3s_1,\quad \xi_2=\la s_3s_2,
\end{equation*}
and put $s':=(s_1,s_2), s:=(s',s_3).$ Then, choosing the supports of $\chi_0$ and $\chi_1$ sufficiently small, we have
$$
|s_3|\sim 1\quad \mbox{and}\quad  |s'|\ll1
$$
on the support of $\widehat{\mu^\la}$.  

Note also that in the re-scaled coordinates $\xi'$ given by $\xi:=\la\xi',$ we have 
$$
s_3=\xi'_3, \quad s_1=\frac {\xi'_1}{\xi'_3}, \quad s_2=\frac {\xi'_2}{\xi'_3}, \qquad\text{where} \quad |\xi'_3|\sim 1,\  |\xi'_2|\ll 1,
\ |\xi'_1|\ll 1.
$$
Recalling  that the operator norm of a Fourier multiplier operator and of the associated maximal operator is  invariant under any non-degenerate linear change of coordinates in the multiplier, we shall work in the coordinates $\xi'$ henceforth.

We may then write
$$
\widehat{\mu^\la}(\xi)
= e^{-i\la s_3} \chi_0(s_3s')\chi_1(s_3) \int_{\bR^2} e^{-i\la s_3\left(s_1x_1+s_2x_2+1+\frac 1{1-x_1}x_2^2+x_1^n\right)} \eta(x)\,dx.
$$

In a next step, following Section 6.1 in \cite{BIM}, we apply the method of stationary phase to the integration in $x_2, $ which yields 
\begin{eqnarray*}
\widehat{\mu^\la}(\xi)
&= &e^{-i\la s_3} \chi_0(s_3s')\chi_1(s_3) \big[ \la^{-1/2} \int_{\bR} e^{-i\la s_3\left(x_1^n+s_1x_1+1 -\frac {(1-x_1)s_2^2}4\right)}\tilde \eta(x_1,  s_2)\,dx_1\\
&&\hskip 8cm+r(\la,s) \big],
\end{eqnarray*}
with a slightly modified cut-off function $\chi_1,$  
where $\tilde \eta$ is another smooth bump function supported in a
sufficiently small neighborhood of  the origin, and  $r(\la,s)$ is a
remainder term of order $r(\la,s)=O(\la^{-\frac32})$ as $\la\to+\infty.$ As shown in \cite{BIM}, the  total contribution to the maximal operator $\M$ by the terms  $r(\la,s)$ with sufficiently  large dyadic $\la$s is  $L^p$-bounded for $p>3/2.$ 

Without loss of generality, let us therefore in the sequel assume that 
\begin{equation}\label{mula2}
\widehat{\mu^\la}(\xi)
= \la^{-1/2}   \chi_0(s_3s')\chi_1(s_3) \, e^{-i\la s_3(1-\frac {s_2^2}4)}  J(\la,s),
\end{equation}
where we have put
$$
J(\la,s):=\int_{\bR} e^{-i\la s_3\left(x_1^n+x_1(s_1+s_2^2/4)\right)}\tilde \eta(x_1,  s_2)\,dx_1.
$$

The ``Airy type integral'' $J(\la,s)$ can be evaluated by means of \cite[Lemma 2.2]{IM}, which leads to the following 

\begin{lemma}\label{airy}
Let us put
$$
u:=s_1 +s_2^2/4.
$$ 
If $\ve_0>0$ is chosen sufficiently small, then the following hold true:

\begin{itemize}
\item[(a)]  If $\la^{(n-1)/n} |u|\lesssim 1,$ then 
$$
J(\la,s)=\la^{-1/n} g(\la^{\frac {n-1}n} u, s_2, s_3),
$$
where $g(v,s_2,s_3)$ is a smooth function whose derivatives of any fixed order are uniformly bounded for 
$|v|\lesssim 1, |s_2|\lesssim 1$ and $s_3\sim 1.$

\item[(b)]  If $\la^{(n-1)/n} |u|\gg 1,$ let us assume first   that  $n$ is odd  and $u<0.$ Then 
\begin{eqnarray}\nonumber
J(\la,s)&=&\la^{-\frac 12} |u|^{-\frac {n-2}{2n-2}}\, \chi_0\Big(\frac u{\ve_0}\Big)a_+(\la s_3 |u|^{\frac n{n-1}},|u|^{\frac 1{n-1}},s_2)\, e^{i\la s_3 c_n|u|^{\frac n{n-1}}}\\
&+&\la^{-\frac 12} |u|^{-\frac {n-2}{2n-2}}\, \chi_0\Big(\frac u{\ve_0}\Big) a_{-}(\la s_3|u|^{\frac n{n-1}},|u|^{\frac1{n-1}},s_2) \,e^{-i\la s_3 c_n|u|^{\frac n{n-1}}} 
\label{airyest}\\
 &+&(\la s_3|u|)^{-1} E(\la s_3 |u|^{\frac n{n-1}}, |u|^{\frac1{n-1}},s_2),\nonumber
\end{eqnarray}
with  $c_n:= (n-1)\cdot n^{-n/(n-1)}.$  
Here, $a_\pm$ are smooth functions which are symbols of order $0$ with respect to the first argument $ \mu=\la s_3|u|^{\frac n{n-1}},$ i.e., 
\begin{equation}\label{aderest}
|\pa_\mu^\al \pa_v^\be\pa_{s_2}^\ga a_\pm(\mu,v,s_2)|\le C_{\al,\be,\ga}  |\mu|^{-\al}, \qquad \forall \al,\be,\ga\in\NN,
\end{equation}
and $E$ is smooth and satisfies estimates
\begin{equation}\label{Ederest}
|\pa_\mu^\al \pa_v^\be\pa_{s_2}^\ga E(\mu,v,s_2)|\le C_{N,\al,\be,\ga} |v|^{-\be} |\mu|^{-N}, \qquad \forall N,\al,\be,\ga\in\NN.
\end{equation}

If  $n$ is odd and $u>0,$ then  the same formula remains valid, even with $a_+\equiv 0, a_-\equiv 0.$ 

Finally,  if $n$ is even, we do have a similar result, but without the presence of the term containing $a_-$.

 \end{itemize}
\end{lemma}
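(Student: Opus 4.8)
The plan is to reduce Lemma \ref{airy} to the already-quoted normal form for oscillatory integrals with a degenerate phase, namely \cite[Lemma 2.2]{IM}, applied to the one–dimensional integral
$$
J(\la,s)=\int_{\bR} e^{-i\la s_3\big(x_1^n+x_1 u\big)}\tilde\eta(x_1,s_2)\,dx_1,\qquad u:=s_1+s_2^2/4.
$$
The phase $\Phi(x_1):=x_1^n+x_1 u$ has a single parameter $u$, and its critical points solve $n x_1^{n-1}=-u$. One first records that for $\ve_0$ small enough all relevant critical points are $\mathcal O(|u|^{1/(n-1)})$, in particular they stay in the support of $\tilde\eta$, and that the second derivative at a critical point is comparable to $|u|^{(n-2)/(n-1)}$, so the stationary points are non-degenerate as soon as $u\ne 0$. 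The dichotomy in the statement is exactly the dichotomy ``is the large parameter $\la s_3$ times the relevant scale $|u|^{n/(n-1)}$ large or not''.

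In regime (a), $\la^{(n-1)/n}|u|\lesssim 1$, i.e.\ $\la s_3|u|^{n/(n-1)}\lesssim 1$, one rescales $x_1=\la^{-1/n}y$, turning $J$ into $\la^{-1/n}\int e^{-i s_3(y^n+y\,\la^{(n-1)/n}u)}\tilde\eta(\la^{-1/n}y,s_2)\,dy$. Since $\la^{(n-1)/n}u$ ranges over a bounded set and the cutoff $\tilde\eta(\la^{-1/n}y,s_2)$ together with all its $y$-derivatives is uniformly bounded (and supported where $|y|\lesssim\la^{1/n}$, but the integral is absolutely convergent because $y^n$ dominates), a routine integration-by-parts argument away from the bounded region of critical points shows the integral is a smooth, bounded function $g(v,s_2,s_3)$ of $v=\la^{(n-1)/n}u$, with uniformly bounded derivatives; differentiating under the integral sign produces only more bounded factors. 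This gives part (a).

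In regime (b), $\la s_3|u|^{n/(n-1)}\gg 1$, one rescales $x_1=|u|^{1/(n-1)}y$, so that $\Phi$ becomes $|u|^{n/(n-1)}\big(y^n\pm y\big)$ with the sign $\pm=\operatorname{sgn}(u)$, and the large parameter is $\mu:=\la s_3|u|^{n/(n-1)}$. The reduced phase $y^n+\operatorname{sgn}(u)\,y$ is a fixed ($u$-independent) polynomial; for $u<0$ (and $n$ odd) it has two non-degenerate real critical points $y_\pm$, symmetric, with critical values $\mp c_n$, $c_n=(n-1)n^{-n/(n-1)}$; for $u>0$ (or for $u<0$, $n$ even, only one sign of critical point survives) it has no real critical point in the support, or only one. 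Applying \cite[Lemma 2.2]{IM} — stationary phase with non-degenerate critical points plus a rapidly decaying non-stationary remainder — yields two oscillatory terms $\la^{-1/2}|u|^{-(n-2)/(2n-2)}\,\chi_0(u/\ve_0)\,a_\pm(\mu,|u|^{1/(n-1)},s_2)\,e^{\pm i\la s_3 c_n|u|^{n/(n-1)}}$, where the prefactor $\mu^{-1/2}|u|^{1/(n-1)}=\la^{-1/2}s_3^{-1/2}|u|^{-(n-2)/(2n-2)}$ comes from the stationary-phase normalization times the Jacobian of the rescaling, and a remainder $(\la s_3|u|)^{-1}E$ from the non-stationary part, the factor $|u|^{-1}$ accounting for the cutoff transition and the chain rule; the symbol bounds \eqref{aderest} and the rapid-decay-in-$\mu$ bounds \eqref{Ederest} (with the $|v|^{-\be}$ loss from differentiating the $y=|u|^{1/(n-1)}$–rescaled cutoff) are read off from the corresponding estimates in \cite[Lemma 2.2]{IM}. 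The cases $u>0$ and $n$ even are handled by noting which real critical points are present.

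I expect the main obstacle to be bookkeeping rather than conceptual: one must track carefully how the two rescalings ($x_1=\la^{-1/n}y$ versus $x_1=|u|^{1/(n-1)}y$) interact with the amplitude $\tilde\eta$ and with the cutoff $\chi_0(u/\ve_0)$, so that all stated powers of $\la$, $s_3$, $|u|$ come out exactly as in \eqref{airyest}, and so that the error terms genuinely decay in $\mu=\la s_3|u|^{n/(n-1)}$ uniformly down to the matching scale $\la^{(n-1)/n}|u|\sim 1$ where (a) and (b) overlap; there one checks the two expressions are consistent up to the claimed error. Making $\ve_0$ small is what guarantees (i) all critical points lie in $\supp\tilde\eta$, and (ii) the higher-order terms $x_1^n$ do not spoil the non-degeneracy, which is why the statement is only claimed for $\ve_0$ sufficiently small.
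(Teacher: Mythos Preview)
Your proposal is correct and follows essentially the same route as the paper's own sketch: both cases use precisely the two rescalings you describe ($x_1=\la^{-1/n}y$ in regime (a), $x_1=|u|^{1/(n-1)}y$ in regime (b)), and in (b) the paper likewise splits off the region $|u|\ge\ve_0$ by integration by parts, then applies stationary phase near the critical points of $t^n+(\sgn u)t$ and integration by parts away from them (via a cutoff $\chi_0(t/M)$) to produce the $a_\pm$ terms and the remainder $E$. The only cosmetic difference is that the paper re-derives the argument rather than black-boxing \cite[Lemma~2.2]{IM}, in part to correct a misstatement in that lemma's original formulation.
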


We seize  here the opportunity  to correct   an error in  the  formulation of \cite[Lemma 2.2 (b)]{IM}:   the  amplitudes 
  $a_\pm (|u|^{\frac1{B-1}},s)$ therein should rather be  functions $a_\pm (\la |u|^{\frac B{B-1}}, |u|^{\frac 1{B-1}},s) $ of also $\mu:=\la |u|^{\frac B{B-1}},$ which  are symbols of order $0$ with respect to this variable $\mu.$
\medskip

\noindent {\it Sketch of proof.} 
Since the arguments from the proof of this lemma are particularly  simple for $J(\la,s),$ we shall briefly sketch the proof for the convenience of the reader, and refer to \cite{IM} for further details .

\smallskip
{\bf Case (a)} If $\la^{\frac {n-1}n} |u|\lesssim 1,$ we scale $x_1$ by the factor $\la^{-1/n},$ which leads to 
$$
J(\la,s)=\la^{-1/n} g(\la^{\frac {n-1}n} u, s_2, s_3),
$$
where 
$$
g(v, s_2,s_3):=\int_{\bR} e^{-i s_3\left(t^n+tv\right)}\tilde \eta(\la^{-1/n}t,  s_2)\,dt.
$$
Integrations by parts show that $g$ is a smooth function whose derivatives of any order are uniformly bounded for 
$|v|\lesssim 1, |s_2|\lesssim 1$ and $s_3\sim 1.$ 

\medskip
{\bf Case (b)} If $\la^{\frac {n-1}n} |u|\gg 1,$ assume first that $|u|\ge \ve_0.$ Then, since $|x_1|<\ve_0,$ we see that we can integrate by parts in the integral defining $ J(\la,s)$ and find that, for any $N\in\NN,$ 
$$
J(\la,s)=O((\la|u|)^{-N})=O(\la^{-N}),
$$
and similar estimates hold true also for any derivatives of  $J(\la,s)$ w.r. to $s$-variables. These contributions can thus be ignored in the sequel (respectively  be included into the last term of \eqref{airyest}).

\smallskip

Let us therefore henceforth assume that $|u|<\ve_0.$ We scale by the factor 
$$
v:=|u|^{1/(n-1)}=|s_1 +s_2^2/4|^{1/(n-1)},
$$
 which yields
$$
J(\la,s)=v \int_{\bR} e^{-i\la v^n s_3\left(t^n+(\sgn u)t)\right)}\tilde \eta(vt,  s_2)\,dt.
$$
Note that $\la v^n\gg 1,$ since we are in Case (b).

We decompose this into 
$$
J(\la,s)=J_0(\la,s)+J_\infty(\la,s),
$$
where
\begin{eqnarray*}
J_0(\la,s)&=&v \int_{\bR} e^{-i\la v^n s_3\left(t^n+(\sgn u)t)\right)}\chi_0(\frac tM)\tilde \eta(vt,  s_2)\,dt,\\
J_\infty(\la,s)&=&v \int_{\bR} e^{-i\la v^n s_3\left(t^n+(\sgn u)t)\right)}\left(1-\chi_0(\frac tM)\right )\tilde \eta(vt,  s_2)\,dt,
\end{eqnarray*}
with $M\gg 1$ sufficiently large. The estimates in  \eqref{airyest} are now easily obtained by applying the method of stationary phase to the integral in $ J_0(\la,s),$ and integrations by parts to $J_\infty(\la,s).$
\qed

Since  we are assuming that $|u|\ll 1,$ in Case (b) we have
$$
1\ll \la^{\frac {n-1}n}|u| \ll \la^{\frac {n-1}n}.
$$
Choosing $j(\la)\in\NN$ suitably so that $2^{j(\la)}\sim \la^{\frac {n-1}n},$ we may thus dyadically  decompose 
\begin{eqnarray}\label{Jdecomp}
J(\la,s)=J_0(\la,s)+\sum\limits_{j=1}^{j(\la)} J_j^+(\la,s)+\sum\limits_{j=1}^{j(\la)} J_j^-(\la,s)+\sum\limits_{j=1}^{j(\la)} J_j^E(\la,s),
\end{eqnarray}
where 
\begin{eqnarray*}
J_0(\la,s)&:=&\la^{-1/n} \chi_0(\la^{\frac {n-1}n}|u|) g(\la^{\frac {n-1}n} u, s_2, s_3),\\
J_j^\pm(\la,s)&:=&\chi_1\big(\frac{\la^{\frac {n-1}n}|u|}{2^j}\big) \la^{-\frac 12} |u|^{-\frac {n-2}{2n-2}}\, \chi_0\Big(\frac u{\ve_0}\Big) a_{\pm}(\la s_3|u|^{\frac n{n-1}},|u|^{\frac1{n-1}},s_2) \,e^{\pm i\la s_3 c_n|u|^{\frac n{n-1}}},\\
 J_j^E(\la,s)&:=&\chi_1\big(\frac{\la^{\frac {n-1}n}|u|}{2^j}\big) (\la s_3|u|)^{-1} E(\la s_3 |u|^{\frac n{n-1}}, |u|^{\frac1{n-1}},s_2),
\end{eqnarray*}
with suitably chosen smooth cut-off function $\chi_0$ and $\chi_1.$ We shall separately estimate the contribution of each of these terms to the maximal operator $\M_{S_{(n)}}.$

\medskip
To this end, for fixed $j$ let us introduce the following abbreviations: We put
$$
\ga:=\frac n{n-1}, \quad  R=R_j(\la):=\frac{\la^{\frac {n-1}n}}{2^j}.
$$
Since we assume that $n\ge 4,$  we may here assume that 
\begin{equation}\label{abbrevs}
1<\ga <2, \quad R\gg 1 \quad  \text{and} \quad \la\ge R^\ga.
\end{equation}
We can now  re-write 
\begin{eqnarray*}
J_0(\la,s)&=&\la^{\frac 1\ga-1} \chi_0(\la^{1/\ga}|u|) g(\la^{1/\ga} u, s_2, s_3),\\
J_j^\pm(\la,s)&=&J^\la_{\pm,R}:=\la^{-\frac 12} |u|^{\frac \ga 2 -1}  \chi_1(R|u|) \,a_{\pm}(\la s_3|u|^{\ga},
|u|^{\ga-1},s_2) \,e^{\pm i\la s_3 c_n|u|^\ga},\\
 J_j^E(\la,s)&=&J^\la_{E,R}:=\chi_1(R|u|) (\la s_3|u|)^{-1} E(\la s_3 |u|^{\ga}, |u|^{\ga-1},s_2),
\end{eqnarray*}
where 
$$
u=s_1 +s_2^2/4=\frac{\xi'_1\xi'_3+\frac 14 (\xi'_2)^2}{(\xi'_3)^2}.
$$
In view of \eqref{mula2}, let us introduce the Fourier multiplier operators $T_{m^\la_{\pm,R}},$ where 
$$
m^\la_{\pm,R}(\xi'):=  \la^{-1/2}   \chi_0(s_3s')\chi_1(s_3) \, e^{-i\la s_3(1-\frac {s_2^2}4)}  J^\la_{\pm,R}(\la,s),
$$
as well as the operator $T_{m^\la_0},$ with 
$$
m^\la_0(\xi'):=\la^{-1/2}   \chi_0(s_3s')\chi_1(s_3) \, e^{-i\la s_3(1-\frac {s_2^2}4)} J_0(\la,s),
$$
and $T_{m^\la_{E,R}},$ with 
$$
m^\la_{E,R}(\xi'):=  \la^{-1/2}   \chi_0(s_3s')\chi_1(s_3) \, e^{-i\la s_3(1-\frac {s_2^2}4)}  J^\la_{E,R}(\la,s).
$$

In a next step, we estimate these operators by means of Corollary  \ref{mainlp}. We begin with the operators $T_{m^\la_{\pm,R}}.$

Since $|u|\sim R^{-1} $ in  $J^\la_{\pm,R}$ and $\ga\ge 1,$ the estimates \eqref{aderest}  easily imply that the amplitude 
$$
 a^\la_{\pm R}(u,s):=R^{\frac \ga 2-1} |u|^{\frac \ga 2 -1}\,a_{\pm}(\la s_3|u|^{\ga},|u|^{\ga-1},s_2)
$$
satisfies  estimates of the form
\begin{equation}\label{ampest}
|\partial_u^k \pa_s^ \beta a^\la_{\pm, R}(u,s)|\le C_{k,\beta} R^k , \qquad k\in \NN, \beta\in \NN^3,
\end{equation}
which match with those of $\partial_u^k [\chi_1(R|u|)].$ 
By Lemma \ref{admissaex}, this shows in particular that for any fixed $s,$ the amplitude $a^\la_{\pm, R}(u,s)$ is admissible when viewed as a function of $\xi'$, uniformly in $\la$ and $s$.

\smallskip

We shall therefore write $m^\la_{\pm,R}(\xi')$ briefly as 
\begin{equation}\label{mlapm}
m^\la_{\pm,R}(\xi')=  \la^{-1} R^{1-\frac \ga 2} \chi_1(R|u|)  a^\la_{\pm R}(u,s)\chi_0(s_3s')\chi_1(s_3) \, e^{-i\la s_3(1-\frac {s_2^2}4\mp c_n|u|^\ga)}.
\end{equation}

Let us here perform the linear change of coordinates
$$
\xi'_1= -\eta_1, \ \xi'_2=\sqrt{2} \eta_2,\  \xi'_3=\eta_3.
$$
In these coordinates,
$$
u=u(\eta):=\frac{\frac {\eta_2^2}2-\eta_1\eta_3}{\eta_3^2},
$$ so we may write
\begin{equation}\label{mlapmeta}
m^\la_{\pm,R}=\la^{-1} R^{1-\frac \ga 2} \chi_1(R|u(\eta)|)  a^\la_{\pm R}(u(\eta),s)\chi_0(-\eta_1,\sqrt{2}\eta_2)\chi_1(\eta_3) \, e^{-i\la \phi(\eta)},
\end{equation}
with  phase 
$$
\phi(\eta):=\eta_3-\frac {\eta_2^2}{2\eta_3} \mp c_n\eta_3 \Big|\frac{\frac {\eta_2^2}2-\eta_1\eta_3}{\eta_3^2}\Big|^\ga,
$$
and we are in a frequency region where $|\eta_1|+|\eta_2|\ll 1$ and, say,  $\eta_3\sim 1.$ Thus, in this frequency region, 
$\phi$ assumes the form 
$$
\phi=\phi_0\mp c_n\phi^\ga,
$$
with $\phi_0\in \F_{class}$ and $\phi^\ga$ as in Example \ref{mainex}.
Thus, by Example  \ref{phi+phi}, we see that $\phi\in \F^{1,\frac \ga 2,\ga},$ with $1<\ga<2.$ Moreover, as we have seen in Subsection \ref{smdex}, $\phi$ satisfies also condition (SMD). 

In particular, since $\la\ge R^\ga,$  we  see by \eqref{rho} that 
$
\tilde \rho_1=(\la R^{2-\ga})^{-1/2}\le R^{-1}, 
$
so that 
$$
\rho_1=(\la R^{2-\ga})^{-1/2}, \quad \rho_2=\la^{-1/2},
$$
hence 
$$
N(\la,R)=\frac {R^{-3/2}}{\rho_1\rho_2}=\la R^{-\frac{1+\ga}2}.
$$

Consequently, we may apply Corollary \ref{mainlp} and obtain that for $4/3\le p\le 2,$ 
\begin{eqnarray*}
 \|T_{m^\la_{\pm,R}} \|_{L^p\to L^p} &\le& \la^{-1} R^{1-\frac \ga 2} C_\epsilon\,R^\epsilon N(\lambda,R)^{\frac 2p-1} 
 =C_\epsilon  \la^{-1} R^{1-\frac \ga 2}\big(\la R^{-\frac{1+\ga}2}\big)^{\frac 2p-1},
\end{eqnarray*}
i.e.,
\begin{equation}\label{Test1}
\|T_{m^\la_{\pm,R}} \|_{L^p\to L^p} \le C_\epsilon\,R^\epsilon \la^{ \frac 2p-2} R^{-\frac{1+\ga}{p}+\frac 32}.
\end{equation}
We next prove
\begin{equation}\label{kernelest1}
\int_{|x|>r} |\F^{-1}m^\la_{\pm,R}(x)| dx \le C \la^5(1+r)^{-1} \qquad \text{for every} \ r>0.
\end{equation}
We can argue here in a similar way as in the proof of Corollary \ref{mainlp}. We decompose $m^\la_{\pm,R}=\sum_\vth m_\vth$ into the contributions $m_\vth$ of all L-boxes $\vth$ contained in $\Gamma^\pm_R,$ and write again 
$\mu_\vth:=\F^{-1}m_\vth,$ so that
$$
F:=\F^{-1}m^\la_{\pm,R}=\sum_{\vth\subset \Gamma^\pm_R} \mu_\vth.
$$
Since $1/\rho_1= \la^{1/2}R^{1-\ga/2}\le \la^{1/2}\la^{(1-\ga/2)/\ga}=\la^{1/\ga}$ and $1/\rho_2=\la^{1/2},$ we see that 
$$
\sharp \{\vth\subset \Gamma^\pm_R\}\lesssim \la^{1/2+1/\ga}.
$$
Thus, since by Lemma \ref{4iersupp} $\|\mu_\vth\|_1\le C$ for all $\vth$s, we see that $\|F\|_1\le C \la^{1/2+1/\ga}\le C\la^2.$ 

Moreover, since 
$$
|\la\nabla \phi(\xi)|\lesssim \la R^{\ga+1}\le \la^{\frac{2\ga+1}\ga}\le \la^3,
$$
we see that $\mu_\vth$ is essentially supported in a ball of radius $\la^3$ for every  $\vth\subset \Gamma^\pm_R.$
Indeed, by arguing as in the proof of Corollary \ref{4iersupp2}, we find that for $r\gg\la^3,$ 
$$
\int_{|x|>r} |\mu_\vth(x)| dx \le C_N\la^{-N} r^{-N}
$$
for any $N\in\NN.$ This implies in particular that 
$
\int_{|x|>r} |F(x)| dx \le C_N  r^{-N}.
$

In combination, our estimates imply that, for any $r>0,$
$$
\int_{|x|>r} |F(x)| dx \le C  \la^5(\la^3+r)^{-1}\le C\la^5(1+r)^{-1},
$$
which proves \eqref{kernelest1}.

\bigskip

Let us next consider the multiplier $\dot m(\eta)=\partial_t[m(t\eta)]\vert_{t=1}$ (as defined  in Theorem \ref{maxtomult}) for the multiplier  $m:=m^\la_{\pm,R}.$

Note first  that  $s'$ and $u(\eta)$ are homogeneous of degree $0,$ and $\phi$ and $s_3$ are homogeneous of degree $1$ in $\eta.$ If we regard  $a^\la_{\pm R}(u,s)$ as a function of $\eta,$ we thus see that 
 $$
{\dot a}^\la_{\pm R}(\eta)=\partial_t[a^\la_{\pm R}(t\eta)]\vert_{t=1}=R^{1-\frac \ga 2} |u|^{\frac \ga 2 -1}\,\tilde a_{\pm}(\la s_3|u|^{\ga},|u|^{\ga-1},s_2),
$$
if we put ${\dot a}_{\pm}(\mu,v,s_2):=\mu\pa_\mu a_\pm(\mu,v,s_2).$ By \eqref{aderest}, the function ${\dot a}_{\pm}$ s controlled by the same kind of estimates as  $a_{\pm}.$

Observe next that $\eta\cdot\nabla m(\eta) [\chi_0(-\eta_1,\sqrt{2}\eta_2)\chi_1(\eta_3)]$ is  essentially  just a slightly modified version of the smooth cut-off $\chi(\eta):=\chi_0(-\eta_1,\sqrt{2}\eta_2)\chi_1(\eta_3).$ 
 
 Finally, one easily sees  that 
 $$
 \la^{-1} \partial_t[e^{-i\la\phi(t\eta)}]\vert_{t=1}=-i\phi(\eta) e^{-i\la\phi(\eta)}=-i\big(\eta_3-\eta_1+\eta_3u(\eta)\mp c_n\eta_3|u(\eta)|^\ga \big)e^{-i\la\phi(\eta)}.
 $$
The product of  the smooth term $\eta_3-\eta_1+\eta_3u(\eta)$ with the amplitude in \eqref{mlapmeta} can easily be absorbed into  the cut-off $\chi$ and thus again just leads to a modified version of $\chi.$ 
 As for the product of the amplitude with $\eta_3|u(\eta)|^\ga,$ note that 
 $$
|u(\eta)|^\ga\chi_1(R|u(\eta)|) =R^{-\ga} \chi_{1,\ga}(R|u(\eta)|),
 $$
 if we set $\chi_{1,\ga}(v):=v^\ga \chi_1(v),$ which  is  again an admissible  amplitude.
 
These observations show that $\dot m$ is a sum of four terms, where the first two of them are again of the form  \eqref{mlapmeta} (but with  slightly modified versions of the amplitude),  the third being of a similar form but coming with an extra factor $\la R^{-\ga},$ and the fourth with an extra factor $\la$ (the last two arise when $\eta\cdot\nabla$ hits  $e^{-i\la\phi}$).
\smallskip

We thus see that $\tilde m:=\la^{-1} \dot m$ is again essentially of the form  \eqref{mlapmeta}, and thus the estimates \eqref{Test1} and \eqref{kernelest1} hold for $\widetilde {m^\la_{\pm,R}}$ in place of $m^\la_{\pm,R}$ as well.

 The multiplier $m^\la_{\pm,R}$ satisfies thus the assumptions (i) and (ii) of Theorem \ref{maxtomult}, with constants 
\begin{equation}\label{AB}
A:= C_\epsilon\,R^\epsilon \la^{ \frac 2p-2} R^{-\frac{1+\ga}{p}+\frac 32},\quad B:=C\la^5.
\end{equation}
 Note that for $4/3\le p\le 2$ and $\la\ge R^\ga,$ we have 
$$
A\lesssim  R^{(\frac 2p-2)\ga-\frac{1+\ga}{p}+\frac 32+\epsilon}\le R^{(\ga-1)\frac 34 +\frac 32 -2\ga+\epsilon}
=R^{\frac{3-5\ga}4+\epsilon}\ll 1,
$$
if we choose $\epsilon$ sufficiently small.

Since Theorem \ref{maxtomult} holds as well for multipliers supported in the slightly larger annulus $1/2\le |\xi| \le 2,$ we find that, for $4/3\le p\le 2,$
$$
\|\M_{m^\la_{\pm,R}} \|_{L^p\to L^p}\le C_{\epsilon,\delta,p}\,\left\{ \la^{\frac 1p}\left( \la^{ \frac 2p-2} R^{-\frac{1+\ga}{p}+\frac 32+\epsilon}\right)^{1-\delta}+\la^{\frac 1p}\log \la \,\la^{ \frac 2p-2} R^{-\frac{1+\ga}{p}+\frac 32+\epsilon} \right\}.
$$
Hence, by choosing $\epsilon$ and $\delta$ sufficiently small, we see that if $4/3\le p\le 2$, then
\begin{equation}\label{Mest1}
\|\M_{m^\la_{\pm,R}} \|_{L^p\to L^p} \le C_{\epsilon,p}\, \la^{ \frac 3p-2+\delta} R^{-\frac{1+\ga}{p}+\frac 32+\delta}
\end{equation}
 for every $\delta>0.$

\smallskip

Assume now that  $p>\tilde p_c,$ so that in particular  $p>3/2.$
Since $\la\ge R^\ga,$  by choosing $\delta>0$ sufficiently small we can estimate
$$
\la^{ \frac 3p-2+\delta} R^{-\frac{1+\ga}{p}+\frac 32+\delta}\le \la^{-\delta}R^{(\frac 3p -2+2\delta)\ga}R^{-\frac{1+\ga}{p}+\frac 32+\delta}\le 
\la^{-\delta}R^{\frac 1p( 2\ga-1)-2\ga+\frac 32+\delta(2\ga+1)}.
$$
Note that  $\frac 1p( 2\ga-1)-2\ga+\frac 32<0$ if and only if 
$$
p>\frac {2\ga-1}{2\ga-3/2}=2\frac {n+1}{n+3}.
$$
Thus, by choosing $\delta$ sufficiently small, we see that for $p>\tilde p_c$ we can establish an estimate 
$$
\|\M_{m^\la_{\pm,R}} \|_{L^p\to L^p} \le C_p\la^{-\delta_p}
$$
for some $\delta_p>0.$ Since each of the sums in \eqref{Jdecomp} consists of at most $j(\la)=\mathcal{O}(\log \la)$ terms, the previous inequality implies that 
$$
\sum\limits_{\la\gg 1} \sum\limits_{j=1}^{j(\la)}\|\M_{m^\la_{\pm,R_j(\la)}} \|_{L^p\to L^p}\le C'_p<\infty
$$
(recall that we are here summing only over dyadic values of  $\la$).

\bigskip
Next, we consider  $T_{m^\la_0}.$ Recall that
$$
m^\la_0=\la^{\frac 1\ga-\frac 32} \chi_0(\la^{1/\ga}|u|) g(\la^{1/\ga} u, s_2, s_3)    \chi_0(s_3s')\chi_1(s_3) \, e^{-i\la s_3(1-\frac {s_2^2}4)},
$$
where $g(v,s_2,s_3)$ is a smooth function whose derivatives of any fixed order are uniformly bounded for 
$|v|\lesssim 1, |s_2|\lesssim 1$ and $s_3\sim 1.$

Passing to the same coordinates $\eta$ as before, we here have, in analogy with \eqref{mlapmeta},
\begin{equation}\label{mla0eta}
m^\la_{0,R}(\eta)=\la^{\frac 1\ga-\frac 32} \chi_0(\la^{1/\ga}|u(\eta)|) g(\la^{1/\ga} u(\eta), s_2, \eta_3)  
 \chi_0(-\eta_1,\sqrt{2}\eta_2)\chi_1(\eta_3)\, e^{-i\la \phi_0(\eta)},
\end{equation}
with the same classical phase 
$$
\phi_0(\eta)=\eta_3-\frac {\eta_2^2}{2\eta_3}
$$
as before. Obviously, if we put here $R:=\la^{1/\ga},$ so that $\la=R^\ga,$ then $g(\la^{1/\ga} u, s_2, s_3)$ satisfies the same kind of estimates as $a^\la_{\pm, R}$ in \eqref{ampest}. Moreover, by Example \ref{Fclass}, also 
$$
\phi_0\in \F_{class}\subset \F^{\frac\ga 2,\frac \ga 2,\ga}\subset  \F^{1,\frac \ga 2,\ga}
$$ 
 over $\mathbf \Gamma_R,$ since here $1\le \ga\le 2.$ Note also that here 
$$
\la^{\frac 1\ga-\frac 32}=\la^{-1} \la^{\frac 1\ga-\frac 12}=\la^{-1}R^{1-\frac \ga2}.
$$
Thus, a comparison with \eqref{mlapm} shows that if we apply  Corollary \ref{mainlp}  to $T_{m^\la_0},$ we obtain the same kind of multiplier and kernel estimates \eqref{Test1} and \eqref{kernelest1} for the multiplier $m^\la_0$ as for  $m^\la_{\pm,R},$  but with $R:=\la^{1/\ga}.$ This implies that for $p>\tilde p_c,$ 
$$
\sum\limits_{\la\gg 1}\|\M_{m^\la_0} \|_{L^p\to L^p}\le C'_p<\infty.
$$
\medskip

We are thus left with the multiplier operators $T_{m^\la_{E,R}}.$ Notice here that in  $J^\la_{E,R},$ the factor $(\la s_3|u|)^{-1}$ is of size 
$\la^{-1} R,$ whereas the factor $\la^{-\frac 12} |u|^{\frac \ga 2 -1}$ in $T_{m^\la_{\pm,R}}$ is of size 
$\la^{-\frac 12} R^{1-\frac \ga 2}.$  But, since $\la\ge R^\ga,$ we see that 
$$
\la^{-1} R\le \la^{-\frac 12} R^{1-\frac \ga 2}.
$$
Moreover, the estimates \eqref{Ederest} show that the factor $E(\la s_3 |u|^{\ga}, |u|^{\ga-1},s_2)$ satisfies the same kind of estimates \eqref{ampest} as the factor $a^\la_{\pm,R}.$ Thus, by choosing the phase $\phi:=0$ here and   comparing with $T_{m^\la_{\pm,R}},$ we see that  we obtain the same kind of multiplier and kernel estimates \eqref{Test1} and \eqref{kernelest1} for the multiplier $m^\la_{E,R}$ as for  $m^\la_{\pm,R}$  (actually, even better estimates hold true  for 
$m^\la_{E,R}$). We conclude that we also have 
$$
\sum\limits_{\la\gg 1} \sum\limits_{j=1}^{j(\la)}\|\M_{m^\la_{E,R_j(\la)}} \|_{L^p\to L^p}\le C'_p<\infty,
$$
if $p>\tilde p_c.$ 

Theorem \ref{pbiggerpc} is an immediate consequences of all these estimates.\qed


\begin{thebibliography}{99999999}

\bibitem{BD} J. Bourgain and C. Demeter, \textit{The proof of the $l^2$ decoupling conjecture}, Ann. of Math. (2) \textbf{182} (2015), no. 1, 351--389.

 \bibitem{BIM}
 S.~Buschenhenke,  I. A.~Ikromov and D.~M\"uller, 
\textit{Estimates for maximal functions  associated to hypersurfaces in $\Bbb R^3$
with height $h<2:$ Part II.
A geometric conjecture and its proof for generic 2-surfaces},
 Ann.  Sc. Norm. Super. Pisa Cl. di Sc. (5) \textbf{XXVI} (2025), 1765--1877.

 
 \bibitem{BDIM}
S.~Buschenhenke, S.~Dendrinos, I.A.~Ikromov  and D.~M\"uller,
 \textit{Estimates for maximal functions  associated to hypersurfaces in $\Bbb  R^3$ with
height $h<2:$ Part I}, 
Trans. Amer. Math. Soc. \textbf{372} (2019), no. 2, 1363--1406.

\bibitem{IKM} I.A.~Ikromov, M.~Kempe and D.~M\"uller,
 \textit{Estimates for maximal functions  associated to hypersurfaces in $\Bbb R^3$  and related problems of harmonic analysis},  Acta Math. \textbf{204} (2010), 151--271.

\bibitem{DGW}C. Demeter, L.\,Guth and H.\,Wang, \textit{Small cap decouplings. With an appendix by D.R. Heath-Brown}, Geom. Funct. Anal., \textbf{30} (2020), no. 4, 989--1062.

\bibitem{Gan} S. Gan,  \textit{Small cap square function estimates}, J. Fourier Anal. Appl.  \textbf{30} (2024), no. 3, Paper No. 36, 29 pp.
\bibitem{GWZ}
L.\,Guth, H.\,Wang, and R.\,Zhang, 
  \textit{A sharp square function estimate for the cone multiplier in $\RR^3$},
 Ann. of Math. \textbf{192} (2020), 551--581.
 
 \bibitem{IM}
 I.A.~ Ikromov and  D.~M\"uller,
 Fourier restriction for hypersurfaces in three dimensions and Newton polyhedra.
\textit{Annals of Mathematics Studies} 194, Princeton University Press, Princeton and Oxford 2016. 

\bibitem{Mo}
G. Mockenhaupt,  \textit{A note on the cone multiplier}, Proc. Amer. Math. Soc. \textbf{117} (1993), no. 1, 145--152.

\bibitem{NSW}
A.~Nagel, A.~Seeger and S.~Wainger, 
\textit{Averages over convex hypersurfaces},
 Amer. J. Math. \textbf{115} (1993), no. 4, 903--927.
 
  
 \bibitem{St}
E.~M.~Stein,
 \textit{Maximal functions. {I}. Spherical means},
Proc. Nat. Acad. Sci. U.S.A. \textbf{73} (1976), no. 7, 2174--2175.




\bibitem{SSS}
A. Seeger, C.D. Sogge and E.M. Stein, \textit{Regularity properties of Fourier integral operators}, Ann. of Math.  \textbf{134} (1991), no. 2, 231--251.

\bibitem{See}
A. Seeger,  \textit{Some inequalities for singular convolution operators in $L^p$-spaces}, Trans. Amer. Math. Soc. \textbf{308} (1988), no. 1, 259--272.

\bibitem{W}
T. Wolff,  \textit{Local smoothing type estimates on $L^p$ for large $p$}. 
Geom. Funct. Anal. \textbf{10} (2000), no. 5, 1237--1288.


\end{thebibliography}
\end{document}